\tikzstyle directed=[postaction={decorate,decoration={markings,
    mark=at position .65 with {\arrow{latex}}}}]
\renewcommand\p@enumii{}
\theoremstyle{plain}
 \newtheorem{theorem}{Theorem}[section]
 \newtheorem{proposition}[theorem]{Proposition}
 \newtheorem{lemma}[theorem]{Lemma}
 \newtheorem{corollary}[theorem]{Corollary}
 \newtheorem{definition}[theorem]{Definition}
 \newtheorem{example}[theorem]{Example}
 \newtheorem{conjecture}[theorem]{Conjecture}
 \newtheorem{remark}[theorem]{Remark}
 \newtheorem{question}[theorem]{Question}
\newcommand\A{{\mathbb A}}
\newcommand\C{{\mathbb C}}
\newcommand\F{{\mathbb F}}
\newcommand\K{{\mathbb K}}
\renewcommand\L{{\mathbb L}}
\newcommand\p{{\mathbb P}}
\newcommand\T{{\mathbb T}}
\newcommand\U{{\mathbb U}}
\newcommand\Z{{\mathbb Z}}
\newcommand{\CC}{{\mathfrak C}}
\newcommand{\FF}{{\mathfrak F}}
\newcommand{\LL}{{\mathfrak L}}
\DeclareMathAlphabet{\mathpzc}{OT1}{pzc}{m}{it}
\newcommand{\g}{\mathpzc{g}}
\DeclareMathOperator{\BBB}{\mathcal{B}}
\DeclareMathOperator{\DDD}{\mathcal D}
\DeclareMathOperator{\So}{S_{\rm odd}}
\renewcommand\k{\mathrm{k}}
\newcommand{\MM}{{\mathfrak M}}
\DeclareMathOperator{\Vect}{Vect}
\DeclareMathOperator{\I}{I}
\newcommand{\smat}[4]{\left( \begin{smallmatrix} #1\,&#2\\ #3\,&#4 \end{smallmatrix} \right)}
\newcommand\dis{{\displaystyle}}
\newcommand\restr[2]{{
  \left.\kern-\nulldelimiterspace 
  #1 
  \right|_{#2} 
  }}
\newcommand\Bir{{\mathrm{Bir}}}
\newcommand\id{{\mathrm{id}}}
\newcommand\Aff{{\mathrm{Aff}}}
\DeclareMathOperator{\Mat}{M}
\newcommand\GL{{\mathrm{GL}}}
\newcommand\PGL{{\mathrm{PGL}}}
\DeclareMathOperator{\Fix}{Fix}
\DeclareMathOperator{\Norm}{N}
\DeclareMathOperator{\degmax}{d}
\newcommand{\Jonq}{{\rm Jonq}}
\newcommand\Aut{{\mathrm{Aut}}}
\renewcommand{\phi}{{\varphi}}
\newcommand{\Com}{{\mathcal C}{\mathfrak o}{\mathfrak m}}
\DeclareMathOperator{\Exc}{Exc}
\DeclareMathOperator{\Stab}{Stab}
\DeclareMathOperator{\vv}{v}
\DeclareMathOperator{\rk}{rk}
\DeclareMathOperator{\tr}{tr}
\DeclareMathOperator{\BaumBott}{BB}
\DeclareMathOperator{\Div}{Div}
\DeclareMathOperator{\divisor}{div}
\DeclareMathOperator{\NFC}{NFC}
\DeclareMathOperator{\Image}{Im}
\DeclareMathOperator{\pr}{pr}
\DeclareMathOperator{\Cent}{Cent}
\DeclareMathOperator{\Nor}{N}
\DeclareMathOperator{\length}{\ell}
\newcommand{\bp}{{\rm Bir}({\mathbb P}^2)}
\newcommand{\bpn}{{\rm Bir}({\mathbb P}^n)}
\newcommand{\ban}{{\rm Bir}({\mathbb A}^n)}
\newcommand{\bpGothic}{{\mathfrak B}{\mathfrak i}{\mathfrak r}({\mathbb P}^2)}
\newcommand{\bpnGothic}{{\mathfrak B}{\mathfrak i}{\mathfrak r}({\mathbb P}^n)}
\newcommand{\rn}{{\rm Rat}({\mathbb P}^n)}
\newcommand{\rrn}{{\mathfrak R}{\mathfrak a}{\mathfrak t}({\mathbb P}^n)}
\newcommand{\JJonq}{{\mathfrak J}{\mathfrak o}{\mathfrak n}{\mathfrak q}}
\title[Borel subgroups of the plane Cremona group]{Borel subgroups of the plane Cremona group}
\author{Jean-Philippe Furter}
\address{J.-P.\ Furter, Univ. Bordeaux, IMB, CNRS, UMR 5251, F-33400 Talence, France}
\email{jean-philippe.furter@math.u-bordeaux.fr}
\author{Isac Hed\'en}
\address{Dept.\ of Mathematics, Royal Institute of Technology (KTH), SE-100\,44 Stockholm, Sweden and Mathematics Institute, University of Warwick, Coventry, CV4 7AL, England}\email{isach@kth.se}
\thanks{The second author gratefully acknowledges support from the Knut and Alice Wallenberg Foundation, grant number KAW2016.0438.}
\begin{document}

\maketitle
\begin{abstract}
It is well known that all Borel subgroups of a linear algebraic group are conjugate. Berest, Eshmatov, and Eshmatov have shown that this result also holds for the automorphism group $\Aut (\A^2)$ of the affine plane. In this paper, we describe all Borel subgroups of the complex Cremona group $\bp$ up to conjugation, proving in particular that they are not necessarily conjugate. In principle, this fact answers a question of Popov. More precisely, we prove that $\bp$ admits Borel subgroups of any rank $r \in \{ 0,1,2 \}$ and that all Borel subgroups of rank $r \in \{ 1,2 \}$ are conjugate. In rank $0$, there is a $1-1$ correspondence between conjugacy classes of Borel subgroups of rank $0$ and hyperelliptic curves of genus $\g \geq 1$. Hence, the conjugacy class of a rank $0$ Borel subgroup admits two invariants: a discrete one, the genus $\g$, and a continuous one, corresponding to the coarse moduli space of hyperelliptic curves of genus $\g$. This moduli space is of dimension $2 \g-1$.
\end{abstract}

\tableofcontents

\newpage

\section{Introduction}

Let $\L$ be an algebraically closed field. The famous Lie--Kolchin theorem asserts that any closed connected solvable subgroup of $\GL_n (\L)$ is triangularisable, i.e.\ conjugate to a subgroup all of whose elements are upper-triangular. More generally, let $G$ be a linear algebraic group defined over $\L$. Define a Borel subgroup of $G$  as a maximal subgroup among the closed connected solvable subgroups. Then, Borel has shown that  any closed connected solvable subgroup of $G$ is contained in a Borel subgroup and that all Borel subgroups of $G$ are conjugate. By \cite{BerestEshmatovEshmatov2016} (see also \cite{FurterPoloni2018}), the same result still holds for the automorphism group $\Aut (\A^2)$ of the affine plane.

The Cremona group ${\rm Bir}(\p^n)$ is the group of all birational transformations of the $n$-dimensional complex projective space $\p^n$. From an algebraic point of view, it corresponds to the group of $\C$-automorphisms of the field  $\C (x_1, \ldots, x_n)$. This group is naturally endowed with the Zariski topology introduced by Demazure \cite{Demazure1970} and Serre \cite{Serre2010}. We describe this topology in Section~\ref{section: Zariski Topology on Bir(pn)}; see in particular Definition~\ref{definition: Zariski topology}. For more details on this subject we refer to \cite{BlancFurter2013}.  Following Popov we define the Borel subgroups of $\bpn$ as the maximal closed connected solvable subgroups with respect to this topology \cite{Popov2017}. Borel subgroups of a closed subgroup $G$ of $\bpn$ are defined analogously.

An element of $\bpn$ is of the form
\[ f \colon [x_1: \dots : x_{n+1} ] \dasharrow [ F_1 (x_1,\ldots,x_{n+1}) : \dots : F_{n+1} (x_1 ,\ldots,x_{n+1} ) ], \]
where the $F_i \in \C [x_1, \ldots, x_{n+1} ]$ are homogeneous polynomials of the same degree. We then often write $f= [F_ 1 : \dots : F_{n+1} ]$. Using the open embedding
\[ \A^n \hookrightarrow \p^n, \quad (x_1, \ldots, x_n ) \mapsto [ x_1 : \dots : x_n : 1],\]
we have a natural isomorphism $\ban \simeq \bpn$. This allows us to write $f$ in affine coordinates as
\[ f \colon (x_1, \dots, x_n) \dasharrow ( f_1 (x_1, \dots,x_n), \dots, f_n (x_1, \dots,x_n) ) , \]
where the $f_i \in \C ( x_1, \ldots, x_n )$ are rational functions. We then often write $f= (f_1, \ldots, f_n)$.

Let $\mathcal B_n \subseteq \bpn$ be the subgroup of birational transformations of $\p^n$ of the form $f= (f_1, \ldots, f_n)$, $f_ i = a_i x_i + b_i$, with $a_i ,b_i \in \C (x_{i+1}, \ldots, x_n )$ and $a_i \neq 0$. The following result is proven by Popov \cite[Theorem 1]{Popov2017}.

\begin{theorem}
The group $\mathcal B_n$ is a Borel subgroup of $\Bir (\p^n)$.
\end{theorem}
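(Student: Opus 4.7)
The plan is to verify the five properties that make $\mathcal{B}_n$ a Borel subgroup: it must be (i) a subgroup, (ii) closed in the Demazure--Serre topology, (iii) connected, (iv) solvable, and finally (v) maximal among closed connected solvable subgroups of $\Bir(\p^n)$. Points (i)--(iv) are comparatively routine; (v) is where the real content sits and will be the main obstacle. For (i), a direct composition calculation confirms that the triangular-affine shape $f_i = a_i(x_{i+1},\ldots,x_n)\,x_i + b_i(x_{i+1},\ldots,x_n)$ with $a_i \neq 0$ is preserved under composition and inversion. For (ii), I would characterise $\mathcal{B}_n$ as the simultaneous stabiliser of the tower of dominant rational fibrations
\[
\p^n \dashrightarrow \p^{n-1} \dashrightarrow \cdots \dashrightarrow \p^1
\]
given by forgetting initial coordinates, together with the further requirement that the induced action on each generic $\p^1$-fibre fixes the point at infinity, i.e.\ lies in $\Aff(\A^1) \subset \PGL_2$. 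Each such geometric condition is closed in the Demazure--Serre topology, hence so is their intersection.

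For (iii) and (iv), I would run induction on $n$ via the short exact sequence
\[
1 \longrightarrow K_n \longrightarrow \mathcal{B}_n \longrightarrow \mathcal{B}_{n-1} \longrightarrow 1,
\]
where the projection sends $f$ to its action on $(x_2,\ldots,x_n)$. The kernel $K_n$ consists of the maps $(a(x_2,\ldots,x_n)\,x_1 + b(x_2,\ldots,x_n),\, x_2,\ldots,x_n)$ and is the semidirect product $(\C(x_2,\ldots,x_n), +) \rtimes (\C(x_2,\ldots,x_n)^{*}, \times)$ of two abelian groups; it is therefore connected and solvable of derived length at most $2$. Connectedness and solvability then propagate from $\mathcal{B}_{n-1}$ to $\mathcal{B}_n$ in the usual fashion.

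The heart of the proof is (v). I would suppose for contradiction that $H$ is a closed connected solvable subgroup of $\Bir(\p^n)$ strictly containing $\mathcal{B}_n$, pick $g \in H \setminus \mathcal{B}_n$, and try to show that $\langle \mathcal{B}_n, g \rangle$ is non-solvable. Since $\mathcal{B}_n$ already contains the diagonal torus $(\C^{*})^n$, the upper-triangular unipotent subgroup of $\PGL_{n+1}$, and the very large translation subgroups $\{x_i \mapsto x_i + \varphi(x_{i+1},\ldots,x_n)\}$, the element $g$ must break the defining geometric structure above: it either fails to preserve one of the fibrations in the flag, or preserves the flag but fails to fix the point at infinity on some generic fibre. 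The strategy in both cases is to use conjugation by $g$ to move a suitable one-parameter subgroup of $\mathcal{B}_n$ into a transverse direction, and then combine the result with the existing torus and translation subgroups to extract a copy of $\PGL_2$ or $\SL_2$ inside the closure of $\langle \mathcal{B}_n, g \rangle \subseteq H$, contradicting the solvability of $H$. The closedness hypothesis on $H$ is essential at this step, as it is precisely what legitimises passing to closures of generated subgroups and invoking limit-of-orbit arguments to produce the offending simple subgroup; making this extraction precise in the Demazure--Serre setting is the delicate part of the argument.
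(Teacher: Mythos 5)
Your items (i)--(iv) are essentially fine: the composition/inversion check, the extension $1\to K_n\to\mathcal B_n\to\mathcal B_{n-1}\to 1$ with $K_n\simeq\Aff_1(\C(x_2,\ldots,x_n))$, and the resulting connectedness and solvability (derived length $\le 2n$) all go through, although the closedness claim in (ii) is asserted rather than proved --- in this topology ``geometric conditions'' are not automatically closed, and one has to descend to the varieties $\bpnGothic_d$ and argue as in the proof that $\Jonq$ is closed in $\bp$ (Lemma~\ref{lemma: Jonq is closed and pr_2 is continuous}). The genuine gap is in (v). Your maximality argument is a strategy, not a proof: the entire content of the theorem is the claim that for \emph{every} $g\notin\mathcal B_n$ one can ``extract a copy of $\PGL_2$ or $\SL_2$'' inside the closure of $\langle\mathcal B_n,g\rangle$, and you give no mechanism for doing this when $g$ fails to preserve the flag of fibrations. (The easier subcase, where $g$ preserves the flag but acts non-affinely on a generic fibre, does work, since $\Aff_1(\K)$ is the stabiliser of a point for the $2$-transitive action of $\PGL_2(\K)$ on $\p^1(\K)$ and hence maximal.) For a general Cremona transformation $g$ --- say a non-algebraic element preserving no rational fibration at all --- there is no visible one-parameter subgroup to ``move into a transverse direction,'' and no known direct argument producing a non-solvable subgroup of $\overline{\langle\mathcal B_n,g\rangle}$. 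This is precisely the step you flag as ``delicate,'' and it is the theorem.

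For comparison: this paper does not reprove Popov's theorem for general $n$ (it is quoted from \cite{Popov2017}), but it does prove the case $n=2$ in Theorem~\ref{theorem: some Borel subgroups of Bir(P2)}, by a route that avoids your contradiction argument entirely. Instead of starting from a single offending element $g$, one classifies \emph{all} closed connected solvable subgroups up to conjugacy: Urech's trichotomy and Enriques' classification of maximal connected algebraic subgroups reduce any such subgroup to a subgroup of $\Jonq$ (Theorem~\ref{theorem: a closed connected solvable subgroup of Bir(P2) is conjugate to a subgroup of Jonq}), then to a subgroup of $\PGL_2(\C(y))\rtimes\Aff_1$; one then classifies the Borel subgroups of $\PGL_2(\C(y))$ (namely $\Aff_1(\C(y))$ and the tori $\T_f$), shows every Borel subgroup of $\PGL_2(\C(y))\rtimes\Aff_1$ contains one of them, and pins down $\BBB_2$ as $\Nor_{\Jonq}(\Aff_1(\C(y)))^{\circ}$. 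The global classification input (Urech, Enriques) is what substitutes for the missing ``extraction'' step in your sketch; without something playing that role, your proof of maximality does not close.
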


In the same paper he also makes the following conjecture.

\begin{conjecture}
For $n \ge 5$, the Cremona group ${\rm Bir}({\mathbb P}^n)$ contains nonconjugate Borel subgroups.
\end{conjecture}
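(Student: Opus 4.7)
The plan is to bootstrap on the classification of rank-$0$ Borel subgroups of $\bp$ established in this paper: since those are parametrised by hyperelliptic curves of genus $\g\geq 1$, constructing a map from non-conjugate Borels of $\bp$ to non-conjugate Borels of $\bpn$ would immediately produce the desired non-conjugate pair. Concretely, for each rank-$0$ Borel $B \subseteq \bp$ with associated hyperelliptic curve $C$, I would fix a birational identification $\p^2 \times \p^{n-2} \dashrightarrow \p^n$ and use it to embed the direct product $B \times \mathcal B_{n-2}$ as a subgroup $\widetilde B \subseteq \bpn$. Here the first factor acts on the $\p^2$-component and the second is Popov's standard Borel of $\Bir(\p^{n-2})$ acting on the $\p^{n-2}$-component; the resulting $\widetilde B$ is automatically closed, connected, and solvable.

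The crucial step is to prove that $\widetilde B$ is maximal among closed connected solvable subgroups of $\bpn$. Suppose $\widetilde B \subsetneq H$ with $H$ closed, connected, solvable. One would project $H$ onto each factor, invoke the maximality of $B$ in $\bp$ and of $\mathcal B_{n-2}$ in $\Bir(\p^{n-2})$, and argue that $H$ must preserve the product decomposition; the extra room afforded by $n-2 \geq 3$ coordinates should then, via an analysis of centralisers of the $(n-2)$-dimensional torus inside $\mathcal B_{n-2}$, force any new element of $H$ to already lie in $\widetilde B$. This is where one expects the hypothesis $n \geq 5$ to genuinely enter.

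To conclude non-conjugacy, I would attach to each $\widetilde B$ a canonical birational invariant that recovers the curve $C$: a natural candidate is a suitable term of the derived series, or a normaliser computation which, for the $\widetilde B$ constructed above, isolates the $B$-component on the first two coordinates. The correspondence between rank-$0$ Borels of $\bp$ and hyperelliptic curves proven in this paper then identifies $C$ as an invariant of the conjugacy class of $\widetilde B$, and since the coarse moduli space of hyperelliptic curves of genus $\g$ has dimension $2\g-1$, one obtains a continuous family of pairwise non-conjugate Borels of $\bpn$. The main obstacle is the maximality step: the Zariski topology on $\bpn$ in the sense of Demazure--Serre is ind-topological, so dimension-counting arguments from the theory of linear algebraic groups are unavailable, and ruling out that $\widetilde B$ embeds properly into a larger closed connected solvable subgroup demands a delicate analysis of which birational self-maps of $\p^n$ can simultaneously normalise both factors.
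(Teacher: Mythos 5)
The first thing to observe is that this statement is not proved in the paper at all: it is Popov's conjecture, quoted as such, and the authors prove only the case $n=2$ of their own strengthened version (via the Main Theorem). There is therefore no proof in the paper to compare yours against; you are attacking an open problem, and your text is a programme rather than a proof --- you concede as much when you describe the maximality step as ``the main obstacle'' and argue only that the extra room ``should'' force the conclusion.

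More seriously, the construction as you set it up already fails at that step: the group $\widetilde B = B\times\mathcal B_{n-2}$ is \emph{not} maximal among closed connected solvable subgroups of $\bpn$. Take $B=\T_f$ with $f\in\C(x_2)$ a nonsquare, acting on the coordinates $(x_1,x_2)$, and let $\mathcal B_{n-2}$ act on $(x_3,\dots,x_n)$. Consider
\[
\T_f' \;:=\; \left\{ \left(\frac{a\,x_1 + b\,f(x_2)}{b\,x_1 + a},\, x_2,\dots,x_n\right),\ a,b\in\C(x_2,\dots,x_n),\ (a,b)\neq(0,0)\right\},
\]
i.e.\ the same torus but with coefficients in the full field $\C(x_2,\dots,x_n)$ instead of $\C(x_2)$. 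Since $\C(x_2,\dots,x_n)$ is purely transcendental over $\C(x_2)$, the element $f$ remains a nonsquare there, so $\T_f'$ is an abelian subgroup of $\bpn$ strictly containing $\T_f$ (strictly because $n\ge 3$), and it is connected for the same reasons as $\T_f$. Moreover $\mathcal B_{n-2}$ normalises $\T_f'$: an element of $\mathcal B_{n-2}$ fixes $x_1,x_2$ and transforms $(x_3,\dots,x_n)$ birationally, so conjugation merely replaces the coefficients $a,b$ by their pullbacks while leaving $f(x_2)$ untouched. Hence $\T_f'\rtimes\mathcal B_{n-2}$ is a connected solvable subgroup of $\bpn$ strictly containing $\widetilde B$ (and its closure is still connected and solvable, by the same closure arguments as in Lemma~\ref{lemma: DG and (mathcal-D)G} of the paper), so $\widetilde B$ is not a Borel subgroup of $\bpn$ and no centraliser analysis inside $\mathcal B_{n-2}$ can repair this. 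Any viable version of your strategy must build the ``hyperelliptic'' torus over the whole function field $\C(x_2,\dots,x_n)$ from the outset; but then the invariant attached to it is no longer a hyperelliptic curve but a quadratic extension of $\C(x_2,\dots,x_n)$ (a double cover of $\p^{n-1}$), and both the maximality of the resulting groups and their conjugacy classification are genuinely open --- the two-dimensional inputs the paper relies on (Urech's classification of solvable subgroups of $\bp$, Enriques' theorem, the reduction to $\Jonq$) have no available analogue in higher dimension.
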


The main result of our paper is the description of all Borel subgroups of $\bp$ up to conjugation. Before giving the precise statement a few definitions are needed.

The Jonqui\`eres group $\Jonq$ is defined as the group of birational transformations of $\p^2$ preserving the pencil of lines passing through the point $[1:0:0] \in \p^2$. In affine coordinates, it is the group of birational transformations of the form
\[ (x,y) \dasharrow \left( \frac{\alpha (y) x + \beta (y)}{\gamma (y) x + \delta (y)},  \frac{ay+b}{cy+d} \right),\]
where $\smat{\alpha }{\beta}{\gamma}{\delta}  \in \PGL_2(\C(y) )$,  $\smat{a }{b}{c}{d}  \in \PGL_2$. 
Hence we have
\[ \Jonq = \PGL_2 ( \C (y) ) \rtimes \PGL_2 \; \supseteq \;  \Aff_1 ( \C (y) ) \rtimes \Aff_1 = \BBB_2 .\]

Here and in the rest of this paper we write $\PGL_2$ and $\Aff_1$ instead of $\PGL_2 (\C)$ and $\Aff_1 (\C)$. Also, all the semidirect products considered will be inner semidirect products. This means we will write $G = N \rtimes H$ when $N,H$ are subgroups of the (abstract) group $G$ which satisfy the three following assertions:
\begin{enumerate}
\item
$N$ is a normal subgroup $N \triangleleft G$;
\item
$G = NH$;
\item
$N \cap H = \{ 1 \}$ (where $1$ denotes the identity element of $G$).
\end{enumerate}

For any nonsquare element $f$ of $\C (y)$ we define the subgroup
\[ \T_f :=  \left\{ \smat{a }{bf}{b}{a}, \; a,b \in \C (y), \;  (a,b) \neq (0,0) \right\} \]
of $\PGL_2 ( \C (y) ) \subseteq \Jonq$ and for any coprime integers $p,q \in \Z$, we define the $1$-dimensional torus
\[ \T_{p,q} := \{ (t^p x, t^q y), \; t \in \C^* \} \;  \subseteq  \; \Jonq .\]
We make the following two conventions. When talking about the genus of a complex curve, we will  always mean the genus of the associated smooth projective curve. If $f$ is a nonsquare element of $\C  (y)$, when talking about the hyperelliptic curve associated with $x^2 = f(y)$, we will always mean the smooth projective curve whose function field is equal to $\C (y) [ \sqrt{f} ]$.

Note that we allow hyperelliptic curves of genus $0$ and $1$.

We can now state the principal result of our paper. 

\begin{theorem}[Main Theorem] \label{theorem: main theorem}
Up to conjugation, any Borel subgroup of $\bp$ is one of the following groups:
\begin{enumerate}
\item \label{Borel: B2-introduction}
$\BBB_2$;
\item \label{Borel: Ty semidirect T-introduction}
$\T_y \rtimes \T_{1,2}$;
\item \label{Borel: Tf-introduction}
$\T_f$, where $f$ is a nonsquare element of $\C (y)$ such that the genus $\mathpzc{g}$ of the hyperelliptic curve associated with $x^2 =f(y)$ satisfies $\mathpzc{g} \geq 1$.
\end{enumerate}

Moreover these three cases are mutually disjoint and in case \eqref{Borel: Tf-introduction} the Borel subgroups $\T_f$ and $\T_g$ are conjugate if and only if the hyperelliptic curves associated with  $x^2=f(y)$ and $x^2=g(y)$ are isomorphic.
\end{theorem}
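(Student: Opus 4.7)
The plan is to pass from an abstract Borel subgroup $B \subseteq \bp$ to an explicit conjugate inside the Jonqui\`eres group $\Jonq$, and then to classify Borel subgroups of $\Jonq$ directly using its semidirect product decomposition $\Jonq = \PGL_2(\C(y)) \rtimes \PGL_2$. First I would show that every Borel subgroup of $\bp$ is, up to conjugation, contained in $\Jonq$. Since $B$ is closed, connected and solvable, the key input is a regularisation/fixed-point statement ensuring that any such subgroup of $\bp$ preserves a pencil of rational curves; after a birational change of coordinates this pencil becomes the pencil of lines through $[1:0:0]$ that defines $\Jonq$.

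Next, within $\Jonq$ I would exploit the semidirect product structure. The projection $\pi\colon \Jonq \twoheadrightarrow \PGL_2$ sends $B$ onto a closed connected solvable subgroup of $\PGL_2$, which is trivial, a one-dimensional torus, a one-parameter unipotent group, or the Borel $\Aff_1$. The kernel $B \cap \PGL_2(\C(y))$ is closed, connected and solvable in $\PGL_2(\C(y))$; viewed as a group over $\C(y)$, its maximal solvable subgroups split into the \emph{split} case, conjugate over $\C(y)$ to $\Aff_1(\C(y))$, and the \emph{non-split} case $\T_f$, parametrised by nonsquare classes in $\C(y)^* / (\C(y)^*)^2$ through the quadratic extension $\C(y) \subseteq \C(y)[\sqrt{f}]$.

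I would then organise the classification according to the rank $r$ of a maximal algebraic torus of $B$. If $r=2$, maximality forces $B$ to contain a lift of $\Aff_1$ in the base and the full split Borel $\Aff_1(\C(y))$ in the kernel, yielding $B = \BBB_2$. If $r=1$, $B$ contains some $\T_{p,q}$ with $\gcd(p,q)=1$, and an analysis of the centralisers and normalisers of these tori in $\bp$ - together with elementary birational maps relating different $\T_{p,q}$'s - produces a single conjugacy class, represented by $\T_y \rtimes \T_{1,2}$. If $r=0$, then $B$ contains no algebraic torus, so $\pi(B)$ is trivial, $B \subseteq \PGL_2(\C(y))$, and the only candidates for $B$ are the non-split tori $\T_f$. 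Maximality of $\T_f$ as a closed connected solvable subgroup of $\bp$ turns out to be equivalent to the double cover $x^2 = f(y)$ admitting no rational parametrisation, i.e.\ the associated hyperelliptic curve having genus $\geq 1$; in the genus $0$ case, an explicit parametrisation conjugates $\T_f$ into case \eqref{Borel: Ty semidirect T-introduction}.

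Finally, I would separate the conjugacy classes. The rank of a maximal torus is a conjugation invariant, so the three cases are disjoint. Inside case \eqref{Borel: Tf-introduction}, any conjugation $g \T_f g^{-1} = \T_{f'}$ in $\bp$ induces an isomorphism between the two corresponding quadratic extensions of $\C(y)$, possibly twisted by a M\"obius change of $y$, and hence a biregular isomorphism between the smooth projective hyperelliptic curves defined by $x^2 = f(y)$ and $x^2 = f'(y)$; conversely, any isomorphism of these curves lifts to a conjugation in $\bp$. The main obstacle in this program is the very first step: unlike a linear algebraic group, $B$ may contain elements of unbounded degree in $\bp$, so classical regularisation does not apply directly, and one must rely on a fixed-point argument - presumably in the Picard--Manin space of $\p^2$ - to produce the invariant pencil.
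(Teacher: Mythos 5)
Your skeleton agrees with the paper up to the point where the real work begins: the reduction of a Borel subgroup to $\Jonq$ (the paper does this via Urech's trichotomy for solvable subgroups of $\bp$ rather than a direct Picard--Manin fixed-point argument, but the spirit is the same), the further reduction to $\PGL_2(\C(y))\rtimes\Aff_1$, the split/non-split dichotomy $\Aff_1(\C(y))$ versus $\T_f$ for Borel subgroups of $\PGL_2(\C(y))$, the genus criterion for maximality of $\T_f$, and the identification of conjugacy classes of the $\T_f$ with isomorphism classes of hyperelliptic curves (this last point is exactly Proposition~\ref{proposition: equivalent conditions for T_f and T_g to be conjugate in Bir(P2)}, and your sketch of it is correct). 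The genuine gap is in the middle, where you stratify by the rank $r$ of a maximal algebraic torus of $B$ and treat each stratum by normaliser arguments. Two concrete problems. First, for $r=0$ you infer that $\pr_2(B)$ is trivial because $B$ contains no torus; but $\pr_2(B)$ could a priori be the unipotent group $\U_2=\{(x,y+c)\}$, which also has rank $0$, so nothing at this stage forces $B\subseteq\PGL_2(\C(y))$. Second, for $r=1$ the Borel subgroup is not determined by the conjugacy class of the torus it contains: $\T_{1,2}$ sits inside both $\T_y\rtimes\T_{1,2}$ and $\BBB_2$, which are non-conjugate, and a Borel subgroup need not lie in the normaliser of its maximal torus (already false for the upper triangular matrices in $\GL_2$), so an ``analysis of the centralisers and normalisers of these tori'' cannot by itself pin down $B$. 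The $r=2$ case is likewise asserted rather than derived.

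What is missing is the paper's key structural result, Theorem~\ref{theorem: any Borel subgroup of PGL_2 (C (y) ) rtimes Aff_1 contains a Borel subgroup of PGL_2 (C (y) )}: every Borel subgroup $B$ of $\PGL_2(\C(y))\rtimes\Aff_1$ contains a Borel subgroup of the kernel $\PGL_2(\C(y))$, namely $[B\cap\PGL_2(\C(y))]^{\circ}$ after one proves that $B\cap\PGL_2(\C(y))$ is closed and non-trivial. This is where most of the technical effort goes (it uses the classification of closed connected subgroups of $\PGL_2(\K)$, Proposition~\ref{proposition: no Borel subgroup of PGL_2( C(y) ) rtimes Aff_1 embeds into Aff_1 via pr_2}, and the computation of centralisers of finite-order elements), and it is what replaces your rank stratification: once $B$ is known to contain $\Aff_1(\C(y))$, $\T_y$, or $\T_f$ with $\g\ge1$, one computes the connected normaliser of that subgroup and finds exactly $\BBB_2$, $\T_y\rtimes\T_{1,2}$, or $\T_f$. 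The ranks $2,1,0$ then appear \emph{a posteriori} as invariants separating the three cases (Remark~\ref{remark: lengths and ranks of Borel subgroups}), rather than as the organising principle of the classification.
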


If $G$ is an abstract group, we denote by $D(G)$ its derived subgroup. It is the subgroup generated by all commutators $[g, h] := ghg^{-1}h^{-1}$, $g,h \in G$. The $n$-th derived subgroup of $G$ is then defined inductively by $D^0(G) := G$ and $D^n (G) := D ( D^{n-1} (G) )$ for $n \ge 1$. A group $G$ is called solvable if $D^n(G) = \{1 \}$ for some integer $n \ge 0$. The smallest such integer $n$ is called the derived length of $G$ and is denoted $\length (G)$. 
Recall that the subgroup of upper triangular matrices in $\GL_n (\C)$ is solvable and has
derived length $\lceil \log_2 (n) \rceil +1$, where $\lceil x \rceil $ denotes the smallest integer greater than or equal to the real number $x$ (see e.g.\ \cite[page 16]{Wehrfritz1973}). Also, the subgroup of upper triangular automorphisms in $\Aut ( \A^n_{\C} )$ has derived length $n+1$ (see \cite[Lemma 3.2]{FurterPoloni2018}). In contrast, we will prove the following result in Appendix \ref{section: computation of the derived length of Bn}.

\begin{proposition} \label{proposition: the derived length of Bn}
The derived length of the Borel subgroup $\mathcal B_n$ of $\bpn$ is equal to~$2n$.
\end{proposition}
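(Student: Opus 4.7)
The plan is to prove the statement by induction on $n$, working over an arbitrary infinite base field $F$ in place of $\C$ and writing $\BBB_n(F)$ for the analogous subgroup of $\Bir(\A^n_F)$. The base case $n=1$ gives $\BBB_1(F) = \Aff_1(F)$, which has derived length exactly $2$. For the upper bound, projection onto the last coordinate yields a short exact sequence
\[
1 \longrightarrow \BBB_{n-1}(F(x_n)) \longrightarrow \BBB_n(F) \longrightarrow \Aff_1(F) \longrightarrow 1,
\]
whose kernel consists of the elements of $\BBB_n(F)$ fixing $x_n$; combining the subadditivity $\length(G) \le \length(N) + \length(G/N)$ for $N \triangleleft G$ with the induction hypothesis applied to the larger base field $F(x_n)$ immediately gives $\length(\BBB_n(F)) \le 2(n-1) + 2 = 2n$.

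For the lower bound I would strengthen the inductive hypothesis to assert also that $D^{2n-2}(\BBB_n(F))$ contains both a nontrivial scaling $(\lambda x_1, x_2, \dots, x_n)$ (with $\lambda \in F(x_2, \dots, x_n)^* \setminus \{1\}$) and a nontrivial translation $(x_1 + c, x_2, \dots, x_n)$ (with $c \neq 0$) of the first coordinate, both sitting inside the normal subgroup $\Aff_1(F(x_2, \dots, x_n)) \subseteq \BBB_n(F)$. The commutator of such a scaling and translation, computed inside that affine group, equals $(x_1 + (\lambda-1)c, x_2, \dots, x_n) \in D^{2n-1}(\BBB_n(F))$, which is nontrivial and therefore forces $\length(\BBB_n(F)) \ge 2n$.

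To propagate the strengthened hypothesis, embed $\BBB_{n-1}(F) \hookrightarrow \BBB_n(F)$ as the subgroup fixing $x_1$; the scaling $(\mu x_2, \dots)$ and translation $(x_2 + c_0, \dots)$ supplied by the hypothesis for $\BBB_{n-1}(F)$ then lie in $D^{2n-4}(\BBB_n(F))$, and their commutator inside $\Aff_1(F(x_3, \dots, x_n))$ produces a nontrivial translation $\phi = (x_1, x_2 + c'', x_3, \dots, x_n) \in D^{2n-3}(\BBB_n(F))$ with $c'' = (\mu-1)c_0 \neq 0$. Commuting $\phi$ with the affine elements $(x_2 x_1, x_2, \dots, x_n)$ and $(x_1 + x_2, x_2, \dots, x_n)$ of $\Aff_1(F(x_2, \dots, x_n))$ yields, by the normality of $D^{2n-3}$, a nontrivial scaling $\sigma_1$ and a nontrivial translation $\tau_1$ of $x_1$, both in $D^{2n-3}(\BBB_n(F))$. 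Then $[\sigma_1, \tau_1] \in D^{2n-2}(\BBB_n(F))$ is the required nontrivial translation, and $[\phi, \sigma_1] \in D^{2n-2}(\BBB_n(F))$ is, after a short rational-function computation, the required nontrivial scaling, closing the induction.

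The main difficulty is the bookkeeping: at each step one must carefully track which derived subgroup contains each constructed element, and verify that the explicit rational functions arising as scaling factors (such as $(x_2 - c'')/x_2$ and $x_2(x_2 - 2c'')/(x_2 - c'')^2$) are not equal to $1$. All these verifications reduce to the observation that a nontrivial automorphism in $\BBB_{n-1}$ acts nontrivially by pullback on the function field $F(x_2, \dots, x_n)$.
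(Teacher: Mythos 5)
Your proof is correct, and it rests on the same two commutator computations that drive the paper's argument: the commutator of a dilatation and a translation of $x_i$ is a translation of $x_i$ (so each coordinate contributes two steps to the derived series), and the commutator of a translation of $x_{i+1}$ with a dilatation of $x_i$ whose factor depends on $x_{i+1}$ is a new non-trivial dilatation of $x_i$ (which is how one descends from coordinate $i+1$ to coordinate $i$). The packaging, however, is genuinely different and arguably cleaner in places. For the upper bound you induct on $n$ over a varying base field, using the exact sequence $1 \to \mathcal B_{n-1}(F(x_n)) \to \mathcal B_n(F) \to \Aff_1(F) \to 1$ and subadditivity of $\length$, whereas the paper stays inside a fixed $\mathcal B_n$ and proves by hand that $U_k := D^{2n-2k}(\mathcal B_n)$ fixes the coordinates $x_{k+1},\dots,x_n$. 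For the lower bound the paper tracks, for every index $i \le k$, the properties ``$U_k$ contains some non-constant dilatation $d(i,a)$'' and ``$U_k$ contains \emph{all} elementary transformations $e(i,b)$'', and closes a decreasing induction on $k$ by commuting elements both of which already lie in $U_k$ or $V_k$. You instead carry only two witnesses (one dilatation and one translation of the first coordinate) per level, and recover the missing witnesses at the next level by commuting a single deep element $\phi \in D^{2n-3}(\mathcal B_n)$ against \emph{arbitrary} elements of $\mathcal B_n$, invoking normality of the derived subgroup to stay at depth $2n-3$; this spares you the bookkeeping over all indices $i$ at the price of the explicit verifications that $(x_2-c'')/x_2$ and $x_2(x_2-2c'')/(x_2-c'')^2$ differ from $1$, which indeed reduce to $c'' \neq 0$. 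Both routes arrive at the same place; yours generalises for free to $\mathcal B_n$ over any infinite field, while the paper's is more self-contained within $\bpn$ and makes the structure of the whole filtration $U_n \supseteq V_n \supseteq U_{n-1} \supseteq \cdots$ visible at once.
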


As usual we let the rank $\rk (G)$ of a complex linear algebraic group $G$ be the maximal dimension $d$ of an algebraic torus $(\C^*)^d$ in $G$.  Analogously, the rank $\rk (G)$ of a closed subgroup $G$ of $\Bir (\p^n)$ is defined as the maximal dimension of an algebraic  torus in $G$. The following result is proven by Bia{\l}ynicki-Birula \cite[Corollary 2, page 180]{Bialynicki-Birula1966} (see also \cite[Theorem 1 (i)]{Popov2013}).

\begin{theorem} \label{theorem: tori in Bir(Pn)}
All algebraic tori in $\Bir (\p^n)$ are of rank $\leq n$. Moreover, all algebraic tori of a given rank $\geq n-2$ are conjugate in $\Bir (\p^n)$.
\end{theorem}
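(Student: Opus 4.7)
The plan is to work at the level of function fields. Let $T \subseteq {\rm Bir}(\p^n)$ be an algebraic torus of rank $d$ acting on $K := \C(\p^n)$, and let $F := K^T$ be the invariant subfield, i.e.\ the function field of the rational quotient $\p^n \dashrightarrow Y$.

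For the rank bound, the first step is to observe that the generic stabilizer of the $T$-action on $\p^n$ coincides with the kernel of this action: any $t \in T$ fixing the generic point of $\p^n$ fixes a dense open subset, and hence, by closedness of the fixed locus combined with irreducibility, fixes all of $\p^n$. Since $T \hookrightarrow {\rm Bir}(\p^n)$ is faithful, the generic stabilizer is therefore trivial, the generic orbit has dimension $d$, and, as orbits sit inside $\p^n$, we conclude $d \leq n$.

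For the conjugacy statement with $d \in \{n-2, n-1, n\}$, set $m := n-d \in \{0, 1, 2\}$. The dense $T$-orbit inside the generic fibre of $\p^n \dashrightarrow Y$ is a torsor under $T_F := T \times_\C \Spec F$, which is trivial by Hilbert~90 since $T$ is split over $\C$ and thus $H^1(F, T) = 0$. This shows $K \cong F(t_1, \ldots, t_d)$ as $T$-fields, with the $t_i$ running over a basis of characters of $T$. In parallel, $F$ is unirational of transcendence degree $m \leq 2$ over $\C$, hence rational: L\"uroth's theorem handles $m \leq 1$, and Castelnuovo's theorem (unirational complex surfaces are rational) handles $m = 2$. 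Choosing an isomorphism $F \cong \C(y_1, \ldots, y_m)$ then gives
\[ K \;\cong\; \C(y_1, \ldots, y_m)(t_1, \ldots, t_d) \]
as a $T$-field, where the right-hand side is the function field of $\p^n$ equipped with the standard coordinate action of a rank-$d$ subtorus of $\PGL_{n+1}$. Dualising this isomorphism yields a $T$-equivariant birational self-map of $\p^n$ conjugating the given torus to a standard one, so all rank-$d$ tori in ${\rm Bir}(\p^n)$ are indeed mutually conjugate.

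The main obstacle---and the reason the theorem is sharp---is the rationality of $F$: in transcendence degree $\geq 3$, unirational fields may fail to be rational (function fields of smooth cubic and quartic threefolds, by Clemens--Griffiths and Iskovskikh--Manin respectively). This is precisely what permits non-conjugate tori of rank $\leq n-3$ to appear in ${\rm Bir}(\p^n)$.
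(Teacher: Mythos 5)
The paper does not actually contain a proof of this statement: it is imported wholesale from Bia{\l}ynicki-Birula \cite[Corollary 2, page 180]{Bialynicki-Birula1966}, so there is no internal argument to compare yours against. That said, your proof is essentially the classical one (and, to the best of my knowledge, essentially Bia{\l}ynicki-Birula's own): regularise the torus action, pass to the Rosenlicht quotient $\p^n \dashrightarrow Y$, trivialise the dense orbit in the generic fibre as a $T_F$-torsor via Hilbert~90, and use L\"uroth and Castelnuovo to see that $\C(Y)$ is purely transcendental when its transcendence degree is at most $2$; you also correctly identify the failure of ``unirational $\Rightarrow$ rational'' in dimension $3$ as the reason the bound $n-2$ is sharp. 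The one step stated too loosely is the first: every birational self-map fixes the generic point of $\p^n$, so ``the stabilizer of the generic point'' is not the right object. What you need is the stabilizer in general position, i.e.\ a subgroup $H \subseteq T$ with $T_x = H$ for all closed points $x$ in a dense open subset; its existence for torus actions is elementary (a torus has only countably many algebraic subgroups, and an irreducible complex variety is not a countable union of proper closed subsets), and once one has it, your closedness-of-the-fixed-locus argument shows $X^H$ is all of $\p^n$, hence $H$ lies in the kernel and is trivial by faithfulness. This is what simultaneously gives the orbit dimension $d$ (hence $d \le n$) and the fact that the generic fibre is a torsor under $T_F$ itself rather than under a quotient of it. With that point repaired, the argument is complete modulo the standard inputs (Weil regularisation, Rosenlicht, Hilbert~90, L\"uroth, Castelnuovo).
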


Hence, we have $\rk (\Bir (\p^n) ) = \rk ( \BBB_n) =n$, and any closed subgroup $G$ of $\Bir (\p^n)$ satisfies $\rk (G) \leq n$.

The derived lengths and ranks of the three kinds of Borel subgroups of $\bp$ are given in the following table (see Remark~\ref{remark: lengths and ranks of Borel subgroups} for the computations):

\vspace{4mm}

\centerline{
\begin{tabular}{|c|c|c|}
\hline
Type of Borel subgroup & Derived length & Rank   \\
\hline
$\BBB_2$ & $4$  & $2$ \\
\hline
$\T_y \rtimes \T_{1,2}$ & $2$ & $1$ \\
\hline
$\T_f$ & $1$ & $0$  \\
\hline
\end{tabular}
}

\vspace{4mm}

Theorem~\ref{theorem: main theorem} directly gives the following result:

\begin{corollary}
All Borel subgroups of maximal rank 2 of $\bp$ are conjugate.
\end{corollary}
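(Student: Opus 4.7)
The plan is to deduce this directly from the Main Theorem together with the rank information recorded in the table immediately preceding the corollary. The Main Theorem supplies a complete list of representatives of conjugacy classes of Borel subgroups of $\bp$, namely $\BBB_2$, $\T_y \rtimes \T_{1,2}$, and the groups $\T_f$ (one class per isomorphism class of hyperelliptic curve of genus $\geq 1$). So any Borel subgroup $G$ of $\bp$ is conjugate to one of these three kinds of group.

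Next, I would invoke the fact that rank is a conjugation invariant. Indeed, if $G$ and $G'$ are conjugate closed subgroups of $\bp$, then conjugation by the connecting element gives a bijection between algebraic tori contained in $G$ and those contained in $G'$ that preserves dimension; hence $\rk(G) = \rk(G')$.

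Now I would simply read off the ranks from the table: $\rk(\BBB_2) = 2$, $\rk(\T_y \rtimes \T_{1,2}) = 1$, and $\rk(\T_f) = 0$. Consequently a Borel subgroup of maximal rank $2$ can only be conjugate to $\BBB_2$, since the other two possibilities would force its rank to drop below $2$. Therefore every rank $2$ Borel subgroup is conjugate to $\BBB_2$, and in particular any two such Borel subgroups are conjugate to one another.

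There is no real obstacle here: the content of the corollary is entirely packaged in the Main Theorem and the rank computations referenced in Remark~\ref{remark: lengths and ranks of Borel subgroups}. The only thing that one has to be a bit careful about is the invariance of rank under conjugation, but this is a one-line observation. (Note also that by Theorem~\ref{theorem: tori in Bir(Pn)}, rank $2$ is indeed the maximum possible rank of any closed subgroup of $\bp$, which justifies the phrase \emph{maximal rank $2$}.)
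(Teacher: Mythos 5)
Your proof is correct and follows exactly the route the paper intends: the corollary is stated as a direct consequence of the Main Theorem together with the rank table (justified in Remark~\ref{remark: lengths and ranks of Borel subgroups}), with the conjugation-invariance of rank being the only implicit step. Nothing further is needed.
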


More generally the following question seems natural.

\begin{question}
Are all Borel subgroups of maximal rank $n$ of $\bpn$ conjugate?
\end{question}

In view of Theorem~\ref{theorem: main theorem} we believe that the following slightly strengthened version of Popov's conjecture should hold.

\begin{conjecture}
For $n \ge 2$, the Cremona group ${\rm Bir}({\mathbb P}^n)$ contains nonconjugate Borel subgroups (note that Theorem~\ref{theorem: main theorem} establishes the case $n=2$).
\end{conjecture}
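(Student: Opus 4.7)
The plan is to reduce the conjecture for $n \geq 3$ to a relative analog of Theorem~\ref{theorem: main theorem}, by exhibiting for each $n \geq 3$ a Borel subgroup of $\bpn$ whose rank is strictly smaller than $n$. Since $\mathcal{B}_n$ has rank $n$ and conjugation preserves rank, any such subgroup is automatically nonconjugate to $\mathcal{B}_n$.

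My construction generalizes the rank~$0$ case of Theorem~\ref{theorem: main theorem}. Write $K := \C(z_1, \ldots, z_{n-2})$ and consider the embedding
\[ \Bir(\p^2_K) \rtimes \Bir(\p^{n-2}) \hookrightarrow \bpn \]
arising from the birational equivalence $\p^n \dashrightarrow \p^2 \times \p^{n-2}$ and the projection onto the second factor. Choose a nonsquare $f \in K(y)$ such that the curve defined by $x^2 = f(y)$ over an algebraic closure of $K$ is smooth projective of genus $\geq 1$, and let $\T_f^{(K)} \subset \PGL_2(K(y))$ denote the evident $K$-analog of $\T_f$. Define the candidate Borel
\[ H := \T_f^{(K)} \rtimes \mathcal{B}_{n-2} \;\subseteq\; \bpn. \]
The first factor is abelian and the second is solvable of derived length $2(n-2)$ by Proposition~\ref{proposition: the derived length of Bn}, so $H$ is solvable. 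Its rank is at most $\rk(\mathcal{B}_{n-2}) = n-2 < n$, since the genus $\geq 1$ hypothesis rules out a split subtorus in $\T_f^{(K)}$. Closedness and connectedness of $H$ should follow by transporting the corresponding statements from the $n=2$ case through the semidirect product and the Jonqui\`eres-style embedding above.

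The main obstacle is proving maximality of $H$ among closed connected solvable subgroups of $\bpn$. This naturally splits into two tasks. First, a relative version of Theorem~\ref{theorem: main theorem} over the field $K$: one wants to show that $\T_f^{(K)}$ is a Borel subgroup of $\Bir(\p^2_K)$ and that the induced $H$ is maximal within the Jonqui\`eres-style ambient group. This should follow by transposing the arguments of Theorem~\ref{theorem: main theorem} to the relative setting, with the caveat that $K$ is not algebraically closed and the associated hyperelliptic curve may behave differently over $\bar{K}$. Second, and more delicate, one must rule out enlargements of $H$ by elements not preserving the fibration $\p^n \dashrightarrow \p^{n-2}$; I expect this step to require substantial new input, perhaps via an analog for lower-rank tori of the conjugacy result in Theorem~\ref{theorem: tori in Bir(Pn)}, or via a Mori-theoretic analysis of the invariant fibrations attached to low-rank solvable subgroups of $\bpn$.
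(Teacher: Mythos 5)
The statement you are addressing is stated in the paper as a \emph{conjecture}: the authors prove it only for $n=2$ (via Theorem~\ref{theorem: main theorem}, where $\BBB_2$ and $\T_f$ are nonconjugate Borel subgroups because their ranks, $2$ and $0$, and derived lengths, $4$ and $1$, differ), and they offer no argument for $n\ge 3$. So there is no proof in the paper to compare yours against, and your proposal should be judged as a standalone attempt at the open case $n\ge 3$.

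As such, it is a plausible research programme but not a proof, and the gap is exactly where you place it: maximality of $H=\T_f^{(K)}\rtimes\mathcal{B}_{n-2}$ among closed connected solvable subgroups of $\bpn$. Every ingredient the paper uses to establish maximality in dimension $2$ is surface-specific: the reduction of an arbitrary closed connected solvable subgroup into $\Jonq$ rests on Urech's trichotomy for solvable subgroups of $\bp$ and on Enriques' classification of maximal connected algebraic subgroups of $\p^2$; the subsequent analysis rests on the explicit structure $\Jonq=\PGL_2(\C(y))\rtimes\PGL_2$, on the Diller--Favre fact that a Jonqui\`eres twist preserves a unique rational fibration, and on the derived-length bound $4$ that makes Zorn's lemma applicable (Proposition~\ref{proposition: pre-Borel subgroups of closed subgroups of Bir(P2) are contained in Borel subgroups}). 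None of these has a known analogue for $\bpn$, $n\ge 3$; in particular, without a uniform bound on derived lengths of closed connected solvable subgroups of $\bpn$ you cannot even guarantee that $H$ is contained in \emph{some} Borel subgroup, let alone identify it. Note also that producing a low-rank solvable $H$ does not by itself help: a Borel subgroup $B\supseteq H$ satisfies only $\rk(B)\ge\rk(H)$, so you would still need to exclude $\rk(B)=n$, which is again the maximality problem. Two smaller points deserve care if you pursue this: (i) the claim $\rk(\T_f^{(K)})=0$ requires redoing the Baum--Bott algebraicity criterion (Lemma~\ref{lemma: criterion of algebraicity in PGL(2,C(y))}) over $K=\C(z_1,\dots,z_{n-2})$, where degree growth must be measured inside $\bpn$ rather than via $\degmax$ on $\p^1$; (ii) closedness and connectedness of $H$ in the Zariski topology of $\bpn$ do not formally ``transport'' through the birational identification $\p^n\dasharrow\p^2\times\p^{n-2}$ and must be checked directly, as the paper does for $\Jonq$ and $\PGL_2(\C(y))$ in Lemmas~\ref{lemma: Jonq is closed and pr_2 is continuous} and~\ref{lemma: PGL(2,C(y)) is Bir(P2)-closed}.
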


Our article is organised as follows: In Section~\ref{section: Zariski Topology on Bir(pn)} we outline the construction of the Zariski topology on $\bpn$, following \cite[$\S 5.2$]{BlancFurter2018}.
We also establish various results to be used later on.
In Section~\ref{section: Any Borel subgroup of Bir(P2) is conjugate to a Borel subgroup of Jonq}, we prove that any Borel subgroup of $\bp$ is conjugate to a subgroup of the Jonqui\`eres group $\Jonq$ (Theorem~\ref{theorem: a closed connected solvable subgroup of Bir(P2) is conjugate to a subgroup of Jonq}). This key result heavily relies on Urech's nice paper \cite{Urech2018}. Then, in Section~\ref{section: Any Borel subgroup of Jonq is conjugate to a Borel subgroup of PGL_2(C(y)) rtimes Aff_1}, we prove that any Borel subgroup of $\Jonq$ is conjugate to a subgroup of $\PGL_2 ( \C(y) ) \rtimes \Aff_1$ (Theorem~\ref{theorem: any closed connected solvable of Jonq is conjugate to a subgroup of PGL_2(C(y)) rtimes Aff_1}). These two results directly imply that any Borel subgroup of $\bp$ is conjugate to a (Borel) subgroup of $\PGL_2 ( \C(y) ) \rtimes \Aff_1$ (Theorem~\ref{theorem: any closed connected solvable subgroup of Bir(P2) is conjugate to a subgroup of PGL_2(C(y)) rtimes Aff_1}). Surprisingly enough, we will see that a Borel subgroup of $\PGL_2 ( \C(y) ) \rtimes \Aff_1$ is not necessarily a Borel subgroup of $\bp$ (Example~\ref{example: a Borel subgroup of PGL_2(C(y)) rtimes PGL_2 which is not a Borel subgroup of Bir(P2)} of Appendix~\ref{section: Borel subgroups of PGL_2(C(y)) rtimes PGL_2}). In Section~\ref{section: K-Borel subgroups of PGL(2,K)}, we define the $\K$-Borel subgroups of $\PGL_2 ( \K )$ for any field $\K$ of characteristic zero in Definition~\ref{definition: K-Borel subgroup of G(K)} and we describe them in Theorem~\ref{theorem: K-Borel subgroups of PGL(2,K)}. This result is then used in Section~\ref{section: Borel subgroups of PGL(2,C(y))} where all Borel subgroups of the closed subgroup $\PGL_2 ( \C (y) )$ of $\bp$ are  described in Theorem~\ref{theorem : the Borel subgroups of PGL(2,C(y))}. It turns out that these Borel subgroups coincide with the $\C (y)$-Borel subgroups of $\PGL_2 ( \C (y) )$! At the end of Section~\ref{section: Borel subgroups of PGL(2,C(y))} we then prove that the maximal derived length of a closed connected solvable subgroup of $\bp$ is  $4$ (Lemma~\ref{lemma: maximal derived length of a closed connected solvable subgroup of Bir(P2)}), and deduce from this result that any closed connected solvable subgroup of $\bp$ is contained in a Borel subgroup of $\bp$ (Proposition~\ref{proposition: pre-Borel subgroups of closed subgroups of Bir(P2) are contained in Borel subgroups}). In Section~\ref{section: The groups TT_f} we study the groups $\T_f \subseteq \bp$. In Proposition~\ref{proposition: equivalent conditions for T_f and T_g to be conjugate in Bir(P2)} we give different equivalent conditions characterising the fact that $\T_f$ and $\T_g$ are conjugate in $\bp$ and in Proposition~\ref{proposition: connected component of the normaliser of Tf in Jonq} we compute the neutral connected component $ \Nor_{\Jonq}( \T_f )^{\circ}$ of the normaliser of $\T_f$ in $\Jonq$. In Section~\ref{section: Subgroups of PGL_2( C(y)) rtimes Aff_1 conjugate to T_{0,1}} we show that an algebraic subgroup $G$ of $\PGL_2 ( \C (y) ) \rtimes \Aff_1$ isomorphic to $\C^*$ is conjugate to $\T_{0,1}$ if and only if the second projection $\pr_2 \colon \PGL_2 ( \C (y) ) \rtimes \Aff_1 \to \Aff_1$ induces an isomorphism $G \to \pr_2 (G)$ (see Lemma~\ref{lemma: characterisation of the groups conjugate to T_{0,1}}). In Section~\ref{section: Embeddings of C+ into JJ} we show that up to conjugation the additive group $(\C , +)$ admits exactly two embeddings in $\Jonq$ (Proposition~\ref{proposition: embeddings of (C,+) in Jonq}). The main result of Section~\ref{section: Borel subgroups of PGL2C(y) and of PGL2C(y) rtimesAff1} is Theorem~\ref{theorem: any Borel subgroup of PGL_2 (C (y) ) rtimes Aff_1 contains a Borel subgroup of PGL_2 (C (y) )}, asserting that any Borel subgroup of $\PGL_2(\C(y))\rtimes\Aff_1$ contains at least one Borel subgroup of $\PGL_2(\C(y))$. Then in Theorem~\ref{theorem: the natural bijection between the Borel subgroups of PGL_2 ( C(y) ) and the Borel subgroups of PGL_2 ( C(y) ) rtimes Aff_1} we show that any Borel subgroup $B'$ of $\PGL_2(\C(y))\rtimes\Aff_1$ actually contains a unique Borel subgroup $B$ of $\PGL_2(\C(y))$ and that the corresponding map $B'\mapsto B$ defines a bijection from the set of Borel subgroups of $\PGL_2(\C(y))\rtimes\Aff_1$ to the set of Borel subgroups of $\PGL_2(\C(y))$. Finally in Section~\ref{section: some Borel subgroups of Bir(P2)} we show that all the subgroups listed in Theorem~\ref{theorem: main theorem} are actually Borel subgroups of $\bp$, cf.\ Theorem~\ref{theorem: some Borel subgroups of Bir(P2)}, and in Section~\ref{section: all Borel subgroups of Bir(P2)} we show that up to conjugation there are no others, cf.\ Theorem~\ref{theorem: all Borel subgroups of Bir(P2)}. These two sections contain the two following additional results: Any Borel subgroup of $\Jonq$ is a Borel subgroup of $\bp$  (Proposition~\ref{proposition: a Borel subgroup of Jonq is also a Borel subgroup of Bir(P2)}); if $B$ is a Borel subgroup of $\bp$, then we have $B= \Nor_{\bp} (B) ^{\circ}$ (Proposition~\ref{proposition: a variant of Borel normaliser theorem for Bir(P2)}; this statement is an analog of the usual Borel normaliser theorem which asserts that $B= \Nor_{G} (B)$ when $B$ is a Borel subgroup of a linear algebraic group $G$).

Our paper also contains two appendices. In Appendix~\ref{section: computation of the derived length of Bn} we show that the derived length of $\BBB_n$ is equal to $2n$. In Appendix~\ref{section: Borel subgroups of PGL_2(C(y)) rtimes PGL_2} we give an example of a Borel subgroup of $\PGL_2(\C(y))\rtimes \Aff_1$ which is not a Borel subgroup of $\bp$ -- even if we have shown that any Borel subgroup of $\bp$ is conjugate to a Borel subgroup of $\PGL_2(\C(y))\rtimes\Aff_1$!

\section{The Zariski topology on $\Bir(\p^n)$} \label{section: Zariski Topology on Bir(pn)}

Following \cite{Demazure1970,Serre2010}, the notion of families of birational maps is defined, and used in Definition \ref{definition: Zariski topology} for describing the natural Zariski topology on $\Bir( W )$ where $W$ is an irreducible complex algebraic variety.

\begin{definition} \label{definition: morphism to Bir(W)}
Let $A,W$ be irreducible complex algebraic varieties, and let $f$ be an $A$-birational map of the $A$-variety $A \times W$, inducing an isomorphism $U \to V$, where $U,V$ are open subsets of $A \times W$, whose projections on $A$ are surjective.

The birational map $f$ is given by $(a,w) \dasharrow (a,p_2(f(a,w)))$, where $p_2$ is the second projection, and for each $\C$-point $a \in A$, the birational map $ w \dasharrow p_2( f(a, w) )$ corresponds to an element  $f_a \in \Bir( W )$. The map $a \mapsto f_a$ represents a map from $A$ $($more precisely from the $\C$-points of $A)$ to $\Bir( W )$, and will be called a morphism from $A$ to $\Bir( W )$.
\end{definition}

\begin{definition}  \label{definition: Zariski topology}
A subset $F \subseteq \Bir(W)$ is closed in the Zariski topology if for any algebraic variety $A$ and any morphism $A \to \Bir(W)$ the preimage of $F$ is closed.
\end{definition}

Recall that a birational transformation $f$ of $\p^n$ is given by
\[ f \colon [x_1: \dots :x_{n+1} ] \dasharrow [ f_1 (x_1,\ldots,x_{n+1}) : \dots : f_{n+1} (x_1,\ldots,x_{n+1}) ], \]
where the $f_i$ are homogeneous polynomials of the same degree. Choosing the $f_i$ without common component, the degree of $f$ is the degree of the $f_i$. If $d$ is a positive integer, we set $\bpn_d:= \{  f \in \bpn, \; \deg (f) \leq d \}$. We will use the following result, which is \cite[Proposition 2.10]{BlancFurter2013}:

\begin{lemma}  \label{lemma:description-of-the-topology-of-Bir(Pn)-as-an-inductive-limit}
A subset $F \subseteq \bpn$ is closed if and only if $F \cap \bpn_d$ is closed in $\bpn_d$ for any positive integer $d$.
\end{lemma}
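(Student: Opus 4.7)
The plan is to prove the two implications separately. For the forward implication I would use a universal family construction over $\bpn_d$ to realise the inclusion $\bpn_d \hookrightarrow \bpn$ as a morphism in the sense of Definition~\ref{definition: morphism to Bir(W)}. For the backward implication I would show that every morphism $\phi \colon A \to \bpn$ from an algebraic variety factors, at least locally on $A$, through some $\bpn_D$ via a morphism of algebraic varieties, so that the closedness condition in Definition~\ref{definition: Zariski topology} reduces to closedness of $F \cap \bpn_D$ inside $\bpn_D$.

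To set up the forward direction I would first endow $\bpn_d$ with its natural algebraic structure as a locally closed subset of the projective space $\p\big(\Sym^d(\C^{n+1})^{\oplus (n+1)}\big)$ of $(n+1)$-tuples of homogeneous polynomials of degree $d$ in $n+1$ variables, modulo a common nonzero scalar. The tautological tuple over $\bpn_d$ gives rise to a $\bpn_d$-birational map of $\bpn_d \times \p^n$, and hence, by Definition~\ref{definition: morphism to Bir(W)}, to a morphism $\iota_d \colon \bpn_d \to \bpn$ whose underlying set map is the inclusion. Therefore, if $F$ is closed in $\bpn$, then $F \cap \bpn_d = \iota_d^{-1}(F)$ is closed in $\bpn_d$ by Definition~\ref{definition: Zariski topology}.

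For the backward direction, assume that $F \cap \bpn_d$ is closed in $\bpn_d$ for every $d$, and let $\phi \colon A \to \bpn$ be any morphism. By Definition~\ref{definition: morphism to Bir(W)}, $\phi$ is induced by an $A$-birational map of $A \times \p^n$ which in homogeneous coordinates on $\p^n$ has the form $(a, [x]) \dasharrow (a, [F_1(a,x) : \dots : F_{n+1}(a,x)])$ with each $F_i$ rational in $a$ and homogeneous of a common degree in $x$. Covering $A$ by affine opens on which the denominators in $a$ are invertible, I may assume the $F_i$ are polynomial in $a$ and homogeneous of a common degree $D$ in $x$. Then $\phi$ factors as $A \xrightarrow{\,\psi\,} \bpn_D \hookrightarrow \bpn$ with $\psi$ a morphism of algebraic varieties, so $\phi^{-1}(F) = \psi^{-1}(F \cap \bpn_D)$ is closed in $A$ by hypothesis. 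Since $\phi$ was arbitrary, $F$ is closed in $\bpn$.

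The main obstacle I anticipate is the technical verification that $\bpn_d$ carries the natural algebraic structure sketched above and that the tautological family genuinely defines a morphism in the sense of Definition~\ref{definition: morphism to Bir(W)}. Concretely, one has to check that the locus in $\p\big(\Sym^d(\C^{n+1})^{\oplus (n+1)}\big)$ of tuples whose components have no common factor and whose associated rational map is birational of degree $\leq d$ is a locally closed subset, and that over this locus the tautological rational self-map of $\bpn_d \times \p^n$ is an $A$-birational map whose domain of definition projects surjectively onto $A = \bpn_d$. This is essentially a constructibility/resultants argument, and it is what identifies the Zariski topology of $\bpn$ with the inductive limit topology of the strata $\bpn_d$.
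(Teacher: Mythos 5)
The paper does not actually prove this statement: it is imported verbatim from \cite[Proposition 2.10]{BlancFurter2013}, so the only comparison available is with the argument given there, whose broad strategy (factor an arbitrary morphism $A\to\bpn$ through a bounded-degree stratum) your outline does follow. There is, however, a genuine error in your setup: you treat $\bpn_d$ as if it were a locally closed subvariety of $\p\big(\Sym^d(\C^{n+1})^{\oplus(n+1)}\big)$ carrying a tautological family. It is not. The locally closed subset of that projective space is what the paper calls $\bpnGothic_d$ (Definition~\ref{definition: Gothic Rat(d) and Bir(d)} and Proposition~\ref{proposition: B'_d-is-locally-closed-and-control-of-its-closure}), and the natural map $\pi_d\colon\bpnGothic_d\to\bpn_d$ is far from injective: a birational map of degree $d_0<d$ is represented by every tuple $(h\phi_1,\dots,h\phi_{n+1})$ with $h$ homogeneous of degree $d-d_0$, so its fibre is a positive-dimensional projective space. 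Consequently there is no morphism $\iota_d\colon\bpn_d\to\bpn$ whose underlying set map is the inclusion arising from a tautological tuple, and $\bpn_d$ admits no compatible variety structure at all (this is exactly the phenomenon behind the remark in Section~\ref{section: Any Borel subgroup of Jonq is conjugate to a Borel subgroup of PGL_2(C(y)) rtimes Aff_1} showing that $\Jonq$, hence $\bp$, is not an ind-variety). Imposing, as in your last paragraph, that the components have no common factor does not help: that locus parametrises only the maps of degree exactly $d$ and still does not give $\bpn_d$. The same conflation occurs in your backward direction, where the factorisation should read $A\supseteq U\xrightarrow{\psi}\bpnGothic_D\xrightarrow{\pi_D}\bpn_D\subseteq\bpn$ with $\psi$ a morphism of varieties, not $A\to\bpn_D$ directly.

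Once this is repaired the argument goes through, and more easily than you anticipate. Since $\bpn_d$ carries the topology induced from $\bpn$, the forward implication is immediate from the definition of the subspace topology and needs no universal family; your construction, applied over $\bpnGothic_d$ where the tautological tuple actually lives, proves the stronger statement of Lemma~\ref{lemma:useful-characterisation-of-closed-subsets-of-Bir(Pn)} (that $\pi_d^{-1}(F)$ is closed), and recovering closedness of $F\cap\bpn_d$ inside $\bpn_d$ from that fact would additionally require the nontrivial closedness of $\pi_d$ (Lemma~\ref{lemma:bridge-to-an-algebraic-variety}). For the backward implication, after writing $\phi|_U=\pi_D\circ\psi$ as above, use that $F\cap\bpn_D=G\cap\bpn_D$ for some closed $G\subseteq\bpn$ and that $\pi_D$, being itself a morphism in the sense of Definition~\ref{definition: morphism to Bir(W)}, is continuous; then $\phi|_U^{-1}(F)=\psi^{-1}\big(\pi_D^{-1}(G)\big)$ is closed. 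You should also verify, when clearing denominators over each affine chart, that the resulting tuple of forms does not vanish identically for any individual $a\in U$ (divide out common factors in $\C[U][x_1,\dots,x_{n+1}]$ first); this is the only place where a genuine constructibility argument is needed.
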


\begin{remark} \label{remark: strengthening of a criterion to check that a subset of Bir(Pn) is closed}
Since $\bpn_d$ is closed in $\bpn$ (see \cite[Corollary 2.8]{BlancFurter2013}), a subset $F$ of $\bpn$ is closed if and only if there exists a positive integer $D$ such that $F \cap \bpn_d$ is closed in $\bpn_d$ for any $d \ge D$.
\end{remark}

We will now describe the topology on $\bpn_d$. A convenient way to handle this topology is through the map $\pi_d \colon \bpnGothic_d \to \bpn_d$ that we introduce in the next definition and whose properties are given in Lemma~\ref{lemma:bridge-to-an-algebraic-variety} below.

Let's now fix the integer $d \geq 1$. We will henceforth use the following notation:

\begin{definition} \label{definition: Gothic Rat(d) and Bir(d)}
Denote by  $\rrn_d$ the projective space associated with the complex vector space of $(n+1)$-tuples $(f_1,\dots,f_{n+1})$ where all $f_i \in \C [x_1,\dots,x_{n+1}]$ are homogeneous polynomials of degree $d$. The equivalence class of $(f_1,\dots,f_{n+1})$ will be denoted by $[f_1,\dots,f_{n+1}]$.

For each $f=[f_1,\dots,f_{n+1}]\in \rrn_d$, we denote by $\psi_f$ the rational map $\p^n \dasharrow \p^n$ defined by
\[  [x_1:\dots:x_{n+1}] \dasharrow [f_1 (x_1,\dots,x_{n+1}): \dots : f_{n+1} (x_1,\dots,x_{n+1}) ].\]
Writing $\rn$ for the set of rational maps from $\p^n$ to $\p^n$ and setting
\[ \rn_d:=\{ h \in \rn, \; \deg (h) \leq d \},\]
we obtain a surjective map
\[ \Psi_d \colon \rrn_d \to \rn_d, \quad f \mapsto \psi_f.\]
This map induces a surjective map $\pi_d \colon \bpnGothic_d \to \bpn_d$, where $\bpnGothic_d$ is defined to be $\Psi_d^{-1} ( \bpn_d)$.
\end{definition}

The following result is \cite[Lemma 2.4(2)]{BlancFurter2013}.

\begin{proposition} \label{proposition: B'_d-is-locally-closed-and-control-of-its-closure}
The set $\bpnGothic_d$ is locally closed in the projective space $\rrn_d$ and thus inherits from $\rrn_d$ the structure of an algebraic variety.
\end{proposition}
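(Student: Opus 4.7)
The plan is to realise $\bpnGothic_d$ as the first projection of a locally closed incidence variety inside a suitable product, and then to upgrade this description to local closedness by exploiting the fibre structure of the projection.

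The starting point is the well-known degree bound for inverses in the Cremona group: if $\psi_f \in \bpn$ has degree at most $d$, then $\psi_f^{-1}$ has degree at most $e := d^{n-1}$. Accordingly, I would introduce the incidence set
\[
V := \bigl\{ (f,g) \in \rrn_d \times \rrn_e \;:\; \psi_g \circ \psi_f = \id \bigr\},
\]
where the composition is interpreted as an equality of rational maps. Concretely, the condition is that the $(n+1)$-tuple $\bigl( g_1(f_1,\ldots,f_{n+1}),\ldots, g_{n+1}(f_1,\ldots,f_{n+1}) \bigr)$ is projectively equal to $(x_1,\ldots,x_{n+1})$ at the generic point. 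Proportionality is encoded by the bilinear equations $g_i(f)\, x_j - g_j(f)\, x_i = 0$ for all $i,j$, which are closed conditions on the coefficients of $f$ and $g$; additionally one demands that not all $g_i(f)$ vanish identically, which is an open condition. Hence $V$ is locally closed in $\rrn_d \times \rrn_e$.

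A short check gives $\bpnGothic_d = p_1(V)$. For $(f,g) \in V$, the identity $\psi_g \circ \psi_f = \id$ forces $\psi_f$ to be dominant and generically injective, hence birational with $\psi_g$ as its inverse. Conversely, if $f \in \bpnGothic_d$, then $\psi_f^{-1}$ has degree at most $e$, and multiplying a minimal polynomial representative by any nonzero homogeneous polynomial of suitable degree produces a $g \in \rrn_e$ satisfying $\psi_g \circ \psi_f = \id$.

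The main obstacle, and the step I expect to be the hardest, is that first projections of locally closed sets are in general only constructible, not locally closed. To upgrade, one analyses the fibres of $p_1 \colon V \to \bpnGothic_d$. Over an $f$ whose inverse has minimal polynomial degree $e_0(f) \leq e$, the fibre is an open subset of a projective space of dimension $\binom{e - e_0(f) + n}{n} - 1$, parametrised by the choice of a common polynomial factor of degree $e - e_0(f)$. To convert this into local closedness around any chosen $f_0 \in \bpnGothic_d$, I would construct a local section of $p_1$ by imposing a linear normalisation on the coefficients of $g$ that is transverse to the fibre through $f_0$. Such a section realises a neighbourhood of $f_0$ in $\bpnGothic_d$ as a locally closed slice of $V$, giving the claim. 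Making this uniform requires either stratifying according to the (constructible) function $e_0(f)$ and patching compatible slicings, or choosing a normalisation generic enough to work on a neighbourhood; in any case the delicate point is the jumping dimension of the fibres.
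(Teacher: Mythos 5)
Your setup is the right one, and it is essentially the approach behind the source the paper quotes for this statement (\cite[Lemma~2.4]{BlancFurter2013}): with $e=d^{n-1}$, the proportionality equations $g_i(f)x_j-g_j(f)x_i=0$ are closed conditions on $\rrn_d\times\rrn_e$, the requirement that some $g_i(f)$ be nonzero is open, and $\bpnGothic_d=p_1(V)$ for the resulting locally closed incidence set $V$. But the proof is not complete: as you yourself say, $p_1(V)$ is a priori only constructible, and the strategy you sketch for upgrading this does not close the gap. Even if you produce a local section $S$ of $p_1$ over a neighbourhood $U$ of $f_0$, that $S$ is only \emph{locally closed} in $U\times\rrn_e$ (being a slice of $V$), so $p_1(S)$ is again merely constructible; and the transverse linear normalisation cannot be chosen uniformly precisely because of the jumping of $e_0(f)$, which you flag but do not resolve. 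The argument therefore stops exactly at the point you identify as the main obstacle. (A minor further slip: the fibre of $p_1$ over $f$ is the full linear subspace $\{[hg^0]\}$ of $\rrn_e$, where $g^0$ is the minimal representative of the inverse and $h$ ranges over nonzero forms of degree $e-e_0(f)$, since dominance of $\psi_f$ forces $h(f)\neq 0$; it is closed, not just open in a projective space.)

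The repair is much simpler than a stratification. Let $\overline V\subseteq\rrn_d\times\rrn_e$ be the closed set cut out by the proportionality equations alone, and let $Z\subseteq\overline V$ be the closed subset where $g_i(f)=0$ for all $i$, so that $V=\overline V\setminus Z$. Since $\rrn_e$ is projective, $p_1$ is a closed map, so $p_1(\overline V)$ and $p_1(Z)$ are closed in $\rrn_d$. The observation you are missing is that $\bpnGothic_d\cap p_1(Z)=\emptyset$: a birational $\psi_f$ is dominant, so no nonzero tuple $g$ can satisfy $g_i(f)=0$ for all $i$. Combined with $p_1(\overline V)=p_1(V)\cup p_1(Z)=\bpnGothic_d\cup p_1(Z)$, this yields
\[ \bpnGothic_d=p_1(\overline V)\setminus p_1(Z), \]
a closed set minus a closed set, i.e.\ the intersection of a closed set with an open set, hence locally closed. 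No analysis of the fibres of $p_1$, and no local sections, are needed.
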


The following result, which is \cite[Corollary 2.9]{BlancFurter2013}, will be crucial for us since it provides a bridge from the ``weird'' topological space $\bpn_d$ to the ``nice'' topological space $\bpnGothic_d$ which is an algebraic variety.

\begin{lemma} \label{lemma:bridge-to-an-algebraic-variety}
The map $\pi_d \colon \bpnGothic_d \to \bpn_d$ is continuous and closed. In particular, it is a quotient topological map: A subset $F \subseteq \bpn_d$ is closed if and only if its preimage $\pi_d^{-1}(F)$ is closed.
\end{lemma}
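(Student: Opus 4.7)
The plan is to prove continuity and closedness of $\pi_d$ separately; the quotient-map assertion then follows at once from surjectivity of $\pi_d$. For continuity, I observe that $\pi_d$ itself arises from a family of birational maps in the sense of Definition~\ref{definition: morphism to Bir(W)}. Indeed, on $\bpnGothic_d\times\p^n$ consider the $\bpnGothic_d$-rational self-map sending $([F_1{:}\cdots{:}F_{n+1}],\,[x_1{:}\cdots{:}x_{n+1}])$ to $([F_1{:}\cdots{:}F_{n+1}],\,[F_1(x){:}\cdots{:}F_{n+1}(x)])$. This is well-defined (the $F_i$ are defined up to a common scalar), and by the very definition of $\bpnGothic_d$ it restricts to a birational self-map of $\p^n$ on each fibre, so it is a $\bpnGothic_d$-birational map of $\bpnGothic_d\times\p^n$. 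The induced morphism $\bpnGothic_d \to \bpn$ is precisely $\pi_d$, and its continuity is then immediate from Definition~\ref{definition: Zariski topology}.

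For closedness, let $Z \subseteq \bpnGothic_d$ be closed and let $\overline Z$ denote its closure in the ambient projective space $\rrn_d$, so that $Z = \overline Z \cap \bpnGothic_d$. The crux is to prove that the saturation $\pi_d^{-1}(\pi_d(Z))$ is closed in $\bpnGothic_d$. To this end, introduce the incidence locus
\[ W := \{ (f,g) \in \rrn_d \times \rrn_d : F_i G_j - F_j G_i \equiv 0 \text{ for all } i,j\}, \]
which is Zariski-closed since each condition is polynomial in the coefficients of $F_i$ and $G_j$. The first projection $p_1 : \rrn_d \times \rrn_d \to \rrn_d$ is proper because $\rrn_d$ is projective, so $p_1(W \cap (\rrn_d \times \overline Z))$ is closed in $\rrn_d$. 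I then claim that its intersection with $\bpnGothic_d$ coincides with $\pi_d^{-1}(\pi_d(Z))$: if $f \in \bpnGothic_d$ satisfies $\psi_f = \psi_g$ for some $g \in \overline Z$, then $\psi_g = \psi_f$ is birational, hence $g \in \Psi_d^{-1}(\bpn_d) = \bpnGothic_d$, whence $g \in \overline Z \cap \bpnGothic_d = Z$. This establishes closedness of $\pi_d^{-1}(\pi_d(Z))$ in $\bpnGothic_d$.

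To conclude, I unravel the definition of closedness in $\bpn_d$: one must verify that $\psi^{-1}(\pi_d(Z))$ is closed in $A$ for every morphism $\psi : A \to \bpn_d$. Any such $\psi$ is represented by an $A$-birational self-map of $A \times \p^n$, described by an $(n+1)$-tuple of polynomials homogeneous in the coordinates of $\p^n$ whose coefficients are rational functions on $A$. On an affine open cover $\{U_i\}$ of $A$ on which these coefficients can be chosen regular and of the prescribed degree (after clearing denominators and, if necessary, multiplying all components by a common homogeneous polynomial to reach degree $d$), one obtains morphisms $\tilde\psi_i : U_i \to \bpnGothic_d$ lifting $\psi|_{U_i}$, i.e.\ satisfying $\pi_d \circ \tilde\psi_i = \psi|_{U_i}$. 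Consequently $\psi^{-1}(\pi_d(Z)) \cap U_i = \tilde\psi_i^{-1}(\pi_d^{-1}(\pi_d(Z)))$ is closed in $U_i$ by continuity of $\tilde\psi_i$ and the closedness established in the previous step, and since closedness is local this shows $\psi^{-1}(\pi_d(Z))$ is closed in $A$.

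The principal technical difficulty I anticipate is the local-lifting step: verifying carefully that the rational coefficients describing an arbitrary family of birational maps over $A$ can be made simultaneously regular, and adjusted so that the resulting tuple of polynomials lies in $\rrn_d$, on the members of an open cover of $A$. Once this is in place, the rest of the argument is a formal manipulation of closed subvarieties together with properness of projections from products of projective spaces.
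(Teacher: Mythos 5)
Your argument has the right architecture, and it essentially reconstructs the proof of \cite[Corollary 2.9]{BlancFurter2013}, which is where the paper outsources this statement: continuity because the universal family over $\bpnGothic_d$ is itself a morphism in the sense of Definition~\ref{definition: morphism to Bir(W)}, and closedness of the saturation $\pi_d^{-1}(\pi_d(Z))$ via the incidence variety $W=\{F_iG_j=F_jG_i\}$ and properness of the projection $\rrn_d\times\rrn_d\to\rrn_d$. Your identification $p_1\bigl(W\cap(\rrn_d\times\overline Z)\bigr)\cap\bpnGothic_d=\pi_d^{-1}(\pi_d(Z))$ is correct, the key point being exactly the one you isolate: $\psi_g=\psi_f$ with $f\in\bpnGothic_d$ forces $g\in\Psi_d^{-1}(\bpn_d)=\bpnGothic_d$, hence $g\in\overline Z\cap\bpnGothic_d=Z$. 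The only structural difference from the source is that Blanc--Furter first prove the criterion of Lemma~\ref{lemma:useful-characterisation-of-closed-subsets-of-Bir(Pn)} and then read off closedness of $\pi_d(Z)$ from it, whereas you unwind Definition~\ref{definition: Zariski topology} directly; these are the same computation.

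The step you defer as the ``principal technical difficulty'' is, however, a genuine gap, and it is the actual hard content of the lemma (it is the substance of the lifting results in \cite{BlancFurter2013} on which Corollaries 2.7 and 2.9 there rest). Concretely, two things are missing. First, an arbitrary morphism $\psi\colon A\to\bpn$ is only given by an $A$-birational map of $A\times\p^n$, i.e.\ by forms with coefficients in $\C(A)$; after clearing denominators the resulting tuple may vanish identically on special fibres $\{a\}\times\p^n$ (this is a codimension-$\ge 2$ phenomenon when $\dim A\ge 2$, so it is not excluded merely by removing common factors), and then $[H_1(a):\cdots:H_{n+1}(a)]$ is not a point of $\rrn_d$ at all. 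Ruling this out requires an argument: e.g.\ reduce to $A$ smooth (normalisation and resolution are proper, so images of closed sets remain closed), write $p_2\circ f$ via sections of a line bundle $p_1^*N\otimes p_2^*\OOO(e)$ on $A\times\p^n$ with no common divisorial zero, and use that the indeterminacy locus is then exactly $\bigcap Z(G_i)$ together with the surjectivity of $U\to A$ to see that no fibre is entirely contained in it; one must also check $e\le d$ before multiplying by $x_{n+1}^{d-e}$. Second, your reduction to morphisms with target $\bpn_d$ (rather than $\bpn$) silently uses that $\bpn_d$ is closed in $\bpn$ (lower semicontinuity of the degree along a family) and that the restriction of a morphism to a closed subvariety of $A$ is again a morphism; both are true but neither is free, and the first is itself a nontrivial statement of the same nature as the lifting. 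Until the local-lifting step is actually proved, the closedness half of the lemma is not established.
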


\begin{remark} \label{remark: connected components of a closed subset of Bir(Pn)d}
Let $F$ be a closed subset of $\bpn_d$ and let $\FF:= \pi^{-1}_d(F) \subseteq \bpnGothic_d$ be its preimage via $\pi_d$. By what has been said above, $\FF$ is naturally a (finite dimensional, but not necessarily irreducible) variety (being closed in the variety $\bpnGothic_d$), and the closed continuous map $\pi_d \colon \bpnGothic_d \to \bpn_d$ induces a closed continuous map $\pi_{d,F} \colon \FF \to F$ whose fibres are connected and nonempty. It follows that $\pi_{d,F}$ induces a $1-1$ correspondence between the connected components of $\FF$ and the connected components of $F$. More precisely, if $\CC$ is a connected component of $\FF$, then $\pi_{d,F} ( \CC)$ is a connected component of $F$ and conversely if $C$ is a connected component of $F$, then $(\pi_{d,F})^{-1} (C)$ is a connected component of $\FF$. In particular, $F$ admits finitely many connected components and these connected components are closed and open in $F$.
\end{remark}

Lemma~\ref{lemma:description-of-the-topology-of-Bir(Pn)-as-an-inductive-limit}, Remark~\ref{remark: strengthening of a criterion to check that a subset of Bir(Pn) is closed}, and Lemma~\ref{lemma:bridge-to-an-algebraic-variety} give the following useful characterisation  of closed subsets of $\bpn$ (this criterion is a slight generalisation of \cite[Corollary~2.7]{BlancFurter2013}).

\begin{lemma} \label{lemma:useful-characterisation-of-closed-subsets-of-Bir(Pn)}
A subset $F \subseteq \bpn$ is closed if and only if there exists a positive integer $D$ such that $\pi_d^{-1}(F \cap \bpn_d) \subseteq \bpnGothic_d$ is closed for any $d \ge D$.
\end{lemma}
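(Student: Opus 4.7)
The plan is to chain together the two preceding lemmas: Lemma \ref{lemma:description-of-the-topology-of-Bir(Pn)-as-an-inductive-limit} expresses the topology of $\bpn$ as an ``inductive limit'' of the topologies on the degree-bounded pieces $\bpn_d$, while Lemma \ref{lemma:bridge-to-an-algebraic-variety} transfers closedness in the possibly awkward space $\bpn_d$ to closedness in the algebraic variety $\bpnGothic_d$ via the quotient map $\pi_d$. Applying these two characterisations one after the other yields the statement, with no new geometric input required.

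More precisely, I would argue as follows. First, by Lemma \ref{lemma:description-of-the-topology-of-Bir(Pn)-as-an-inductive-limit}, a subset $F \subseteq \bpn$ is closed if and only if $F \cap \bpn_d$ is closed in $\bpn_d$ for every $d \geq 1$. Next, Lemma \ref{lemma:bridge-to-an-algebraic-variety} states that $\pi_d \colon \bpnGothic_d \to \bpn_d$ is a quotient topological map, so for any subset $G \subseteq \bpn_d$ we have $G$ closed in $\bpn_d$ if and only if $\pi_d^{-1}(G)$ is closed in $\bpnGothic_d$. Taking $G = F \cap \bpn_d$ for each $d$ converts the first equivalence into the desired one: $F \subseteq \bpn$ is closed if and only if $\pi_d^{-1}(F \cap \bpn_d) \subseteq \bpnGothic_d$ is closed for every $d$.

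Since both underlying results have already been established, there is essentially no obstacle; the lemma is a purely formal composition of the two. The only point worth making explicit is that both implications go through in both directions because the quotient-map property gives a genuine ``if and only if'', and the inductive-limit description of the topology of $\bpn$ likewise is an equivalence, so no care is needed about which direction of implication is harder.
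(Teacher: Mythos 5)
Your proof is correct and is exactly the paper's argument: the lemma is stated there as an immediate consequence of Lemma~\ref{lemma:description-of-the-topology-of-Bir(Pn)-as-an-inductive-limit} combined with the quotient-map property of $\pi_d$ from Lemma~\ref{lemma:bridge-to-an-algebraic-variety}. Nothing further is needed.
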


Our main use of the following lemma will be the characterisation of the connectedness of a closed subset of $\bpn$ given in \eqref{any two points of F can be joined by a curve}. The intermediate characterisation \eqref{any pair of elements is contained in a connected component of the filtration} will only be used in the proof of the equivalence between \eqref{F is connected} and \eqref{any two points of F can be joined by a curve}.

\begin{lemma} \label{lemma: a useful criterion of connectedness for closed subsets of Bir(Pn)}
Let $F$ be a closed subset of $\bpn$. Then, the three following assertions are equivalent:
\begin{enumerate}
\item \label{F is connected}
$F$ is connected;
\item \label{any pair of elements is contained in a connected component of the filtration}
For each  $\varphi, \psi \in F$, there exists a positive integer $d$ such that $\varphi, \psi$ belong to the same connected component of  $F \cap \bpn_d$;
\item \label{any two points of F can be joined by a curve}
For each $\varphi, \psi \in F$, there exists a connected (not necessarily irreducible) curve $C$ and a morphism $\lambda \colon C \to \bpn$  (see Definition~\ref{definition: morphism to Bir(W)}) whose image satisfies:
\[ \varphi, \psi \in \Image (\lambda ) \subseteq F .\]
\end{enumerate}
\end{lemma}

\begin{proof}
\eqref{F is connected} $\Longrightarrow$ \eqref{any pair of elements is contained in a connected component of the filtration}. For each positive integer $d$, set $F_d:= F \cap \bpn_d$. Assume that $\varphi$ is an element of some $F_{d_0}$. For each $d \geq d_0$, the connected component of $F_d$ which contains $\varphi$ will be denoted by $F_{d,\varphi}$. We want to prove that $F_{\varphi} := \bigcup_{d \geq d_0} F_{d,\varphi}$ is equal to $F$. For this, it is sufficient to check that $F_{\varphi}$ is open and closed in $F$, i.e.\ that $F_{\varphi} \cap F_d$ is closed and open in $F_d$ for each positive integer $d$. However, since the set $F_d$ has only finitely many connected components (see Remark~\ref{remark: connected components of a closed subset of Bir(Pn)d}) and since $F_{\varphi} \cap F_d$ is a union of such connected components, it follows that $F_{\varphi} \cap F_d$ is actually closed and open in $F_d$.

\eqref{any pair of elements is contained in a connected component of the filtration} $\Longrightarrow$ \eqref{any two points of F can be joined by a curve}. Let $\varphi, \psi$ be elements of $F$ and let $d$ be a positive integer such that $\varphi, \psi$ belong to the same connected component of  $F \cap \bpn_d$. Since each connected component of $F \cap \bpn_d$ is of the form $\pi_d (A)$ for some connected variety $A$ (see Remark~\ref{remark: connected components of a closed subset of Bir(Pn)d}), we have
\[ \varphi, \psi \in \pi_d (A) \subseteq F .\]
Let $a,b \in A$ be such that $\varphi = \pi_d (a)$, $\psi = \pi_d (b)$. It's enough to show that there exists a connected curve $C$ on the variety $A$ containing $a$ and $b$. Let $A'$ be the set of points $c \in A$ for which there exists a connected curve $C \subseteq A$ containing $a$ and $c$. By \cite[Lemma on page 56]{Mumford2008}, for any irreducible variety $V$ and any points $v,w \in V$, there is an irreducible curve on $V$ containing $v$ and $w$. It follows that if $c$ belongs to $A'$, then all the irreducible components of $A$ which contain $c$ are contained in $A'$. Since $A$ is connected, this shows that $A' =A$ and this concludes the proof of the implication \eqref{any pair of elements is contained in a connected component of the filtration}~$\Longrightarrow$~\eqref{any two points of F can be joined by a curve}.

\eqref{any two points of F can be joined by a curve}  $\Longrightarrow$ \eqref{F is connected}. This is obvious.
\end{proof}

\begin{definition} \label{definition: morphism from Bir(Pn) to Bir(Pn)}
A map $\varphi \colon \bpn \to \bpn $ will be called a morphism if for each irreducible complex algebraic variety $A$ and each morphism $\rho \colon A \to \bpn$ (in the sense of Definition~\ref{definition: morphism to Bir(W)}), the composition $\varphi \circ \rho$ is also a morphism (still in the sense of Definition~\ref{definition: morphism to Bir(W)}).
\end{definition}

The following result directly follows from the Definitions~\ref{definition: Zariski topology} and \ref{definition: morphism from Bir(Pn) to Bir(Pn)}. 

\begin{lemma} \label{lemma:  four properties of morphisms from Bir(Pn) to Bir(Pn) and products of connected subsets}
The four following assertions are satisfied.
\begin{enumerate}
\item \label{The inverse map is a morphism}
The inverse map $\iota \colon \bpn \to \bpn $, $g \mapsto g^{-1}$,  is a morphism.
\item  \label{The multiplication of two morphisms is a morphism}
Let $\pi \colon \bpn \times \bpn \to \bpn$ be the map that sends $(g,g')$ onto $g \circ g'$. If $\varphi, \varphi' \colon \bpn \to \bpn $ are morphisms, then the map $\pi ( \varphi, \varphi') \colon \bpn \to \bpn $, that sends $g$ onto $\varphi (g)  \circ \varphi ' (g)$, is a morphism.
\item \label{A morphism is continuous}
Any morphism $\varphi \colon \bpn \to \bpn $ is continuous.
\item \label{The product in Bir(Pn) of two connected subsets of Bir(Pn) is connected}
If $V,W$ are two connected subsets of $\bpn$, then their product
\[ V.W = \{ v \circ w, \; v \in V, \; w \in W \} \subseteq \bpn \]
is connected.
\end{enumerate}
\end{lemma}

\begin{proof}
\eqref{The inverse map is a morphism}
Let $A$ be an algebraic variety and $\rho \colon A \to \Bir( \p^n )$ be a morphism. We want to show that $\iota \circ \rho \colon A \to \bpn$ is a morphism. Since $\rho$ is a morphism, there exists an $A$-birational map $f$ of the $A$-variety $A \times  \p^n$, inducing an isomorphism $U \to V$, where $U,V$ are open subsets of $A \times \p^n$, whose projections on $A$ are surjective, and such that $\rho$ is the family associated to $f$. This last point means that for each $\C$-point $a \in A$, the birational map $\rho (a) \colon \p^n \dasharrow \p^n$ is the transformation $ w \dasharrow p_2( f(a, w) )$. For showing that $\iota \circ \rho$ is a morphism, it's enough to note that $\iota \circ \rho$ is the family associated to the $A$-birational map $f^{-1}$.

\eqref{The multiplication of two morphisms is a morphism}
Set $\psi := \pi ( \varphi, \varphi')$. Let $A$ be an algebraic variety and $\rho \colon A \to \Bir( \p^n )$ be a morphism. We want to show that $\psi \circ \rho \colon A \to \bpn$ is a morphism. 

Since $\varphi \circ \rho$ is a morphism, there exists an $A$-birational map $f$ of the $A$-variety $A \times  \p^n$, inducing an isomorphism $ \theta \colon U \to V$, where $U,V$ are open subsets of $A \times \p^n$, whose projections on $A$ are surjective, and such that $\varphi \circ \rho$ is the family associated to $f$. 

Analogously, since $\varphi' \circ \rho$ is a morphism, there exists an $A$-birational map $f'$ of the $A$-variety $A \times  \p^n$, inducing an isomorphism $ \theta' \colon U' \to V'$, where $U',V'$ are open subsets of $A \times \p^n$, whose projections on $A$ are surjective, and such that $\varphi' \circ \rho$ is the family associated to $f'$. 

For showing that $\psi \circ \rho$ is a morphism, it's enough to note that $\psi \circ \rho$ is the family associated to the $A$-birational map $f \circ f'$. Let us just check that there exists open subsets $U'',V''$ of $A \times \p^n$, whose projections on $A$ are surjective, and such that  $f \circ f'$ induces an isomorphism $U'' \to V''$. Since $U,V'$ are open subsets of $A \times \p^n$, whose projections on $A$ are surjective, one would easily check that $U \cap V'$ is also an open subset of $A \times \p^n$, whose projection on $A$ is surjective. It's now enough to set $U'':= (\theta')^{-1} (U \cap V')$ and $V'' := \theta (U \cap V')$.

\eqref{A morphism is continuous}
Let $F$ be a closed subset of $ \bpn $. We want to show that $\varphi ^{-1} (F)$ is closed in $\bpn$, i.e. that for each algebraic variety $A$ and each morphism $\rho \colon A \to \Bir( \p^n )$, the pre\-image $\rho^{-1} ( \varphi ^{-1} (F) )$ is closed. Since $\varphi \circ \rho \colon A \to \bpn$ is a morphism  (by Definition~\ref{definition: morphism from Bir(Pn) to Bir(Pn)}), this follows from  Definition~\ref{definition: Zariski topology}.

\eqref{The product in Bir(Pn) of two connected subsets of Bir(Pn) is connected}
We may assume that both $V$ and $W$ are nonempty. Take $w_0 \in W$. Since
\[ V.W = \bigcup_{v \in V} v.W\]
where each $v.W$ is connected and intersects the fixed connected subset $V.w_0$ of $V.W$, this shows that $V.W$ is connected.
\end{proof}

If $G$ is a linear algebraic group, it is well-known that its derived group $D(G)$ is closed (see e.g.\  \cite[Proposition 17.2, page 110]{Humphreys1975}). It is not clear whether this result remains true for closed subgroups $G$ of $\bpn$, but thanks to the following definition and to the next two lemmas, this will not be a concern for us.

\begin{definition} \label{definition: DDD(G)}
Let $G$ be any subgroup of $\bpn$.
\begin{enumerate}
\item
Set $\DDD (G):=\overline{ D(G) }$ (the closure of the derived group of $G$). 
\item
We then define $\DDD^k (G)$ inductively by
\[\DDD^0 (G) :=\overline{G}, \quad \DDD^1 (G) :=\DDD (G),  \hspace{2mm} \text{and} \hspace{3mm} \DDD^k (G) := \DDD^1 ( \DDD^{k-1} (G) ) \hspace{2mm} \text{for} \hspace{2mm} k \geq 2.\] 
\end{enumerate}
\end{definition}

\begin{lemma} \label{lemma: DG and (mathcal-D)G}
Let $G$ be any subgroup of $\bpn$.
\begin{enumerate}
\item \label{the derivative of the closure is contained in the closure of the derivative}
We have $D(G) \subseteq D ( \overline{G} ) \subseteq \DDD (G)$.
\item \label{(mathcal-D)G=(mathcal-D)(closure of G)}
We have $\DDD (G) = \DDD ( \overline{G} )$.
\item \label{G-closed-implies-(mathcal-D)1G-normal}
If $G$ is closed, then  $\DDD (G) $ is normal in $G$ and the quotient $G / \DDD (G) $ is abelian.
\item \label{G-connected-implies-DG-connected}
If $G$ is connected, then $D (G)$ is also connected.

\item  \label{G-closed-connected-implies-(mathcal-D)G-connected}
If $G$ is closed and connected, then $\DDD (G)$ is also closed and connected.
\end{enumerate}
\end{lemma}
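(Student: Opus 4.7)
The plan is to prove the five parts in order, exploiting the continuity of the group operations of $\bpn$ in the Zariski topology of \cite{BlancFurter2013}. Throughout, I use the fact that inversion, left and right translations by a fixed element, and conjugation by a fixed element are homeomorphisms of $\bpn$; in particular, the closure of any subgroup of $\bpn$ is again a subgroup (if $H \subseteq \bpn$ is a subgroup then inversion maps $H$ onto $H$, hence maps $\overline H$ onto $\overline H$; and for fixed $a \in H$ left translation by $a$ maps $\overline H$ onto $\overline H$, from which one deduces $ab \in \overline H$ for all $a,b \in \overline H$ by iterating with right translations).

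For (1), the inclusion $D(G) \subseteq D(\overline G)$ is immediate from $G \subseteq \overline G$. To show $D(\overline G) \subseteq \overline{D(G)}$, fix $g \in G$; the map $h \mapsto [g,h] = g h g^{-1} h^{-1}$ is continuous on $\bpn$ as a composition of inversion and translations, so the preimage $\{h \in \bpn : [g,h] \in \overline{D(G)}\}$ is a closed subset of $\bpn$ that contains $G$, hence contains $\overline G$. A symmetric argument with the roles of the two variables swapped shows that every commutator of elements of $\overline G$ lies in $\overline{D(G)}$; since $\overline{D(G)}$ is a subgroup, it contains the subgroup $D(\overline G)$ that these commutators generate. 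Part (2) is then formal: (1) applied to $G$ and to $\overline G$ yields $D(G) \subseteq D(\overline G) \subseteq \overline{D(G)}$, and taking closures gives $\overline{D(G)} \subseteq \overline{D(\overline G)} \subseteq \overline{D(G)}$, i.e.\ $\DDD(G) = \DDD(\overline G)$.

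For (3), assume $G$ is closed. For every $g \in G$, conjugation by $g$ is a homeomorphism of $\bpn$ that preserves $D(G)$ and therefore preserves its closure $\DDD(G) = \overline{D(G)}$; this gives the normality of $\DDD(G)$ in $G$. The quotient $G/\DDD(G)$ is then a quotient of the abelian group $G/D(G)$, and hence itself abelian. For (4), suppose $G$ is connected and, for each $k \geq 0$, let $C_k \subseteq \bpn$ be the set of products of exactly $k$ commutators of elements of $G$ (so $C_0 = \{1\}$ and $D(G) = \bigcup_{k \geq 0} C_k$). Each $C_k$ is the image of $G^{2k}$ under the map $(g_1, h_1, \ldots, g_k, h_k) \mapsto \prod_{i=1}^{k} [g_i, h_i]$, which is continuous in the sense of the Zariski topology of $\bpn$ (an iterated composition of the continuous group operations, cf.\ Definition~\ref{definition: morphism to Bir(W)}). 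Since $G$ is connected, so is the product $G^{2k}$, whence its image $C_k$ is connected; as the point $1$ lies in every $C_k$, the union $D(G)$ is connected. Finally, for (5), if $G$ is closed and connected, then $D(G)$ is connected by (4), so $\DDD(G) = \overline{D(G)}$ is connected (as the closure of a connected set) and closed by construction.

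The main obstacle is to justify, with sufficient care, that the commutator and iterated multiplication maps used in (1) and (4) are continuous on $\bpn$ and define morphisms from the relevant products of copies of $G$ to $\bpn$ in the sense of Definition~\ref{definition: morphism to Bir(W)}. This rests on the foundational results on the Zariski topology of $\bpn$ from \cite{BlancFurter2013}, which I would invoke rather than re-prove.
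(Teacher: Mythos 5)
Your proposal is correct and follows essentially the same route as the paper: separate continuity of the commutator map in each variable for parts (1)--(2), the standard fact that a subgroup between $D(G)$ and $G$ is normal with abelian quotient for (3), and expressing $D(G)$ as an increasing union of connected images of powers of $G$ under continuous (product-of-commutators) maps for (4)--(5). The only cosmetic differences are that you make explicit the fact that the closure of a subgroup is a subgroup and argue normality in (3) via conjugation being a homeomorphism rather than purely group-theoretically; neither changes the substance.
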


\begin{proof}
\eqref{the derivative of the closure is contained in the closure of the derivative}
The inclusion $D(G) \subseteq D ( \overline{G} ) $ being obvious, let's prove $D ( \overline{G} ) \subseteq \overline{ D(G) }$.
Fix an element $h$ of $G$. By Lemma~\ref{lemma:  four properties of morphisms from Bir(Pn) to Bir(Pn) and products of connected subsets}\eqref{The inverse map is a morphism}-\eqref{The multiplication of two morphisms is a morphism},  the map
\[ \varphi_h \colon \bpn \to \bpn, \quad g \mapsto [g,h] = ghg^{-1}h^{-1}\]
is a morphism.  By Lemma~\ref{lemma:  four properties of morphisms from Bir(Pn) to Bir(Pn) and products of connected subsets}\eqref{A morphism is continuous}, it is in particular continuous. Since $G$ is obviously contained in $(\varphi_h)^{-1} ( \overline{D(G)} )$, we get $\overline{G} \subseteq (\varphi_h)^{-1} ( \overline{D(G)} )$. Consequently,   we have proven that
\[ \forall g \in \overline{G}, \quad \forall h \in G, \quad [g,h] \in \overline{D(G)}.\]
Similarly, for each fixed element $g$ of $\overline{G}$, the map $\psi \colon \bpn \to \bpn$, $h \mapsto [g,h]$ is continuous. Since $G$ is included in  $\psi^{-1} ( \overline{D(G)} )$, we get $\overline{G} \subseteq \psi^{-1} ( \overline{D(G)} )$ and thus
\[ \forall g,h \in \overline{G},  \quad [g,h] \in \overline{D(G)}.\]
This  implies the desired inclusion.

\eqref{(mathcal-D)G=(mathcal-D)(closure of G)} By taking the closure of \eqref{the derivative of the closure is contained in the closure of the derivative}, we obtain $\DDD (G) \subseteq \DDD ( \overline{G} ) \subseteq \DDD (G)$ and the equality follows.

\eqref{G-closed-implies-(mathcal-D)1G-normal}
Any group $H$ such that $D(G) \subseteq H \subseteq G$ is normal in $G$ with $G/H$ abelian. Hence, the result is a consequence of \eqref{the derivative of the closure is contained in the closure of the derivative}.

\eqref{G-connected-implies-DG-connected}
Let's begin by checking that the set $\Com (G)$ of commutators of $G$ is connected. Let $h$ be an element of $G$. We have seen above (in the proof of \eqref{the derivative of the closure is contained in the closure of the derivative}) that the map
\[ \varphi_h \colon \bpn \to \bpn, \quad g \mapsto [g,h] = ghg^{-1}h^{-1}\]
is continuous. Hence $\varphi_h (G)$ is connected. Since all $\varphi_h (G)$, $h \in G$, are connected and contain $\id \in G$, it follows from the equality $\Com (G) = \bigcup_{h \in G} \varphi_h (G)$ that $\Com (G)$ is connected. It now follows from Lemma~\ref{lemma:  four properties of morphisms from Bir(Pn) to Bir(Pn) and products of connected subsets}\eqref{The product in Bir(Pn) of two connected subsets of Bir(Pn) is connected} that for each positive integer $j$, the set
\[ \Com^j (G):= \{ c_1 \ldots c_j, \; c_1, \ldots, c_j \in \Com (G) \} \]
is also connected. Therefore the increasing union $D^1 (G) = \bigcup_j \Com^j(G)$ is connected.

\eqref{G-closed-connected-implies-(mathcal-D)G-connected} This is a direct consequence of the previous point.
\end{proof}

An induction based on Lemma~\ref{lemma: DG and (mathcal-D)G}\eqref{(mathcal-D)G=(mathcal-D)(closure of G)} yields the following result:

\begin{lemma} \label{lemma: D^kG and (mathcal-D)^kG}
Let $G$ be a subgroup of $\bpn$. Then, for each nonnegative integer $k$, we have $\DDD^k (G) = \overline{D^k (G) }$. In particular, we have $D^k(G) = \{1 \}$ if and only if $\DDD^k (G )  =\{ 1 \}$. This means that if $G$ is solvable its derived length is also equal to the least nonnegative integer $k$ such that $\DDD^k(G) = \{ 1 \}$.
\end{lemma}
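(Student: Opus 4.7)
The plan is to proceed by induction on $k \geq 0$, with the base case handled by definition and the inductive step driven by Lemma~\ref{lemma: DG and (mathcal-D)G}\eqref{(mathcal-D)G=(mathcal-D)(closure of G)}.

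For the base case $k=0$, we simply note that $\DDD^0(G) = \overline{G}$ by definition, which equals $\overline{D^0(G)}$ since $D^0(G) = G$. For the inductive step, assume that $\DDD^k(G) = \overline{D^k(G)}$. Then by the definition of $\DDD^{k+1}$ we have
\[ \DDD^{k+1}(G) = \DDD\bigl( \DDD^k(G) \bigr) = \overline{D\bigl( \DDD^k(G) \bigr)}. \]
Applying part~\eqref{(mathcal-D)G=(mathcal-D)(closure of G)} of Lemma~\ref{lemma: DG and (mathcal-D)G} to the subgroup $D^k(G)$ gives $\DDD(D^k(G)) = \DDD(\overline{D^k(G)}) = \DDD(\DDD^k(G))$, that is, $\overline{D(D^k(G))} = \overline{D(\DDD^k(G))}$. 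Combining these yields
\[ \DDD^{k+1}(G) = \overline{D(D^k(G))} = \overline{D^{k+1}(G)}, \]
completing the induction.

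For the second assertion, the equivalence $D^k(G) = \{1\} \Leftrightarrow \DDD^k(G) = \{1\}$ follows immediately because a singleton is closed in $\bpn$, so $\overline{D^k(G)} = \{1\}$ exactly when $D^k(G) = \{1\}$. The statement about the derived length is then a direct consequence: the least $k$ for which $D^k(G)$ is trivial coincides with the least $k$ for which $\DDD^k(G)$ is trivial.

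No step appears to be an obstacle; the only subtlety is correctly feeding the subgroup $D^k(G)$ (rather than $G$ itself) into Lemma~\ref{lemma: DG and (mathcal-D)G}\eqref{(mathcal-D)G=(mathcal-D)(closure of G)} at each stage of the induction, which is what allows the closure operation to commute past the iterated derived subgroup.
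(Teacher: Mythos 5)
Your proof is correct and is precisely the ``obvious induction based on Lemma~\ref{lemma: DG and (mathcal-D)G}\eqref{(mathcal-D)G=(mathcal-D)(closure of G)}'' that the paper invokes, with the key step being to apply that identity to the subgroup $D^k(G)$ so that $\overline{D(\DDD^k(G))}=\overline{D(D^k(G))}$. The final equivalence also uses, as you note, that the singleton $\{1\}$ is closed in $\bpn$, which the paper established earlier.
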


\begin{definition} \label{definition: bounded subset of Bir(Pn)}
A subset $A$ of $\bpn$ is bounded if there exists a constant $K$ such that $\deg(g)\leq K$ for all $g \in A$.
\end{definition}

\begin{remark} \label{remark: an algebraic subgroup of Bir(Pn) is a bounded closed subgroup}
An algebraic subgroup of $\bpn$  is  nothing else than a bounded closed subgroup $($see  \cite[Remark 2.20]{BlancFurter2013}$)$.
\end{remark}

\begin{lemma} \label{lemma: Jonq is closed and pr_2 is continuous}
\begin{enumerate}
\item \label{Jonq is closed in Bir(P2)}
The group $\Jonq$ is closed in $\bp$.
\item \label{pr_2 is continuous}
The projection $\pr_2 \colon \Jonq \to \PGL_2$, $\left( \frac{\alpha (y) x + \beta (y)}{\gamma (y) x + \delta (y)}, \frac{ay+b}{cy+d} \right) \mapsto \smat  {a} {b} {\rule{0mm}{3mm} c} {d} $, is continuous.
\item \label{The image of a bounded closed subset of Jonq by pr_2 is a constructible subset of PGL_2}
If $A$ is a bounded closed subset of $\Jonq$, then $\pr_2 (A)$ is a constructible subset of~$\PGL_2$.
\item \label{PGL_2 ( C (y) ) rtimes Aff_1 and PGL_2 ( C (y) ) are closed in Bir(P2)}
If $H$ is a closed subgroup of $\PGL_2$, then the group $\PGL_2 ( \C (y) ) \rtimes H$ is a closed subgroup of $\Jonq$, and hence of $\bp$. In particular, the groups $\PGL_2 ( \C (y) ) \rtimes \Aff_1$ and $\PGL_2 ( \C (y) ) \subseteq \Jonq$ are closed in $\Jonq$.
\end{enumerate}
\end{lemma}

\begin{proof}
\eqref{Jonq is closed in Bir(P2)} Even if the proof is already given in \cite[Remark 5.22]{BlancFurter2018}, we recall it here in preparation for the proof of \eqref{pr_2 is continuous}. By Lemma~\ref{lemma:useful-characterisation-of-closed-subsets-of-Bir(Pn)}, it is enough to prove that $\JJonq_{d} =$ $\pi_d^{-1}(\Jonq \cap \bp_d )$ is closed in $\bpGothic_d$ for each $d$. Denote by  $\LL$ the projective space (of dimension 3) associated with the complex vector space of pairs $(g_1,g_2)$ where $g_1,g_2 \in \C [y,z]$ are homogeneous polynomials of degree $1$. The equivalence class of $(g_1,g_2)$ will be denoted by $[g_1:g_2]$. Denote by $Y\subseteq \bpGothic_d \times \LL$ the closed subvariety given by elements $([f_1:f_2:f_3],[g_1:g_2])$ satisfying $f_2g_2 =f_3 g_1$. Since $\LL$ is a complete variety, the first projection $p_1 \colon \bpGothic_d \times \LL \to \bpGothic_d$ is a closed morphism. Hence, assertion \eqref{Jonq is closed in Bir(P2)} follows from the equality $\JJonq_d = p_1 (Y)$.\\

\noindent \eqref{pr_2 is continuous} It is enough to prove that the restriction of $\pr_2$ to the set $\Jonq_d := \Jonq \cap \bp_d$ is continuous. This can be seen using the following commutative diagram:
\[\xymatrix{
 &  Y \ar[ld]_{p_1}   \ar[rdd]^{p_2} \\
\JJonq_d \ar[d]_{\pi_d} & \\
\Jonq_d \ar[rr]^{\pr_2}   & & \LL  }\]

Note that $\PGL_2 $ is naturally identified with the open subset of $\LL$ whose elements \linebreak  $[ay+bz : cy+dz]$ satisfy $\det \smat{a}{b}{c}{d} \neq 0$. Hence, it is enough to note that the horizontal map $\pr_2 \colon \Jonq_d \to \LL$ is continuous. We will use the fact that $p_2$ is continuous (being a morphism of algebraic varieties) and that $p_1 \colon Y \to \JJonq_d$ and $\pi_d \colon \JJonq_d \to \Jonq_d$ are surjective and closed (the surjectivity is obvious, the closedness comes from the fact that these two maps are restrictions to closed subsets of the closed maps $p_1 \colon \bpGothic_d \times \LL \to \bpGothic_d$ and $\pi_d \colon \bpGothic_d \to \bp_d$, see Lemma~\ref{lemma:bridge-to-an-algebraic-variety}).

Take any closed subset $F$ of $\LL$. We want to prove that $(\pr_2)^{-1} (F)$ is closed in $\Jonq_d$. This comes from the previous remarks and the equality $(\pr_2)^{-1} (F) = (\pi_d \circ p_1) ( (p_2)^{-1} (F) )$.\\

\noindent \eqref{The image of a bounded closed subset of Jonq by pr_2 is a constructible subset of PGL_2} Choose $d$ so that we have $A \subseteq \Jonq_d$. Since $p_1 \colon Y \to \JJonq_d$ and $\pi_d \colon \JJonq_d \to \Jonq_d$ are surjective, we have $\pr_2 (A) = p_2 ( ( \pi_d \circ p_1)^{-1} (A) )$, and since $p_2 \colon Y \to \LL$ is a morphism of algebraic varieties, the closed subset $( \pi_d \circ p_1)^{-1} (A)$ of $Y$ is sent by $p_2$ onto the constructible subset $\pr_2 (A)$ of $\LL$. This also shows that  $\pr_2 (A)$ is a constructible subset of $\PGL_2$.\\

\noindent \eqref{PGL_2 ( C (y) ) rtimes Aff_1 and PGL_2 ( C (y) ) are closed in Bir(P2)}
The group $\PGL_2 (\C (y) ) \rtimes H$ is the preimage of $H$ by $\pr_2 \colon \Jonq \to \PGL_2 $. Therefore, the result follows from \eqref{pr_2 is continuous}.
\end{proof}

\section{Any Borel subgroup of $\bp$ is conjugate to a subgroup of $\Jonq$} \label{section: Any Borel subgroup of Bir(P2) is conjugate to a Borel subgroup of Jonq}
The aim of this section is to prove the following result:

\begin{theorem} \label{theorem: a closed connected solvable subgroup of Bir(P2) is conjugate to a subgroup of Jonq}
Any closed connected solvable subgroup of $\bp$ is conjugate to a subgroup of $\Jonq$. In particular, any Borel subgroup of $\bp$ is conjugate to a Borel subgroup of $\Jonq$.
\end{theorem}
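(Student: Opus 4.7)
The plan is to show that any closed connected solvable $G \subseteq \bp$ preserves a pencil of \emph{rational} curves on $\p^2$. Once this is in place, the Noether--Enriques theorem says such a pencil is birationally trivial; choosing a birational $\varphi \colon \p^2 \dashrightarrow \p^2$ that carries it to the pencil of lines through $[1:0:0]$, the conjugate $\varphi G \varphi^{-1}$ lies inside $\Jonq$, the stabiliser of that pencil. The ``in particular'' clause then follows because any closed connected solvable subgroup of $\Jonq$ is also closed connected solvable in $\bp$, so a Borel subgroup of $\bp$ conjugated into $\Jonq$ remains maximal there.

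I would argue by induction on the derived length $\length(G)$. For $\length(G) \geq 2$, Lemma~\ref{lemma: DG and (mathcal-D)G} shows that $\DDD(G)$ is a closed connected normal subgroup of $G$ of strictly smaller derived length; by induction it preserves a rational pencil $\mathcal P$. By normality, $G$ permutes the $\DDD(G)$-invariant pencils of rational curves, and using that $G$ is connected and that within each bounded degree the parameter space of such pencils is finite-dimensional, one upgrades this to a genuine $G$-invariant pencil (possibly after replacing $\mathcal P$ by a canonical associated pencil). The real work is the base case of a closed connected abelian subgroup, where one must construct the pencil directly.

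For a closed connected abelian $A$, I would invoke the classification of individual elements of $\bp$ (Gizatullin, Diller--Favre, Cantat) via the action on the Picard--Manin hyperbolic space: every element is elliptic, Jonqui\`eres-parabolic, Halphen-parabolic, or loxodromic. Two loxodromic elements of an abelian group share boundary fixed points, and a continuous family of loxodromic elements contradicts rigidity of translation lengths, so $A$ consists of elliptic and parabolic elements. An elliptic-only abelian $A$ has bounded degrees and hence lies in $\Aut(X)$ for some smooth projective rational surface $X$; being connected abelian, it acts as a torus-by-unipotent extension, and Enriques' classification of minimal rational surfaces yields an invariant pencil of rational curves. A parabolic element of $A$ already preserves a fibration by definition, and the whole of $A$ then preserves the same fibration by commutation.

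The main obstacle is ensuring that the invariant fibration is \emph{rational}, not elliptic: a Halphen-parabolic element preserves a pencil of elliptic curves, and a priori $A$ could stabilise only a Halphen fibration and hence conjugate into $\Aut$ of a Halphen surface rather than into $\Jonq$. To rule this out I would exploit that Halphen elements have essentially discrete closure in $\bp$ (their iterates have degrees growing along a specific arithmetic sequence), so a closed connected subgroup containing such an element either contracts onto its fibration datum --- contradicting connectedness and nontriviality --- or contains a continuous deformation of the Halphen element, which is obstructed by the rigidity of the elliptic fibration. Disposing of this Halphen case is the technical heart of the argument; after it the remaining steps are standard birational geometry combined with the Lie--Kolchin-type step supplied by the induction.
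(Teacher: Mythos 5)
Your outline is, in effect, an attempt to re-derive from scratch the structure theorem for solvable subgroups of $\bp$ that the paper simply imports from Urech (Theorem 8.1 of \cite{Urech2018}): the paper reduces to three cases (conjugate into $\Jonq$, countable, or bounded), kills the countable case by showing that a closed connected countable subgroup of $\bp$ is trivial, and handles the bounded case via Enriques--Umemura's classification of maximal connected algebraic subgroups. Re-proving this is legitimate in principle, but as written your argument has genuine gaps at each of its load-bearing joints. First, the inductive step is not justified: knowing that the normal subgroup $\DDD(G)$ preserves \emph{some} rational pencil $\mathcal P$ only lets you conclude that $G$ preserves $\mathcal P$ when $\mathcal P$ is the \emph{unique} $\DDD(G)$-invariant fibration (the normaliser argument, as in Lemma~\ref{lemma: unique rational fibration preserved and a property of some conjugants}); when $\DDD(G)$ is small (e.g.\ trivial, finite, or bounded) it preserves a huge family of pencils, and ``the parameter space of such pencils is finite-dimensional in each degree'' gives you no fixed-point theorem --- there is no Borel fixed point theorem available here because the relevant parameter space is neither complete nor acted on algebraically in any sense you have established. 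Second, in the base case your claim that ``an elliptic-only abelian $A$ has bounded degrees'' is false without further argument: a subgroup all of whose elements are elliptic need not be bounded (this is exactly the content of Urech's Theorem 1.3, whose third alternative is an unbounded torsion group). The paper disposes of the torsion alternative by embedding such a group into $\GL_{48}(\C)$, concluding it is countable, and then invoking triviality of closed connected countable subgroups.

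Third, and most seriously, your treatment of the Halphen case is not a proof. Phrases like ``contracts onto its fibration datum'' and ``obstructed by the rigidity of the elliptic fibration'' do not identify a contradiction. The correct and in fact very short argument is the one the paper uses: the automorphism group of a Halphen surface is countable (\cite[Theorem 2.4]{Urech2018}), and a closed connected countable subgroup of $\bp$ is trivial --- this last fact is itself not free, being proved in the paper by an uncountability argument on the varieties $\pi_d^{-1}(C\cap\bp_d)$ (a countable increasing union of closed subvarieties covering a variety over $\C$ must stabilise). Your proposal never proves this triviality statement, yet relies on some version of it both here and implicitly in the loxodromic exclusion. If you want to salvage your approach, the honest route is: (i) prove the countable-implies-trivial lemma; (ii) either cite or fully reprove Urech's trichotomy for solvable groups (which is where the loxodromic, Halphen, and torsion cases are genuinely handled); (iii) treat the bounded case by Enriques--Umemura. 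At that point you have reconstructed the paper's proof rather than found an alternative to it.
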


Recall that by definition $\Jonq$ is the group of birational transformations preserving the pencil of lines through $ [1:0:0] \in \p^2$. An element, resp.\ a subgroup, of $\bp$, is conjugate to an element, resp.\ a subgroup, of $\Jonq$ if and only if it preserves a {\it rational fibration}. In our text a rational fibration denotes what is often called  a rational fibration with rational fibres. For the sake of clarity we include the following complete definition.

\begin{definition} \label{definition: rational fibration}
\begin{enumerate}
\item
A rational fibration of $\p^2$ is a rational map $\pi \colon \p^2 \dasharrow  \p^1$ whose generic fibre is birational to $\p^1$. By Tsen's lemma, this is equivalent to saying that the element $\pi \in \C (x,y)$ is the coordinate of a Cremona transformation, i.e.\ there exists a rational map $\pi' \colon \p^2 \dasharrow  \p^1$ such that the rational map $(\pi,\pi') \colon \p^2 \dasharrow \p^1 \times \p^1$ is birational.
\item \label{preserved rational fibration}
The rational fibration $\pi \colon \p^2 \dasharrow  \p^1$ is preserved by the Cremona transformation $\alpha \in \bp$ if there exists an automorphism $\beta \in \Aut (\p^1)$ such that the following diagram is commutative:
\[\xymatrix{
\p^2 \ar@{-->}[d]_{\pi} \ar@{-->}[r]^{\alpha} & \p^2  \ar@{-->}[d]^{\pi} \\
\p^1  \ar[r]^{\beta} &  \p^1}\]
Equivalently, there exists a Cremona transformation $\varphi =  [ \varphi_1 : \varphi_2 : \varphi_3 ] \in \bp $ such that:
\begin{enumerate}
\item $\pi =  [ \varphi_2 : \varphi_3 ] $;
\item $\varphi  \alpha \varphi^{-1} \in \Jonq$.
\end{enumerate}
\item
Two rational fibrations $\pi, \pi'  \colon \p^2 \dasharrow  \p^1$ are called equivalent if there exists an automorphism $\beta \in \Aut (\p^1)$ such that $\pi' = \beta \pi$.
\item
We say that the rational fibration  $\pi \colon \p^2 \dasharrow  \p^1$ is the only rational fibration preserved by $\alpha \in \bp$ if it is preserved by $\alpha$ and if all rational fibrations preserved are equivalent to $\pi$.
\end{enumerate}
\end{definition}

The following lemma should not come as a surprise:

\begin{lemma}
Any countable closed subset of $\bp$ is discrete.
\end{lemma}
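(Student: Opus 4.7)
The plan is to first show that each ``slice'' $F_d := F \cap \bp_d$ is finite, and then deduce discreteness of $F$. The deduction is straightforward: singletons in $\bp_d$ are closed by the Example following Lemma~\ref{lemma:bridge-to-an-algebraic-variety}, so any finite subset of $\bp_d$ is closed. Hence, for any $\varphi \in F$, the set $F \setminus \{\varphi\}$ has finite, and therefore closed, intersection with each $\bp_d$. By Lemma~\ref{lemma:description-of-the-topology-of-Bir(Pn)-as-an-inductive-limit} this means $F \setminus \{\varphi\}$ is closed in $\bp$, so $\{\varphi\}$ is open in $F$ and $F$ is discrete.

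To prove that $F_d$ is finite, I would consider the preimage $\FF := \pi_d^{-1}(F_d)$, which by Lemma~\ref{lemma:bridge-to-an-algebraic-variety} is closed in the algebraic variety $\bpGothic_d$ and hence is itself an algebraic subvariety. Let $C_1, \ldots, C_s$ be its finitely many irreducible components. Since $F_d = \pi_d(C_1) \cup \cdots \cup \pi_d(C_s)$, it suffices to show that each $\pi_d(C_i)$ is a single point. Fix $i$. As each $\{\psi\}$ is closed in $\bp_d$ by the Example, each fibre $\pi_d^{-1}(\psi)$ is closed in $\bpGothic_d$, and so the disjoint decomposition
\[ C_i = \bigsqcup_{\psi \in F_d} \bigl( C_i \cap \pi_d^{-1}(\psi) \bigr) \]
expresses the irreducible $\C$-variety $C_i$ as a countable disjoint union of Zariski-closed subsets.

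The key observation is that, as soon as $\dim C_i \geq 1$, such a decomposition is impossible unless exactly one summand equals $C_i$ and all the others are empty: equipping $C_i$ with its complex analytic topology makes it a locally compact Hausdorff space and thus a Baire space, while any proper Zariski-closed subset of an irreducible $\C$-variety is nowhere dense in the analytic topology. This forces $C_i$ to be either a single point or contained in a single fibre, so $\pi_d(C_i)$ is a point in either case, and $F_d$ is finite as required. The main obstacle is this Baire-category step (one could equivalently invoke Noether normalisation and induction, reducing to the uncountability of $\C$); the rest of the proof is topological bookkeeping based on the description of the Zariski topology of $\bp$ via the maps $\pi_d$.
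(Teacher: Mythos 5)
Your proof is correct and follows essentially the same route as the paper's: both arguments reduce the statement to the finiteness of each slice $F \cap \bp_d$ by pulling back along $\pi_d$ to the variety $\bpGothic_d$, and both ultimately rest on the fact that a complex variety cannot be covered by countably many proper closed subvarieties (the paper phrases this as the stationarity, over the uncountable field $\C$, of an increasing union of closed subvarieties, where you invoke Baire category on the irreducible components). The only difference is bookkeeping: the paper writes $F\cap\bp_d$ as an increasing union of finite subsets and pulls back, whereas you decompose the irreducible components of $\pi_d^{-1}(F\cap\bp_d)$ into the fibres of $\pi_d$.
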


\begin{proof}
Let $C$ be such a  countable closed subset. We want to prove that any subset  $C' \subseteq C$ is closed in $C$. By Lemma~\ref{lemma:description-of-the-topology-of-Bir(Pn)-as-an-inductive-limit}, this is equivalent to proving that $C' \cap \bp_d$ is closed in $ \bp_d$ for any $d \ge 1$. Therefore, it is sufficient to prove that $C_d:= C \cap \bp_d$ is finite. Writing $C_d = \bigcup_{n \ge 1} F_n$ as an increasing union of finite subsets, we get $(\pi_d)^{-1}(C_d)=$ $\bigcup_{n \ge 1} (\pi_d)^{-1}(F_n)$. Since $(\pi_d)^{-1}(C_d)$ and $(\pi_d)^{-1}(F_n)$, $n \ge 1$, are closed subvarieties of $\bpGothic_d$ and since the ground field $\C$ is uncountable, this proves that the increasing union is stationary, i.e.\ $(\pi_d)^{-1}(C_d) =   (\pi_d)^{-1}(F_n)$ for some $n$, proving that $C_d = F_n$ is finite.
\end{proof}

\begin{corollary} \label{corollary:closed-connected-countable}
Any closed connected and countable subgroup of $\bp$ is trivial.
\end{corollary}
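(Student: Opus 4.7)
The plan is to deduce this immediately from the preceding lemma. Let $G$ be a closed connected countable subgroup of $\bp$. By the previous lemma, $G$ is discrete as a topological subspace of $\bp$, meaning that every subset of $G$ is closed in $G$ (equivalently, every singleton $\{g\} \subseteq G$ is open in $G$).

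Now I invoke the standard topological fact that a discrete space is connected if and only if it has at most one point: if $g \in G$ were distinct from the identity, then $\{g\}$ and $G \setminus \{g\}$ would both be open (the latter being a union of singletons), giving a separation of $G$ into two nonempty disjoint open subsets, contradicting connectedness. Since $G$ is a subgroup, it contains the identity element, so we conclude $G = \{ \id \}$.

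The only conceivable obstacle is verifying that "discrete" in the previous lemma genuinely means the induced topology is discrete, but the lemma's proof explicitly shows that every subset $C' \subseteq C$ is closed in $C$, which is precisely this notion. Hence the argument is a one-line deduction and I do not expect any serious difficulty.
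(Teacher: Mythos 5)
Your argument is exactly the intended one: the paper states this as an immediate corollary of the preceding lemma (a countable closed subset is discrete) and gives no further proof, relying on the standard fact that a nonempty connected discrete space is a single point. Your deduction is correct and matches the paper's approach.
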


The following result is for example proven in \cite[Proposition 9.3.1]{FurterKraft2018}:

\begin{lemma} \label{lemma:torsion-countable}
Any torsion subgroup of a linear algebraic group is countable.
\end{lemma}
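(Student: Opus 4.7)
The plan is to embed the linear algebraic group $G$ into $\GL_n(\C)$ for some $n$ (which is possible by the very definition of a linear algebraic group) and then to prove the statement for an arbitrary torsion subgroup $H$ of $\GL_n(\C)$.

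First, I would invoke the classical Jordan--Schur theorem, which asserts that any torsion subgroup $H \subseteq \GL_n(\C)$ contains a normal abelian subgroup $A \triangleleft H$ of index at most $j(n)$, for some constant $j(n)$ depending only on $n$. Since $H/A$ is finite, the question of countability of $H$ reduces immediately to the countability of $A$.

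Second, I would exploit the fact that we are in characteristic zero: every element of finite order in $\GL_n(\C)$ is semisimple (diagonalizable, with roots of unity as eigenvalues). Hence $A$ consists of pairwise commuting diagonalizable matrices, and such a family is simultaneously diagonalizable. After conjugation, we may therefore assume $A \subseteq (\C^*)^n$. Since $A$ is torsion, we actually have $A \subseteq (\mu_\infty)^n$, where $\mu_\infty := \bigcup_{k\geq 1}\{\zeta \in \C^* : \zeta^k = 1\}$ is the group of roots of unity in $\C$. As $\mu_\infty$ is a countable union of finite groups, it is countable, hence so is $(\mu_\infty)^n$, and thus $A$ is countable. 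Combined with $[H:A] < \infty$, this gives the desired countability of $H$ and therefore of any torsion subgroup of $G$.

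The main obstacle is the Jordan--Schur theorem itself, which is already a nontrivial classical result: it combines Jordan's theorem on finite subgroups of $\GL_n(\C)$ with Schur's theorem that every finitely generated torsion subgroup of $\GL_n(\C)$ is finite, together with an inverse-limit type argument to glue the finite-index abelian subgroups through the directed system of finitely generated subgroups of $H$. If one wished to avoid quoting Jordan--Schur, an alternative route would be to use only Schur's theorem (to conclude $H$ is locally finite), pass to the Zariski closure $\overline{H}$ in $\GL_n(\C)$, reduce to $H \cap \overline{H}^\circ$ of finite index, and analyse this torsion subgroup of a connected linear algebraic group via its Levi decomposition and the structure of maximal tori; but since this also relies on classical structure theory, the Jordan--Schur route is the cleanest.
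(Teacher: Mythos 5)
Your proof is correct. Note that the paper does not actually prove this lemma; it simply cites \cite[Proposition 9.3.1]{FurterKraft2018}, so there is no in-text argument to compare against, and your self-contained proof is a welcome substitute. The two ingredients you use are both sound: the Jordan--Schur theorem does reduce the problem to a normal abelian subgroup $A$ of finite index, and the reduction of $A$ to a subgroup of $(\mu_\infty)^n$ works because an arbitrary (possibly infinite) commuting family of diagonalizable endomorphisms of $\C^n$ is simultaneously diagonalizable, and torsion elements of $\GL_n(\C)$ are semisimple with root-of-unity eigenvalues. One small point worth making explicit: the semisimplicity of torsion elements, and hence the whole statement, genuinely uses characteristic zero --- over an algebraically closed field $\K$ of positive characteristic the additive group $\K^n$ is an uncountable torsion subgroup of a linear algebraic group whenever $\K$ is uncountable --- but since the paper works over $\C$ throughout, this is consistent with the intended reading of the lemma. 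Your closing remark about an alternative route via Schur's local finiteness plus structure theory is also reasonable, though as you say it is no more elementary; quoting Jordan--Schur is the cleanest path.
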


Our proof of the next result relies on Urech's paper \cite{Urech2018}:

\begin{lemma}  \label{lemma:jonq-or-countable-or-bounded}
Each solvable subgroup $G$ of $\bp$ satisfies one of the following assertions:
\begin{enumerate}
\item $G$ is conjugate to a subgroup of $\Jonq$;
\item $G$ is countable;
\item $G$ is bounded.
\end{enumerate}
\end{lemma}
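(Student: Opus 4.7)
The plan is to use Cantat's dynamical trichotomy for elements of $\bp$: every $f \in \bp$ is either \emph{elliptic} (the sequence $(\deg(f^n))_{n \in \N}$ is bounded), \emph{parabolic} (polynomial growth), or \emph{loxodromic} (exponential growth). I would split the analysis of $G$ into three cases according to which of these types occur, exploiting throughout the action of $\bp$ on the Picard--Manin hyperbolic space $\H^{\infty}$, where elliptic elements fix a point, parabolic elements fix a unique boundary point, and loxodromic elements have a unique invariant geodesic axis.

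First I would dispose of the case where $G$ contains a loxodromic element $g$. A standard ping-pong argument on $\H^{\infty}$ shows that two loxodromic elements of $\bp$ with disjoint axes generate a non-abelian free subgroup. Since $G$ is solvable, every loxodromic element of $G$ must therefore share the axis of $g$, so $G$ is contained in the stabiliser of this axis, which is virtually cyclic by results of Cantat and Blanc--Cantat used in Urech's paper. Thus $G$ is countable, giving conclusion~(2).

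Next I would treat the case where $G$ contains no loxodromic element but contains a parabolic element $f$. Such an $f$ preserves a unique pencil $\pi$ of rational or elliptic curves, corresponding to its unique fixed boundary point of $\H^{\infty}$. Solvability plus the absence of loxodromic elements forces every element of $G$ to preserve $\pi$: otherwise some conjugate of $f$ would fix a different boundary point and a short ping-pong argument would produce a loxodromic in $G$. If $\pi$ is a rational fibration then $G$ is conjugate into $\Jonq$, giving~(1). If instead $\pi$ is an elliptic fibration, the induced actions on the base $\p^1$ and on the generic elliptic fibre both have countable image, and the kernel of the base action lies inside the Mordell--Weil group of the elliptic surface; its solvable subgroups are countable, so $G$ is countable, giving~(2).

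Finally, if every element of $G$ is elliptic, I would invoke Urech's theorem that a subgroup of $\bp$ consisting only of elliptic elements is either bounded in degree (giving~(3)) or preserves a rational fibration (hence conjugate into $\Jonq$, giving~(1)). The main obstacle in the whole argument is the parabolic/elliptic-fibration sub-case: establishing that a solvable subgroup preserving an elliptic fibration with no loxodromic element must be countable requires careful use of both the boundedness of the automorphism group of a generic elliptic fibre and the solvable structure of the Mordell--Weil group of the fibration; the other two cases ultimately reduce to invoking classification results already present in the literature.
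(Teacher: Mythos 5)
Your strategy is genuinely different from the paper's. The paper does essentially no dynamics itself: it invokes Urech's classification of solvable subgroups of $\bp$ (\cite[Theorem~8.1]{Urech2018}), which already yields the case division (de Jonqui\`eres, countable, automorphisms of a Halphen surface, group of elliptic elements), and then disposes of the last two cases by quoting the countability of automorphism groups of Halphen surfaces and, for groups of elliptic elements, Urech's trichotomy together with his embedding of torsion subgroups into $\GL_{48}(\C)$ and Lemma~\ref{lemma:torsion-countable}. You try instead to re-derive the case division from the elliptic/parabolic/loxodromic trichotomy on the Picard--Manin space. Your parabolic/Halphen case matches the paper's in outline and is fine, but each of the two remaining cases contains a genuine gap.

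In the loxodromic case, the claim that the stabiliser of the axis of a loxodromic element $g$ is virtually cyclic is false. Take $g=(x^2y,xy)$ and let $T=(\C^*)^2$ be the diagonal torus; then $T$ is normalised by $g$, so $T\rtimes\langle g\rangle$ is a solvable subgroup containing the loxodromic element $g$, and by your own first step it must preserve $\Ax(g)$. Hence the stabiliser of that axis contains the uncountable group $T$. In general the axis stabiliser only surjects onto a discrete subgroup of $\mathrm{Isom}(\R)$, with kernel a group of elliptic elements normalised by $g$, and that kernel can be an algebraic torus; this ``toric'' alternative is exactly the delicate case in the D\'eserti--Urech classification of solvable subgroups, and your argument does not address it, so you cannot conclude countability there. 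In the purely elliptic case, you quote Urech's theorem as a dichotomy (bounded or de Jonqui\`eres), but it is a trichotomy: the third alternative is that $G$ is a torsion group, and handling it requires the further inputs that torsion subgroups of $\bp$ consisting of elliptic elements embed into $\GL_{48}(\C)$ and that torsion subgroups of linear algebraic groups are countable (Lemma~\ref{lemma:torsion-countable}). Without these, uncountable torsion groups of elliptic elements are unaccounted for.
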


\begin{proof}
By \cite[Theorem 8.1, page 25]{Urech2018}, $G$ satisfies one of the following assertions:
\begin{enumerate}[$($a$)$]
\item \label{G is conjugate to a subgroup of Jonq}
$G$ is conjugate to a subgroup of $\Jonq$;
\item \label{G is countable}
$G$ is countable;
\item \label{G is conjugate to a subgroup of the automorphism group of a Halphen surface}
$G$ is conjugate to a subgroup of the automorphism group of a Halphen surface;
\item \label{G is a subgroup of elliptic elements}
$G$ is a subgroup of elliptic elements.
\end{enumerate}
In case \eqref{G is conjugate to a subgroup of the automorphism group of a Halphen surface}, it is well known that $G$ is countable (see e.g.\ \cite[Theorem 2.4, page 9]{Urech2018}). In case \eqref{G is a subgroup of elliptic elements}, it follows from \cite[Theorem 1.3, page 3]{Urech2018} that $G$ satisfies one of the following assertions (here we use that G preserves a rational fibration if and only if it is conjugate to a subgroup of $\Jonq$):
\begin{enumerate}[$($i$)$]
\item \label{G is conjugate to a subgroup of Jonq -- 2}
$G$ is conjugate to a subgroup of $\Jonq$;
\item \label{G is a bounded subgroup}
$G$ is a bounded subgroup;
\item \label{G is a subgroup of torsion elements}
$G$ is a subgroup of torsion elements.
\end{enumerate}
In case \eqref{G is a subgroup of torsion elements}, it follows from \cite[Theorem 1.5, page 3]{Urech2018} that $G$ is isomorphic to a subgroup of $\GL_{48}(\C)$. Then, $G$ is countable by Lemma~\ref{lemma:torsion-countable}.
\end{proof}

We are now able to prove Theorem~\ref{theorem: a closed connected solvable subgroup of Bir(P2) is conjugate to a subgroup of Jonq}:

\begin{proof}[Proof of Theorem~\ref{theorem: a closed connected solvable subgroup of Bir(P2) is conjugate to a subgroup of Jonq}]
Let $G$ be a closed connected solvable subgroup of $\bp$. By Lemma~\ref{lemma:jonq-or-countable-or-bounded} and Corollary~\ref{corollary:closed-connected-countable}, we may assume that $G$ is a bounded subgroup. Therefore, $G$ is an algebraic group (see Remark~\ref{remark: an algebraic subgroup of Bir(Pn) is a bounded closed subgroup}). It follows from Enriques theorem (see \cite[Theorem (2.25), page 238]{Umemura1982b} and also \cite[Proposition (2.18), page 233]{Umemura1982b}) that any connected algebraic subgroup of $\bp$ is contained in a maximal connected algebraic subgroup, and that, up to conjugation, any maximal connected algebraic subgroup of $\bp$ is one of the following subgroups:
\begin{enumerate}
\item \label{The group Aut (p^2)}
The group $\Aut (\p^2) \simeq \PGL_3$;
\item \label{The group Aut^{0}(F_n)}
The connected component $\Aut^{\circ} (\F_n)$ of the automorphism group of the $n$-th Hirzebruch surface $\F_n$ where $n$ is a nonnegative integer different from $1$ (because $\F_n$ needs to be a minimal surface).
\end{enumerate}
In case \eqref{The group Aut (p^2)}, recall that any  closed connected  solvable subgroup of $\PGL_3$ is contained in a Borel subgroup of $\PGL_3$, that such a Borel subgroup is conjugate to the subgroup of upper triangular matrices, and that this latter group is contained in $\Jonq$. In case~\eqref{The group Aut^{0}(F_n)}, it is enough to note that all groups $\Aut^{\circ} (\F_n)$ are already conjugate to subgroups of $\Jonq$.
\end{proof}

\section{Any Borel subgroup of $\Jonq$ is conjugate to a subgroup of $\PGL_2 ( \C (y) )  \rtimes \Aff_1 $} \label{section: Any Borel subgroup of Jonq is conjugate to a Borel subgroup of PGL_2(C(y)) rtimes Aff_1}

The aim of this short section is to prove the following easy result:

\begin{theorem} \label{theorem: any closed connected solvable of Jonq is conjugate to a subgroup of PGL_2(C(y)) rtimes Aff_1}
Any closed connected solvable subgroup of $\Jonq$ is conjugate to a subgroup of $\PGL_2 ( \C(y) ) \rtimes \Aff_1$ by an element of $ \{ 1 \} \rtimes \PGL_2  \subseteq \Jonq$. In particular, any Borel subgroup of $\Jonq$ is conjugate to a Borel subgroup of $\PGL_2 ( \C(y) ) \rtimes \Aff_1$ by an element of $ \{1\}\rtimes\PGL_2 \subseteq \Jonq$.
\end{theorem}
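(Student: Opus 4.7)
The plan is to project $G$ onto the second factor of $\Jonq = \PGL_2(\C(y))\rtimes\PGL_2$, apply the classical Lie--Kolchin theorem for $\PGL_2$, and then pull back the resulting conjugating element. Write $\pr_2\colon\Jonq\twoheadrightarrow\PGL_2$ for the quotient homomorphism, so that $\pr_2^{-1}(\Aff_1)=\PGL_2(\C(y))\rtimes\Aff_1$. For any $h\in\PGL_2\subseteq\Jonq$, the fact that $\pr_2$ is a group homomorphism with $\pr_2(h)=h$ yields $\pr_2(hGh^{-1})=h\,\pr_2(G)\,h^{-1}$; hence it is enough to find $h\in\PGL_2$ with $h\,\pr_2(G)\,h^{-1}\subseteq\Aff_1$, as this forces $hGh^{-1}\subseteq\pr_2^{-1}(\Aff_1)=\PGL_2(\C(y))\rtimes\Aff_1$.

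First I would verify that $\pr_2\colon\Jonq\to\PGL_2$ is continuous when $\Jonq$ carries the subspace topology inherited from $\bp$ and $\PGL_2$ its usual Zariski topology as a linear algebraic group. By Definition~\ref{definition: morphism to Bir(W)} this reduces to showing that every morphism $A\to\Jonq\subseteq\bp$ induces a regular map $A\to\PGL_2$; this is transparent from the shape $(x,y)\dasharrow(f_a(x,y),g_a(y))$ of an element of $\Jonq$, as the $y$-component $g_a\in\PGL_2$ depends regularly on $a\in A$. Granted this, the image $\pr_2(G)$ is a solvable subgroup of $\PGL_2$, and its Zariski closure $H\subseteq\PGL_2$ is closed, connected (closure of a continuous image of a connected set), and still solvable (standard closure argument for the derived series; see also Lemmas~\ref{lemma: DG and (mathcal-D)G}\eqref{the derivative of the closure is contained in the closure of the derivative} and \ref{lemma: D^kG and (mathcal-D)^kG} applied inside the linear algebraic group $\PGL_2$).

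By the classical Borel/Lie--Kolchin theorem applied to $\PGL_2$, the subgroup $H$ is contained in a Borel subgroup, and every Borel subgroup of $\PGL_2$ is $\PGL_2$-conjugate to $\Aff_1$. Choosing $h\in\PGL_2$ with $hHh^{-1}\subseteq\Aff_1$ and invoking the reduction above proves the first assertion. For the ``in particular'' part, suppose $G$ is a Borel subgroup of $\Jonq$. Then $hGh^{-1}$ is again a Borel subgroup of $\Jonq$ lying in the closed subgroup $\pr_2^{-1}(\Aff_1)=\PGL_2(\C(y))\rtimes\Aff_1$; any closed connected solvable subgroup of $\PGL_2(\C(y))\rtimes\Aff_1$ containing $hGh^{-1}$ would also be closed connected solvable in $\Jonq$, forcing equality by maximality. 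Hence $hGh^{-1}$ is a Borel subgroup of $\PGL_2(\C(y))\rtimes\Aff_1$.

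The single delicate point in this strategy is the continuity of $\pr_2$ for the Demazure--Serre topology on $\bp$; once this is granted, everything reduces to the classical Lie--Kolchin theorem on the finite-dimensional group $\PGL_2$, and the conjugating element is automatically produced inside the $\PGL_2$-factor of $\Jonq$, as required by the statement.
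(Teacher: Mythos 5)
Your proposal is correct and follows essentially the same route as the paper: project to the $\PGL_2$-factor, use continuity of $\pr_2$ to see that the closure of the image is a closed connected solvable subgroup of $\PGL_2$, and conjugate it into $\Aff_1$ by an element of the $\PGL_2$-factor. The one point you flag as delicate, the continuity of $\pr_2$ for the Demazure--Serre topology, is exactly the paper's Lemma~\ref{lemma: Jonq is closed and pr_2 is continuous}\eqref{pr_2 is continuous}, which is proved there by a genuine (if short) incidence-variety argument in $\bpGothic_d\times\LL$ rather than being ``transparent''; your reduction to showing that every morphism $A\to\bp$ with image in $\Jonq$ induces a regular map $A\to\PGL_2$ is the right one, but that last step is precisely where the work lies.
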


\begin{proof}
Let $G$ be a closed connected solvable subgroup of $\Jonq$. Let 
\[ \pr_2 \colon \Jonq = \PGL_2 ( \C (y) ) \rtimes \PGL_2   \to \PGL_2\]
be the second projection. Since $\pr_2$ is a morphism of groups, the image $\pr_2(G)$ is a solvable subgroup of $\PGL_2 $ and since $\pr_2$ is continuous (Lemma~\ref{lemma: Jonq is closed and pr_2 is continuous}\eqref{pr_2 is continuous}) $\pr_2(G)$ is moreover connected. It follows that $\overline{ \pr_2(G) }$ is a closed connected solvable subgroup of $ \PGL_2$. Up to conjugation we may assume that it is contained in the  subgroup of upper triangular matrices of $\PGL_2$.
\end{proof}

Theorems~\ref{theorem: a closed connected solvable subgroup of Bir(P2) is conjugate to a subgroup of Jonq} and \ref{theorem: any closed connected solvable of Jonq is conjugate to a subgroup of PGL_2(C(y)) rtimes Aff_1} directly give the following result:

\begin{theorem} \label{theorem: any closed connected solvable subgroup of Bir(P2) is conjugate to a subgroup of PGL_2(C(y)) rtimes Aff_1}
Any closed connected solvable subgroup of $\bp$ is conjugate to a subgroup of  $\PGL_2 (\C(y) ) \rtimes \Aff_1$. In particular, any Borel subgroup of $\bp$ is conjugate to a Borel subgroup of $\PGL_2 (\C(y) ) \rtimes \Aff_1$.
\end{theorem}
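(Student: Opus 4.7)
The plan is to chain the two previous theorems together. Let $G$ be a closed connected solvable subgroup of $\bp$. First I would apply Theorem~\ref{theorem: a closed connected solvable subgroup of Bir(P2) is conjugate to a subgroup of Jonq} to find $\varphi \in \bp$ such that $G_1 := \varphi G \varphi^{-1} \subseteq \Jonq$. Since conjugation in $\bp$ is a homeomorphism (the map $g \mapsto \varphi g \varphi^{-1}$ is continuous in both directions), $G_1$ is again closed connected solvable, this time viewed as a subgroup of $\Jonq$ (which is closed in $\bp$ by Lemma~\ref{lemma: Jonq is closed and pr_2 is continuous}, so the two possible notions of ``closed'' for $G_1$ coincide).

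Next I would apply Theorem~\ref{theorem: any closed connected solvable of Jonq is conjugate to a subgroup of PGL_2(C(y)) rtimes Aff_1} to obtain $\psi \in \PGL_2 \subseteq \Jonq$ with $\psi G_1 \psi^{-1} \subseteq \PGL_2(\C(y)) \rtimes \Aff_1$. Setting $\eta := \psi \varphi$ we get $\eta G \eta^{-1} \subseteq \PGL_2(\C(y)) \rtimes \Aff_1$, which proves the first assertion.

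For the ``in particular'' part, suppose $B$ is a Borel subgroup of $\bp$. By what was just shown, some conjugate $B' := \eta B \eta^{-1}$ lies inside $\PGL_2(\C(y)) \rtimes \Aff_1$, and $B'$ is still a Borel subgroup of $\bp$ (maximality is preserved under conjugation). It remains to check that $B'$ is a Borel subgroup of the smaller group $H := \PGL_2(\C(y)) \rtimes \Aff_1$. If $H'$ is any closed connected solvable subgroup of $H$ with $B' \subseteq H' \subseteq H$, then $H'$ is in particular a closed connected solvable subgroup of $\bp$ (using that $H$ is a closed subgroup of $\bp$, so closedness in $H$ coincides with closedness in $\bp$), so maximality of $B'$ in $\bp$ forces $H' = B'$. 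Hence $B'$ is a Borel subgroup of $H$, completing the proof.

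There is no real obstacle: the theorem is a formal corollary of the two preceding results, the only point worth checking carefully being the compatibility of the closedness notions, which is handled by Lemma~\ref{lemma: Jonq is closed and pr_2 is continuous}\eqref{Jonq is closed in Bir(P2)} (giving that $\Jonq$, and hence the even smaller $\PGL_2(\C(y)) \rtimes \Aff_1$ sitting closedly inside $\Jonq$, is closed in $\bp$).
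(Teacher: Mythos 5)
Your proposal is correct and is exactly the paper's argument: the paper states that the result "directly" follows from Theorem~\ref{theorem: a closed connected solvable subgroup of Bir(P2) is conjugate to a subgroup of Jonq} and Theorem~\ref{theorem: any closed connected solvable of Jonq is conjugate to a subgroup of PGL_2(C(y)) rtimes Aff_1}, which is precisely the chaining you carry out. Your extra care about the compatibility of the closedness notions and the transfer of maximality for the Borel statement fills in details the paper leaves implicit, and does so correctly.
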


\section{$\K$-Borel subgroups of $\PGL_2 (\K)$} \label{section: K-Borel subgroups of PGL(2,K)}

In this section, $\K$ denotes a field of characteristic zero. For the sake of clarity, we begin with a classical definition and a well-known lemma.

\begin{definition} \label{definition: the K-topology}
Let $V$ be an affine variety defined over $\K$. The Zariski $\K$-topology on $V ( \K)$ (or for short, the $\K$-topology) is the topology for which a subset is closed if it is the zero set of some collection of elements of the affine algebra $\K [ V ]$.
\end{definition}

\begin{lemma} \label{lemma: field restriction on varieties}
For any field extension $\K \subseteq \K'$ and any affine variety $V$ defined over $\K$, the $\K$-topology on $V (\K)$ coincides with the topology induced by the $\K'$-topology on $V ( \K ')$.
\end{lemma}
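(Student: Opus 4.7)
The plan is to unpack both topologies in terms of vanishing sets of regular functions and then show the two inclusions of closed sets explicitly. Write the affine algebra of $V$ as $\K[V]$, so that the $\K$-closed subsets of $V(\K)$ are by definition the sets of the form $Z_{\K}(S) := \{ v \in V(\K) : f(v) = 0 \text{ for all } f \in S \}$ with $S \subseteq \K[V]$. The affine algebra of $V_{\L} := V \times_{\Spec\K} \Spec\L$ is $\L[V] = \K[V] \otimes_{\K} \L$, so the $\L$-closed subsets of $V(\L)$ are the sets $Z_{\L}(T) := \{ v \in V(\L) : g(v) = 0 \text{ for all } g \in T \}$ with $T \subseteq \L[V]$. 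The induced topology on $V(\K) \subseteq V(\L)$ has closed sets the intersections $Z_{\L}(T) \cap V(\K)$.

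The easy inclusion is that every $\K$-closed set is induced-closed: given $S \subseteq \K[V]$, one has $S \subseteq \L[V]$ as well, and evidently $Z_{\L}(S) \cap V(\K) = Z_{\K}(S)$.

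For the other direction, I would fix a $\K$-basis $(e_i)_{i \in I}$ of $\L$ (containing $1$), and for each $g \in \L[V]$ write $g = \sum_{i \in I} e_i g_i$ with $g_i \in \K[V]$ and only finitely many $g_i$ nonzero. The key observation is that for a $\K$-point $v \in V(\K)$ one has $g_i(v) \in \K$ for every $i$, so the equality $g(v) = \sum_i e_i g_i(v) = 0$ in $\L$ is equivalent, by $\K$-linear independence of $(e_i)$, to $g_i(v) = 0$ for all $i$. Consequently, setting $S_g := \{ g_i : i \in I \} \subseteq \K[V]$, one has the set-theoretic equality
\[ \{ v \in V(\K) : g(v) = 0 \} = Z_{\K}(S_g). \]
Applying this to every $g$ in a defining family $T \subseteq \L[V]$ for an $\L$-closed set and taking intersections yields
\[ Z_{\L}(T) \cap V(\K) = Z_{\K}\!\left( \bigcup_{g \in T} S_g \right), \]
which is $\K$-closed. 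This proves the reverse inclusion and hence the equality of the two topologies.

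I do not anticipate a genuine obstacle here; the only point requiring mild care is that $\L$ may be infinite-dimensional over $\K$, so the decomposition $g = \sum_i e_i g_i$ must be justified as a finite sum for each individual $g$ (which is automatic from the definition of a basis), and the family $\bigcup_{g \in T} S_g$ is allowed to be infinite, which is harmless since the $\K$-topology admits arbitrary intersections of closed sets.
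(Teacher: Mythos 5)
Your proof is correct and follows essentially the same route as the paper's: both handle the easy inclusion by viewing a $\K$-defining family inside $\L[V]$, and both prove the converse by choosing a $\K$-basis $(e_i)_{i\in I}$ of $\L$, decomposing each $g\in\L[V]=\bigoplus_i \K[V]e_i$ as $g=\sum_i e_i g_i$, and using linear independence over $\K$ to replace the vanishing of $g$ at a $\K$-point by the vanishing of all the $g_i$. Your extra remarks on finiteness of the decomposition and arbitrary intersections are fine but not needed beyond what the paper already implicitly uses.
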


\begin{proof}
It is clear that any $\K$-closed subset of $V (\K)$ is the trace of a $\K'$-closed subset of $V ( \K' )$. Conversely, choose any basis $(e_i)_{i \in I}$ of $\K'$ over $\K$. We have $\K' = \bigoplus_{i \in I} \K e_i$ and $\K' [V ] =  \bigoplus_{i \in I} \K [V ] e_i$. Hence, if $Z \subseteq V (\K)$ is the zero set of some collection $(f_j)_{j \in J}$ of elements $f_j \in \K' [V]$, then it is also the zero set of the collection $( f_{ij} ) _{(i,j) \in I \times J}$ where each $f_j$ is decomposed as $f_j = \sum_{i \in I} f_{ij} e_i$, $f_{ij} \in \K [V]$.
\end{proof}

\begin{definition} \label{definition: K-Borel subgroup of G(K)}
Let $G$ be a linear algebraic group defined over $\K$. We will say that a subgroup $B$ of  $G (\K)$ is a $\K$-Borel subgroup if it is a maximal closed connected solvable subgroup of $G (\K)$ for the $\K$-topology.
\end{definition}

This notion is not to be confused with the classical notion of a Borel subgroup of $G$ defined over $\K$  (or equivalently an algebraic $\K$-Borel subgroup of $G$) which we now recall:  This is  a $\K$-closed subgroup $B$ of $G$ such that $B (\overline{\K})$ is a maximal $\overline{\K}$-closed connected solvable subgroup of $G(\overline{\K})$. We will not use this classical notion at all in our text. One reason for  studying $\K$-Borel subgroups rather than algebraic $\K$-Borel subgroups will become apparent in Theorem~\ref{theorem : the Borel subgroups of PGL(2,C(y))}.

We will prove in Theorem~\ref{theorem: K-Borel subgroups of PGL(2,K)} that if $f$ is a nonsquare element of $\K$, then the group 
\[ \T_f :=  \left\{ \smat{a }{bf}{b}{a}, \; a,b \in \K, \;  (a,b) \neq (0,0) \right\} \]
is a $\K$-Borel subgroup of $\PGL_2 ( \K )$.

\begin{proposition} \label{proposition: T_f is isomorphic to K [ sqrt{f} ]^* / K^*}
If $f$ is a nonsquare element of $\K$, then the group 
\[ \T_f =  \left\{ \smat{a }{bf}{b}{a}, \; a,b \in \K, \;  (a,b) \neq (0,0) \right\} \]
is abstractly isomorphic to the group $\K [ \sqrt{f} ]^* / \K^*$.
\end{proposition}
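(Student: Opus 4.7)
The plan is to realize $\T_f$ as the image of the regular representation of the quadratic extension $\K[\sqrt{f}]$, acting on itself by left multiplication and then projectivized.

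First, since $f$ is a nonsquare element of $\K$, the polynomial $x^2 - f$ is irreducible over $\K$, so $\K[\sqrt{f}] = \K[x]/(x^2 - f)$ is a field extension of $\K$ of degree $2$, admitting $\{1, \sqrt{f}\}$ as a $\K$-basis. The multiplication rule is $(a+b\sqrt{f})(c+d\sqrt{f}) = (ac+bdf) + (ad+bc)\sqrt{f}$, so the $\K$-linear map ``left multiplication by $a+b\sqrt{f}$'' sends $1 \mapsto a + b\sqrt{f}$ and $\sqrt{f} \mapsto bf + a\sqrt{f}$. With respect to the chosen basis its matrix is precisely $\smat{a}{bf}{b}{a}$. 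This yields an injective $\K$-algebra homomorphism $\rho \colon \K[\sqrt{f}] \hookrightarrow \Mat_2(\K)$.

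Restricting $\rho$ to units, since $\K[\sqrt{f}]$ is a field, every nonzero element $a+b\sqrt{f}$ is invertible, and its image $\smat{a}{bf}{b}{a}$ has determinant $a^2 - b^2 f \neq 0$ (this is the field norm of a nonzero element). Hence $\rho$ induces an injective group homomorphism $\K[\sqrt{f}]^* \hookrightarrow \GL_2(\K)$. Composing with the canonical projection $\GL_2(\K) \twoheadrightarrow \PGL_2(\K)$, we obtain a group homomorphism $\overline{\rho} \colon \K[\sqrt{f}]^* \to \PGL_2(\K)$ whose image is exactly $\T_f$: by definition, every element of $\T_f$ is the class of some $\smat{a}{bf}{b}{a}$ with $(a,b) \neq (0,0)$, which is $\overline{\rho}(a + b\sqrt{f})$.

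It remains to compute the kernel of $\overline{\rho}$. An element $a+b\sqrt{f} \in \K[\sqrt{f}]^*$ lies in $\ker \overline{\rho}$ precisely when $\smat{a}{bf}{b}{a}$ is a scalar matrix, i.e.\ when $b=0$ (noting $bf = 0$ forces $b = 0$ since $f \neq 0$). Thus $\ker \overline{\rho} = \K^*$ embedded as scalars, and the first isomorphism theorem yields $\T_f \cong \K[\sqrt{f}]^*/\K^*$. There is no real obstacle here; the only point meriting care is confirming that $\K[\sqrt{f}]$ is genuinely a field (so that the regular representation lands in $\GL_2$ on nonzero elements), which is exactly what the nonsquare hypothesis on $f$ guarantees.
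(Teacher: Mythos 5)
Your proof is correct and is essentially the paper's own argument read in the opposite direction: the paper identifies $\K[C]$ (where $C=\smat{0}{f}{1}{0}$, so $aI+bC=\smat{a}{bf}{b}{a}$) with $\K[\sqrt{f}]$ via the minimal polynomial $T^2-f$, while you construct the inverse map as the regular representation of $\K[\sqrt{f}]$ in the basis $\{1,\sqrt{f}\}$. Both routes then pass to units modulo scalars, so there is nothing substantively different here.
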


\begin{proof}
Let $\K [ C ]$ be the $\K$-subalgebra of $ \Mat_2 (\K )$ spanned by $C$. The minimal polynomial of the matrix $C:= \smat{0 }{f}{1}{0} \in \Mat_2 (\K )$ being equal to $\mu_C (T) = T^2-f$, we have $\K [ C ] = \{ a I + b C, \; a,b \in \K \}$ and the map
\[ \K [C] \to \K [ \sqrt{f} ], \quad a I + b C \mapsto a + b \sqrt{f}\]
is a $\K$-isomorphism of fields. This map induces the isomorphism of groups
\[  \T_f  \stackrel{\sim}{\longrightarrow} \K [ \sqrt{f} ]^* / \K^* . \qedhere \]
\end{proof}

In the sequel we will often use the distinguished element $\iota_f$ of $\T_f$ that we now define.

\begin{definition} \label{definition: iota_f}
For a nonsquare element $f$ of $\K$, we set $\iota_f := \smat{0 }{f}{1}{0} \in \T_f$.
\end{definition}

A straightforward computation establishes the following result.

\begin{lemma} \label{lemma: iota_f is the unique involution of T_f}
Let $f$ be a nonsquare element of $\K$. Then $\iota_f$ is the unique involution of~$\T_f$.
\end{lemma}

\begin{lemma} \label{lemma: each nontrivial element of T_f is nontriangularisable}
Let $f$ be a nonsquare element of $\K$. Then each nontrivial element of $\T_f$ is nontriangularisable in  $\PGL_2 (\K)$.
\end{lemma}

\begin{proof}
Set $A:= \smat{a}{bf}{b}{a} \in \GL_2 (\K)$ where $a,b \in \K$ and $b \neq 0$. If the class $\overline{A}$ of $A$ in $\PGL_2 (\K)$ was triangularisable, then $A$ should be triangularisable in $\GL_2 (\K)$. However, its characteristic polynomial is $\chi_A (T)  = T^2 -2a T + (a^2 -b^2f) \in \K [T]$ whose discriminant $\Delta = 4 b^2 f$ is not a square. Hence $\chi_A$ does not split over $\K$. A contradiction.
\end{proof}

The following result is the key lemma of this section.

\begin{lemma} \label{lemma: closed connected subgroups of PGL_2(K)}
Let $H$ be a closed connected subgroup of $\PGL_2 (\K)$. Then, up to conjugation, one of the following cases occurs:
\begin{enumerate}
\item \label{H={id}}
$H = \{ \id \}$;
\item \label{H isomorphic to K^*}
$H = \{ \smat{a }{0}{0}{1}, \; a \in \K^*  \} \simeq (\K^*, \times)$;
\item \label{H isomorphic to K}
$H = \{ \smat{1}{a}{0}{1}, \; a \in \K  \} \simeq (\K, +)$;
\item \label{H=T_f}
$H = \T_f$ for some nonsquare element $f \in \K$;
\item \label{H=Aff_1}
$H = \Aff_1 (\K) =  \{ \smat{a}{b}{0}{1}, \; a \in \K^*, \: b \in \K  \}$;
\item \label{H=PGL_2(K)}
$H= \PGL_2 (\K)$.
\end{enumerate}
\end{lemma}

\begin{proof}
Let $p \colon \GL_2 (\K) \to \PGL_2 (\K)$ be the natural surjection. The map $H \mapsto p^{-1} (H)$ induces a bijection between the closed connected subgroups of $\PGL_2 (\K)$ and the closed connected subgroups of $\GL_2 (\K)$ containing the group $\K^* \,  \id$. If $G$ is a linear algebraic group defined over $\K$ whose Lie algebra is denoted ${\mathfrak g}$, recall that each closed connected subgroup $H$ of $G$ is uniquely determined by its Lie algebra ${\mathfrak h} \subseteq {\mathfrak g}$ (but that not every Lie subalgebra of ${\mathfrak g}$ corresponds to an algebraic subgroup of $G$; the Lie algebras corresponding to algebraic subgroups are called algebraic)(see e.g.\ \cite[\S 7, page 105]{Borel1991}). In our case, we want to describe, up to conjugation in $\GL_2 (\K)$, all algebraic Lie subalgebras of $\mathfrak{gl}_2$ containing $\K \,  \id$. Actually, we will show that each Lie subalgebra of $\mathfrak{gl}_2$ containing $\K \,  \id$ is  algebraic, i.e.\ is the Lie algebra of some closed connected subgroup of $\GL_2 (\K)$ containing $\K^* \, \id$. Since $\mathfrak{gl}_2 = \mathfrak{sl}_2 \oplus \K \, \id$ as Lie algebras, our problem amounts to describing up to conjugation all Lie subalgebras ${\mathfrak h}$ of $\mathfrak{sl}_2$.

If $\dim {\mathfrak h} =2$, let's prove that ${\mathfrak h}$ is conjugate by an element of $\GL_2 ( \K)$ to the Lie algebra ${\mathfrak u}$ of upper triangular matrices. Write $\mathfrak{sl}_2 = \Vect (E,F,H)$ where
\[E=\smat{0}{1}{0}{0}, \quad   F=\smat{0}{0}{1}{0}, \quad H= \smat{1}{  \hspace{2mm} 0}{0}{-1},\]
and recall that $[H,E] = 2 E$, $[H,F] = -2 F$, and $[E,F] = H$. It is clear that ${\mathfrak h}$ admits a basis $A,B$ where $A$ is upper triangular, i.e.\ of the form $A= \smat{a}{  \hspace{2mm}b}{0}{-a}$. Up to conjugation and multiplication by an element of $\K^*$, we may even assume that $A$ is either $E$ (if $a=0$) or $H$ (if $a \ne 0$).

If $A= E$, then we may assume that $B =\alpha F + \beta H$ where $\alpha, \beta  \in \K$. We have $[A,B] = \alpha H - 2 \beta E$. This yields $\alpha = 0$ (because otherwise ${\mathfrak h}$ would contain $E,H, F$ and would not be $2$-dimensional) and hence ${\mathfrak h} = \Vect (E,H) = {\mathfrak u}$.

If $A= H$, then we may assume that $B =\alpha E + \beta F$ where $\alpha, \beta  \in \K$. We have $[A,B] = 2 \alpha E - 2 \beta F $. Hence, the matrices $B$ and $[A,B]$ are linearly dependent, i.e.\ $\det \smat{\alpha}{ \hspace{2mm} \beta}{2 \alpha}{- 2 \beta}$  $= 0$, i.e.\ $4 \, \alpha \beta =0$. If $\alpha =0$, we get  ${\mathfrak h} = \Vect (F,H)$ and finally $\smat{0}{1}{1}{0} {\mathfrak h} \smat{0}{1}{1}{0}^{-1} = {\mathfrak u}$. If  $\beta =0$, we get  ${\mathfrak h} = \Vect (E,H) = {\mathfrak u}$. We have actually proven that up to conjugation ${\mathfrak h}$ is always equal to the Lie subalgebra of upper triangular matrices. We are in case \eqref{H=Aff_1}.

If $\dim {\mathfrak h} =1$, then ${\mathfrak h} = \Vect (A)$ for some nonzero matrix $A$ of $\mathfrak{sl}_2$. Setting  ${f:= - \det (A)}$, the characteristic polynomial of $A$ is equal to $\chi_A = T^2 - f$. If $f$ is a nonsquare element, then $\chi_A$ is irreducible, and up to conjugation by an element of $\GL_2 (\K)$, we may assume that $A$ is the companion matrix $\smat{0}{f}{1}{0}$. We are in case~\eqref{H=T_f}. Assume now that $f$ is a square. If $f=0$, then, up to conjugation, we may assume that $A = \smat{0}{1}{0}{0}$. We are in case \eqref{H isomorphic to K}. If $f \neq 0$, then, up to conjugation and multiplication by an element of $\K^*$, we may assume that $A = \smat{1}{ \hspace{2mm} 0}{ 0}{-1}$. We are in case \eqref{H isomorphic to K^*}.
\end{proof}

Our main result is a direct consequence of 
Lemma~\ref{lemma: closed connected subgroups of PGL_2(K)}.

\begin{theorem} \label{theorem: K-Borel subgroups of PGL(2,K)}
Up to conjugation, any $\K$-Borel subgroup of $\PGL_2( \K )$ is equal to either $\Aff_1 (\K  )$ or some $\T_f$, where $f$ is a nonsquare element of $\K$. Moreover, any closed connected solvable subgroup of $\PGL_2 (\K)$ for the $\K$-topology is contained in a $\K$-Borel subgroup.
\end{theorem}

\begin{proposition} \label{proposition: when are Tf and Tg conjugate in PGL(2,K)}
If $f,g \in \K$ are nonsquares, then the following assertions are equivalent:
\begin{enumerate}
\item \label{Tf and Tg are conjugate}
The groups $\T_f$ and $\T_g$ are conjugate;
\item \label{f/g is a square}
The ratio $f / g$ is a perfect square.
\end{enumerate}
\end{proposition}

\begin{proof}
\eqref{Tf and Tg are conjugate} $\Longrightarrow$ \eqref{f/g is a square}. Assume that $\T_f$ and $\T_g$ are conjugate. Then Lemma~\ref{lemma: iota_f is the unique involution of T_f} shows that $\iota_f = \smat{0 }{f}{1}{0}$ and $\iota_g = \smat{0 }{g}{1}{0}$ are conjugate. Denote by $\overline{\det} \colon \PGL_2 (\K) \to \K^* / ( \K^*)^2$ the morphism of groups induced by the determinant morphism $\det \colon \GL_2 (\K) \to \K^*$. The equality  $\overline{\det} ( \iota_f) = \overline{\det} ( \iota_g) $ exactly means that $f/g$ is a square.

\eqref{f/g is a square}  $\Longrightarrow$ \eqref{Tf and Tg are conjugate}.
Assume that $g = \lambda^2 f$ for some nonzero element $\lambda \in \C (y)$. If $a,b \in \C (y) ^*$ the equality $ \smat{ \lambda }{0}{0}{1} \smat{a }{bf}{b}{a} \smat{ \lambda }{0}{0}{1}^{-1} = \smat{ \lambda a }{ bg}{b}{\lambda a}$ shows that $ \smat{ \lambda }{0}{0}{1}  \T_f  \smat{ \lambda }{0}{0}{1}^{-1} = \T_g$.
\end{proof}

The following result shows that a non-triangula\-risable $\K$-Borel subgroup of $\PGL_2 (\K)$ is uniquely determined by any of its nontrivial elements.

\begin{lemma} \label{lemma: a nontrivial Borel subgroup of PGL(2,K) is uniquely determined by any of its nontrivial elements}
If $A$ is a nontrivial element of a non-triangularisable $\K$-Borel subgroup $B$ of $\PGL_2 (\K)$, then $B$ is the unique $\K$-Borel subgroup of $\PGL_2 (\K)$ containing $A$.
\end{lemma}

\begin{proof}
Let $p \colon \GL_2 (\K) \to \PGL_2 (\K)$ be the canonical surjection and let $\tilde A$ be an element of $\GL_2 (\K)$ satisfying $p( \tilde A) = A$. Set also $I:= \smat{1 }{0}{0}{1} \in \Mat_2 (\K)$. Let's begin by checking that the following equality holds 
\begin{equation}
B = \{ p ( \alpha I + \beta \tilde A),  \; \alpha, \beta \in \K, \;  ( \alpha, \beta ) \neq (0,0)  \,  \}. \label{equation: B in terms of A tilde} \end{equation}
Up to conjugation, we may assume that $B =\T_f$ for some nonsquare element $f$ of $\K$. Setting $J_f :=   \smat{0 }{f}{1}{0} \in \Mat_2 (\K)$, $\tilde A$ is necessarily of the form  $\tilde A = \smat{a}{bf}{b}{a}$, i.e. $\tilde A = a I + b J_f $,  for some $a,b \in \K$ with $b \neq 0$. This shows that in the $\K$-vector space $\Mat_2 (\K)$ we have
\[ \Vect ( I, J_f) =  \Vect ( I, \tilde A ) . \]
It follows that 

$\begin{array}{ll}
\T_f  & = \{ p ( \alpha I + \beta J_f),  \; \alpha, \beta \in \K, \;  ( \alpha, \beta ) \neq (0,0)  \,  \} \\ &  = \{ p ( \alpha I + \beta \;  \tilde A ),  \; \alpha, \beta \in \K, \;  ( \alpha, \beta ) \neq (0,0)  \,  \}, \end{array} $

\noindent and this proves \eqref{equation: B in terms of A tilde}.

Assume now that $B'$ is a $\K$-Borel subgroup of $\PGL_2 (\K)$ containing $A = \smat{a}{bf}{b}{a}$. By Lemma~\ref{lemma: each nontrivial element of T_f is nontriangularisable}, $B'$ is non-triangularisable. Hence the equality \eqref{equation: B in terms of A tilde} also applies to $B'$ and this shows that $B=B'$.
\end{proof}

Lemma~\ref{lemma: a nontrivial Borel subgroup of PGL(2,K) is uniquely determined by any of its nontrivial elements} provides the following useful result.

\begin{proposition} \label{proposition: a remarkable characterisation of equality or conjugation}
Let $B,B'$ be two $\K$-Borel subgroups of $\PGL_2 (\K)$. Assume moreover that $B$ or $B'$ is not triangularisable. Then, the following assertions hold.
\begin{enumerate}
\item
We have $B= B'$ if and only if $B \cap B' \ne \{ 1 \}$.
\item
If $\varphi$ is an element of $\PGL_2 (\K)$ and $A$ a nontrivial element of $B$, we have $ \varphi B \varphi^{-1} = B'$ if and only if $\varphi A \varphi^{-1} \in B'$.
\end{enumerate}
\end{proposition}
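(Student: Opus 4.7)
The plan is to deduce both assertions directly from the preceding Lemma (the uniqueness of a non-triangularisable $\K$-Borel subgroup containing a given non-trivial element).

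For assertion (1), the forward implication is immediate, since every $\K$-Borel subgroup in question is non-trivial (as a non-triangularisable Borel subgroup $G_P$ is never reduced to $\{1\}$). Conversely, assume $B\cap B'\neq \{1\}$ and pick a non-trivial element $A\in B\cap B'$. By hypothesis, at least one of $B$ and $B'$ is non-triangularisable; say $B$ is. By the preceding Lemma, $B$ is the \emph{unique} $\K$-Borel subgroup of $\PGL_2(\K)$ containing $A$. Since $A\in B'$ and $B'$ is also a $\K$-Borel subgroup, we conclude $B=B'$.

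For assertion (2), the forward implication is trivial: $A\in B$ implies $\varphi A\varphi^{-1}\in\varphi B\varphi^{-1}=B'$. For the converse, observe first that $\varphi B\varphi^{-1}$ is itself a $\K$-Borel subgroup of $\PGL_2(\K)$, since conjugation by $\varphi$ is an automorphism of the topological group $\PGL_2(\K)$ (it sends closed connected solvable subgroups to closed connected solvable subgroups, and hence maximal such to maximal such). Moreover, triangularisability is preserved by conjugation, so if $B$ is non-triangularisable then so is $\varphi B\varphi^{-1}$; in any case, at least one of $\varphi B\varphi^{-1}$ and $B'$ is non-triangularisable. The non-trivial element $\varphi A\varphi^{-1}$ lies in $(\varphi B\varphi^{-1})\cap B'$, so the intersection is non-trivial. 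Applying (1) to the pair $(\varphi B\varphi^{-1},B')$ then gives $\varphi B\varphi^{-1}=B'$.

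There is no real obstacle here: the entire content of the proposition is packaged inside the preceding uniqueness lemma, and the only points to check carefully are that conjugation preserves the property of being a (non-triangularisable) $\K$-Borel subgroup and that non-triangularisable Borel subgroups $G_P$ are non-trivial, both of which are routine.
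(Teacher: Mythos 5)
Your proof is correct and follows exactly the route the paper intends: the paper states the proposition as a direct consequence of Lemma~\ref{lemma: a non-trivial Borel subgroup of PGL(2,K) is uniquely determined by any of its non-trivial elements} without writing out the details, and your argument simply makes those details explicit (including the two routine checks that non-triangularisable Borel subgroups are non-trivial and that conjugation preserves the relevant properties). No gaps.
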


For later use we now compute the normalisers of $\T_f$ and $\Aff_1 (\K)$.

\begin{lemma} \label{lemma: the normaliser of T_f in PGL_2(K)}
Let $f$ be a nonsquare element of $\K$. Recall that we have set $\iota_f = \smat{0 }{f}{1}{0}$. Then we have
\[ \Nor_{\PGL_2(\K)}(\T_f) = \Cent_{\PGL_2(\K)}(\iota_f) = \T_f \rtimes \langle \smat{1 }{  \hspace{2mm} 0}{0}{-1} \rangle . \]
\end{lemma}

\begin{proof}
We will prove that
\[ \Nor_{\PGL_2(\K)}(\T_f)  \subseteq \Cent_{\PGL_2(\K)}(\iota_f)  \subseteq  \T_f \rtimes \langle \smat{1 }{  \hspace{2mm} 0}{0}{-1} \rangle \subseteq \Nor_{\PGL_2(\K)}(\T_f).\]
The inclusion $ \Nor_{\PGL_2(\K)}(\T_f)  \subseteq \Cent_{\PGL_2(\K)}(\iota_f)$ directly follows from Lemma~\ref{lemma: iota_f is the unique involution of T_f}. Assume now that the element $M= \smat{a }{b}{c}{d}$ of $\PGL_2 (\K)$ centralises $\iota_f$.
We have
\[ \smat{a }{b}{c}{d} \smat{0 }{f}{1}{0} = \smat{0 }{ f}{1}{0} \smat{a }{b}{c}{d}, \quad \text{i.e.} \quad \smat{b }{af}{d}{cf} = \smat{c f }{d f}{a}{b}.\]
This is equivalent to the existence of $\varepsilon \in \K^*$ such that
\[ cf = \varepsilon b, \quad d = \varepsilon a, \quad a = \varepsilon d, \quad b= \varepsilon cf.\]
Since $a= \varepsilon^2 a$ and $b= \varepsilon^2 b$ where $(a,b) \neq (0,0)$, we have $\varepsilon^2 = 1$, i.e.\ $\varepsilon = \pm 1$, and these equations are equivalent to
\[ d = \varepsilon a, \quad b= \varepsilon cf.\]
This means that we have $M=  \smat{a}{\varepsilon cf}{c}{\varepsilon a}$, i.e.\ $M=  \smat{a}{cf}{c}{a} \smat{\varepsilon}{0}{0}{1} \in \T_f \rtimes \langle \smat{1 }{0}{0}{-1} \rangle$. We have proven $ \Cent_{\PGL_2(\K)} \subseteq  \T_f \rtimes \langle \smat{1 }{  \hspace{2mm} 0}{0}{-1} \rangle$.

Finally, the equality $ \smat{1 }{ \hspace{2mm} 0 }{0}{ -1 } \smat{a }{bf }{b}{ a } \smat{1 }{  \hspace{2mm}0 }{0}{ -1 }= \smat{a }{-bf }{-b}{ a }$ shows that $ \smat{1 }{ \hspace{2mm} 0 }{0}{ -1 }$ normalises $\T_f$. This proves that $\T_f \rtimes \langle \smat{1 }{ \hspace{2mm} 0}{0}{-1} \rangle \subseteq \Nor_{\PGL_2(\K)}(\T_f)$.
\end{proof}

\begin{remark}
Recall that a Borel subgroup $B$ of a linear algebraic group $G$ defined over an algebraically closed field is always maximal among the solvable subgroups of $G$ by \cite[Corollary 23.1A, page 143]{Humphreys1975}. In contrast,  the $\K$-Borel subgroup $\T_f$ is contained in the larger solvable subgroup $\T_f  \rtimes \langle  \smat{1}{\hspace{2mm} 0}{0}{-1} \rangle $. 
\end{remark}

\begin{lemma} \label{lemma: the normaliser of Aff_1(K) in PGL_2(K)}
We have
\[ \Nor_{\PGL_2(\K)}(\Aff_1 (\K) ) = \Aff_1 (\K). \]
\end{lemma}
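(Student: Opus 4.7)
The plan is to identify $\Aff_1(\K)$ as the point-stabilizer of $\infty = [1:0] \in \p^1(\K)$ under the natural action of $\PGL_2(\K)$ on $\p^1(\K)$, and then to exploit the fact that this particular stabilizer pins down the point it stabilizes.

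First I would observe that the inclusion $\Aff_1(\K) \subseteq \Nor_{\PGL_2(\K)}(\Aff_1(\K))$ is automatic, since every subgroup lies in its own normaliser. For the reverse inclusion, fix $M \in \Nor_{\PGL_2(\K)}(\Aff_1(\K))$. Since $\Aff_1(\K) = \Stab_{\PGL_2(\K)}(\infty)$, the standard identity $M \cdot \Stab(p) \cdot M^{-1} = \Stab(M \cdot p)$ yields $\Stab_{\PGL_2(\K)}(M \cdot \infty) = M\, \Aff_1(\K)\, M^{-1} = \Aff_1(\K) = \Stab_{\PGL_2(\K)}(\infty)$. It then remains to check that the only point of $\p^1(\K)$ fixed by every element of $\Aff_1(\K)$ is $\infty$ itself; this is immediate because the translation $\smat{1}{1}{0}{1} \in \Aff_1(\K)$ already has $\infty$ as its unique fixed point on $\p^1(\K)$. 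Hence $M \cdot \infty = \infty$, which means $M \in \Aff_1(\K)$.

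If one prefers to bypass the stabiliser language, a one-step matrix computation is equally short: writing $M = \smat{a}{b}{c}{d}$, the bottom-left entry of $M \smat{1}{1}{0}{1} M^{-1}$ equals, up to the scalar $1/\det M$, the expression $-c^2$. Requiring this conjugate to be upper-triangular (in $\PGL_2(\K)$) forces $c=0$, so $M \in \Aff_1(\K)$.

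There is really no obstacle here; the statement is elementary and both approaches fit in a few lines. The only thing meriting minor care is a consistency check — confirming that, with the conventions recalled earlier in the paper, $\Aff_1(\K) \subset \PGL_2(\K)$ is indeed the stabiliser of $[1:0]$ rather than of $[0:1]$ — but this follows at once from the definitions.
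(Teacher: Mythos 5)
Your proposal is correct, and your second, matrix-based argument is precisely the proof given in the paper: conjugating $\smat{1}{1}{0}{1}$ by $A=\smat{a}{b}{c}{d}$ produces $-c^2$ in the lower-left entry, forcing $c=0$. The stabiliser-of-$[1:0]$ argument is a valid, slightly more conceptual reformulation of the same fact, so there is no substantive difference between your route and the paper's.
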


\begin{proof}
Let $A=\smat{a}{b}{c}{d}\in\PGL_2(\K)$. Then $A\smat{1}{1}{0}{1}A^{-1}\in\Aff_1(\K)$ if and only if $-c^2=0$, i.e.\ if and only if $A\in\Aff_1(\K)$.
\end{proof}

From now on we will only consider the groups $\PGL_2 ( \K)$ and $\T_f$ when the base field $\K$ is equal to $\C (y)$.

\section{Borel subgroups of  $\PGL_2 ( \C (y) ) \subseteq \bp$} \label{section: Borel subgroups of PGL(2,C(y))}

The aim of this section is to prove Theorem~\ref{theorem : the Borel subgroups of PGL(2,C(y))} which describes all Borel subgroups of the closed subgroup $\PGL_2 ( \C (y) )$ of $\bp$. It turns out that these Borel subgroups coincide with the $\C (y)$-Borel subgroups of $\PGL_2 ( \C (y) )$ which have been defined in Definition~\ref{definition: K-Borel subgroup of G(K)} and described in Theorem~\ref{theorem: K-Borel subgroups of PGL(2,K)}.

The group $\PGL_2 ( \C (y) )$ admits two natural topologies. The first one (which is the one of most interest to us) is the topology induced by the inclusion $\PGL_2 ( \C (y) ) \subseteq \bp$. The second one is the $\C (y)$-topology which has been defined in Definition~\ref{definition: the K-topology}. We will refer to the first one as the $\bp$-topology and to the second as the $\C(y)$-topology.
When not specified, the topology is always understood to be the $\bp$-topology. Finally note that the $\C (y)$-topology will only be used in this section and in the proof of Theorem~\ref{theorem: any Borel subgroup of PGL_2 (C (y) ) rtimes Aff_1 contains a Borel subgroup of PGL_2 (C (y) )}.

We begin with the following result:

\begin{lemma} \label{lemma: the Bir(P2) topology is finer than the C(y)-topology}
The $\bp$-topology on $\PGL_2(\C(y))$ is finer than the $\C(y)$-topology.
\end{lemma}

\begin{proof}
Any $\C(y)$-closed set in $\PGL_2(\C(y))$ is an intersection of sets of the form
\[F_P:=\left\{ \left(\begin{smallmatrix}\alpha&\beta\\ \gamma&\delta\end{smallmatrix}\right)  \in \PGL_2(\C(y)) \,|\,P(\alpha,\beta,\gamma,\delta)=0\right\}\subset\PGL_2(\C(y)),\]
where $P\in\C[y][X_1,X_2,X_3,X_4]$ is a polynomial with coefficients in $\C[y]$ which is homogeneous with respect to the variables $X_1,\dots,X_4$. Therefore it is enough to show that such a set $F_P$ is $\bp$-closed. By Lemma~\ref{lemma:useful-characterisation-of-closed-subsets-of-Bir(Pn)} we need to show that $\pi_d^{-1}(F_P \cap \bp_d) \subseteq \bpGothic_d$ is closed for any positive $d \ge 2$. Denote by  $\MM$ the projective space associated with the complex vector space of $4$-tuples $(\alpha, \beta, \gamma, \delta)$ where $\alpha, \beta, \gamma, \delta  \in \C [y,z]$ are homogeneous polynomials of respective degrees $d-1, d, d-2, d-1$. The equivalence class of $(\alpha, \beta, \gamma, \delta)$ will be denoted by $[ \alpha : \beta : \gamma : \delta ]$. Denote by $ Z \subseteq \bpGothic_d \times \MM$ the closed subset given by elements $([f_1:f_2:f_3], [ \alpha : \beta : \gamma : \delta ])$ satisfying the two following conditions:

\begin{equation} \forall \, i,j \in \{1,2,3 \}, \; f_i g_j =f_j g_i, \label{equation:  f_i g_j =f_j g_i} \end{equation}
where $(g_1, g_2, g_3) : =   \Big( \alpha (y,z) x + \beta(y,z),  y ( \gamma (y,z) x + \delta(y,z) ),  z ( \gamma (y,z) x + \delta(y,z) ) \Big)$;
\begin{equation} P (  \alpha (y,1) , \beta (y,1),  \gamma (y,1),  \delta (y,1) ) =0. \label{equation: P(alpha, beta, gamma, delta)=0} \end{equation}
The condition \eqref{equation:  f_i g_j =f_j g_i}  means that the elements $[f_1:f_2:f_3]$ and $ [g_1 : g_2 : g_3]$ define the same rational map $\p^2 \dasharrow \p^2$. Note that we then have $\gamma (y,1) x + \delta (y,1) \neq 0$ and that in affine coordinates this rational map is the following birational map
\[ \A^2 \dasharrow \A^2, \quad (x,y) \dasharrow \bigg( \frac{\alpha (y,1) x + \beta (y,1) }{\gamma (y,1) x + \delta (y,1) }, y \bigg).\]
The condition \eqref{equation: P(alpha, beta, gamma, delta)=0} means that the element $\left(\begin{smallmatrix} \alpha (y,1) & \beta (y,1) \\ \gamma (y,1) & \delta (y,1) \end{smallmatrix}\right)$ of  $\PGL_2(\C(y))$ belongs to $F_P$.

We leave it as an easy exercise for the reader to check that the condition
\eqref{equation: P(alpha, beta, gamma, delta)=0} is actually a closed condition on the coefficients of the polynomials $\alpha, \beta, \gamma, \delta$. Since the first projection $p_1 \colon \bpGothic_d \times \MM \to \bpGothic_d$ is a closed morphism, the equality $\pi_d^{-1}(F_P \cap \bp_d) = p_1 (Z)$ shows that $\pi_d^{-1}(F_P \cap \bp_d)$ is closed in $\bpGothic_d$.
\end{proof}

Here is an example showing that the $\bp$-topology on $\PGL_2(\C(y))$ is strictly finer than the $\C(y)$-topology.

\begin{example}
The set $\left\{ \smat{1}{t}{0}{1}, \; t \in \C \right\} \subseteq \PGL_2 ( \C (y) ) $ is closed for the $\bp$-topology, but its $\C(y)$-closure is $\left\{ \smat{1}{t}{0}{1}, \; t \in \C(y) \right\}$.
\end{example}

\begin{lemma} \label{lemma: closed connected subgroup of PGL(2,C(y)) for the C(y)-topology are Bir(P2)-connected}
Any closed connected subgroup of $\PGL_2 (\C (y) )$ for the $\C (y)$-topology is closed connected for the $\bp$-topology.
\end{lemma}

\begin{proof}
By Lemma~\ref{lemma: the Bir(P2) topology is finer than the C(y)-topology}, each $\C(y)$-closed subset of  $\PGL_2 (\C (y) )$ is $\bp$-closed.
It's enough to prove that when $\K = \C (y)$, the six subgroups $H_1, \ldots, H_6$
listed in the Cases \eqref{H={id}}, \ldots,  \eqref{H=PGL_2(K)} of Lemma~\ref{lemma: closed connected subgroups of PGL_2(K)}, are $\bp$-connected. Since $H_1 = \{ \id \}$, $H_5 = H_3 \rtimes H_2$, and $H_6 = \langle H_5, (H_5)^t \rangle$ (where $(H_5)^t$ denotes the transpose of $H_5$), it's enough to show that $H_2, H_3,H_4$ are $\bp$-connected. Using  Lemma~\ref{lemma: a useful criterion of connectedness for closed subsets of Bir(Pn)}, it's enough to show that these three subgroups satisfy the assertion \eqref{any two points of F can be joined by a curve} of that lemma, i.e. that for any $i \in \{2,3,4 \}$ and any  $\varphi, \psi \in H_i$, there exists a connected (not necessarily irreducible) curve $C$ and a morphism $\lambda \colon C \to \bp$  (see Definition~\ref{definition: morphism to Bir(W)}) whose image satisfies $ \varphi, \psi \in \Image (\lambda ) \subseteq H_i$.

Let $h:= \smat{a }{0}{0}{1}$,  $a \in \C (y) \setminus \{0,1 \}$,  be a nontrivial element of $H_2$. Define the curve $C$ by $C:= \A^1$ if $a \notin \C$ and $C := \A^1 \setminus \{ \frac{1}{1-a} \}$ otherwise. Then, the image of the morphism $C \to \bp$, $t \mapsto \smat{ 1-t + t a}{0}{0}{1}$ is contained in $H_2$, and it connects $h$ and $\id$. Hence $H_2$ is connected.

If $h:=  \smat{1}{a}{0}{1}$, $a \in \C (y)$, is any element of $H_3$, the image of the morphism $\A^1 \to \bp$, $t \mapsto \smat{1}{ta}{0}{1}$ is contained in $H_3$, and it connects $h$ and $\id$. Hence $H_3$ is connected.

Let $h:=  \smat{a }{bf}{b}{a} $,  $a,b  \in \C (y) ^*$, be any element of $H_4 = \T_f$ which is different from $1$ and  $\iota_f $. The two morphisms $\A^1 \to \bp$, $t \mapsto  \smat{a }{tbf}{tb}{a}$ and $\A^1 \to \bp$, $t \mapsto  \smat{ta }{bf}{b}{ta}$ both have their images contained in $H_4$. The first one connects $h$ and $\id$, and the second connects $h$ and $\iota_f = \smat{0 }{f}{1}{0}  $. Hence $H_4$ is connected.
\end{proof}

We can now prove the main result of this section.

\begin{theorem}  \label{theorem : the Borel subgroups of PGL(2,C(y))}
Up to conjugation, any Borel subgroup of $\PGL_2( \C (y)  ) $ is equal to either $\Aff_1 ( \C (y)  )$ or some $\T_f$, where $f$ is a nonsquare element of $\C (y) $. Moreover, any closed connected solvable subgroup of $\PGL_2 (\C (y) )$ is contained in a Borel subgroup.
\end{theorem}

\begin{proof}
By Theorem~\ref{theorem: K-Borel subgroups of PGL(2,K)}, it's enough to show the two following assertions:
\begin{enumerate}
\item \label{Each closed connected solvable subgroup of PGL_2 (C (y) )is is contained in a C (y)-Borel subgroup}
Each closed connected solvable subgroup of $\PGL_2 (\C (y) )$ is contained in a $\C (y)$-Borel subgroup of $\PGL_2 (\C (y) )$;
\item \label{The Borel subgroups of  PGL_2 ( C (y) ) coincide with the C (y)-Borel subgroups}
The Borel subgroups of  $\PGL_2 (\C (y) )$ coincide with the $\C (y)$-Borel subgroups of \linebreak  $\PGL_2 (\C (y) )$.
\end{enumerate}

\eqref{Each closed connected solvable subgroup of PGL_2 (C (y) )is is contained in a C (y)-Borel subgroup} Let $H$ be a closed connected solvable subgroup of $\PGL_2( \C(y))$ and let $\overline{H}$ be its $\C(y)$-closure in $\PGL_2( \C(y) )$. By Lemma~\ref{lemma: the Bir(P2) topology is finer than the C(y)-topology}, $H$ is $\C(y)$-connected, and hence $\overline{H}$ is $\C(y)$-connected as well. The group $\overline{H}$ being a $\C(y)$-closed connected solvable subgroup of $\PGL_2( \C(y) )$, it is contained in a $\C(y)$-Borel subgroup of $\PGL_2 (\C (y) )$ by Theorem~\ref{theorem: K-Borel subgroups of PGL(2,K)}.

\eqref{The Borel subgroups of  PGL_2 ( C (y) ) coincide with the C (y)-Borel subgroups}
We will successively prove that each Borel subgroup of $\PGL_2 (\C (y) )$ is a $\C (y)$-Borel subgroup of $\PGL_2 ( \C (y) )$ and that each $\C(y)$-Borel subgroup of $\PGL_2 (\C (y) )$ is a Borel subgroup of $\PGL_2 (\C (y) )$.

Let $B$ be a Borel subgroup of $\PGL_2 (\C (y) )$. By \eqref{Each closed connected solvable subgroup of PGL_2 (C (y) )is is contained in a C (y)-Borel subgroup}, $B$ is contained in a $\C (y)$-Borel subgroup $B'$ of $\PGL_2 ( \C (y) )$. But $B'$ is a  closed connected solvable subgroup of $\PGL_2 (\C (y) )$ (see Lemma~\ref{lemma: closed connected subgroup of PGL(2,C(y)) for the C(y)-topology are Bir(P2)-connected}). Hence, by maximality of $B$, we get $B=B'$, showing that $B$ is actually a $\C(y)$-Borel subgroup of $\PGL_2 (\C (y) )$.

Let now $B$ be a $\C(y)$-Borel subgroup of $\PGL_2 (\C (y) )$. First of all, $B$ is a closed connected solvable subgroup of $\PGL_2( \C(y))$ (see Lemma~\ref{lemma: closed connected subgroup of PGL(2,C(y)) for the C(y)-topology are Bir(P2)-connected}). Secondly, let's check that $B$ is maximal for this property. Let's assume that we have $B \subseteq H$, where $H$ is a closed connected solvable subgroup of $\PGL_2( \C(y))$.  By \eqref{Each closed connected solvable subgroup of PGL_2 (C (y) )is is contained in a C (y)-Borel subgroup}, $H$ is contained in a $\C(y)$-Borel subgroup $B'$ of $\PGL_2 (\C (y) )$. Hence we have $B \subseteq H \subseteq B'$ where $B,B'$ are two $\C (y)$-Borel subgroups of  $\PGL_2 (\C (y) )$. By maximality of $B$, we get $B= B'$ from which we get $H =B$. We have actually shown that $B$ is a Borel subgroup of $\PGL_2 (\C (y) )$.
\end{proof}

We end this section by showing that any closed connected solvable subgroup of $\bp$ is contained in a Borel subgroup. Actually we need a slightly stronger result, to be used later (see Proposition~\ref{proposition: pre-Borel subgroups of closed subgroups of Bir(P2) are contained in Borel subgroups} below). The proof is based on the following result.

\begin{lemma} \label{lemma: maximal derived length of a closed connected solvable subgroup of Bir(P2)}
The maximal derived length of a closed connected solvable subgroup of $\bp$ is $4$.
\end{lemma}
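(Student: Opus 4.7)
The plan is to combine Theorem~\ref{theorem: any closed connected solvable subgroup of Bir(P2) is conjugate to a subgroup of PGL_2(C(y)) rtimes Aff_1} (which conjugates $G$ into $\PGL_2(\C(y)) \rtimes \Aff_1$) with Proposition~\ref{proposition: Bir(P2)-Borel subgroups of PGL(2,C(y))} (which classifies the closed connected solvable subgroups of $\PGL_2(\C(y))$). Let $G$ be a closed connected solvable subgroup of $\bp$. Since conjugation preserves derived length, I may assume $G \subseteq \PGL_2(\C(y)) \rtimes \Aff_1$. The second factor $\Aff_1 = \C \rtimes \C^*$ is metabelian, and $\pr_2$ is a homomorphism of groups, so
\[ \pr_2(D^2(G)) = D^2(\pr_2(G)) = \{1\},\]
i.e.\ $D^2(G) \subseteq \ker(\pr_2|_G) = G \cap \PGL_2(\C(y))$.

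Next I would pass to closures. Since $\PGL_2(\C(y))$ is $\bp$-closed by Lemma~\ref{lemma: PGL(2,C(y)) is Bir(P2)-closed}, I get
\[ \DDD^2(G) = \overline{D^2(G)} \subseteq \PGL_2(\C(y)).\]
Iterating Lemma~\ref{lemma: DG and (mathcal-D)G} starting from the closed connected group $G$, the group $\DDD^2(G)$ remains closed and connected, and it is solvable as a subgroup of $G$. It is therefore a $\bp$-closed connected solvable subgroup of $\PGL_2(\C(y))$.

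The decisive step is to feed $\DDD^2(G)$ into Proposition~\ref{proposition: Bir(P2)-Borel subgroups of PGL(2,C(y))}: up to conjugation it sits inside either $\Aff_1(\C(y))$ or some $\T_f$ with $f \in \C(y)$ a nonsquare. Both targets have derived length at most $2$ ($\T_f$ being abelian and $\Aff_1(\C(y)) = \C(y) \rtimes \C(y)^*$ being metabelian), so $\length(\DDD^2(G)) \le 2$ and hence $\DDD^4(G) = \DDD^2(\DDD^2(G)) = \{1\}$. By Lemma~\ref{lemma: D^kG and (mathcal-D)^kG} this is equivalent to $D^4(G) = \{1\}$, giving $\length(G) \le 4$. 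The only delicate point is keeping track of connectedness under the operation $H \mapsto \DDD(H)$ and of the topology when restricting to $\PGL_2(\C(y))$, but this is precisely what Lemmas~\ref{lemma: DG and (mathcal-D)G} and~\ref{lemma: PGL(2,C(y)) is Bir(P2)-closed} are designed to handle.
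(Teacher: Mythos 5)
Your proposal is correct and follows essentially the same route as the paper: conjugate into $\PGL_2(\C(y))\rtimes\Aff_1$, observe that $\DDD^2(G)$ lands in the closed subgroup $\PGL_2(\C(y))$ and is closed, connected and solvable by Lemma~\ref{lemma: DG and (mathcal-D)G}, then invoke Proposition~\ref{proposition: Bir(P2)-Borel subgroups of PGL(2,C(y))} to bound its derived length by $2$. You simply spell out the details (metabelianity of $\Aff_1$, the passage from $D^k$ to $\DDD^k$ via Lemma~\ref{lemma: D^kG and (mathcal-D)^kG}) that the paper leaves implicit.
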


\begin{proof}
Let's begin by proving that the derived length of a closed connected solvable subgroup $H$ of $\bp$ is at most $4$. By Theorem~\ref{theorem: any closed connected solvable subgroup of Bir(P2) is conjugate to a subgroup of PGL_2(C(y)) rtimes Aff_1}, we may assume that $H$ is contained in $\PGL_2 (\C(y) ) \rtimes \Aff_1$. But then, we have $\DDD^2H \subseteq \PGL_2 ( \C (y) )$ and since $\DDD^2H$ is a closed connected solvable subgroup (Lemma~\ref{lemma: DG and (mathcal-D)G}\eqref{G-closed-connected-implies-(mathcal-D)G-connected}) its derived length is at most $2$ (Theorem~\ref{theorem : the Borel subgroups of PGL(2,C(y))}). We have actually proven that the derived length of $H$ is at most $4$. We conclude the proof by noting that the derived length of $\mathcal B_2$ is $4$  (see Proposition~\ref{proposition: the derived length of Bn}).
\end{proof}

\begin{proposition} \label{proposition: pre-Borel subgroups of closed subgroups of Bir(P2) are contained in Borel subgroups}
Let $G$ be a closed subgroup of $\bp$. Then, any closed connected solvable subgroup of  $G$ is contained in a Borel subgroup of $G$.
\end{proposition}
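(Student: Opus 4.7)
The plan is to apply Zorn's lemma to the poset $\mathcal{P}$ of closed connected solvable subgroups of $G$ containing the given closed connected solvable subgroup $H$, ordered by inclusion. A maximal element of $\mathcal{P}$ is by definition a Borel subgroup of $G$ containing $H$, so everything reduces to producing an upper bound in $\mathcal{P}$ for every totally ordered chain.

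Given a chain $\{H_i\}_{i\in I}$ in $\mathcal{P}$, I would form the ascending union $U := \bigcup_{i \in I} H_i$ (a subgroup of $G$ since $I$ is totally ordered and each $H_i \subseteq G$) and let $K := \overline{U}$ be its closure in $\bp$; as $G$ is closed, $K \subseteq G$. Using that the maps $x \mapsto gx$, $x \mapsto xg$, and the inversion are continuous self-homeomorphisms of $\bp$ (already invoked tacitly in the proof of Lemma~\ref{lemma: DG and (mathcal-D)G}), the classical topological-group argument shows that $K$ is again a subgroup of $G$. Moreover, each $H_i$ contains the nonempty subgroup $H$ and is connected, so $U$ is connected as a union of connected sets sharing a common point, and hence so is its closure $K$.

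The remaining point, which I expect to be the main substance of the argument, is the solvability of $K$. Here Lemma~\ref{lemma: maximal derived length of a closed connected solvable subgroup of Bir(P2)} is decisive: every $H_i$ satisfies $\length(H_i) \leq 4$. Because the chain is ascending, any iterated commutator in $U$ already lies in some $H_i$, so $D^k(U) = \bigcup_{i \in I} D^k(H_i)$ for every $k$; in particular $D^4(U) = \{1\}$. Lemma~\ref{lemma: D^kG and (mathcal-D)^kG} then yields $\mathcal{D}^4(U) = \overline{D^4(U)} = \{1\}$, and iterating Lemma~\ref{lemma: DG and (mathcal-D)G}\eqref{(mathcal-D)G=(mathcal-D)(closure of G)} gives $\mathcal{D}^4(K) = \mathcal{D}^4(\overline{U}) = \mathcal{D}^4(U) = \{1\}$. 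A final application of Lemma~\ref{lemma: D^kG and (mathcal-D)^kG} delivers $D^4(K) = \{1\}$, so $K$ is solvable, hence belongs to $\mathcal{P}$ and dominates the chain. Zorn's lemma then furnishes the desired Borel subgroup of $G$ containing $H$.
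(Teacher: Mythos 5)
Your proof is correct and follows essentially the same route as the paper: Zorn's lemma applied to a poset of connected solvable subgroups containing $H$, with the uniform derived-length bound of Lemma~\ref{lemma: maximal derived length of a closed connected solvable subgroup of Bir(P2)} guaranteeing that chain unions remain solvable. The only difference is that the paper takes its poset to consist of \emph{not necessarily closed} connected solvable subgroups, so the union of a chain is already an upper bound and your closure step (closure of a subgroup is a subgroup, connectedness and solvability pass to the closure via Lemmas~\ref{lemma: DG and (mathcal-D)G} and \ref{lemma: D^kG and (mathcal-D)^kG}) is not needed; a maximal element of that larger poset is automatically closed, since its closure also lies in the poset.
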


\begin{proof}
If $H$ is a closed connected solvable subgroup of $G$, denote by $\FF$ the set of connected solvable subgroups of $G$ containing $H$. The bound given in Lemma~\ref{lemma: maximal derived length of a closed connected solvable subgroup of Bir(P2)} implies that $ (\FF, \subseteq )$ is inductive. Indeed, if $(H_i)_{i \in I}$ is a chain in $\FF$, i.e.\ a totally ordered family of $\FF$, then the group $\cup_i H_i$ is connected solvable (for details, see \cite[Proposition 3.10]{FurterPoloni2018}). Therefore, by Zorn's lemma, the poset $\FF$ admits a maximal element $B$. Since $\overline{B}$ is connected and solvable, this shows that $B = \overline{B}$. Hence $B$ is closed. Being maximal among the closed connected solvable subgroups of $G$, it is a Borel subgroup of $G$.
\end{proof}

\section{The groups $\T_f$} \label{section: The groups TT_f}

If $f$ is any nonsquare element of $\C (y)$, it follows from Theorem~\ref{theorem : the Borel subgroups of PGL(2,C(y))} that the group 
\[ \T_f =  \left\{ \smat{a }{bf}{b}{a}, \; a,b \in \C (y), \;  (a,b) \neq (0,0) \right\} \]
is a Borel subgroup of $\PGL_2 (\C (y) )$.

\begin{definition} \label{defintion: the geometric degree of a rational function}
The geometric degree of a rational function $f \in \C (y)$ is denoted  $\degmax (f)$. If $f$ is written $ \alpha / \beta$ where $\alpha, \beta \in \C [ y]$ are coprime, it is given by
\[ \degmax (f) = \max \{ \deg (\alpha), \deg ( \beta) \} \in \Z_{\geq 0} .\]
\end{definition}

The geometric degree $\degmax (f)$ is the number of preimages (counted with multiplicities) of any point of $\p^1$ for the morphism $f \colon \p^1 \to \p^1$. It satisfies the following elementary properties:

\begin{lemma}
For each $f,g \in \C(y)$ we have:
\begin{enumerate}
\item \label{upperbound of the geometric degree of the sum}
$\degmax (f+g) \leq   \degmax (f) +  \degmax (g)$.
\item \label{upperbound of the geometric degree of the product}
$\degmax (fg) \leq   \degmax (f) +  \degmax (g)$.
\item  \label{upperbound of the geometric degree of the composition}
$\degmax (f \circ g) =   \degmax (f)   \degmax (g)$ (when the composition $f \circ g$ exists).
\end{enumerate}
\end{lemma}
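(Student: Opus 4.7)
The plan is to prove each of the three inequalities/equality by reducing to the statement about homogeneous polynomials in two variables, using the standard trick of writing a rational function $h = \alpha/\beta$ with $\gcd(\alpha,\beta)=1$ and then homogenising both $\alpha,\beta$ to common degree $d = \degmax(h)$, so that $h$ corresponds to the morphism $\p^1 \to \p^1$, $[y:z] \mapsto [\tilde\alpha(y,z):\tilde\beta(y,z)]$, whose two components are coprime homogeneous polynomials of degree $d$.

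For \eqref{upperbound of the geometric degree of the sum}, I would write $f = \alpha_1/\beta_1$ and $g = \alpha_2/\beta_2$ in coprime form and observe that
\[ f+g = \frac{\alpha_1\beta_2 + \alpha_2\beta_1}{\beta_1\beta_2}. \]
Both the numerator and the denominator are polynomials of degree at most $\degmax(f) + \degmax(g)$ (for the numerator, this comes from $\deg(\alpha_i\beta_j) \leq \degmax(f)+\degmax(g)$). Since $\degmax$ can only decrease when cancelling common factors, the bound follows. Item \eqref{upperbound of the geometric degree of the product} is almost identical: $fg = \alpha_1\alpha_2/\beta_1\beta_2$ and again both terms have degree at most $\degmax(f)+\degmax(g)$.

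For the composition formula \eqref{upperbound of the geometric degree of the composition}, I would pass to homogeneous coordinates: let $\tilde\alpha,\tilde\beta \in \C[y,z]$ be the degree-$d$ homogenisations of the coprime $\alpha,\beta$ (with $d = \degmax(f)$), and similarly $\tilde p, \tilde q$ of degree $e = \degmax(g)$ for $g = p/q$. Then $f\circ g$ is represented by the pair
\[ \bigl(\tilde\alpha(\tilde p,\tilde q),\ \tilde\beta(\tilde p,\tilde q)\bigr), \]
whose two components are homogeneous of degree $de$. The inequality $\degmax(f\circ g)\leq de$ is immediate; the main point, and the main obstacle, is the reverse inequality, which amounts to showing that $\tilde\alpha(\tilde p,\tilde q)$ and $\tilde\beta(\tilde p,\tilde q)$ have no common factor. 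I would argue this by showing they have no common zero in $\p^1$: a common zero $[y_0:z_0]$ would force $[\tilde p(y_0,z_0):\tilde q(y_0,z_0)]$ to be a common zero of $\tilde\alpha$ and $\tilde\beta$ in $\p^1$, contradicting their coprimality, unless $\tilde p(y_0,z_0)=\tilde q(y_0,z_0)=0$, which itself contradicts the coprimality of $\tilde p,\tilde q$. Hence the two components are coprime and $\degmax(f\circ g) = de = \degmax(f)\degmax(g)$, as claimed. (Alternatively, one could appeal to the multiplicativity of the degree in the tower of field extensions $\C(y) \supseteq \C(g) \supseteq \C(f\circ g)$, since $\degmax(h) = [\C(y):\C(h)]$ for any nonconstant $h\in\C(y)$.)
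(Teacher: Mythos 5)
Your proof is correct. For parts (1) and (2) it is the same argument as the paper's: write $f,g$ in coprime form, bound the degrees of the resulting numerator and denominator, and note that cancellation only decreases $\degmax$. The only real divergence is in part (3), which the paper dispatches with the single sentence ``this is obvious from the geometric interpretation'' --- i.e.\ from the characterisation of $\degmax(h)$ as the number of preimages (with multiplicity) of a point of $\p^1$ under $h$, which is visibly multiplicative under composition. You instead give a self-contained algebraic argument: homogenise, substitute, and show that $\tilde\alpha(\tilde p,\tilde q)$ and $\tilde\beta(\tilde p,\tilde q)$ remain coprime because a common projective zero would contradict the coprimality of either $(\tilde\alpha,\tilde\beta)$ or $(\tilde p,\tilde q)$. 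This is the standard proof that the degree of a composite of morphisms $\p^1\to\p^1$ is the product of the degrees, so in effect you are proving the fact the paper takes as known; your version (and the field-theoretic alternative via $\degmax(h)=[\C(y):\C(h)]$ and multiplicativity of degrees in towers) is more detailed and equally valid, modulo the trivial constant cases which are easily checked separately.
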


\begin{proof}
Write $f= \alpha / \beta$ and $g = \gamma / \delta$ where $\alpha, \beta \in \C [ y]$ are coprime and $\gamma, \delta \in \C [ y]$ are coprime.

\noindent \eqref{upperbound of the geometric degree of the sum}
The equality $ f+ g = {\dis \dfrac{ \alpha \delta + \beta \gamma} {\beta \delta} }$ yields

$
\begin{array}{ll}
\degmax (f+g)  &\leq  \max \{ \deg ( \alpha \delta + \beta \gamma ), \deg ( \beta \delta) \}  \\
& \leq \max \{ \deg ( \alpha \delta), \deg (\beta \gamma ), \deg ( \beta \delta) \} \\
& \leq \degmax (f) + \degmax (g).
\end{array}
$

\noindent \eqref{upperbound of the geometric degree of the product} This is straightforward.

\noindent \eqref{upperbound of the geometric degree of the composition} This is obvious from the geometric interpretation.
\end{proof}

Recall that a subgroup of $\bp$ is called algebraic if it is closed and bounded for the degree. Also an element $f$ of $\bp$ is called algebraic if it belongs to an algebraic subgroup of $\bp$. Equivalently this means that the sequence $n \mapsto \deg f^n$, $n \in \Z_{\ge 0}$, is bounded. For elements in $\PGL_2(\C(y))$, we will use the following characterisation of algebraic elements by Cerveau--D\'eserti \cite[Theorem~A]{CerveauDeserti2012}. 

\begin{lemma} \label{lemma: criterion of algebraicity in PGL(2,C(y))}
Let $A$ be an element of $\GL_2 ( \C(y) )$ and let  $\bar A$ be its class in $\PGL_2 ( \C (y) )$. Then, the element  $\bar A$ of $\PGL_2 ( \C (y) ) \subseteq \bp$ is algebraic if and only if its Baum-Bott index
\[ \BaumBott ( \bar A) := \tr^2(A) / \det (A)  \]
belongs to $\C$. 
\end{lemma}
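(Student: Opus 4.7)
The idea is to reformulate the condition $\BaumBott(\bar A)\in\C$ in terms of the ratio of the eigenvalues of $A$. Let $\lambda_1,\lambda_2\in\overline{\C(y)}$ be the two eigenvalues of any representative $A\in\GL_2(\C(y))$ of $\bar A$, and set $\mu:=\lambda_1/\lambda_2$. Since $\tr(A)^2/\det(A)$ is invariant under $A\mapsto\nu A$, both $\BaumBott(\bar A)$ and $\mu$ depend only on $\bar A$, and they satisfy $\mu+\mu^{-1}=\BaumBott(\bar A)-2$. Because $\C$ is algebraically closed, we obtain the key reformulation: $\BaumBott(\bar A)\in\C$ if and only if $\mu$ is algebraic over $\C$, if and only if $\mu\in\C$.

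For the direction $\Leftarrow$, set $c:=\BaumBott(\bar A)\in\C$. If $c=0$, then $\tr(A)=0$ and Cayley--Hamilton yields $A^2=-\det(A)I$, so $\bar A$ is an involution and is thus algebraic. If $c\neq 0$, then $\det A=\tr(A)^2/c$ is a square in $\C(y)$ (because $\C^*$ consists of squares), and rescaling $A$ by the factor $\tr(A)/\sqrt{c}$ yields a representative with constant trace $\sqrt{c}$ and determinant $1$. The eigenvalues then lie in $\C$ and the eigenvectors in $\C(y)^2$, so $A$ is diagonalisable (resp.\ triangularisable when $c=4$) in $\GL_2(\C(y))$. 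Hence $\bar A$ is $\PGL_2(\C(y))$-conjugate, and therefore $\bp$-conjugate, to an element of $\PGL_2(\C)\subseteq\Aut(\p^2)\subseteq\bp$, which is algebraic.

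For the direction $\Rightarrow$, I would argue by contrapositive. Assume $\BaumBott(\bar A)\notin\C$, so that $\mu\notin\C$. From $\tr(A^n)=\lambda_1^n+\lambda_2^n$ and $\det(A^n)=(\lambda_1\lambda_2)^n$ one computes $\BaumBott(\bar A^n)=s_n+2$, where $s_n:=\mu^n+\mu^{-n}$ belongs to $\C(y)$ by Galois-invariance under the involution $\mu\leftrightarrow\mu^{-1}$. Since $\mu\in\overline{\C(y)}\setminus\overline{\C}$, the element $\mu$ is transcendental over $\C$, so there exists a place $v$ of $\overline{\C(y)}$ with $v(\mu)\neq 0$; up to replacing $\mu$ by $\mu^{-1}$ we may assume $v(\mu)>0$. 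Then $v(\mu^n)>0>v(\mu^{-n})$, so no cancellation occurs and $v(s_n)=-n\,v(\mu)\to-\infty$. Restricting $v$ to $\C(y)$, we see that $s_n\in\C(y)$ has a pole at a fixed point of $\p^1_\C$ whose order grows linearly in $n$, so $\degmax(\BaumBott(\bar A^n))\to\infty$.

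On the other hand, if $\bar A$ were algebraic in $\bp$, then the sequence $\deg_{\p^2}(\bar A^n)$ would be uniformly bounded; representing each $\bar A^n$ by a matrix in $\Mat_2(\C[y])$ with coprime entries, the $y$-degrees of those entries, and therefore $\degmax(\BaumBott(\bar A^n))=\degmax(\tr(A^n)^2/\det(A^n))$, would also be uniformly bounded, contradicting the unbounded growth just established. The main technical hurdle I anticipate is precisely this last degree comparison, which rests on the explicit formula for the $\p^2$-degree of a Jonqui\`eres transformation in terms of the $y$-degrees of the coefficients of its matrix representative.
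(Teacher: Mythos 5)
Your proof is correct, and the ``only if'' direction follows a genuinely different route from the paper's. For unboundedness of $\degmax(\BaumBott(\bar A^n))$, the paper stays entirely inside $\C(y)$: it verifies the functional identity $\BaumBott(\bar A^2)=(\BaumBott(\bar A)-2)^2$ and uses multiplicativity of the geometric degree under composition to get $\degmax(\BaumBott(\bar A^{2^k}))=2^k\degmax(\BaumBott(\bar A))$, which is already unbounded along the subsequence $n=2^k$. You instead pass to the splitting field, introduce the eigenvalue ratio $\mu$, and run a valuation argument; this costs a little machinery (places of a finite extension, the fact that an element with no zeros or poles is constant) but buys linear growth $\degmax(\BaumBott(\bar A^n))\geq cn$ for \emph{every} $n$, and it exposes the underlying structure: $\bar A$ is algebraic iff $\mu\in\C$, iff $\bar A$ is conjugate over $\C(y)$ to a constant matrix. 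Your ``if'' direction likewise differs: the paper normalises $A$ so that $\tr(A),\det(A)\in\C$ and uses the Cayley--Hamilton recursion to confine all powers to the two-dimensional space $\C I+\C A$, whereas you diagonalise (or triangularise) over $\C(y)$ --- a stronger conclusion, at the price of checking diagonalisability. Two small points: the inclusion $\PGL_2(\C)\subseteq\Aut(\p^2)$ is false for the copy of $\PGL_2(\C)$ acting on the $x$-coordinate (e.g.\ $(1/x,y)$ has degree $2$), though the specific conjugates you produce, $(\mu x,y)$ and $(x+1,y)$, are indeed automorphisms of $\p^2$, and in any case constant matrices lie in the algebraic subgroup $\Aut(\p^1\times\p^1)^{\circ}$ of $\bp$. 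Finally, the degree comparison you flag as the main hurdle --- bounded Cremona degree of $\bar A^n$ implies bounded $y$-degrees of the reduced matrix entries, hence bounded $\degmax(\BaumBott(\bar A^n))$ --- is exactly the step the paper itself leaves implicit (``this implies that the sequence $n\mapsto\deg((\bar A)^n)$ is also unbounded''), so you are no worse off than the original on this point.
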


We now describe the algebraic elements of $\PGL_2 ( \C (y) )$ up to conjugation:

\begin{lemma} \label{lemma: algebraic elements of PGL(2,C(y))}
Any algebraic element of $\PGL_2 ( \C (y) )$ is conjugate to one of the following elements:
\begin{enumerate}
\item \label{diagonal form}
$\smat{a }{0}{0}{1}$, where $a \in \C^*$;
\item \label{upper-triangular form}
$\smat{1}{1}{0}{1}$;
\item \label{anti-diagonal form}
$\smat{0}{f}{1}{0}$, where $f$ is a nonsquare element of $\C(y)$.
\end{enumerate}
\end{lemma}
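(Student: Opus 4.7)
The plan is to lift $\bar A$ to $A \in \GL_2(\C(y))$ and analyse its characteristic polynomial $\chi_A(X) = X^2 - \tau X + \delta$, splitting into the two cases according to whether $\chi_A$ splits over $\C(y)$ or is irreducible. Throughout I will exploit the previous lemma, which says that $\tau^2/\delta \in \C$ since $\bar A$ is algebraic.

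If $\chi_A$ splits as $(X-\lambda)(X-\mu)$ with $\lambda,\mu \in \C(y)^*$, then $A$ is either scalar or has two distinct eigenvalues. In the diagonalisable case with $\lambda \neq \mu$, standard linear algebra conjugates $A$ to $\smat{\lambda}{0}{0}{\mu}$, and hence $\bar A$ to $\smat{a}{0}{0}{1}$ with $a := \lambda/\mu \in \C(y)^*$. Setting $s := \BaumBott(\bar A)-2 = \tau^2/\delta - 2 = a + 1/a \in \C$, the element $a$ is a root of $X^2 - sX + 1 \in \C[X]$, so $a \in \C^*$ by algebraic closedness, giving case~\eqref{diagonal form}. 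If instead $\lambda = \mu$, then either $A$ is scalar (so $\bar A = \id$, still case \eqref{diagonal form} with $a=1$) or $A$ is conjugate to the Jordan block $\smat{\lambda}{1}{0}{\lambda}$; a quick conjugation by $\smat{\lambda}{0}{0}{1}$ transforms the corresponding class in $\PGL_2(\C(y))$ into $\smat{1}{1}{0}{1}$, which is case~\eqref{upper-triangular form}.

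If $\chi_A$ is irreducible over $\C(y)$, then the eigenvalues $\lambda_1,\lambda_2$ lie in a quadratic extension $\C(y)(\sqrt{\tau^2-4\delta})$ but not in $\C(y)$, and they are distinct. As above, $r := \lambda_1/\lambda_2$ satisfies $r + 1/r = \tau^2/\delta - 2 \in \C$, so $r$ is algebraic over $\C$ and thus $r \in \C^*$; moreover $r \neq 1$ since the eigenvalues are distinct. Now write $\tau = \lambda_2(r+1)$: if $r \neq -1$, then $\lambda_2 = \tau/(r+1) \in \C(y)$, contradicting irreducibility. Hence $r = -1$, which forces $\tau = 0$ and $\chi_A = X^2 + \delta$ with $f := -\delta$ a nonsquare. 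Since $A$ is then non-scalar, any nonzero vector $v \in \C(y)^2$ is cyclic, so $\{v, Av\}$ is a basis in which $A$ has matrix $\smat{0}{-\delta}{1}{0}=\smat{0}{f}{1}{0}$, giving case~\eqref{anti-diagonal form}.

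The only real subtlety is the irreducible case: one must recognise that algebraicity forces the \emph{ratio} of eigenvalues to be a complex constant, and then extract the purely algebraic observation that this ratio being anything other than $-1$ would already force the eigenvalues themselves into $\C(y)$. Everything else — rational canonical/Jordan form for $2\times 2$ matrices, and the rescaling that kills the leading diagonal entry in the unipotent case — is routine.
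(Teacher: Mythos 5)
Your proof is correct, and it organises the argument differently from the paper. The paper splits on whether $\BaumBott(\bar A)=\tr^2(A)/\det(A)$ is zero or a nonzero constant: if it is zero then $\tr(A)=0$, $(\bar A)^2=\id$, and a cyclic-vector argument puts $\bar A$ in companion form $\smat{0}{f}{1}{0}$ (with a further sub-split according to whether $f$ is a square); if it is a nonzero constant, the paper divides $A$ by $\tr(A)$ so that the whole characteristic polynomial has coefficients in $\C$, and then simply invokes Jordan form over $\C\subseteq\C(y)$. You instead split on reducibility of $\chi_A$ over $\C(y)$, and in both branches extract from $\BaumBott(\bar A)\in\C$ the statement that the eigenvalue ratio $r$ satisfies $r+1/r\in\C$ and hence lies in $\C^*$ (using that $\C$ is algebraically closed in $\C(y)$ and in its quadratic extensions). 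Your key step in the irreducible case --- that $r\neq -1$ would give $\lambda_2=\tau/(r+1)\in\C(y)$ and contradict irreducibility, forcing $\tau=0$ --- is a genuinely different derivation of the implication ``irreducible $\Rightarrow$ trace zero'' than the paper's ``trace nonzero $\Rightarrow$ complex characteristic polynomial $\Rightarrow$ split''. The paper's normalisation trick is arguably quicker in the nonzero-trace case, while your decomposition has the mild advantage that the companion form arises only when $f$ is genuinely a nonsquare, so you never need the paper's extra step of re-diagonalising $\smat{0}{f}{1}{0}$ to $\smat{-1}{0}{0}{1}$ when $f$ happens to be a square. All the individual steps you use (every nonzero vector is cyclic when $\chi_A$ is irreducible, the rescaling of the Jordan block to $\smat{1}{1}{0}{1}$ in $\PGL_2$) check out.
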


\begin{proof}
Let $ A \in \GL_2 ( \C (y) )$ be an element whose class $ \bar A$ in $\PGL_2 ( \C (y) )$ is algebraic. We then have $\BaumBott ( \bar A) \in \C$. If $\BaumBott ( \bar A) =0$,  we get $\tr (A) = 0$ and $(\bar A)^2 = \id$. Since $A$ is not a homothety, there exists a vector $u \in (\C (y) )^2$ such that $u$ and $v:=Au$ are linearly independent. If $P= (u, v)  \in \GL_2 ( \C (y) )$ is the element whose first  (resp.\ second) column is $u$ (resp.\ $v$), we have $P^{-1}AP = \smat{0}{f}{1}{0}$ for some nonzero element $f$ of $\C(y)$. If $f$ is a square, then the class of this matrix is conjugate to $\smat{-1 }{0}{\hspace{1.5mm}0}{1}$ in $\PGL_2 ( \C (y) )$ and we are in case \eqref{diagonal form}. If $f$ is not a square, we are in case \eqref{anti-diagonal form}. Assume now that $\BaumBott ( \bar A) \in \C^*$. As in the proof of Lemma~\ref{lemma: criterion of algebraicity in PGL(2,C(y))}, up to dividing the matrix $A \in \GL_2 ( \C (y) )$ by $\tr (A)$, we may assume that $\tr (A)$ and $\det (A) \in \C$. Hence, the eigenvalues of $A$ are complex numbers, and $A$ is conjugate to a Jordan matrix $\smat{a }{0}{0}{b}$ or $\smat{a }{1}{0}{a}$, where $a,b \in \C^*$. In the first case, the class of this element is equal to the class of $\smat{a/b }{0}{0}{1}$ and we are in case \eqref{diagonal form}. In the second case, the matrix $\smat{a }{1}{0}{a}$ is conjugate to the matrix $\smat{a }{a}{0}{a}$ whose class is equal to $\smat{1}{1}{0}{1}$ in $\PGL_2(\C(y) )$. We are in case \eqref{upper-triangular form}.
\end{proof}

\begin{lemma} \label{lemma: algebraic elements of Tf}
The group $\T_f \subseteq \bp$ contains exactly two algebraic elements  which correspond to the elements $\smat{a }{bf}{b}{a}$ with $ab= 0$. If $b=0$, we get the identity element of $\bp$ and if $a =0$, we get the involution
\[ \iota_f \colon \p^2 \dasharrow \p^2, \quad (x,y) \dasharrow ( f(y) /x, y).\]
\end{lemma}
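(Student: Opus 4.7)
The plan is to apply Lemma~\ref{lemma: criterion of algebraicity in PGL(2,C(y))}: an element $\bar A \in \PGL_2(\C(y))$ is algebraic in $\bp$ if and only if its Baum--Bott index $\BaumBott(\bar A) = \tr^2(A)/\det(A)$ lies in $\C$. So for a representative $A = \smat{a}{bf}{b}{a}$ of an element of $\T_f$ (with $(a,b) \neq (0,0)$), I first compute $\tr(A) = 2a$ and $\det(A) = a^2 - b^2 f$, which gives
\[ \BaumBott(\bar A) \;=\; \frac{4a^2}{a^2 - b^2 f}. \]
Then the task reduces to determining precisely when this rational function belongs to~$\C$.

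The two obvious cases produce algebraic elements: if $b=0$, then $\bar A$ is the identity (with $\BaumBott = 4$); if $a=0$, then $\bar A$ is the class of $\smat{0}{bf}{b}{0} = \smat{0}{f}{1}{0}$, which corresponds to the birational involution $\iota_f = (f(y)/x,\, y)$ (with $\BaumBott = 0$).

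The main content is to rule out $ab \neq 0$. Here I suppose, for contradiction, that $4a^2/(a^2-b^2 f) = c$ for some $c \in \C$. Clearing denominators gives $(4-c)a^2 = -c b^2 f$. If $c = 0$ we get $a = 0$, contradicting $ab \neq 0$; if $c \neq 0$, we obtain
\[ f \;=\; \frac{c-4}{c}\left(\frac{a}{b}\right)^{\!2}. \]
Since $(c-4)/c \in \C$ is automatically a square in $\C$ (being algebraically closed) and $(a/b)^2$ is a square in $\C(y)$, this forces $f$ to be a square in $\C(y)$, contradicting the hypothesis that $f$ is nonsquare. Hence $ab \neq 0$ implies $\bar A$ is not algebraic, completing the proof.

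The only subtle point is the case $c = 4$ in the argument above, where the identity $(4-c)a^2 = -cb^2 f$ becomes $0 = -4 b^2 f$, forcing $b = 0$ (since $f \neq 0$), again contradicting $ab\neq 0$; so this is not really an obstacle, just something to mention explicitly. No other steps are delicate.
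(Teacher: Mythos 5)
Your proposal is correct and follows essentially the same route as the paper: both apply the Baum--Bott criterion of Lemma~\ref{lemma: criterion of algebraicity in PGL(2,C(y))} to $A=\smat{a}{bf}{b}{a}$ and show that constancy of $4a^2/(a^2-b^2f)$ with $ab\neq 0$ would force $f$ to be a square in $\C(y)$. Your explicit handling of the edge cases $c=0$ and $c=4$ is a minor elaboration of the same argument, which the paper compresses by rewriting the index as $4/(1-b^2f/a^2)$.
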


\begin{proof}
We apply the criterion for algebraicity given in Lemma~\ref{lemma: criterion of algebraicity in PGL(2,C(y))}. If $A:= \smat{a }{bf}{b}{a}$ with $ab \neq 0$, then we have
\[ \BaumBott (A) =  \frac{ \tr^2 (A) }{ \det (A) } = \frac{4a^2}{a^2-b^2f}   = \frac{4}{1 - \frac{b^2f}{a^2} } \]
 and this element is nonconstant because otherwise we would have $b^2f /a^2 \in \C^*$, proving that $f$ is a square. A contradiction.
\end{proof}

The previous result directly implies the following one:

\begin{lemma} \label{lemma: T_f does not contain connected closed algebraic subgroups}
The group $\T_f$ contains no nontrivial connected closed algebraic subgroup of $\bp$.
\end{lemma}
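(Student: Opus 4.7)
The plan is to combine the preceding lemma (which pins down the algebraic elements of $\T_f$ as exactly $\{1,\iota_f\}$) with the fact recalled earlier in the paper that an algebraic subgroup of $\bp$ is the same as a closed bounded subgroup (cf.\ \cite[Remark 2.20]{BlancFurter2013}). So the main idea is: any closed algebraic subgroup of $\T_f$ has all of its elements algebraic, hence lies inside $\{1,\iota_f\}$, and connectedness then forces it to be trivial.

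More concretely, let $H$ be a closed connected algebraic subgroup of $\T_f$. Being algebraic, $H$ is bounded in $\bp$, i.e.\ there is a constant $K$ such that $\deg(h)\le K$ for every $h\in H$. For any fixed $h\in H$ we then have $\deg(h^n)\le K$ for all $n\ge 0$, so the sequence $n\mapsto \deg(h^n)$ is bounded, which means precisely that $h$ is an algebraic element of $\bp$. Thus every element of $H$ is an algebraic element of $\T_f$.

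By the preceding lemma the only algebraic elements of $\T_f$ are the identity and the involution $\iota_f$, so $H\subseteq\{1,\iota_f\}$. A two-point subset of $\bp$ (with the induced topology) is discrete, hence disconnected, while $\{1\}$ is the only connected subset of $\{1,\iota_f\}$ containing the identity. Since $H$ is connected and $1\in H$, we conclude $H=\{1\}$, which is the desired statement. There is no real obstacle here; the entire content sits in the previous lemma, and the only thing to be careful about is invoking the correct definition of ``algebraic subgroup'' of $\bp$ in order to transfer boundedness of $H$ to algebraicity of each of its elements.
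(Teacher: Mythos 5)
Your proof is correct and follows exactly the route the paper intends: the paper presents this lemma as a direct consequence of the preceding one (every element of a closed bounded, i.e.\ algebraic, subgroup is an algebraic element, hence lies in $\{1,\iota_f\}$, and connectedness forces triviality). You have simply written out the details that the paper leaves implicit.
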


Lemma~\ref{lemma: T_f does not contain connected closed algebraic subgroups} gives us:

\begin{lemma} \label{lemma: rk(T_f) = 0}
We have $\rk (\T_f ) =0$.
\end{lemma}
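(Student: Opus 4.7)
The proof plan is essentially a one-line deduction from the immediately preceding lemma. Recall that $\rk(\T_f)$ is defined as the maximal dimension of an algebraic torus $(\C^*)^d$ contained in $\T_f$. An algebraic torus is by definition a connected closed algebraic subgroup, and it is nontrivial precisely when $d \geq 1$.

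Therefore, to show $\rk(\T_f) = 0$, I would simply argue by contradiction: if there existed a torus $(\C^*)^d \subseteq \T_f$ with $d \geq 1$, this would be a nontrivial connected closed algebraic subgroup of $\T_f$, directly contradicting Lemma~\ref{lemma: T_f does not contain connected closed algebraic subgroups}. Hence no such torus exists and $\rk(\T_f) = 0$.

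The only conceptual ingredient needed is the observation that a torus of positive dimension is nontrivial, connected, closed, and algebraic -- all of which are immediate from the definition of an algebraic torus. There is no real obstacle; the work has already been done in establishing Lemma~\ref{lemma: T_f does not contain connected closed algebraic subgroups}, which itself relied on the classification of algebraic elements in $\T_f$ via the Baum--Bott index criterion (Lemma~\ref{lemma: criterion of algebraicity in PGL(2,C(y))}) showing that the only algebraic elements of $\T_f$ are the identity and the involution $\iota_f$.
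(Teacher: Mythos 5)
Your proposal is correct and matches the paper exactly: the paper derives Lemma~\ref{lemma: rk(T_f) = 0} as an immediate consequence of Lemma~\ref{lemma: T_f does not contain connected closed algebraic subgroups}, since a torus of positive dimension would be a nontrivial connected closed algebraic subgroup of $\T_f$. The only difference is that you spell out this one-line deduction, which the paper leaves implicit.
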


\begin{proposition} \label{proposition: normaliser of T_f in Jonq is the centraliser of iota_f}
Let $f$ be a nonsquare element of $\C(y)$. Then we have
\[ \Nor_{\Jonq}(\T_f) = \Cent_{\Jonq}(\iota_f) .\]
\end{proposition}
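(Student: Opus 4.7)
The plan is to establish both inclusions separately, exploiting two facts that have already been set up: first, $\T_f$ is a Borel subgroup of $\PGL_2(\C(y))$ (Theorem~\ref{theorem : Bir(P2)-Borel and C(y)-Borel subgroups of PGL(2,C(y))}) which is not triangularisable, and second, by Lemma~\ref{lemma: algebraic elements of Tf} the involution $\iota_f$ is the \emph{unique} non-trivial algebraic element of $\T_f$. The plan also relies crucially on the fact that $\PGL_2(\C(y))$ is normal in $\Jonq$, which is immediate from the semidirect product decomposition $\Jonq = \PGL_2(\C(y)) \rtimes \PGL_2$.

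For the inclusion $\Cent_\Jonq(\iota_f) \subseteq \Nor_\Jonq(\T_f)$, let $\varphi \in \Cent_\Jonq(\iota_f)$. First I would observe that conjugation by $\varphi$ is a $\bp$-homeomorphism of $\bp$, so that $\varphi \T_f \varphi^{-1}$ is again $\bp$-closed, $\bp$-connected and solvable; since $\PGL_2(\C(y))$ is normal in $\Jonq$, this conjugate lies inside $\PGL_2(\C(y))$. Maximality is inherited (if $\varphi \T_f \varphi^{-1}$ were properly contained in a larger closed connected solvable subgroup $H$ of $\PGL_2(\C(y))$, then $\varphi^{-1} H \varphi$ would properly contain $\T_f$, contradicting the Borel property of $\T_f$), so $\varphi \T_f \varphi^{-1}$ is itself a Borel subgroup of $\PGL_2(\C(y))$. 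It is also non-triangularisable, since conjugation preserves this property. Because $\varphi$ centralises $\iota_f$ we have $\iota_f = \varphi \iota_f \varphi^{-1} \in \varphi \T_f \varphi^{-1} \cap \T_f$, and Proposition~\ref{proposition: a remarkable characterisation of equality or conjugation} then forces $\varphi \T_f \varphi^{-1} = \T_f$.

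For the reverse inclusion $\Nor_\Jonq(\T_f) \subseteq \Cent_\Jonq(\iota_f)$, let $\varphi \in \Nor_\Jonq(\T_f)$, so that conjugation by $\varphi$ restricts to a group automorphism of $\T_f$. The set of algebraic elements of $\bp$ is stable under conjugation (an element $g$ is algebraic iff $\{g^n\}_{n \in \Z}$ is contained in some $\bp_d$, which is equivalent for $g$ and $\varphi g \varphi^{-1}$), so this automorphism permutes the two algebraic elements of $\T_f$ identified in Lemma~\ref{lemma: algebraic elements of Tf}, namely $\id$ and $\iota_f$. Since automorphisms send $\id$ to $\id$, we conclude $\varphi \iota_f \varphi^{-1} = \iota_f$.

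The main obstacle is the first inclusion, specifically verifying that $\varphi\T_f\varphi^{-1}$ is still a Borel subgroup of $\PGL_2(\C(y))$ so that Proposition~\ref{proposition: a remarkable characterisation of equality or conjugation} is applicable; this in turn depends on the normality of $\PGL_2(\C(y))$ in $\Jonq$ and on conjugation being a homeomorphism for the $\bp$-topology. The reverse inclusion is shorter once one notices that algebraicity is preserved under conjugation, reducing everything to the characterisation of algebraic elements in $\T_f$ already established.
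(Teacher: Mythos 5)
Your proof is correct and follows essentially the same route as the paper: both directions hinge on $\iota_f$ being the unique non-trivial algebraic element of $\T_f$ (Lemma~\ref{lemma: algebraic elements of Tf}), on $\varphi\T_f\varphi^{-1}$ being a non-triangularisable Borel subgroup of $\PGL_2(\C(y))$, and on Proposition~\ref{proposition: a remarkable characterisation of equality or conjugation}. Your justification that conjugation sends Borel subgroups of $\PGL_2(\C(y))$ to Borel subgroups is exactly the "alternative" argument the paper itself mentions (the paper's primary route is the explicit computation $\varphi\T_f\varphi^{-1}=u\,\T_{f\circ v^{-1}}\,u^{-1}$ for $\varphi=uv$), so the two proofs coincide in substance.
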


\begin{proof}
The inclusion $  \Nor_{\Jonq}(\T_f)  \subseteq \Cent_{\Jonq}(\iota_f)$ directly follows from Lemma~\ref{lemma: iota_f is the unique involution of T_f}.

Let now $\varphi$ be an element of $\Cent_{\Jonq}(\iota_f)$. Note that $\varphi \T_f \varphi^{-1}$ is a Borel subgroup of $\PGL_2 (\C (y) )$. For seeing it, we can write $\varphi = uv$ with $u \in \PGL_2 ( \C (y) )$, $v \in \PGL_2$, and observe that $\varphi \T_f \varphi^{-1}= uv \T_f v^{-1} u^{-1} = u   \T_{f \circ v^{-1}}  u^{-1}$. Since  $\varphi \T_f \varphi^{-1}$ and $\T_f$ are two Borel subgroups of $\PGL_2 ( \C(y) )$ whose intersection is nontrivial (because it contains $\iota_f$), they are equal by Proposition~\ref{proposition: a remarkable characterisation of equality or conjugation}. Hence we have shown that $\varphi$ belongs to $  \Nor_{\Jonq}(\T_f)$ and this proves the inclusion $\Cent_{\Jonq}(\iota_f) \subseteq  \Nor_{\Jonq}(\T_f)  $.
\end{proof}

For later use, we prove the following basic result.

\begin{lemma} \label{lemma: unique rational fibration preserved and a property of some conjugants}
Let $\alpha \in \Jonq$ be a Jonqui\`eres transformation. The following assertions are equivalent:
\begin{enumerate}
\item \label{a unique rational fibration is preserved}
$\alpha$ preserves a unique rational fibration;
\item \label{an implication}
$\forall \, \varphi \in \bp, \quad \varphi \alpha \varphi^{-1} \in \Jonq \; \Longrightarrow  \; \varphi \in \Jonq$.
\end{enumerate}
\end{lemma}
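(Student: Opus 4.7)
The plan is to reformulate both assertions in terms of the single set
\[
S_\alpha := \{ \varphi \in \bp \mid \varphi \alpha \varphi^{-1} \in \Jonq \}.
\]
Since $\alpha \in \Jonq$, one automatically has $\Jonq \subseteq S_\alpha$, and assertion \eqref{an implication} is precisely the reverse inclusion $S_\alpha \subseteq \Jonq$, i.e.\ the equality $S_\alpha = \Jonq$.

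Next I would set up a natural bijection between the equivalence classes of rational fibrations preserved by $\alpha$ and the left cosets $\Jonq \backslash S_\alpha$. Let $\pi_0 \colon \p^2 \dashrightarrow \p^1$, $[x:y:z] \mapsto [y:z]$, be the standard rational fibration whose stabiliser in $\bp$ is by definition $\Jonq$ (i.e.\ $g \in \Jonq$ iff $\pi_0 \circ g = \beta \circ \pi_0$ for some $\beta \in \Aut(\p^1)$). To each $\varphi \in S_\alpha$ I associate the fibration $\pi_\varphi := \pi_0 \circ \varphi = [\varphi_2 : \varphi_3]$, where $\varphi = [\varphi_1 : \varphi_2 : \varphi_3]$. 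The alternative description in Definition~\ref{definition: rational fibration}\eqref{preserved rational fibration} shows that every rational fibration preserved by $\alpha$ arises in this way for some $\varphi \in S_\alpha$. Moreover, two fibrations $\pi_\varphi$ and $\pi_{\varphi'}$ are equivalent iff there exists $\beta \in \Aut(\p^1)$ with $\pi_0 \circ \varphi' = \beta \circ \pi_0 \circ \varphi$, i.e.\ $\pi_0 \circ (\varphi' \varphi^{-1}) = \beta \circ \pi_0$, which is in turn equivalent to $\varphi' \varphi^{-1} \in \Jonq$, i.e.\ $\varphi$ and $\varphi'$ lie in the same left coset of $\Jonq$ in $S_\alpha$.

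With this bijection in hand, assertion \eqref{a unique rational fibration is preserved} translates into $|\Jonq \backslash S_\alpha| = 1$. Combined with the inclusion $\Jonq \subseteq S_\alpha$ noted above, this is exactly the equality $S_\alpha = \Jonq$, which is assertion \eqref{an implication}. I do not anticipate any real obstacle; the only minor care needed is to unwind Definition~\ref{definition: rational fibration}\eqref{preserved rational fibration} so that the correspondence $\varphi \mapsto \pi_\varphi$ is visibly surjective onto the set of $\alpha$-preserved fibrations and respects the equivalence relations on both sides.
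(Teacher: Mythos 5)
Your proof is correct and is essentially the paper's argument in a repackaged form: the paper proves the two implications separately using exactly the same identification of preserved fibrations with $\Pi\varphi$ for $\varphi$ conjugating $\alpha$ into $\Jonq$ (where $\Pi=[y:z]$ is the standard fibration), whereas you organize this into a single bijection between equivalence classes of preserved fibrations and the cosets $\Jonq\backslash S_\alpha$. Both arguments rest on the same two facts, namely Definition~\ref{definition: rational fibration}\eqref{preserved rational fibration} and the characterisation of $\Jonq$ as the stabiliser of $\Pi$, so no further comparison is needed.
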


\begin{proof}
Let's note that the rational fibration $\pi \colon \p^2 \dasharrow  \p^1$ is preserved by $\alpha$ if and only if the rational fibration $\pi \varphi ^{-1}$ is preserved by $ \varphi \alpha \varphi^{-1}$.

Set $\Pi \colon \p^2 \dasharrow \p^1$, $[x:y:z] \dasharrow [y:z]$. Note that $\Pi$  corresponds to the fibration $y = {\rm const}$.

\eqref{a unique rational fibration is preserved} $\Longrightarrow$ \eqref{an implication} Let $\varphi \in \bp$ be such that $\varphi \alpha \varphi^{-1} \in \Jonq$. This means that there exists $\beta \in \Aut (\p^1)$ such that $\Pi (\varphi \alpha \varphi^{-1}) = \beta \Pi$. This is equivalent to $(\Pi \varphi) \alpha = \beta (\Pi \varphi) $. Hence $\Pi \varphi$ is preserved by $\alpha$. Therefore, there exists $\beta' \in \Aut (\p^1)$ such that  $\Pi \varphi = \beta \Pi$ and $\varphi \in \Jonq$.

\eqref{an implication} $\Longrightarrow$ \eqref{a unique rational fibration is preserved} Assume that $\pi \colon \p^2 \dasharrow \p^1$ is a rational fibration preserved by $\alpha$. By Definition~\ref{definition: rational fibration}\eqref{preserved rational fibration} there exists $\varphi \in \bp$ satisfying $\pi = \Pi \varphi$ and $\varphi \alpha \varphi^{-1} \in \Jonq$. The assumption we've made yields $\varphi \in \Jonq$, i.e.\ $\Pi \varphi = \beta \Pi$ for some $\beta \in \Aut (\p^1)$. We obtain $\pi = \beta \Pi$ which shows that $\pi$ is equivalent to $\Pi$.
\end{proof}

Following the literature a non algebraic element of $\Jonq$ will be called a  Jonqui\`eres twist.

\begin{lemma} \label{lemma: a sufficient condition for an element to be Jonquieres}
Let $\varphi$ be an element of $\bp$. Then, the following assertions are equivalent:
\begin{enumerate}
\item \label{phi belongs to Jonq}
We have $\varphi \in \Jonq$.
\item \label{the conjugate is contained in PGL(2,C(y))}
We have $\varphi \T_f \varphi^{-1} \subseteq \PGL_2 ( \C (y) )$.
\item \label{the conjugate is contained in Jonq}
We have $\varphi \T_f \varphi^{-1} \subseteq \Jonq$.
\item \label{the conjugate of some Jonquieres twist of Tf belongs to Jonq}
$ \exists \, t \in \T_f \smallsetminus \{ \id,  \iota_f \}$ such that $\varphi t  \varphi^{-1} \in  \Jonq$.
\item \label{the conjugate of some Jonquieres twist of Jonq belongs to Jonq}
There exists a Jonqui\`eres twist $t \in \Jonq$ such that $\varphi t  \varphi^{-1} \in  \Jonq$.
\end{enumerate}
\end{lemma}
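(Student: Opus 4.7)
The plan is to establish the cycle $(1)\Rightarrow(2)\Rightarrow(3)\Rightarrow(4)\Rightarrow(5)\Rightarrow(1)$.

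For $(1)\Rightarrow(2)$, I would decompose $\varphi=uv$ with $u\in\PGL_2(\C(y))$ and $v\in\PGL_2$, the latter acting only on the $y$-coordinate as some $\bar v\in\PGL_2(\C)$. A direct computation sends an element $\smat{a(y)}{b(y)f(y)}{b(y)}{a(y)}\in\T_f$ under conjugation by $v$ to $\smat{a(\bar v^{-1}y)}{b(\bar v^{-1}y)f(\bar v^{-1}y)}{b(\bar v^{-1}y)}{a(\bar v^{-1}y)}$, so that $v\T_f v^{-1}=\T_{f\circ\bar v^{-1}}\subseteq\PGL_2(\C(y))$ (and $f\circ\bar v^{-1}$ remains a nonsquare), and further conjugation by $u$ preserves $\PGL_2(\C(y))$. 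The implications $(2)\Rightarrow(3)$ and $(3)\Rightarrow(4)$ are immediate from $\PGL_2(\C(y))\subseteq\Jonq$ together with the fact that $\T_f\cong\C(y)[\sqrt{f}]^*/\C(y)^*$ is infinite, so one can pick $t\in\T_f\setminus\{\id,\iota_f\}$. For $(4)\Rightarrow(5)$, Lemma~\ref{lemma: algebraic elements of Tf} identifies $\id$ and $\iota_f$ as the only algebraic elements of $\T_f$, so any such $t$ is non-algebraic in $\bp$, i.e.\ a Jonquières twist.

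The substantive step is $(5)\Rightarrow(1)$. Given a Jonquières twist $t\in\Jonq$ with $\varphi t\varphi^{-1}\in\Jonq$, Lemma~\ref{lemma: unique rational fibration preserved and a property of some conjugants} reduces the problem to showing that $t$ preserves a unique rational fibration up to equivalence. This is a classical fact from the Diller--Favre/Gizatullin classification of birational self-maps of surfaces: a non-algebraic element of $\bp$ that preserves a rational fibration preserves exactly one such fibration up to equivalence. I would invoke this as a cited classical result; the conclusion $\varphi\in\Jonq$ then follows immediately.

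The main obstacle is this uniqueness of the preserved fibration for a Jonquières twist. If a self-contained argument were required, a promising approach would be to analyse the $t$-action on $\Pic$ of a sufficiently high blow-up resolving $t$: a Jonquières twist is parabolic with a distinguished invariant isotropic class corresponding to its preserved pencil, and two inequivalent invariant rational fibrations would produce two independent invariant isotropic classes, forcing bounded degrees on an appropriate $\p^1\times\p^1$-model and contradicting non-algebraicity. In the write-up I would likely just cite the classical statement and defer this dynamical argument.
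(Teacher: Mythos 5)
Your proposal is correct and follows essentially the same route as the paper: the same cycle of implications, with $(4)\Rightarrow(5)$ resting on Lemma~\ref{lemma: algebraic elements of Tf} and $(5)\Rightarrow(1)$ resting on Lemma~\ref{lemma: unique rational fibration preserved and a property of some conjugants} combined with the fact that a Jonqui\`eres twist preserves a unique rational fibration, which the paper likewise simply cites (as \cite[Lemma 4.5]{DillerFavre2001}) rather than reproving.
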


\begin{proof}
\eqref{phi belongs to Jonq} $\Longrightarrow$ \eqref{the conjugate is contained in PGL(2,C(y))} $\Longrightarrow$ \eqref{the conjugate is contained in Jonq} $\Longrightarrow$ \eqref{the conjugate of some Jonquieres twist of Tf belongs to Jonq} is obvious.

\noindent \eqref{the conjugate of some Jonquieres twist of Tf belongs to Jonq} $\Longrightarrow$ \eqref{the conjugate of some Jonquieres twist of Jonq belongs to Jonq} is a consequence of Lemma~\ref{lemma: algebraic elements of Tf}.

\noindent \eqref{the conjugate of some Jonquieres twist of Jonq belongs to Jonq} $\Longrightarrow$ \eqref{phi belongs to Jonq}. By \cite[Lemma 4.5]{DillerFavre2001} a Jonqui\`eres twist preserves a unique rational fibration. Hence the result follows from Lemma~\ref{lemma: unique rational fibration preserved and a property of some conjugants}.
\end{proof}

Lemma~\ref{lemma: a sufficient condition for an element to be Jonquieres} directly yields the next result:

\begin{proposition}
The group $\Jonq$ is equal to its own normaliser in $\bp$.
\end{proposition}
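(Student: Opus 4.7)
The plan is to exploit the characterisation of $\Jonq$ given by Lemma~\ref{lemma: a sufficient condition for an element to be Jonquieres}, specifically the implication \eqref{the conjugate of some Jonquieres twist of Jonq belongs to Jonq} $\Rightarrow$ \eqref{phi belongs to Jonq}. The inclusion $\Jonq \subseteq \Nor_{\bp}(\Jonq)$ is immediate, so the entire content is to show $\Nor_{\bp}(\Jonq) \subseteq \Jonq$.

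First, I would observe that $\Jonq$ contains at least one Jonqui\`eres twist, i.e.\ a non-algebraic element. For instance, by Lemma~\ref{lemma: algebraic elements of Tf}, any element of the form $\smat{a}{bf}{b}{a} \in \T_f \subseteq \Jonq$ with $ab \ne 0$ and $f$ a nonsquare element of $\C(y)$ is non-algebraic, hence is a Jonqui\`eres twist. Fix such an element $t \in \Jonq$.

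Now take $\varphi \in \Nor_{\bp}(\Jonq)$. By definition of the normaliser, $\varphi \Jonq \varphi^{-1} = \Jonq$, and in particular $\varphi t \varphi^{-1} \in \Jonq$. Since $t$ is a Jonqui\`eres twist, condition \eqref{the conjugate of some Jonquieres twist of Jonq belongs to Jonq} of Lemma~\ref{lemma: a sufficient condition for an element to be Jonquieres} is satisfied for $\varphi$, which by the equivalence with \eqref{phi belongs to Jonq} of that same lemma gives $\varphi \in \Jonq$. This proves $\Nor_{\bp}(\Jonq) \subseteq \Jonq$ and hence the equality.

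There is essentially no obstacle here: all the substantive work has already been done in Lemma~\ref{lemma: a sufficient condition for an element to be Jonquieres}, whose proof relied in turn on \cite{DillerFavre2001} asserting that a Jonqui\`eres twist preserves a unique rational fibration, together with Lemma~\ref{lemma: unique rational fibration preserved and a property of some conjugants}. The present proposition is simply the ``group-theoretic shadow'' of the geometric fact that the pencil of lines through $[1:0:0]$ is the unique rational fibration preserved by a generic element of $\Jonq$.
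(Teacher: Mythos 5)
Your proof is correct and is exactly the argument the paper intends: the proposition is stated as a direct consequence of Lemma~\ref{lemma: a sufficient condition for an element to be Jonquieres}, via the implication that an element conjugating a Jonqui\`eres twist of $\Jonq$ back into $\Jonq$ must itself lie in $\Jonq$. Your choice of a concrete twist $t \in \T_f$ with $ab \ne 0$ just makes explicit what the paper leaves implicit.
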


Consider the embedding $\C (y) ^* \hookrightarrow \PGL_2 (\C(y) )$, $ \lambda \mapsto d_{\lambda} = \smat{\lambda}{0}{0}{1}= ( \lambda (y) x , y) $.

Any element $\varphi \in  \C (y) ^* \rtimes \PGL_2  \subseteq \PGL_2( \C(y) ) \rtimes \PGL_2 = \Jonq$ can be (uniquely) written $\varphi = \mu \circ d_{\lambda}$, where $\mu  = (x, \mu(y) ) \in \PGL_2 $ and $\lambda \in \C (y)^*$. Equivalently, we have $\varphi =  ( \lambda (y) x , \mu(y) )$.
We then have:
\begin{equation} \varphi \circ \T_f  \circ \varphi^{-1} = \mu \circ d_{\lambda} \circ  \T_f \circ (d_{\lambda})^{-1} \circ \mu^{-1}  = \mu \circ \T_{\lambda^2 f} \circ \mu^{-1} = \T_ { (\lambda^2 f) \circ \mu^{-1} }. \label{equation: action by conjugation of the groups Tf} \end{equation}
We summarise this computation in the following statement

\begin{lemma} \label{lemma: the action of C (y)^* rtimes PGL_2 on C(y)}
Considering the action of  $\C (y) ^* \rtimes \PGL_2 $ on $\C (y)$ given by
\[ \forall \, \varphi = ( \lambda  , \mu ) \in \C (y) ^* \rtimes \PGL_2, \; \forall \, f \in \C (y), \quad \varphi . f := (\lambda^2 f) \circ \mu^{-1},\]
we have  $\varphi  \T_f \varphi^{-1}  = \T _{\varphi.f}$ for any nonsquare element $f$ of $\C (y)$.
\end{lemma}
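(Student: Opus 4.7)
The plan is to verify the claim by a direct two-step computation, which is essentially just a repackaging of the displayed equation $\eqref{equation: action by conjugation of the groups Tf}$ immediately preceding the statement. I would begin by using the fact that every $\varphi \in \C(y)^* \rtimes \PGL_2$ admits a unique decomposition $\varphi = \mu \circ d_\lambda$, where $d_\lambda = (\lambda(y)x, y)$ corresponds to the matrix $\smat{\lambda}{0}{0}{1} \in \PGL_2(\C(y))$ and $\mu = (x, \mu(y))$ acts only on the second coordinate. Since conjugation is multiplicative in the decomposition, it suffices to handle the two factors $d_\lambda$ and $\mu$ separately.

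For the first factor, I would perform the matrix multiplication
\[ \smat{\lambda}{0}{0}{1} \smat{a}{bf}{b}{a} \smat{1/\lambda}{0}{0}{1} = \smat{a}{\lambda bf}{b/\lambda}{a}. \]
Setting $b' := b/\lambda$, the right hand side becomes $\smat{a}{b'(\lambda^2 f)}{b'}{a}$, which lies in $\T_{\lambda^2 f}$. As $(a,b) \mapsto (a,b/\lambda)$ is a bijection of $\C(y)^2 \setminus \{(0,0)\}$, this yields $d_\lambda \T_f d_\lambda^{-1} = \T_{\lambda^2 f}$.

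For the second factor, I would unravel the action of an element $t = \smat{a}{bg}{b}{a} \in \T_g$ on affine coordinates, namely $t(x,y) = \bigl(\tfrac{ax+bg(y)}{bx+a},\, y\bigr)$, and then compute
\[ (\mu \circ t \circ \mu^{-1})(x,y) = \mu\Bigl( \tfrac{ax + b g(\mu^{-1}(y))}{bx + a},\, \mu^{-1}(y) \Bigr) = \Bigl( \tfrac{ax + b (g\circ \mu^{-1})(y)}{bx + a},\, y \Bigr), \]
which is an element of $\T_{g \circ \mu^{-1}}$. Conversely the same formula shows that every element of $\T_{g \circ \mu^{-1}}$ is obtained this way, so $\mu \T_g \mu^{-1} = \T_{g \circ \mu^{-1}}$.

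Combining the two steps with $g = \lambda^2 f$ yields
\[ \varphi \T_f \varphi^{-1} = \mu (d_\lambda \T_f d_\lambda^{-1}) \mu^{-1} = \mu \T_{\lambda^2 f} \mu^{-1} = \T_{(\lambda^2 f) \circ \mu^{-1}} = \T_{\varphi.f}, \]
which is the desired conclusion. There is no real obstacle; the only minor point to check is that the prescribed rule $\varphi . f = (\lambda^2 f) \circ \mu^{-1}$ does indeed define a group action of $\C(y)^* \rtimes \PGL_2$ on $\C(y)$, but this follows automatically from the fact that conjugation is a group action on the set $\{ \T_h : h \in \C(y) \text{ nonsquare}\}$ together with the bijection $h \leftrightarrow \T_h$ guaranteed by Proposition~\ref{proposition: a remarkable characterisation of equality or conjugation}.
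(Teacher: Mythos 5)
Your proof is correct and is essentially the paper's own argument: the paper simply records the chain $\varphi \T_f \varphi^{-1} = \mu d_\lambda \T_f d_\lambda^{-1} \mu^{-1} = \mu \T_{\lambda^2 f}\mu^{-1} = \T_{(\lambda^2 f)\circ\mu^{-1}}$ in the displayed equation immediately preceding the lemma, and you have merely filled in the two elementary steps. The only cosmetic slip is that in the conjugation by $\mu$ the coefficients $a,b \in \C(y)$ must also be replaced by $a\circ\mu^{-1}$ and $b\circ\mu^{-1}$, which does not affect the set-level equality $\mu \T_g \mu^{-1} = \T_{g\circ\mu^{-1}}$ since $h \mapsto h\circ\mu^{-1}$ is a bijection of $\C(y)$.
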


\begin{definition} \label{definition: odd support of a rational function}
If $f \in \C (y)$ is a nonzero rational function, define its odd support $\So (f)$ as the support of the divisor $\overline{  \divisor (f) }$ where  $\divisor (f) \in  \Div ( \p^1)$ is the usual divisor of $f$, and $\overline{  \divisor (f) }$ denotes its image by the canonical map $\Div ( \p^1) \to \Div ( \p^1) \otimes_{\Z} \Z_2$.

Alternatively, if $ \vv_a (f)$ is the order of vanishing of $f$ at the point $a$ (counted positively if $f$ actually vanishes at $a$ and negatively if $f$ admits a pole at $a$), we have 
\[ \So (f) = \{ a \in \p^1, \; \vv_a (f) \text{ is odd} \} .\]
\end{definition}

If $f(y) = c \prod_i (y-a_i)^{n_i}$, where $c \in \C^*$, the $a_i$ are distinct complex numbers, and the $n_i$ are integers, we have
\[ \So (f) = \{ a_i, \; n_i \text{ is odd} \}
\hspace{10mm}  \text{if} \hspace{2mm} \sum_i n_i \text{ is even, and}  \] \[ \So (f) = \{ a_i, \; n_i \text{ is odd} \} \cup \{ \infty \}
\hspace{10mm}  \text{if} \hspace{2mm} \sum_i n_i \text{ is odd.}  \]
Note that $\So (f) =\emptyset $ if and only if $f$ is a square. When $f$ is not a square, let $\g$ denote the genus of the curve $x^2=f(y)$. The following formula is a well-known consequence of the Riemann-Hurwitz formula:
\[ 2 \g  + 2 =  | \So (f) |  .\]
We will constantly use the following straightforward lemma:

\begin{lemma} \label{lemma: fov/g is a square iff v stabilizes the odd support of f}
Let $f,g \in \C (y)^*$ and let $v \in \PGL_2$ be a homography. Then, the following assertions are equivalent:
\begin{enumerate}
\item \label{fov/g is a square}
We have $\dfrac{f(v(y))}{g(y)} = \lambda^2(y)$  for some $\lambda \in \C (y)$.
\item \label{v sends the odd support of g onto the odd support of f}
\rule{0mm}{5mm}
We have $v ( \So (g) ) = \So (f)$.
\end{enumerate}
\end{lemma}

\begin{proposition} \label{proposition: equivalent conditions for T_f and T_g to be conjugate in Bir(P2)}
Let $f,g$ be nonsquare elements of $\C (y)$. Then, the following assertions are equivalent:
\begin{enumerate}
\item \label{conjugate in bir(P2)} $\T_f$ and $\T_g$ are conjugate in $\bp$;
\item \label{conjugate in Jonq} $\T_f$ and $\T_g$ are conjugate in $\Jonq$;
\item \label{conjugate in C(y)^* rtimes PGL_2} $\T_f$ and $\T_g$ are conjugate by an element of  $\C (y) ^* \rtimes \PGL_2$;
\item \label{in the same orbit for the C(y)^* rtimes PGL_2 action} $f$ and $g$ are in the same orbit for the action of $\C (y) ^* \rtimes \PGL_2$ on $\C (y)$;
\item \label{iota_f and iota_g are conjugate}
The involutions $\iota_f$ and $\iota_g$ are conjugate in $\bp$;
\item \label{The fields C[sqrt{f}] and C[sqrt{g}] are C-isomorphic}
The fields $\C (y) [ \sqrt{f}] $ and $\C (y) [ \sqrt{g}] $ are $\C$-isomorphic;
\item \label{The odd supports belong to the same orbit}
There exists $\mu \in \Aut (\p^1)$ such that $\So (g) = \mu \big(  \So (f) \big)$;
\item \label{The curves are isomorphic}
The hyperelliptic curves associated with $x^2 = f(y)$ and $x^2 = g(y)$ are isomorphic.
\end{enumerate}
\end{proposition}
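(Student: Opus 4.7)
The strategy is to close a ring of implications touching every condition, supplemented by one side equivalence. A block of routine equivalences combines previously established lemmas. By Lemma~\ref{lemma: the action of C (y)^* rtimes PGL_2 on C(y)}, conjugation by $\varphi \in \C(y)^* \rtimes \PGL_2$ sends $\T_f$ to $\T_{\varphi.f}$, so (3)$\Leftrightarrow$(4). Since $\varphi.f = \lambda^2 (f\circ\mu^{-1})$ lies in the same square-class of $\C(y)^*$ as $f \circ \mu^{-1}$, and $\So(f \circ \mu^{-1}) = \mu(\So(f))$ by definition of the odd support, Lemma~\ref{lemma: fov/f is a square iff v stabilizes the odd support of f} yields (4)$\Leftrightarrow$(7). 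The equivalence (6)$\Leftrightarrow$(8) is the classical fact that a smooth projective complex curve is determined up to isomorphism by its function field, applied to the curves associated with $x^2=f(y)$ and $x^2=g(y)$ which have function fields $\C(y)[\sqrt{f}]$ and $\C(y)[\sqrt{g}]$. The equivalence (7)$\Leftrightarrow$(8) is the classification of hyperelliptic covers of $\p^1$ by their branch divisors modulo $\PGL_2$: uniqueness of the degree-two pencil when $\g \geq 2$, the cross-ratio / $j$-invariant classification of $4$-point configurations when $\g = 1$, and the trivial projective equivalence of $2$-point subsets of $\p^1$ when $\g = 0$. Finally, (3)$\Rightarrow$(2)$\Rightarrow$(1) comes from the inclusions $\C(y)^* \rtimes \PGL_2 \subseteq \Jonq \subseteq \bp$, and (1)$\Rightarrow$(5) is immediate: if $\varphi \T_f \varphi^{-1} = \T_g$, then $\varphi \iota_f \varphi^{-1}$ is a non-trivial algebraic element of $\T_g$, and so it equals $\iota_g$ by Lemma~\ref{lemma: algebraic elements of Tf}.

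To close the cycle only (5)$\Rightarrow$(8) remains, and this is the key new input. The involution $\iota_f = (f(y)/x, y)$ is biregular on the open set of $\A^2$ where $x \neq 0$ and $f(y)$ is finite and nonzero, and its fixed locus there is the affine plane curve $\{x^2 = f(y)\}$, whose smooth projective model is precisely the hyperelliptic curve associated with $x^2=f(y)$. Given $\varphi \in \bp$ with $\varphi \iota_f \varphi^{-1} = \iota_g$, on any dense open subset of $\p^2$ where $\varphi$, $\varphi^{-1}$ and the intertwining relation are simultaneously regular, $\varphi$ sends fixed points of $\iota_f$ to fixed points of $\iota_g$, yielding a dominant birational map between the two plane fixed curves. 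Passing to smooth projective models produces a birational, hence biregular, isomorphism of hyperelliptic curves, which is (8). Coupled with the routine block above this closes the cycle (1)$\Rightarrow$(5)$\Rightarrow$(8)$\Leftrightarrow$(7)$\Leftrightarrow$(4)$\Leftrightarrow$(3)$\Rightarrow$(2)$\Rightarrow$(1), while (6)$\Leftrightarrow$(8) pins down the last condition.

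The main obstacle will be making (5)$\Rightarrow$(8) fully rigorous, because a priori $\varphi$ could collapse the fixed curve of $\iota_f$ inside its indeterminacy locus. The cleanest way around this is a common resolution of indeterminacies: blow up $\p^2$ finitely often to obtain a smooth projective surface $X$ on which the lifts of $\varphi$, $\varphi^{-1}$, $\iota_f$ and $\iota_g$ all become regular; the lifts of $\iota_f$ and $\iota_g$ are then biregular involutions of $X$ whose fixed loci are smooth one-dimensional subvarieties birational to the hyperelliptic curves in question, and the lifted conjugation $\varphi$ restricts to a biregular isomorphism between these two fixed loci. This argument is in the same spirit as the normalised-fixed-curve analysis carried out in Appendix~\ref{Rational fibrations with rational fibres preserved by iota_f = ( f(y)/x ,y)} and is uniform in the genus $\g$, so no case split on $\g$ is needed.
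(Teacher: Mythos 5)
Most of your argument is sound and close in substance to the paper's: the block (3)$\Leftrightarrow$(4)$\Leftrightarrow$(7)$\Leftrightarrow$(8)$\Leftrightarrow$(6), the inclusions (3)$\Rightarrow$(2)$\Rightarrow$(1), and (1)$\Rightarrow$(5) via the uniqueness of the involution in $\T_g$ all check out. Your route through the action of $\C(y)^*\rtimes\PGL_2$ (Lemma~\ref{lemma: the action of C (y)^* rtimes PGL_2 on C(y)}) is in fact a little cleaner than the paper's trace computations for (2)$\Rightarrow$(3)$\Rightarrow$(4), and your cycle avoids having to prove (1)$\Rightarrow$(2), which in the paper rests on the lemma that a Jonqui\`eres twist preserves a unique rational fibration. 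The price is that the whole weight of the proof falls on (5)$\Rightarrow$(8), and that is where there is a genuine gap.

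The proposed mechanism --- blow up $\p^2$ finitely often to obtain a smooth projective surface $X$ on which the lifts of $\varphi$, $\varphi^{-1}$, $\iota_f$ and $\iota_g$ all become biregular --- is not available. One can equivariantly regularise the finite groups $\langle\iota_f\rangle$ and $\langle\iota_g\rangle$ separately, and one can resolve the indeterminacies of $\varphi$ so that it becomes a \emph{morphism} from some blow-up; but the conjugating element $\varphi\in\bp$ is an arbitrary Cremona transformation and in general is not conjugate to an automorphism of any smooth projective surface (it may, for instance, have unbounded degree growth), so there is no common model on which all four maps act biregularly. The claimed ``biregular isomorphism between the two fixed loci'' is therefore not produced, and the assertion that the argument is uniform in $\g$ is also wrong: for $\g=0$ the fixed curve is rational and \emph{can} be contracted by $\varphi$. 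The correct repair is either to invoke Lemma~\ref{lemma: two involutions are conjugate iff they have the same Normalised Fixed curve} (Bayle--Beauville), whose easy direction is exactly what you need --- this is what the paper does, proving (5)$\Rightarrow$(6) --- or to argue directly that an irreducible curve of geometric genus $\ge 1$ cannot be contracted by a birational map of smooth projective surfaces, so that for $\g\ge1$ the curve $x^2=f(y)$ maps birationally onto a positive-genus curve fixed pointwise by $\iota_g$, which must be $x^2=g(y)$; the case $\g=0$ then requires the separate (easy) observation that $\g=0$ for $f$ forces $\g=0$ for $g$, whence both hyperelliptic curves are $\p^1$.
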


\begin{proof}
\eqref{conjugate in bir(P2)} $\Longrightarrow$ \eqref{conjugate in Jonq} If $\phi \, \T_f \, \phi^{-1}= \T_g$ for some $\phi \in \bp$, it follows by Lemma~\ref{lemma: a sufficient condition for an element to be Jonquieres} that $\phi \in \Jonq$, since $\T_g \subset \PGL_2(\C(y))$. 

\eqref{conjugate in Jonq} $\Longrightarrow$ \eqref{conjugate in C(y)^* rtimes PGL_2}  If $\mu  \phi \,  \T_f \,  \phi^{-1}  \mu^{-1} = \T_g$ for some $\mu \in \PGL_2 $ and $\phi=\smat{\alpha}{\beta}{\gamma}{\delta}\in\PGL_2(\C(y))$, there exists an element $\smat{a}{bf}{b}{a} \in \T_f$ such that 
\[\smat{\alpha}{\beta}{\gamma}{\delta}\smat{a}{bf}{b}{a}\smat{\alpha}{\beta}{\gamma}{\delta}^{-1}=\mu^{-1}\smat{0}{g}{1}{0}\mu\] Comparing traces, we obtain $a=0$ so that $\smat{a}{bf}{b}{a}=\smat{0}{f}{1}{0}$. It follows that 
\[\smat{\delta (\mu^{-1}.g)}{\gamma f(\mu^{-1}.g)}{\beta}{\alpha f}=\smat{\alpha}{\beta}{\gamma}{\delta}.\] 
Thus $\beta=0$ if and only if $\gamma=0$. If $\beta=\gamma=0$ we have $\mu\circ\phi\in\C(y)^*\rtimes \PGL_2 $ as desired, so we may assume that $\beta$ and $\gamma$ are both nonzero.
Then $\beta/\gamma=\gamma f(\mu^{-1}.g)/\beta$ so that $f(\mu^{-1}.g)$ is a square. This is equivalent to $\mu^{-1}.g/f$ being a square, so $\mu^{-1}.g=fh^2$ for some $h \in \C(y)$. It follows that $d_h \T_f (d_h)^{-1} = \T_{\mu^{-1}.g}$ and hence $\mu   d_h \T_f (d_h)^{-1} \mu^{-1} = \T_g$.

\eqref{conjugate in C(y)^* rtimes PGL_2} $\Longrightarrow$ \eqref{in the same orbit for the C(y)^* rtimes PGL_2 action} By assumption we have $\mu\smat{\lambda}{0}{0}{1}\smat{a}{bf}{b}{a}\smat{1}{0}{0}{\lambda}\mu^{-1}=\smat{0}{g}{1}{0}$ for some $\mu \in \PGL_2 ,\,\lambda \in \C(y)^*$ and $\smat{a}{bf}{b}{a} \in \T_f$. Comparing traces, we obtain $a=0$ and it follows that $ (\lambda^2f) \circ \mu^{-1}  =g$. Hence $\phi.f=g$ with $\phi =  \mu \circ  d_\lambda  \in \C(y)^*\rtimes \PGL_2 $.

\eqref{in the same orbit for the C(y)^* rtimes PGL_2 action} $\Longrightarrow$ \eqref{conjugate in bir(P2)} We have $\varphi  \,  \T_f  \, \varphi^{-1}  = \T _{\varphi.f}$ for $\phi \in \C(y)^*\rtimes \PGL_2 $ and $f \in \C(y)$. It is straightforward to check that the set of squares in $\C(y)$ is invariant for the $\C(y)^*\rtimes \PGL_2 $-action on $\C(y)$, so if $f$ is not a square, then neither is $\phi.f$.

\eqref{conjugate in bir(P2)} $\Longrightarrow$ \eqref{iota_f and iota_g are conjugate}
This follows from Lemma~\ref{lemma: iota_f is the unique involution of T_f}.

\eqref{iota_f and iota_g are conjugate} $\Longrightarrow$ \eqref{The fields C[sqrt{f}] and C[sqrt{g}] are C-isomorphic}  Since the set of fixed points of the involution $\iota_f$ is the curve $x^2 = f(y)$, the conclusion follows from Lemma~\ref{lemma: two involutions are conjugate iff they have the same Normalised Fixed curve} below (see also Definition~\ref{definition: Normalised Fixed Curve}).

\eqref{The fields C[sqrt{f}] and C[sqrt{g}] are C-isomorphic} $\Longrightarrow$ \eqref{The odd supports belong to the same orbit} Denote by $\pi_f \colon C_f \to \p^1$ the $2:1$ morphism corresponding to the inclusion $\C (y) \subseteq \C (y) [ \sqrt{f}] $. Note that $\So (f)$ is equal to the ramification locus of $\pi_f$. We will consider three cases depending on the  genus $\g$ of $C_f$.

If $\g \geq 2$, then it is well-known that $\pi_f$ is the only $2:1$ morphism to $\p^1$ up to left composition by an auto\-morphism of $\p^1$ (see \cite[Theorem III.7.3, page 101]{FarkasKra1992}). If  we take $\varphi \colon C_f \to C_g$ to be any isomorphism, then there exists an automorphism $\mu \in \Aut (\p^1)$ making the following diagram commute:
\[\xymatrix{
C_f \ar[d]_{\pi_f} \ar[r]^{\varphi} & C_g  \ar[d]^{\pi_g} \\
\p^1  \ar[r]^{\mu} &  \p^1}\]
The equality $\pi_g  \varphi = \mu  \pi_f$ shows that $\pi_g$ and $\mu  \pi_f$ have the same ramification, i.e.\ $ \So (g) =$  $\So (f   \mu^{-1} ) $, i.e.\ $\So (g) = \mu \big(  \So (f) \big)$.

If $ \g=1$, it is well-known that if the two elliptic curves $C_f$ and $C_g$ are isomorphic,
then there exists an automorphism of $\p^1$ sending the 4 ramification points of $\pi_f$ onto the 4 ramification points of $\pi_g$ (see \cite[(IV, 4.4), page 318]{Hartshorne77}).

If $\g=0$, then, the two ramification loci of $\pi_f$ and $\pi_g$ have 2 elements. It is clear that there exists an automorphism of $\p^1$ sending the first ramification locus onto the other.

\eqref{in the same orbit for the C(y)^* rtimes PGL_2 action} $\Longleftrightarrow$ \eqref{The odd supports belong to the same orbit} is a direct consequence of Lemma~\ref{lemma: fov/g is a square iff v stabilizes the odd support of f}

\eqref{The fields C[sqrt{f}] and C[sqrt{g}] are C-isomorphic} $\Longleftrightarrow$ \eqref{The curves are isomorphic} It is well known that two projective smooth curves are isomorphic if and only if their function fields are isomorphic.
\end{proof}

The following lemma is an easy consequence of Proposition~\ref{proposition: equivalent conditions for T_f and T_g to be conjugate in Bir(P2)}.

\begin{lemma}
Let $f \in \C(y)$ be a nonsquare rational function. Then there exists a monic squarefree polynomial $g \in \C [y]$ of odd degree and divisible by $y$ such that $\T_f$ is conjugate to $\T_g$ in $\Jonq$.
\end{lemma}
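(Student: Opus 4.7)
The plan is to invoke Proposition~\ref{proposition: equivalent conditions for T_f and T_g to be conjugate in Bir(P2)}, by which $\T_f$ and $\T_g$ are conjugate in $\Jonq$ if and only if $f$ and $g$ lie in the same orbit under the action $(\lambda,\mu).f = (\lambda^2 f) \circ \mu^{-1}$ of $\C(y)^* \rtimes \PGL_2$ on $\C(y)$ (Lemma~\ref{lemma: the action of C (y)^* rtimes PGL_2 on C(y)}). It therefore suffices to produce, in the orbit of $f$, a representative of the required form.

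I would proceed in two steps. First, using only the $\C(y)^*$-factor, I would reduce to the case where $f$ is a monic squarefree polynomial. Write $f = c \prod_i (y - a_i)^{n_i}$ with $c \in \C^*$ and distinct $a_i \in \C$, and multiply by the square
\[ \lambda^2 = c^{-1} \prod_i (y-a_i)^{-2 \lfloor n_i/2 \rfloor} \in (\C(y)^*)^2, \]
which makes sense because every element of $\C^*$ is a square. This replaces $f$ by $f_1 := \prod_{n_i \text{ odd}} (y-a_i) = \prod_{j=1}^r (y-b_j)$ for suitable distinct $b_j \in \C$.

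Second, I would use the $\PGL_2$-factor to place both $0$ and $\infty$ in the (eventual) odd support. If $r$ is odd, then $\infty \in \So(f_1)$, and the translation $\mu^{-1}(y) = y + b_1$ yields $f_1 \circ \mu^{-1} = y \prod_{j=2}^r (y - (b_j-b_1))$, which already has the desired form. If $r$ is even, then $\infty \notin \So(f_1)$; I would pick $\mu \in \PGL_2$ with $\mu(b_1) = \infty$ and $\mu(b_2) = 0$, so that by Lemma~\ref{lemma: fov/f is a square iff v stabilizes the odd support of f} (applied with $v = \mu^{-1}$) we have $\So(f_1 \circ \mu^{-1}) = \mu(\So(f_1)) = \{0, \infty, \mu(b_3), \ldots, \mu(b_r)\}$. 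Applying the first step once more to $f_1 \circ \mu^{-1}$ then produces the monic squarefree polynomial $g = y \prod_{j=3}^r (y - \mu(b_j))$ of odd degree $r-1$, divisible by $y$.

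No substantial obstacle is expected: the argument is essentially bookkeeping with the action formula and the transformation rule for the odd support. The only minor point worth attention is verifying that $\C^*$ lies inside the group of squares, which fails for general fields $\K$ but holds here since $\C$ is algebraically closed; this is what makes the monic normalisation in Step~1 possible.
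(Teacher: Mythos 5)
Your argument is correct and is exactly the route the paper intends: the paper gives no proof, stating only that the lemma is "an easy consequence" of Proposition~\ref{proposition: equivalent conditions for T_f and T_g to be conjugate in Bir(P2)}, and your two-step normalisation (kill the constant and the even multiplicities by a square in $\C(y)^*$, then move a point of the odd support to $\infty$ and one to $0$ by a homography) fills in precisely that bookkeeping. The only cosmetic slip is the citation of Lemma~\ref{lemma: fov/f is a square iff v stabilizes the odd support of f} for the identity $\So(f_1\circ\mu^{-1})=\mu(\So(f_1))$; what you are actually using is the underlying transformation rule for divisors under pullback by $\mu^{-1}$ (the same fact the paper uses in the proof of that Proposition), not the square criterion itself.
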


The following definition already made in \cite{deFernex2004} is \cite[Definition 2.1]{Blanc2011}:

\begin{definition}[Normalised fixed curve: $\NFC$] \label{definition: Normalised Fixed Curve}
Let $\varphi \in \bp$ be a nontrivial element of finite order. If no curve of positive genus is fixed (pointwise) by $\varphi$, we say that $\NFC (\varphi) = \emptyset$; otherwise $\varphi$ fixes exactly one curve of positive genus (\cite{BayleBeauville2000}, \cite{deFernex2004}), and $\NFC (\varphi)$ is then the isomorphism class of the normalisation of this curve.
\end{definition}

The following result (proven by Bayle--Beauville in \cite[Proposition 2.7]{BayleBeauville2000}) is mentioned just after Definition 2.1 in \cite{Blanc2011}. It shows in particular that an involution $\varphi$ of $\bp$ is linearisable, i.e. conjugate to an automorphism of $\p^2$, if and only if $\NFC (\varphi) = \emptyset$.

\begin{lemma} \label{lemma: two involutions are conjugate iff they have the same Normalised Fixed curve}
Two involutions $\varphi_1, \varphi_2 \in \bp$ are conjugate if and only if \linebreak $\NFC (\varphi_1)=$  $\NFC (\varphi_2)$.
\end{lemma}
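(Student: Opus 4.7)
The forward direction is essentially formal: if $\varphi_2 = \psi \varphi_1 \psi^{-1}$ for some $\psi \in \bp$, then the (birational) map $\psi$ sends the fixed locus of $\varphi_1$ to the fixed locus of $\varphi_2$, and induces a birational isomorphism between the unique curves of positive genus fixed by each (when they exist). Since the normalisation is a birational invariant of an irreducible curve, we obtain $\NFC(\varphi_1) \simeq \NFC(\varphi_2)$. (One needs the uniqueness assertion of Bayle--Beauville--de Fernex that an involution fixes at most one irreducible curve of positive genus.)

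For the converse, the plan is to invoke the classification of birational involutions of $\p^2$. Up to conjugation, any non-trivial involution in $\bp$ falls into one of four families: linear involutions (conjugate into $\PGL_3$, with $\NFC = \emptyset$), de~Jonqui\`eres involutions (conjugate into $\Jonq$, of the form $\iota_f = (f(y)/x, y)$ for some nonsquare $f \in \C(y)$, with $\NFC$ the hyperelliptic curve $x^2 = f(y)$ of genus $\g \geq 1$), Geiser involutions (fixing a non-hyperelliptic smooth plane quartic, of genus $3$), and Bertini involutions (fixing a non-hyperelliptic curve of genus $4$). Crucially, the four families are distinguished by $\NFC$: the empty set; hyperelliptic curves of genus $\geq 1$; non-hyperelliptic genus $3$; non-hyperelliptic genus $4$. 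Hence if $\NFC(\varphi_1) \simeq \NFC(\varphi_2)$, they belong to the same family.

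It remains to show that within each family, the isomorphism class of $\NFC$ determines the conjugacy class. For linear involutions this is classical: any two are $\PGL_3$-conjugate. For the de~Jonqui\`eres family, if $\varphi_1 \sim \iota_f$ and $\varphi_2 \sim \iota_g$, then the hyperelliptic curves $x^2 = f(y)$ and $x^2 = g(y)$ being isomorphic translates, via the proof of Proposition~\ref{proposition: equivalent conditions for T_f and T_g to be conjugate in Bir(P2)}, into the existence of $\mu \in \PGL_2$ and $\lambda \in \C(y)^*$ with $\mu.(\lambda^2 f) = g$, and then $(\lambda(y)x, \mu(y)) \in \Jonq$ conjugates $\iota_f$ to $\iota_g$. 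For Geiser and Bertini involutions, the argument uses that such an involution is determined up to conjugation by the linear system it arises from (anticanonical system of a weak del Pezzo of degree $2$ or $1$, respectively), which in turn is determined by the fixed curve.

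The main obstacle is setting up the classification of birational involutions itself -- this is the nontrivial input of Bayle--Beauville (together with de~Fernex's contribution on the Geiser/Bertini cases). The hardest individual point is that within the de~Jonqui\`eres family, two involutions whose hyperelliptic fixed curves are abstractly isomorphic are actually conjugate \emph{in $\bp$}; one has to be careful that conjugacy by an arbitrary element of $\bp$ (and not just inside $\Jonq$) is what is needed, and this is where the equivalence \eqref{iota_f and iota_g are conjugate} $\Leftrightarrow$ \eqref{The curves are isomorphic} in Proposition~\ref{proposition: equivalent conditions for T_f and T_g to be conjugate in Bir(P2)} would be used -- a mild circularity that in a self-contained treatment one would resolve by proving the present lemma first and deducing that equivalence from it, rather than the other way around.
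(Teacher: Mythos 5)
The paper offers no proof of this lemma: it is quoted directly from Bayle--Beauville \cite[Proposition 2.7]{BayleBeauville2000}, so the only comparison to be made is with that reference, and your sketch is essentially a correct outline of its argument (classification of involutions into linear, de~Jonqui\`eres, Geiser and Bertini types; $\NFC$ separates the four types; within each type $\NFC$ determines the conjugacy class). Two substantive remarks. First, your forward direction is slightly too quick as stated: a conjugating map $\psi \in \bp$ could a priori contract the fixed curve of $\varphi_1$, in which case it would not induce a birational isomorphism onto the fixed curve of $\varphi_2$; this is ruled out by the standard fact that every curve contracted by a birational map between smooth projective surfaces is rational, whereas the curve entering $\NFC$ has positive genus --- with that observation the forward direction really is formal. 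Second, the circularity you flag is genuine but avoidable exactly as you suspect: in Proposition~\ref{proposition: equivalent conditions for T_f and T_g to be conjugate in Bir(P2)} only the implication \eqref{iota_f and iota_g are conjugate} $\Rightarrow$ \eqref{The fields C[sqrt{f}] and C[sqrt{g}] are C-isomorphic} appeals to the present lemma, while the direction you need --- isomorphic hyperelliptic curves imply $\iota_f$ and $\iota_g$ conjugate --- follows from \eqref{The curves are isomorphic} $\Leftrightarrow$ \eqref{The fields C[sqrt{f}] and C[sqrt{g}] are C-isomorphic} $\Rightarrow$ \eqref{The odd supports belong to the same orbit} $\Leftrightarrow$ \eqref{in the same orbit for the C(y)^* rtimes PGL_2 action} $\Rightarrow$ \eqref{conjugate in bir(P2)}, combined with the uniqueness of the involution in $\T_f$ (Lemma~\ref{lemma: algebraic elements of Tf}), none of which uses it. The remaining (and dominant) cost of your route is that it takes the full Bayle--Beauville classification as input, which is precisely the nontrivial content the paper's citation is meant to import rather than reprove.
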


For later use we will now compute the neutral connected component $ \Nor_{\Jonq}( \T_f )^{\circ}$ of the normaliser of $\T_f$ in $\Jonq$ (the final result is given in Proposition~\ref{proposition: connected component of the normaliser of Tf in Jonq} below). This is how the proof goes: we begin by introducing the group ${\mathcal N}_{\C(y) ^*\rtimes \PGL_2}(\T_f)$ in Definition~\ref{definition: N_{C* times PGL_2}(T_f)^{circ} and related stuffs}; we then compute its neutral connected component ${\mathcal N}_{\C(y) ^*\rtimes \PGL_2}(\T_f)^{\circ}$ in Lemma~\ref{lemma: description of N_{C* times PGL_2}(T_f)^{circ}} showing in particular that it is either trivial or isomorphic to $\C^*$;  we then prove the equality $\Nor_{\Jonq}( \T_f ) =$  $\T_f  \rtimes {\mathcal N}_{\C(y) ^*\rtimes \PGL_2}(\T_f)$ in Lemma~\ref{lemma: normaliser of Tf in Jonq}, from which it will straightforwardly follow that ${ \Nor_{\Jonq}( \T_f )^{\circ} = \T_f  \rtimes {\mathcal N}_{\C(y) ^*\rtimes \PGL_2}(\T_f)^{\circ} }$ in Proposition~\ref{proposition: connected component of the normaliser of Tf in Jonq}.

Note that point \eqref{Second short exact sequence} of Lemma~\ref{lemma: description of N_{C* times PGL_2}(T_f)^{circ}} is to be used only later on (in the proof of Proposition~\ref{proposition: description of N_{C* times Aff_1}(T_f)^{circ}}).

\begin{definition} \label{definition: N_{C* times PGL_2}(T_f)^{circ} and related stuffs}
Let $f$ be a nonsquare element of $\C (y)$. 
\begin{enumerate}
\item
Let ${\mathcal N}_{\C(y) ^*\rtimes \PGL_2}(\T_f)$ be the subgroup of elements $\varphi \in \C (y) ^* \rtimes \PGL_2   \subseteq \Jonq$ which normalise $\T_f$, i.e.\ such that $\varphi  \T_f  \varphi^{-1} = \T_f$.
\item
Let $\Stab ( \So (f) ) := \{ v \in \PGL_2, \; v( \So (f) ) = \So (f)  \}$ be the subgroup of elements $v \in \Aut (\p^1) = \PGL_2$ which globally preserve $\So (f) \subseteq \p^1$.
\item
Let $\Fix ( \So (f) ) \subseteq \PGL_2$ be the subgroup of elements $v \in \Aut (\p^1) = \PGL_2$ which preserve pointwise $\So (f) \subseteq \p^1$.
\end{enumerate} 
\end{definition}

\begin{lemma} \label{lemma: description of N_{C* times PGL_2}(T_f)^{circ}}
Let $f$ be a nonsquare element of $\C (y)$ and let $\g$ denote the genus of the curve $x^2=f(y)$. Then, the following assertions hold:
\begin{enumerate}
\item \label{First short exact sequence}
We have the short exact sequence
\begin{equation} 1 \to \{ (\pm x,y)  \} \to {\mathcal N}_{\C (y) ^*\rtimes \PGL_2}(\T_f) \xrightarrow{\pr_2} \Stab ( \So (f) ) \to 1. \label{equation: short exact sequence for N_{C* times PGL_2}(T_f)} \end{equation}
\item \label{The two cases for N_{C* times PGL_2}(T_f)^{circ}}
The group ${\mathcal N}_{\C(y) ^*\rtimes \PGL_2}(\T_f)^{\circ}$ is either trivial or isomorphic to $\C^*$: 
\begin{enumerate}
\item \label{g is at least 1}
${\mathcal N}_{\C(y) ^*\rtimes \PGL_2}(\T_f)^{\circ} = \{ 1 \}$ if $\g \ge 1$, i.e.\ $| \So (f) | \ge 4$;
\item \label{g=0}
${\mathcal N}_{\C(y) ^*\rtimes \PGL_2}(\T_f)^{\circ} \simeq \C^*$ if $\g = 0$, i.e.\ $| \So (f) | = 2$. For $f=y$, we have ${\mathcal N}_{\C(y) ^*\rtimes \PGL_2}(\T_y)^{\circ} = \T_{1,2}$.
\end{enumerate}
\item  \label{Second short exact sequence}
In case (b) above the second projection  $\pr_2 \colon \Jonq \to \PGL_2$ induces the short exact sequence
\begin{equation} 1 \to \{ (\pm x,y)  \} \to {\mathcal N}_{\C (y) ^*\rtimes \PGL_2}(\T_f)^{\circ} \xrightarrow{\pr_2} \Fix ( \So (f) ) \to 1, \label{equation: short exact sequence for N_{C(y)* rtimes PGL_2}(T_f)^{circ}} \end{equation}
where $ \Fix ( \So (f) )  \simeq \C^*$.
\end{enumerate}
\end{lemma}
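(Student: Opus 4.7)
The plan is to decode the normaliser condition using the action formula $\varphi \T_f \varphi^{-1} = \T_{(\lambda^2 f)\circ \mu^{-1}}$ from Lemma~\ref{lemma: the action of C (y)^* rtimes PGL_2 on C(y)}, where $\varphi = (\lambda(y)x, \mu(y))$. A preliminary observation to establish first is that the assignment $g \mapsto \T_g$ is injective on $\C(y)^*$: the element $\smat{0}{g}{1}{0} \in \T_g$ lies in $\T_h$ only when $g=h$ exactly (not merely modulo squares), by a direct matching of entries up to scalar. Consequently $\varphi$ normalises $\T_f$ if and only if $(\lambda^2 f) \circ \mu^{-1} = f$, which rewrites as $\lambda^2(\mu^{-1}(y)) = f(y)/f(\mu^{-1}(y))$.

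For part~\eqref{First short exact sequence}, the existence of $\lambda \in \C(y)^*$ forces $f/(f \circ \mu^{-1})$ to be a square in $\C(y)$, which by Lemma~\ref{lemma: fov/f is a square iff v stabilizes the odd support of f} is equivalent to $\mu \in \Stab(\So(f))$. Surjectivity of $\pr_2$ onto $\Stab(\So(f))$ then follows by writing $f/(f \circ \mu^{-1}) = \rho^2$ and setting $\lambda(y) := \rho(\mu(y))$. The kernel is obtained by taking $\mu = \id$, which reduces to $\lambda^2 = 1$, giving exactly $\{(\pm x, y)\}$.

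For part~\eqref{The two cases for N_{C* times PGL_2}(T_f)^{circ}}, the approach is to split on $|\So(f)| = 2\g + 2$. If $\g \geq 1$, so $|\So(f)| \geq 4$, then $\Stab(\So(f))$ embeds into the finite symmetric group on $\So(f)$, since any element of $\PGL_2$ fixing three points of $\p^1$ is trivial; by \eqref{First short exact sequence}, ${\mathcal N}_{\C(y) ^*\rtimes \PGL_2}(\T_f)$ is then finite, so its neutral component is trivial. If $\g = 0$, conjugating in $\C(y)^* \rtimes \PGL_2$ via Proposition~\ref{proposition: equivalent conditions for T_f and T_g to be conjugate in Bir(P2)} reduces to $f = y$, where $\So(f) = \{0, \infty\}$ and $\Fix(\So(f)) = \{(x, cy) : c \in \C^*\}$. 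The equation $\lambda^2 = c$ with $\lambda \in \C(y)^*$ forces $\lambda$ to be a constant (a nonconstant rational function cannot square to a nonzero constant), so $\lambda = \pm\sqrt{c}$. The lifts of $\Fix(\So(y))$ are thus exactly $\{(tx, t^2 y) : t \in \C^*\} = \T_{1,2}$, a connected one-dimensional subgroup. Combined with the containment $\pr_2({\mathcal N}_{\C(y) ^*\rtimes \PGL_2}(\T_f)^{\circ}) \subseteq \Stab(\So(y))^\circ = \Fix(\So(y))$, this identifies the neutral component with $\T_{1,2}$. Part~\eqref{Second short exact sequence} then follows by restricting the sequence of~\eqref{First short exact sequence} to the neutral component, noting that both involutions $(\pm x, y)$ already lie in $\T_{1,2}$ as the parameters $t = \pm 1$.

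The main technical point is the initial injectivity of $g \mapsto \T_g$, without which one would wrongly conflate subgroups merely conjugate to $\T_f$ with the normaliser; everything else is a straightforward combination of the action formula, the combinatorial rigidity of $\PGL_2$-actions on finite subsets of $\p^1$, and an explicit calculation in the genus-zero model case.
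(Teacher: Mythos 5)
Your proof is correct and follows essentially the same route as the paper's: the action formula $\varphi\,\T_f\,\varphi^{-1}=\T_{(\lambda^2 f)\circ\mu^{-1}}$ together with the criterion that $f\circ v/f$ is a square iff $v$ stabilises $\So(f)$ yields the short exact sequence, finiteness of $\Stab(\So(f))$ disposes of the case $\g\ge 1$, and the explicit computation in the model case $f=y$ handles $\g=0$ and part (3). The only difference is that you make explicit the injectivity of $g\mapsto\T_g$ (so that normalising really means $(\lambda^2f)\circ\mu^{-1}=f$ on the nose), a small step the paper leaves implicit; this is a correct clarification rather than a different approach.
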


\begin{proof}
\eqref{First short exact sequence}
By Lemma~\ref{lemma: the action of C (y)^* rtimes PGL_2 on C(y)} we have
\[ {\mathcal N}_{\C(y) ^*\rtimes \PGL_2}(\T_f)= \{ ( \lambda x, v(y) ), \; \lambda \in \C (y) ^*, \, v \in \PGL_2, \; f \circ v (y) = \lambda ^2 f(y)  \} .\]
Hence Lemma~\ref{lemma: fov/g is a square iff v stabilizes the odd support of f} shows that $\pr_2$ induces a surjection ${\mathcal N}_{\C(y) ^*\rtimes \PGL_2}(\T_f)  \to \Stab ( \So (f) )$ and it is therefore clear that the short exact sequence \eqref{equation: short exact sequence for N_{C* times PGL_2}(T_f)} holds.

\eqref{g is at least 1} If $\g \ge 1$, i.e.\ $ | \So (f) | \ge 4$, then $\Stab ( \So (f) ) $ is finite, and the short exact sequence \eqref{equation: short exact sequence for N_{C* times PGL_2}(T_f)} shows that ${\mathcal N}_{\C (y) ^*\rtimes \PGL_2}(\T_f)$ is finite. This proves that we have ${\mathcal N}_{\C (y) ^*\rtimes \PGL_2}(\T_f)^{\circ} =$  $\{ 1 \} $.

(b) If $\g =0 $, i.e.\ $ | \So (f) |  = 2$, then $y$ is in the orbit of $f$ under the action of $\C(y)^* \rtimes \PGL_2$ on $\C(y)$ (see Lemma~\ref{lemma: the action of C (y)^* rtimes PGL_2 on C(y)}) so that we may assume $f =y$. We then get
\[ {\mathcal N}_{\C (y) ^*\rtimes \PGL_2}(\T_y) = \{ (\lambda x, \lambda^2 y), \; \lambda \in \C^* \} \cup \{ (\lambda y^{-1} x, \lambda^2 y^{-1}), \; \lambda \in \C^* \},\]
\[ {\mathcal N}_{\C (y) ^*\rtimes \PGL_2}(\T_y)^{\circ} = \{ (\lambda x, \lambda^2 y), \; \lambda \in \C^* \} = \T_{1,2},\]
\[ \Fix ( \So (y) ) = \Fix (  \{ 0, \infty \} ) =  \{  ( y \mapsto \mu y), \; \mu \in \C^*\}= \T_{0,1}.\]
This shows (b).

\eqref{Second short exact sequence}
Still in case (b) the above computation shows that the short exact sequence~\eqref{equation: short exact sequence for N_{C(y)* rtimes PGL_2}(T_f)^{circ}} holds, and the isomorphism $ \Fix ( \So (f) )  \simeq \C^*$ is clear.
\end{proof}

We have $\T_f \cap {\mathcal N}_{\C(y)^*\rtimes \PGL_2}(\T_f)=\{ \id \}$ and the group ${\mathcal N}_{\C(y) ^*\rtimes \PGL_2}(\T_f)$ normalises $\T_f$ (by definition of  ${\mathcal N}_{\C(y)^* \rtimes \PGL_2}(\T_f)$), hence we have a semidirect product structure \linebreak $\T_f  \rtimes {\mathcal N}_{\C (y) ^* \rtimes \PGL_2}(\T_f)$. Note that this semidirect product structure is not induced by the semidirect product $\Jonq = \PGL_2 ( \C (y) ) \rtimes \PGL_2$ which is usually considered in this paper.

\begin{lemma} \label{lemma: normaliser of Tf in Jonq}
Let $f \in \C ( y) $ be a nonsquare element. Then the normaliser of $\T_f$ in $\Jonq$ is equal to
\[ \Nor_{\Jonq}( \T_f ) = \T_f  \rtimes {\mathcal N}_{\C(y) ^*\rtimes \PGL_2}(\T_f) .\]
\end{lemma}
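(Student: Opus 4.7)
The plan is to show both inclusions of the claimed equality. The reverse inclusion is essentially immediate: $\T_f$ normalises itself, ${\mathcal N}_{\C(y)^*\rtimes\PGL_2}(\T_f)$ normalises $\T_f$ by definition, and the intersection $\T_f\cap(\C(y)^*\rtimes\PGL_2)$ is trivial, since an element $\smat{a}{bf}{b}{a}$ of $\T_f$ that has the diagonal form $d_\lambda=\smat{\lambda}{0}{0}{1}$ must satisfy $b=0$ and $a=\lambda=1$. This gives the semidirect product structure and places it inside $\Nor_{\Jonq}(\T_f)$.

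For the forward inclusion, the strategy is to split an arbitrary $\varphi\in\Nor_{\Jonq}(\T_f)$ along $\Jonq=\PGL_2(\C(y))\rtimes\PGL_2$ as $\varphi=uv$ with $u\in\PGL_2(\C(y))$ and $v\in\PGL_2$, and then to extract a $\T_f$-factor from $u$ so that the remainder sits in $\C(y)^*\rtimes\PGL_2$. Applying Lemma~\ref{lemma: the action of C (y)^* rtimes PGL_2 on C(y)} to $v\in\PGL_2\subseteq\C(y)^*\rtimes\PGL_2$ gives $v\T_f v^{-1}=\T_g$ where $g:=f\circ v^{-1}$, so the normalising condition $\varphi\T_f\varphi^{-1}=\T_f$ reduces to
\[ u\T_g u^{-1}=\T_f. \]
Since $\T_f$ and $\T_g$ are non-triangularisable Borel subgroups of $\PGL_2(\C(y))$, Proposition~\ref{proposition: K-Borel subgroups of PGL(2,K) in characteristic not 2} (applied with $\K=\C(y)$) forces $f/g$ to be a square, say $f=h^2 g$ with $h\in\C(y)^*$.

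A direct matrix computation yields $d_h\T_g d_h^{-1}=\T_{h^2 g}=\T_f$, so any $u$ solving the above equation belongs to the coset $\Nor_{\PGL_2(\C(y))}(\T_f)\cdot d_h$, which by Lemma~\ref{lemma: the normaliser of T_f in PGL_2(K)} equals $\bigl(\T_f\rtimes\langle\smat{1}{0}{0}{-1}\rangle\bigr)\cdot d_h$. The key observation, easy but essential, is that in $\PGL_2(\C(y))$ one has $\smat{1}{0}{0}{-1}=\smat{-1}{0}{0}{1}=d_{-1}$, so the $\pm 1$-correction is itself a diagonal element. Consequently $u=t\cdot d_{\varepsilon h}$ for some $t\in\T_f$ and $\varepsilon\in\{\pm 1\}$, which yields the factorisation
\[ \varphi = t\cdot(d_{\varepsilon h}\,v), \]
with $d_{\varepsilon h}v\in\C(y)^*\rtimes\PGL_2$. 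A last direct check gives $(d_{\varepsilon h}v)\T_f(d_{\varepsilon h}v)^{-1}=d_{\varepsilon h}\T_g d_{\varepsilon h}^{-1}=\T_{\varepsilon^2 h^2 g}=\T_f$, so $d_{\varepsilon h}v\in{\mathcal N}_{\C(y)^*\rtimes\PGL_2}(\T_f)$, as required.

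No step is a serious obstacle. The one subtle point is recognising that the nontrivial coset representative $\smat{1}{0}{0}{-1}$ of $\Nor_{\PGL_2(\C(y))}(\T_f)/\T_f$ is already a diagonal element $d_{-1}\in\C(y)^*$, which is precisely what allows the sign correction to be absorbed into the $\C(y)^*$-component of $\C(y)^*\rtimes\PGL_2$ rather than producing an element of $\PGL_2(\C(y))$ outside that subgroup.
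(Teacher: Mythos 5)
Your proof is correct and follows essentially the same route as the paper's: decompose $\varphi=uv$ along $\Jonq=\PGL_2(\C(y))\rtimes\PGL_2$, deduce from the conjugacy criterion for the tori (Proposition~\ref{proposition: K-Borel subgroups of PGL(2,K) in characteristic not 2}) that $f/(f\circ v^{-1})$ is a square $h^2$, and reduce to the computation of $\Nor_{\PGL_2(\C(y))}(\T_f)$ from Lemma~\ref{lemma: the normaliser of T_f in PGL_2(K)}. The only difference is presentational: where the paper lifts $v\in\Stab(\So(f))$ through the short exact sequence of Lemma~\ref{lemma: description of N_{C* times PGL_2}(T_f)^{circ}} to obtain an element of ${\mathcal N}_{\C(y)^*\rtimes\PGL_2}(\T_f)$ with second projection $v$, you build that lift explicitly as $d_{\varepsilon h}v$, and your remark that $\smat{1}{\hspace{1mm}0}{0}{-1}=d_{-1}$ is diagonal is exactly what underlies the paper's inclusion $\T_f\rtimes\langle\smat{1}{\hspace{1mm}0}{0}{-1}\rangle\subseteq\T_f\rtimes{\mathcal N}_{\C(y)^*\rtimes\PGL_2}(\T_f)$.
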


\begin{proof}
The inclusion $\T_f  \rtimes {\mathcal N}_{\C(y) ^*\rtimes \PGL_2}(\T_f) \subseteq \Nor_{\PGL_2 ( \C(y) ) \rtimes \PGL_2}( \T_f )$ being clear, it is enough to prove $\Nor_{\PGL_2 ( \C(y) ) \rtimes \PGL_2}( \T_f )  \subseteq \T_f  \rtimes {\mathcal N}_{\C(y) ^*\rtimes \PGL_2}(\T_f)$. Take $g \in \Jonq$ such that $g  \T_f g^{-1} = \T_f$. Let's begin by proving that $\pr_2 (g)$ belongs to $\Stab ( \So (f) )$ where $\pr_2 \colon \Jonq \to \PGL_2$ denotes the  second projection. Writing $g=uv$ with $u \in \PGL_2 (\C (y) )$, $v \in \PGL_2$, we have $\pr_2 (g) =v$ and we want to prove that  $v ( \So (f) ) = \So (f)$. Since $ g \T_f g^{-1} = u (v\T_fv^{-1}) u^{-1} = u (  \T_{f \circ v^{-1}} ) u^{-1} =\T_f  $, the tori $ \T_{f \circ v^{-1}}$ and $\T_f$ are conjugate in $\PGL_2 ( \C(y) )$. This proves that $f \circ v^{-1} = \lambda^2 f$ for some $\lambda \in \C (y)$ (by Proposition~\ref{proposition: when are Tf and Tg conjugate in PGL(2,K)}) and now Lemma~\ref{lemma: fov/g is a square iff v stabilizes the odd support of f} shows that $v ( \So (f) ) = \So (f)$.

Since $\pr_2 (g) \in \Stab ( \So (f) )$, the short exact sequence \eqref{equation: short exact sequence for N_{C* times PGL_2}(T_f)} of Lemma~\ref{lemma: description of N_{C* times PGL_2}(T_f)^{circ}} shows that there exists  $ g' \in {\mathcal N}_{\C(y)^*\rtimes \PGL_2}(\T_f)$ such that $\pr_2 (g') = \pr_2 (g)$. By definition of ${\mathcal N}_{\C(y)^*\rtimes \PGL_2}(\T_f)$, $g'$ normalises $\T_f$. Hence $g'' := g (g')^{-1}$ also normalises $\T_f$ and moreover it belongs to $\PGL_2 ( \C(y) )$. This shows that $g''$ belongs to $\Nor_{\PGL_2(\C(y) )}(\T_f) = \T_f \rtimes \langle \smat{1 }{0}{0}{-1} \rangle  \subseteq \T_f  \rtimes {\mathcal N}_{\C(y)^*\rtimes \PGL_2}(\T_f) $ (see Lemma~\ref{lemma: the normaliser of T_f in PGL_2(K)}) and this concludes the proof.
\end{proof}

The following result directly follows from Lemmas~\ref{lemma: description of N_{C* times PGL_2}(T_f)^{circ}} and \ref{lemma: normaliser of Tf in Jonq}.

\begin{proposition} \label{proposition: connected component of the normaliser of Tf in Jonq}
Let $f$ be a nonsquare element of $\C(y)$ and let $\g$ be the genus of the curve $x^2 =f(y)$. Then we have
\[ \Nor_{\Jonq}( \T_f )^{\circ} = \T_f  \rtimes {\mathcal N}_{\C(y) ^*\rtimes \PGL_2}(\T_f)^{\circ}. \]
Moreover the group ${\mathcal N}_{\C(y) ^*\rtimes \PGL_2}(\T_f)^{\circ}$ is trivial if $\g \ge 1$ and isomorphic to $\C^*$ if $\g=0$.

Spelling out the details in the two cases, we have:
\begin{enumerate}
\item \label{Nor_{Jonq}(T_f )^{circ} = T_f if g  is at least 1}
$\Nor_{\Jonq}( \T_f )^{\circ} = \T_f$ if $\g \ge 1$;
\item \label{Nor_{Jonq}(T_y )^{circ} = T_t rtimes T_{1,2}}
$ \Nor_{\Jonq}( \T_y )^{\circ} = \T_y \rtimes \T_{1,2}$.
\end{enumerate}
\end{proposition}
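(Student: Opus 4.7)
My plan is to combine Lemma~\ref{lemma: normaliser of Tf in Jonq} with Lemma~\ref{lemma: description of N_{C* times PGL_2}(T_f)^{circ}} by passing to neutral connected components, while being careful about the topology. Write $\mathcal{N} := {\mathcal N}_{\C(y)^* \rtimes \PGL_2}(\T_f)$ for brevity. Lemma~\ref{lemma: normaliser of Tf in Jonq} provides the semidirect decomposition $\Nor_{\Jonq}(\T_f) = \T_f \rtimes \mathcal{N}$, and the crucial topological input is that $\T_f$ is $\bp$-connected, since it is a Borel subgroup of $\PGL_2(\C(y))$ (Theorem~\ref{theorem : Bir(P2)-Borel and C(y)-Borel subgroups of PGL(2,C(y))}).

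The first key step is to establish the equality $\Nor_{\Jonq}(\T_f)^{\circ} = \T_f \rtimes \mathcal{N}^{\circ}$. The multiplication map $\T_f \times \mathcal{N}^{\circ} \to \bp$, $(t,n) \mapsto t \cdot n$, is continuous as the restriction of composition in $\bp$, and set-theoretically its image is the subgroup $\T_f \rtimes \mathcal{N}^{\circ}$; the domain is the product of two connected sets, hence connected, so the image is connected as well. Next, Lemma~\ref{lemma: description of N_{C* times PGL_2}(T_f)^{circ}}\eqref{The two cases for N_{C* times PGL_2}(T_f)^{circ}} and its proof show that $\mathcal{N}^{\circ}$ has finite index in $\mathcal{N}$: the quotient is finite when $\g \geq 1$ (because $\mathcal{N}$ itself is finite then, via the short exact sequence \eqref{equation: short exact sequence for N_{C* times PGL_2}(T_f)}), and of order $2$ when $\g = 0$ (from the explicit description of ${\mathcal N}_{\C(y)^* \rtimes \PGL_2}(\T_y)$ given in that proof). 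Consequently $\T_f \rtimes \mathcal{N}^{\circ}$ has finite index in $\Nor_{\Jonq}(\T_f)$. Its finitely many cosets partition the group into pairwise disjoint subsets, each obtained from $\T_f \rtimes \mathcal{N}^{\circ}$ by the homeomorphism of left translation in $\bp$, hence each coset is closed and connected. A finite partition into closed subsets forces each block to be open as well, so the cosets are exactly the connected components of $\Nor_{\Jonq}(\T_f)$. The coset containing the identity is $\T_f \rtimes \mathcal{N}^{\circ}$, which is therefore the neutral component.

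For the remaining assertions, the description of $\mathcal{N}^{\circ}$ itself is exactly Lemma~\ref{lemma: description of N_{C* times PGL_2}(T_f)^{circ}}\eqref{The two cases for N_{C* times PGL_2}(T_f)^{circ}}. Assertion~\eqref{Nor_{Jonq}(T_f )^{circ} = T_f if g  is at least 1} follows immediately by substitution when $\g \geq 1$. For assertion~\eqref{Nor_{Jonq}(T_y )^{circ} = T_t rtimes T_{1,2}}, I would use Proposition~\ref{proposition: equivalent conditions for T_f and T_g to be conjugate in Bir(P2)} to reduce (via conjugation in $\Jonq$) to the case $f = y$, and then invoke the explicit identification ${\mathcal N}_{\C(y)^* \rtimes \PGL_2}(\T_y)^{\circ} = \T_{1,2}$ from Lemma~\ref{lemma: description of N_{C* times PGL_2}(T_f)^{circ}}\eqref{g=0}.

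The main obstacle is the topological bookkeeping in the first paragraph. Since $\Nor_{\Jonq}(\T_f)$ is not \emph{a priori} a linear algebraic group, I cannot invoke any standard structural theorem about neutral components; everything has to be argued using only continuity of multiplication and of left translation in $\bp$, together with the finite-index information extracted from Lemma~\ref{lemma: description of N_{C* times PGL_2}(T_f)^{circ}}. Once this scaffolding is in place, the two specific cases follow by direct substitution, which is why the authors legitimately describe the result as a ``direct consequence'' of the two preceding lemmas.
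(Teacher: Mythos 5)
Your strategy coincides with the paper's: the authors state that the proposition ``directly follows'' from Lemma~\ref{lemma: normaliser of Tf in Jonq} and Lemma~\ref{lemma: description of N_{C* times PGL_2}(T_f)^{circ}}, and your proposal supplies exactly the bookkeeping they leave implicit. Most of it is sound. Two remarks, one minor and one substantive. The minor one: composition $\bp\times\bp\to\bp$ is not known to be continuous for the product topology (just as for algebraic groups with the Zariski topology), so the connectedness of $H:=\T_f\rtimes{\mathcal N}_{\C(y)^*\rtimes\PGL_2}(\T_f)^{\circ}$ should instead be argued as $H=\bigcup_{t\in\T_f}t\,{\mathcal N}_{\C(y)^*\rtimes\PGL_2}(\T_f)^{\circ}$, a union of connected translates each of which meets the connected set $\T_f$; this uses only that left translations are homeomorphisms.

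The substantive gap is the sentence ``hence each coset is closed''. A left translation carries closed sets to closed sets, but that only helps once you know that $H$ itself is closed, and you never establish this. Without closedness (or openness) of the blocks, a finite partition of a space into connected subsets need not consist of its connected components (e.g.\ $[0,2]=[0,1)\sqcup[1,2]$), and a connected finite-index subgroup of a topological group need not equal the neutral component (put the indiscrete topology on any group with a proper finite-index subgroup). The gap is fixable. For $\g\ge 1$ the group ${\mathcal N}_{\C(y)^*\rtimes\PGL_2}(\T_f)^{\circ}$ is trivial and $H=\T_f$ is closed in $\bp$: it is $\C(y)$-closed in $\PGL_2(\C(y))$, hence $\bp$-closed by Proposition~\ref{proposition: the Bir(P2) topology is strictly finer than the C(y)-topology} together with Lemma~\ref{lemma: PGL(2,C(y)) is Bir(P2)-closed}; your coset argument then applies to the finitely many translates $\T_f n$, $n\in{\mathcal N}_{\C(y)^*\rtimes\PGL_2}(\T_f)$. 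For $\g=0$ one may assume $f=y$, and then the continuity of $\pr_2$ (Lemma~\ref{lemma: Jonq is closed and pr_2 is continuous}) does the job: $\pr_2$ maps $\Nor_{\Jonq}(\T_y)$ into $\Stab(\{0,\infty\})$, which is the disjoint union of the two closed subsets $\{y\mapsto\mu y,\ \mu\in\C^*\}$ and $\{y\mapsto\mu y^{-1},\ \mu\in\C^*\}$ of $\PGL_2$, and $H=\T_y\rtimes\T_{1,2}$ is the preimage of the first; hence $H$ is clopen in $\Nor_{\Jonq}(\T_y)$, and being connected and containing the identity it is the neutral component. With these repairs the rest of your argument, including the two explicit cases, is correct.
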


\begin{remark}
Let $f$ be a nonsquare element of $\C(y)$. Note that the equation $x^2 = f(y)$ defines an affine curve in $\A^2$. The group $\T_f$ fixes  this curve pointwise. In the case $f= y$, the group $\T_y \rtimes \T_{1,2}$ stabilises this curve (but not pointwise).
\end{remark}

\section{Subgroups of $\PGL_2(\C(y)) \rtimes \Aff_1$ conjugate to $\T_{0,1}$} \label{section: Subgroups of PGL_2( C(y)) rtimes Aff_1 conjugate to T_{0,1}}

The following technical lemma describes rather explicitly a morphism from a factorial irreducible affine variety $W$ to $\bp$ whose image is contained in $\PGL_2 ( \C (y) )$. We will use it in the proof of Lemma~\ref{lemma: characterisation of the groups conjugate to T_{0,1}}.

\begin{lemma}  \label{lemma: morphism to PGL(2,C(y))}
Consider the monoid
\[ \Mat_2 ( \C [y] ) _{\det \ne 0} := \{ M \in \Mat_2 ( \C [y] ) , \; \det M \ne 0 \} \]
and let
\[ p \colon \Mat_2 ( \C [y] )_{\det \ne 0}  \to \PGL_2 ( \C (y) ), \quad \smat{a}{b}{c}{d} \mapsto \smat{a}{b}{c}{d},\]
be the natural surjective monoid  morphism. Let $\iota \colon \PGL_2 ( \C (y) ) \hookrightarrow \bp$ be the natural injection. If $W$ is a factorial irreducible affine variety and $\varphi \colon W \to \PGL_2 ( \C (y) )$  a map such that $\iota \circ \varphi \colon W \to \bp$ is a morphism in the sense of Definition~\ref{definition: morphism to Bir(W)}, then $ \varphi$ admits a lifting $\widehat{\varphi} \colon $  $ W \to \Mat_2 ( \C [y] ) _{\det \ne 0}$ (i.e.\ a map such that $ \varphi = p \circ \widehat{\varphi}$) which is a morphism of ind-varieties.\footnote{For the definitions of ind-varieties and morphisms of ind-varieties, see e.g. \cite[\S 1.1]{FurterKraft2018}.} This exactly means that  $\widehat{\varphi}$  (or $\varphi$!) is of the form
\[ w \mapsto  \smat{a(w,y) }{b(w,y)}{c(w,y)}{d(w,y)}, \]
for some $a,b,c,d \in \C [W][y]$ and that for any $w \in W$, the determinant $\det \smat{a(w,y) }{b(w,y)}{c(w,y)}{d(w,y)}$ is a nonzero element of $\C [y]$.
\end{lemma}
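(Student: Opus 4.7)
The plan is to first unpack Definition~\ref{definition: morphism to Bir(W)}, then extract a clean polynomial representation using the factoriality of $W$, and finally verify the determinant condition pointwise, which is the crux of the argument.

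Definition~\ref{definition: morphism to Bir(W)} provides a $W$-birational $W$-automorphism $F$ of $W\times\p^2$ whose specialization at each $w\in W$ equals $\varphi(w)\in\PGL_2(\C(y))\subseteq\Jonq$. Since elements of $\PGL_2(\C(y))$ act trivially on the $y$-coordinate, I can write $F$ in affine coordinates as $(w,x,y)\dashrightarrow(w,R(w,x,y),y)$ for some $R\in\C(W)(x,y)$, and for generic $w$ the function $R(w,\cdot,y)$ is a M\"obius transformation in $x$ with coefficients in $\C(y)$.

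I next exploit the factoriality of $W$: since $\C[W][x,y]$ is then a UFD, I can write $R=A/B$ in lowest terms with $A,B\in\C[W][x,y]$ coprime, and the homography condition on generic fibres forces $\deg_xA,\deg_xB\le 1$. Writing $A=a(w,y)x+b(w,y)$ and $B=c(w,y)x+d(w,y)$ with $a,b,c,d\in\C[W][y]$, Gauss's lemma ensures $A,B$ remain coprime in $\C(W)(y)[x]$, so the resultant $D:=ad-bc$ is a nonzero element of $\C[W][y]$. This produces the candidate lifting $\widehat\varphi(w)=\smat{a(w,y)}{b(w,y)}{c(w,y)}{d(w,y)}$, and since its entries have uniformly bounded $y$-degree with coefficients in $\C[W]$, $\widehat\varphi$ is already an ind-variety morphism $W\to\Mat_2(\C[y])$.

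The hard part will be to show $D(w_0,y)\ne 0$ in $\C[y]$ for every $w_0\in W$. Fix $w_0$ and pick a representative $M_0=\smat{\alpha_0}{\beta_0}{\gamma_0}{\delta_0}$ of $\varphi(w_0)$ with entries coprime in $\C[y]$ and $\det M_0\ne 0$. Since $F_{w_0}$ equals the birational map given by $\varphi(w_0)$, we have the identity $A(w_0,x,y)/B(w_0,x,y)=(\alpha_0x+\beta_0)/(\gamma_0x+\delta_0)$ in $\C(y)(x)$. The two linear-in-$x$ polynomials on the right are coprime in $\C(y)[x]$ (because $\det M_0\ne 0$), so a standard degree comparison produces a common scalar $\lambda\in\C(y)$ with $A(w_0)=\lambda(\alpha_0x+\beta_0)$ and $B(w_0)=\lambda(\gamma_0x+\delta_0)$. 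Since the domain of $F$ projects surjectively onto $W$, $A(w_0)$ and $B(w_0)$ are not both identically zero, so $\lambda\ne 0$; and since the entries of $M_0$ are coprime in $\C[y]$, writing $\lambda=p/q$ in lowest terms forces $q$ to divide all four entries of $M_0$, hence to be a unit. Therefore $\lambda\in\C[y]\setminus\{0\}$, whence $D(w_0,y)=\lambda(y)^2\det M_0\ne 0$. This final step is where the hypothesis that $\varphi$ be defined on \emph{all} of $W$ is essential: without it, simple examples such as $A=wx$, $B=1$ at $w=0$ would give coprime $A,B$ with vanishing $D(0,y)$, and the reduction at $w_0$ would not yield a homography.
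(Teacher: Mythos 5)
Your proof is correct, and its first half (reduction to $R=(ax+b)/(cx+d)$ with $a,b,c,d\in\C[W][y]$ via factoriality of $\C[W][x,y]$, and the bounded-degree description of $\widehat{\varphi}$) is essentially the paper's. Where you genuinely diverge is in the crux, namely showing that the determinant specialises to a nonzero element of $\C[y]$ at \emph{every} $w_0\in W$. The paper does this geometrically: it identifies the exceptional set of the family, $\Exc(\psi)=Z(ad-bc)\cup Z(cx+d)$ (via a density argument on $Z(cx+d)\cap D(ax+b)$ and the observation that $\psi$ contracts the lines $\{w_0\}\times\A^1_x\times\{y_0\}$ lying over $Z(ad-bc)$), and then uses that the isomorphism locus $U$ avoids $\Exc(\psi)$ and surjects onto $W$. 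You instead argue algebraically fibre by fibre: you compare the specialisation $A(w_0)/B(w_0)$ with a coprime representative $M_0$ of $\varphi(w_0)$, extract the proportionality factor $\lambda\in\C(y)$, and show $\lambda\in\C[y]\setminus\{0\}$, whence $D(w_0,y)=\lambda^2\det M_0\neq 0$. This is more elementary (no exceptional-set computation) at the price of using the hypothesis $\varphi(w_0)\in\PGL_2(\C(y))$ pointwise in an essential way, which is of course available. One small point of order: the identity $A(w_0)/B(w_0)=(\alpha_0x+\beta_0)/(\gamma_0x+\delta_0)$ already presupposes $B(w_0)\not\equiv 0$, so the appeal to the surjectivity of the projection of the domain --- which does give exactly $B(w_0)\not\equiv 0$, since on the factorial variety $W\times\A^2$ the domain of definition of $A/B$ in lowest terms is precisely $D(B)$ --- should be made before writing that identity, not only in the justification that $\lambda\neq 0$; ``not both identically zero'' is not quite sufficient at that point.
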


\begin{proof} 
By Definition~\ref{definition: morphism to Bir(W)}, $\iota \circ \varphi$ corresponds to a birational map $\psi \colon W \times \A^2 \dasharrow W \times \A^2$, $(w,x,y) \dasharrow (w, f(w,x,y), y)$, i.e.\ a $(W \times \A^1_y)$-birational map of the $(W \times \A^1_y)$-variety $(W \times \A^1_y) \times \A^1_x$. Such a birational map corresponds to a $\C (W)(y)$-automorphism of the field $\C (W)(y)(x)$ and hence the rational function $f \in \C(W)(y)(x)$ is necessarily of the form $f \colon(w,x,y) \dasharrow \frac{ax+b}{cx+d}$, for some $a,b,c,d \in \C(W)(y)$ which satisfy $ad-bc \neq 0$. Since $\C(W)(y)$ is the field of fractions of $\C[W][y]$ we may assume that $a,b,c,d$ belong to $\C[W][y]$ and since $\C[W][y]$ is factorial, we may even assume that $\gcd (a,b,c,d) = 1$ in $\C[W][y]$. This determines $a,b,c,d$ uniquely up to a common factor in $\C[W]^*$ (the group of invertible elements of $\C [W]$). We could easily check that we have $\gcd(ax+b,cx+d)=1$ in $\C[W][x,y]$. If $h$ belongs to $\C[W][x,y]$, its zero set is defined by
\[ Z(h) = \{ (w,x,y) \in W\times\A^2, \; h(w,x,y) = 0\} \subseteq  W\times\A^2.\]
Set
\[
\begin{array}{ll}
S_1(\psi)=Z(ad-bc) \cup Z(cx+d) \subset W\times \A^2  &  \text{and}  \\
S_2(\psi)=Z(ad-bc) \cup Z(-cx+a) \subset W\times\A^2.
\end{array}
\]
By the previous remark $S_1(\psi)$ and $S_2 ( \psi)$ only depend on $\psi$ and not on the choice of the functions $a,b,c,d \in \C[W][y]$. It is clear that $\psi$ induces an isomorphism $U \stackrel\sim\to V$ where $U:=(W\times\A^2)\setminus S_1(\psi)$ and $V:=(W\times\A^2)\setminus S_2(\psi)$, the inverse map $\psi^{-1}$ being given by $(w,x,y) \dasharrow (w,g(w,x,y),y)$ where $g: =\frac{\hspace{2mm}dx-b}{ -cx+a}$.

We now show that $\Exc(\psi)=S_1(\psi)$, where the exceptional set $\Exc(\psi)\subset W\times\A^2$, by definition, is the complement of the open set consisting of all points at which $\psi$ induces a local isomorphism. In particular $\Exc(\psi)$ is closed in $W\times\A^2$. We have
\[Z(cx+d)\cap D(ax+b)\subset \Exc(\psi),\]
where $D(h)\subset W\times\A^2$, for a function $h\in\C[W][x,y]$, denotes the principal open subset $(W\times\A^2)\setminus Z(h)$. Since $\gcd(ax+b,cx+d)=1$, the hypersurfaces $Z(ax+b)$ and $Z(cx+d)$ have no common irreducible components and thus the intersection of $D(ax+b)$ with any irreducible component of $Z(cx+d)$ is nonempty. Hence $D(ax+b)\cap Z(cx+d)$ is dense in $Z(cx+d)$ and since $\Exc(\psi)$ is closed, all of $Z(cx+d)$ is contained in $\Exc(\psi)$. In particular, $Z(c)\cap Z(d) \subset Z(cx+d)$ is contained in $\Exc(\psi)$. It only remains to show that $Z(ad-bc)\setminus (Z(c)\cap Z(d))$ is contained in $\Exc(\psi)$. Let $(w_0,x_0,y_0)\in Z(ad-bc)\setminus (Z(c)\cap Z(d))$, so that $c(w_0,y_0)x+d(w_0,y_0)\in\C[x]$ is nonzero at $x_0$. Then $\psi$ is defined in $(w_0,x_0,y_0)$ but, since $(w_0,x_0,y_0)\in Z(ad-bc)$, it is constant along the line $\A^1\simeq \{w_0\}\times\A^1_x\times\{y_0\}$. In particular $\psi$ does not induce a local isomorphism at $(w_0,x_0,y_0)$. We have proven that $\Exc(\psi)=S_1(\psi)$. Similarly $\Exc(\psi^{-1})=S_2(\psi)$.\par
The map $\iota \circ \phi$ being a morphism in the sense of Definition~\ref{definition: morphism to Bir(W)}, the projection of $U$ to $W$ is surjective. In other words, for each $w_0\in W$, there exists $(x_0,y_0)\in\A^2$ such that $(w_0,x_0,y_0)\notin Z(ad-bc)\cup Z(cx+d)$. In particular we have $\smat{a(w_0,y)}{b(w_0,y)}{c(w_0,y)}{d(w_0,y)}\in \PGL_2 (\C(y))$. Hence the correspondence  $w \mapsto  \smat{a(w,y) }{b(w,y)}{c(w,y)}{d(w,y)}$ actually defines a morphism of ind-varieties $\widehat{\varphi}$  $\colon W \to \Mat_2 ( \C [y] ) _{\det \ne 0}$ which satisfies $ \varphi = p \circ \widehat{\varphi}$.
\end{proof}

\begin{lemma} \label{lemma: characterisation of the groups conjugate to T_{0,1}}
Let $G$ be an algebraic subgroup of $\PGL_2 ( \C (y) ) \rtimes \Aff_1$ isomorphic to the multiplicative algebraic group $(\C^*, \times)$. Then, the following assertions are equivalent:
\begin{enumerate}
\item \label{The map pr_2 : G -> pr_2(G) is an isomorphism}
The map $\pr_2 \colon \PGL_2 ( \C (y) ) \rtimes \Aff_1 \to \Aff_1$ induces an isomorphism $G \to \pr_2 ( G)$;
\item \label{G and T_{0,1} are conjugate}
The groups $G$ and $\T_{0,1}$ are conjugate.
\end{enumerate}
\end{lemma}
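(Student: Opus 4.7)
The plan is to handle the two directions separately, the non-trivial one being \eqref{The map pr_2 : G -> pr_2(G) is an isomorphism}$\Rightarrow$\eqref{G and T_{0,1} are conjugate} and resting on a coboundary argument enabled by Lemma~\ref{lemma: morphism to PGL(2,C(y))}. For the easy direction \eqref{G and T_{0,1} are conjugate}$\Rightarrow$\eqref{The map pr_2 : G -> pr_2(G) is an isomorphism}, I would write $G=h\,\T_{0,1}\,h^{-1}$ with $h=(E,v)\in\PGL_2(\C(y))\rtimes\Aff_1$ and unfold the semidirect product formula $(A,v)(B,w)=(A^w B,vw)$, where $A^w(y)=A(w(y))$, to compute
\[h\,(I,w_t)\,h^{-1}=\bigl(E(w_tv^{-1}(y))\cdot E(v^{-1}(y))^{-1},\;vw_tv^{-1}\bigr).\]
The map $t\mapsto v w_t v^{-1}$ is an injective homomorphism $\C^*\to\Aff_1$ (its image being the unique one-torus of $\Aff_1$ fixing $v(0)$ and $\infty$), so $\pr_2|_G$ is indeed an isomorphism onto its image.

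For the converse, I would first observe that $\pr_2(G)$ is a closed connected one-dimensional subgroup of $\Aff_1$ isomorphic to $\C^*$, hence a one-torus, and that all such tori are $\Aff_1$-conjugate to $\T_{0,1}$. After conjugating $G$ by a suitable $(I,v)$ with $v\in\Aff_1$ I may therefore assume $\pr_2(G)=\T_{0,1}$. Then $G$ is the graph of an algebraic section $w_t\mapsto(C(t),w_t)$ of $\pr_2$; the group law on $G$ translates directly into the cocycle identity
\[C(ts)(y)=C(t)(sy)\cdot C(s)(y)\quad\text{in }\PGL_2(\C(y)).\]
Lemma~\ref{lemma: morphism to PGL(2,C(y))} applied to $W=\C^*$ produces an algebraic lift $t\mapsto\smat{a(t,y)}{b(t,y)}{c(t,y)}{d(t,y)}$ with $a,b,c,d\in\C[t,t^{-1},y]$, which makes it legitimate to substitute non-zero rational functions of $y$ for the variable $t$.

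The core of the argument is to trivialise this cocycle by a coboundary. I would define $D(y):=C(\lambda/y)(y)^{-1}\in\PGL_2(\C(y))$ for a generic $\lambda\in\C^*$, and apply the cocycle identity with $(a,b,y')=(\lambda/(ty),t,y)$ to deduce
\[C(\lambda/y)(y)=C(\lambda/(ty))(ty)\cdot C(t)(y),\qquad\text{hence}\qquad C(t)(y)=D(ty)\cdot D(y)^{-1}.\]
Setting $\Phi:=(D,e)\in\PGL_2(\C(y))\rtimes\Aff_1$, the semidirect product formula gives $\Phi\,(I,w_t)\,\Phi^{-1}=(D(ty)\,D(y)^{-1},w_t)=(C(t),w_t)$, so $\Phi\,\T_{0,1}\,\Phi^{-1}=G$; geometrically, $D(y)\cdot x$ is the $G$-invariant rational function that serves as the ``new $x$-coordinate'' in which the $G$-action becomes the standard $\T_{0,1}$-action. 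The main obstacle is verifying that $D(y)$ is a bona fide element of $\PGL_2(\C(y))$: the determinant of the lift is a non-zero element of $\C[t,t^{-1},y]$, which can specialise to $0$ under $t\mapsto\lambda/y$ only for finitely many $\lambda\in\C^*$, so any generic choice yields a well-defined $D$ and effects the required conjugation entirely inside $\PGL_2(\C(y))\rtimes\Aff_1$.
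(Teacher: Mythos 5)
Your proposal is correct and follows essentially the same route as the paper: the same easy direction, the same reduction of $\pr_2(G)$ to $\T_{0,1}$, the same polynomial lift via Lemma~\ref{lemma: morphism to PGL(2,C(y))}, the same cocycle identity $s_{\lambda\mu}(y)=s_\lambda(\mu y)s_\mu(y)$, and the same trivialisation of that cocycle by a coboundary obtained from a generic specialisation guarded by the non-vanishing of the determinant of the lift. The only difference is cosmetic: the paper specialises the curve variable to a generic constant $y_0$ and sets $t(y):=s_{yy_0^{-1}}(y_0)$, whereas you specialise the group parameter to $\lambda/y$ for generic $\lambda$; both produce the same kind of conjugating element in $\PGL_2(\C(y))$.
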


\begin{proof}
\eqref{G and T_{0,1} are conjugate} $\Longrightarrow$ \eqref{The map pr_2 : G -> pr_2(G) is an isomorphism} is clear. Let's prove  \eqref{The map pr_2 : G -> pr_2(G) is an isomorphism} $\Longrightarrow$ \eqref{G and T_{0,1} are conjugate}. If $\pr_2$ induces an isomorphism $G \to \pr_2 ( G)$, then, up to conjugation by an element of $\Aff_1 \subseteq \PGL_2 ( \C (y) ) \rtimes \Aff_1$ we have $\pr_2 (G ) = \{ \lambda y, \; \lambda \in \C^* \}$. Hence, there is a morphism $s \colon \C^* \to \PGL_2(\C(y))$, $\lambda \mapsto s_{\lambda}$, such that $B = \{ (s_\lambda(y),\lambda y), \; \lambda \in \C^* \}$. Actually, by Lemma~\ref{lemma: morphism to PGL(2,C(y))}, there exist polynomials $a,b,c,d\in\C[\lambda^{\pm1}][y]$ such that $s_\lambda(y) = \smat{a(\lambda,y)}{b(\lambda,y)}{c(\lambda,y)}{d(\lambda,y)}$. Since we have $(s_{\lambda \mu }(y)x ,\lambda \mu y) = (s_\lambda(y)x ,\lambda y) \circ (s_\mu(y)x ,\mu y) $ this yields $s_{\lambda\mu}(y)=s_\lambda(\mu y)s_\mu(y)$. For all $\lambda\in\C^*$ we have $a(\lambda,y)d(\lambda,y)-b(\lambda,y)c(\lambda,y)\neq 0$ in $\C [y ]$. We may therefore choose $y_0 \in \C\setminus\{0\}$ such that $a(\lambda,y_0) d(\lambda,y_0) - b(\lambda,y_0) c(\lambda,y_0) \neq 0$ in  $\C[\lambda^{\pm1}]$. It follows that $s_y (y_0)\in\PGL_2 ( \C( y ))$ and because $y_0 \neq 0$ we can replace $y$ by $y y_0^{-1}$ to obtain an element $t(y) := s_{y y_0^{-1}}(y_0)$ which belongs to $\PGL_2( \C(y) )$ as well. Let $\phi:= (t (y) x,y) \in \PGL_2 ( \C (y) ) \subseteq $  $\PGL_2 ( \C (y) )  \rtimes \Aff_1$. Then we have $s_{\lambda y y_0 ^{-1} } (y_0) = s_{\lambda} (y) s_{yy_0^{-1}} (y_0)$, i.e.\ $t(\lambda y) =  s_{\lambda} (y)  t(y)$, i.e.\ $t(\lambda y) t(y)^{-1} =  s_{\lambda} (y)  $, i.e.\ 
\[ (s_\lambda(y),\lambda y)=\phi\circ(x,\lambda y)\circ \phi^{-1},\]
showing that up to conjugation we have $B=\T_{0,1}$. 
\end{proof}

\section{Embeddings of $(\C,+)$ into $\Jonq$}\label{section: Embeddings of C+ into JJ}

The aim of this section is to prove the proposition below which describes the different embeddings of the additive group  $(\C,+)$ into $\Jonq$ up to conjugation.

\begin{proposition} \label{proposition: embeddings of (C,+) in Jonq}
Any algebraic subgroup of $\Jonq$ isomorphic to the additive algebraic group $(\C,+)$ is either conjugate to $\U_1 := \{ (x+c, y), \; c \in \C \}$ or to $\U_2 := \{ (x,y+c), \; c \in \C \}$.
\end{proposition}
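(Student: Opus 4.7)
The plan is to study $G$ via the projection $\pr_2 \colon \Jonq = \PGL_2(\C(y)) \rtimes \PGL_2 \to \PGL_2$, which is continuous by Lemma~\ref{lemma: Jonq is closed and pr_2 is continuous}. Since $G$ is connected and algebraically isomorphic to $(\C,+)$, the image $\pr_2(G)$ is a connected subgroup of $\PGL_2$ obtained as a morphic image of $(\C,+)$; it is therefore either trivial or a one-dimensional unipotent subgroup of $\PGL_2$. These two possibilities will produce the two cases of the conclusion.

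First, suppose $\pr_2(G)=\{1\}$, so $G \subseteq \PGL_2(\C(y))$. By Lemma~\ref{lemma: morphism to PGL(2,C(y))} the parametrising morphism $\C \to \PGL_2(\C(y))$, $t \mapsto g_t$, lifts to $\widehat{g}_t \in \Mat_2(\C[y])_{\det \neq 0}$ with entries in $\C[t,y]$; after rescaling we may assume $\widehat{g}_0 = I$. The one-parameter group law, written modulo scalars in $\C(y)^*$, gives $\partial_t \widehat{g}_t \equiv \widehat{g}_t \cdot X$ for some $X \in \sll_2(\C(y))$, so $\widehat{g}_t \equiv \exp(tX)$ modulo scalars. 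Polynomiality in $t$ combined with the Cayley--Hamilton identity $X^2 = -\det(X)\,I$ in $\sll_2(\C(y))$ forces $\det X = 0$, i.e.\ $X$ nilpotent (otherwise $\exp(tX)$ involves transcendental functions of $t$). Standard linear algebra over the field $\C(y)$ produces $h \in \GL_2(\C(y))$ with $h X h^{-1} = \smat{0}{1}{0}{0}$, and consequently $h G h^{-1} = \bigl\{\,\smat{1}{t}{0}{1} : t \in \C\bigr\} = \U_1$.

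Second, suppose $\pr_2(G)$ is nontrivial; as a 1-parameter unipotent subgroup of $\PGL_2$ it is $\PGL_2$-conjugate to $\{y \mapsto y+c : c\in\C\}$, and after conjugation in $\Jonq$ by an element of $\PGL_2$ we may assume equality. The restriction $\pr_2|_G$ is then a non-zero algebraic homomorphism $(\C,+) \to (\C,+)$, hence an isomorphism in characteristic zero, and so $G = \{(s(c), y+c) : c \in \C\}$ is the graph of a morphism $s \colon \C \to \PGL_2(\C(y))$ satisfying the cocycle identity
\[ s(c+c')(y) = s(c)(y+c') \cdot s(c')(y), \qquad s(0) = \id. \]
A direct computation in the semidirect product shows that conjugation of $(s(c), y+c)$ by $(\phi,\id) \in \PGL_2(\C(y))$ yields $(\phi(y+c)\,s(c)(y)\,\phi(y)^{-1},\,y+c)$, so the task reduces to finding $\phi \in \PGL_2(\C(y))$ with $s(c)(y) = \phi(y+c)^{-1}\phi(y)$ for all $c \in \C$. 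The key trick is to take $\phi(y) := s(y_0 - y)(y)$ for a generic fixed $y_0 \in \C$: the polynomial lift of $s$ from Lemma~\ref{lemma: morphism to PGL(2,C(y))} has entries in $\C[c,y]$, and the substitution $c \leftarrow y_0 - y$ produces a matrix in $\Mat_2(\C[y])$ whose determinant is a non-zero polynomial in $y$ for generic $y_0$, so $\phi$ truly lies in $\PGL_2(\C(y))$. Applying the cocycle identity with $c$ replaced by $y_0 - y - c$ and $c'$ by $c$ gives $s(y_0-y)(y) = s(y_0-y-c)(y+c)\cdot s(c)(y)$, which rearranges precisely to $\phi(y+c)^{-1}\phi(y) = s(c)(y)$, so $G$ is $\Jonq$-conjugate to $\U_2$.

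The non-conjugacy of $\U_1$ and $\U_2$ in $\Jonq$ is then immediate from the fact that $\pr_2$ is a group homomorphism with $\pr_2(\U_1) = \{1\}$ while $\pr_2(\U_2) \neq \{1\}$. The main obstacle of the proof is the explicit construction of $\phi$ in the second case; the ansatz $\phi(y) = s(y_0-y)(y)$ is not obvious a priori and is found by reading the desired identity $s(c)(y) = \phi(y+c)^{-1}\phi(y)$ backwards, setting $y + c = y_0$ so as to reduce the right-hand side to a single evaluation of $s$, and then verifying the identity via the cocycle relation.
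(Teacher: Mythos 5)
Your proof is correct, and in the main case it is essentially the paper's own argument. When $\pr_2(G)\neq\{1\}$ the paper (Lemma~\ref{lemma: characterisation of the groups conjugate to U2}) uses exactly the same cocycle identity $A_{t+u}(y)=A_t(y+u)A_u(y)$ and conjugates by $B(y):=A_{y-y_0}(y_0)$ for a suitable $y_0$; your $\phi(y)=s(y_0-y)(y)$ is precisely $B(y)^{-1}$ (the identity $A_{y_0-y}(y)=A_{y-y_0}(y_0)^{-1}$ follows from the cocycle at $t=-u$), so the two constructions coincide up to which direction of the conjugation one writes down. Your justification that $\phi$ lands in $\PGL_2(\C(y))$ (genericity of $y_0$ so that $\Delta(y_0-y,y)\not\equiv 0$) is a legitimate variant of the paper's choice of $y_0$ with $\Delta(0,y_0)\neq 0$. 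One small point to make explicit in both steps: the cocycle is a priori an identity for \emph{complex} values of the parameters, and substituting the rational function $y_0-y-c$ into the parameter slot uses that proportionality of the polynomial lifts is a Zariski-closed condition in all variables; the paper faces the same issue when it ``replaces $u$ with $y$''.

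Where you genuinely diverge is the case $G\subseteq\PGL_2(\C(y))$. The paper observes that every element of $G$ is algebraic and invokes Lemma~\ref{lemma: algebraic elements of PGL(2,C(y))}: a nontrivial element of $G$ cannot be conjugate to $\smat{0}{f}{1}{0}$ (an involution, while $(\C,+)$ is torsion-free) nor, after a short argument, to a diagonal element, hence is conjugate to $\smat{1}{1}{0}{1}$, and $G$ then sits in the centraliser $\bigl\{\smat{1}{b}{0}{1},\ b\in\C(y)\bigr\}$, where linearity in $t$ finishes the job. Your route via $\partial_t\widehat{g}_t\equiv\widehat{g}_t X$ and $\widehat{g}_t\equiv\exp(tX)$ is the right idea but is the one heuristic step of the write-up: differentiating a $\PGL_2(\C(y))$-valued cocycle only determines $X$ modulo scalars, and when $\det X$ is a nonsquare in $\C(y)$ the ``transcendental functions of $t$'' you exclude actually live in the quadratic extension $\C(y)(\sqrt{-\det X})$ (this is exactly the $\T_f$ phenomenon), so ruling out that case needs an extra word --- e.g.\ that $\T_f$ is torsion-by-free (Theorem~\ref{theorem: Abstract description of T_f when K=C(y)}) hence contains no divisible subgroup, or simply the paper's algebraicity criterion of Lemma~\ref{lemma: criterion of algebraicity in PGL(2,C(y))}. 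With that point patched, the argument is complete; the classification of algebraic elements gives the cleaner path.
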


Our proof relies on the following characterisation of the embeddings of $(\C,+)$ into $\Jonq$ which are conjugate to $\U_2$.

\begin{lemma} \label{lemma: characterisation of the groups conjugate to U2}
Let $G$ be an algebraic subgroup of $\Jonq$ isomorphic to the additive algebraic group $(\C,+)$. Then, the following assertions are equivalent:
\begin{enumerate}
\item \label{pr_2(G) is nontrivial}
The group $\pr_2 (G)$ is nontrivial (where $\pr_2 \colon \Jonq \to \PGL_2$ is the second projection).
\item \label{G and U2 are conjugate}
The groups $G$ and $\U_2$ are conjugate
by an element of $\PGL_2 ( \C(y) ) \subseteq \Jonq$.
\end{enumerate}
\end{lemma}

\begin{proof}
\eqref{G and U2 are conjugate} $\Longrightarrow$  \eqref{pr_2(G) is nontrivial} is clear. Let's prove \eqref{pr_2(G) is nontrivial} $\Longrightarrow$ \eqref{G and U2 are conjugate}. If  $\pr_2 (G)$ is nontrivial, $\pr_2$ induces an isomorphism from $G$ onto its image $\U_2 \simeq ( \C, +)$.
Up to conjugation the inverse isomorphism $(\C, +) \to G$ is of the form $\theta \colon c \mapsto (A_c, y+c) \in \Jonq$, where the map $c \mapsto A_c$  is defined as a morphism $\C \to \bp$ (see Definition~\ref{definition: morphism to Bir(W)}) with values in $\PGL_2 (\C (y) )$. By Lemma~\ref{lemma: morphism to PGL(2,C(y))}, there exist polynomials $a(t,y), b(t,y),c(t,y),d(t,y)  \in \C [t , y]$ such that
\[ \forall \, t \in \A^1, \; A_t(y) = \smat{a(t,y)}{b(t,y)}{c(t,y)}{d(t,y) } \in \PGL_2 (\C (y) ) .\]
In particular, the determinant
\[ \Delta (t,y) = \det \smat{a(t,y)}{b(t,y)}{c(t,y)}{d(t,y) } \in \C [t,y]\]
is such that $\Delta (t_0, y)$ is a nonzero element of $\C [y]$ for each $t_0 \in \A^1$.

The equality $\theta (t+u) = \theta (t) \circ \theta (u)$ yields
\begin{equation} A_{t+u} (y) = A_t (y+u) A_u (y). \label{the basic equation} \end{equation}
Setting $t=-u$, we get:
\[ \I_2= A_0(y)  = A_{-u}(y+u)A_u(y), \quad \text{i.e.} \]
\[  \smat{a(0,y)}{b(0,y)}{c(0,y)}{d(0,y) } = \smat{a(-u,y+u)}{b(-u,y+u)}{c(-u,y+u)}{d(-u,y+u) } \smat{a(u,y)}{b(u,y)}{c(u,y)}{d(u,y) } . \]
Choosing $y_0 \in \C$ such that $\Delta (0, y_0) \neq 0$ and replacing $y$ with $y_0 $ in the latter equality, proves that  $A_u(y_0)$ defines an element of $\PGL_2 ( \C (u) )$
(Example~\ref{example:A(t,y)} below shows that the polynomial $\Delta (0,y_0)$ may actually vanish for some values of $y_0$). Setting $y=y_0$ in \eqref{the basic equation}, we get:
\[ A_{t+u} (y_0) = A_t (u+y_0) A_u (y_0),\]
i.e.\ $ A_t (u+y_0) = A_{t+u} (y_0) A_u (y_0) ^{-1}  $. Replacing $u$ with $u-y_0$ gives
\[ A_t (u) = A_{u+t-y_0} (y_0) A_{u-y_0} (y_0) ^{-1}.\]
Writing $B(u) := A_{u-y_0} (y_0) \in \PGL_2 ( \C (u) )$, we get $A_t(u) = {B(u+t)} B(u)^{-1} $, i.e.\ (replacing $u$ with $y$) $A_t(y) = B(y+t) B(y)^{-1} $, or equivalently
\begin{equation} A_c (y) = B(y+c) B(y) ^{-1}. \label{cocycle condition on A} \end{equation}
Setting $\varphi := (B(y) , y) \in \PGL_2 ( \C (y) ) \subseteq \Jonq$, the equation \eqref{cocycle condition on A} shows that
\[ \theta (c) = \varphi \circ (x, y+c) \circ \varphi^{-1}. \qedhere \]
\end{proof}

\begin{example} \label{example:A(t,y)}
Set $\tilde A_t(y) := \smat{(y+t+1) (y-1)}{-t}{0}{(y+t-1)(y+1) } \in \Mat_2 ( \C [t,y] )$ and $A_t(y) :=$  $[ \tilde A_t(y) ] \in \PGL_2 (\C(y) )$. We have
\[ (y+u+1) (y+u-1) \tilde A_{t+u}(y) = \tilde A_t (y+u) \tilde A_u (y), \]
showing that the equation \eqref{the basic equation}  above is satisfied. Moreover, we have
\[ \Delta (t,y) := \det \tilde A_t(y) = (y+t+1) (y+t-1)(y+1) (y-1) \]
showing that $\Delta (0,y_0) \neq 0$ if and only if $y_0 \neq \pm 1$. Writing $y_0=0$ and setting as in the proof of Lemma~\ref{lemma: characterisation of the groups conjugate to U2}
\[ B(u) := A_{u-y_0} (y_0) =A_u(0) =  \smat{-1-u}{-u}{0}{-1+u }=  \smat{1+u}{u}{0}{1-u }   \in \PGL_2 ( \C (u) ), \]
we get $B(y) = \smat{1+y}{y}{0}{1-y}$ and $A_c (y) = { B(y+c)  }B(y)^{-1}$.
\end{example}

\begin{proof}[Proof of Proposition~\ref{proposition: embeddings of (C,+) in Jonq}]
Let $G$ be an algebraic subgroup of $\Jonq$ isomorphic to $(\C,+)$. If $\pr_2 (G)$ is  nontrivial, then the conclusion follows from Lemma~\ref{lemma: characterisation of the groups conjugate to U2}. We may therefore assume that $G$ is contained in $\PGL_2 ( \C(y) )$. By Lemma~\ref{lemma: algebraic elements of PGL(2,C(y))}, we may assume (up to conjugation) that $G$ contains the matrix $\smat{1}{1}{0}{1} \in \PGL_2 ( \C(y) )$ from which it follows that $G= \U_1$.
\end{proof}

\section{Borel subgroups of $\PGL_2 (\C (y) )$ and of $\PGL_2 (\C (y) ) \rtimes \Aff_1$}\label{section: Borel subgroups of PGL2C(y) and of PGL2C(y) rtimesAff1}

In this section, we will prove that any Borel subgroup of $\PGL_2 (\C (y) )$ is contained in a unique Borel subgroup of $\PGL_2 (\C (y) ) \rtimes \Aff_1$ and conversely that any Borel subgroup of $\PGL_2 (\C (y) ) \rtimes \Aff_1$ contains a unique Borel subgroup of $\PGL_2 (\C (y) )$. Hence, there will be a natural bijection from the set of Borel subgroups of $\PGL_2 (\C (y) )$ to the set of Borel subgroups of $\PGL_2 (\C (y) ) \rtimes \Aff_1$ (see Theorem~\ref{theorem: the natural bijection between the Borel subgroups of PGL_2 ( C(y) ) and the Borel subgroups of PGL_2 ( C(y) ) rtimes Aff_1}).

The main difficulty is to prove that any Borel subgroup of $\PGL_2 (\C (y) ) \rtimes \Aff_1$ contains at least one Borel subgroup of $\PGL_2 (\C (y) )$ and we will begin by showing this result which is Theorem~ \ref{theorem: any Borel subgroup of PGL_2 (C (y) ) rtimes Aff_1 contains a Borel subgroup of PGL_2 (C (y) )} below. The proof relies on a few preliminary results.

We will use the next technical result for the proof of both Theorem~\ref{theorem: any Borel subgroup of PGL_2 (C (y) ) rtimes Aff_1 contains a Borel subgroup of PGL_2 (C (y) )} and Proposition~\ref{proposition: precise description of the unique Borel subgroup of PGL_2(C(y)) rtimes PGL_2 containing T_f}.

\begin{proposition}  \label{proposition: description of N_{C* times Aff_1}(T_f)^{circ}}
Let $f$ be a nonsquare element of $\C(y)$. Then we have
\begin{equation}
\Nor_{\PGL_2 ( \C(y) ) \rtimes \Aff_1}(\T_f)^\circ = \T_f \rtimes  {\mathcal N}_{\C(y) ^*\rtimes \Aff_1}(\T_f)^{\circ} \label{equation: neutral component of the normaliser of T_f in PGL_2 ( C(y) ) rtimes Aff_1} \end{equation}
where ${\mathcal N}_{\C(y) ^*\rtimes \Aff_1}(\T_f)$ is the subgroup of elements $\varphi \in \C (y) ^* \rtimes \Aff_1   \subseteq \Jonq$ which normalise $\T_f$, i.e.\ such that $\varphi  \T_f  \varphi^{-1} = \T_f$. Moreover, the group $ {\mathcal N}_{\C(y) ^*\rtimes \Aff_1}(\T_f)^{\circ}$ is either trivial or isomorphic to $\C^*$. More precisely, if $\g$ denotes the genus of the curve $x^2=f(y)$, we have
\begin{enumerate}
\item \label{genus(f) is at least 1 or infinity does not belong to the odd support of f}
${\mathcal N}_{\C(y) ^*\rtimes \Aff_1}(\T_f)^{\circ}  = \{ \id \}$ if $\g \ge 1$ or $\infty \notin \So (f)$;
\item \label{genus(f)=0 and infinity belongs to the odd support of f}
${\mathcal N}_{\C(y) ^*\rtimes \Aff_1}(\T_f)^{\circ}  = {\mathcal N}_{\C(y) ^*\rtimes \PGL_2}(\T_f)^{\circ}  \simeq \C^*$ if $\g = 0$ and $\infty \in \So (f)$.
\end{enumerate}
\end{proposition}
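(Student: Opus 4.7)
The plan is to adapt the argument of Lemma \ref{lemma: normaliser of Tf in Jonq} and Proposition \ref{proposition: connected component of the normaliser of Tf in Jonq} by systematically intersecting with the subgroup $\PGL_2(\C(y)) \rtimes \Aff_1 \subseteq \Jonq$. Since $\T_f \subseteq \PGL_2(\C(y)) \rtimes \Aff_1$, intersecting the conclusion of Lemma \ref{lemma: normaliser of Tf in Jonq} with $\PGL_2(\C(y)) \rtimes \Aff_1$ yields
\[\Nor_{\PGL_2(\C(y)) \rtimes \Aff_1}(\T_f) = \T_f \rtimes {\mathcal N}_{\C(y)^*\rtimes \Aff_1}(\T_f).\]
As $\T_f$ is connected (being a Borel subgroup of $\PGL_2(\C(y))$), the general fact that in a semidirect product $A \rtimes B$ with $A$ connected one has $(A \rtimes B)^\circ = A \rtimes B^\circ$ gives the identity \eqref{equation: neutral component of the normaliser of T_f in PGL_2 ( C(y) ) rtimes Aff_1}, and reduces the problem to computing ${\mathcal N}_{\C(y)^*\rtimes \Aff_1}(\T_f)^\circ$.

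To this end, I would mimic the proof of Lemma \ref{lemma: description of N_{C* times PGL_2}(T_f)^{circ}}\eqref{First short exact sequence} to produce the short exact sequence
\[1 \to \{(\pm x, y)\} \to {\mathcal N}_{\C(y)^*\rtimes \Aff_1}(\T_f) \xrightarrow{\pr_2} \Stab(\So(f)) \cap \Aff_1 \to 1,\]
where surjectivity on the right uses Lemma \ref{lemma: fov/f is a square iff v stabilizes the odd support of f}: for each $v \in \Aff_1$ stabilizing $\So(f)$ one has $f \circ v / f = \lambda^2$ for some $\lambda \in \C(y)$, producing the preimage $(\lambda x, v(y))$. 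It then suffices to determine the neutral component of $\Stab(\So(f)) \cap \Aff_1$.

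If $\g \ge 1$ then $|\So(f)| \ge 4$ and $\Stab(\So(f))$ itself is already finite; if instead $\g = 0$ and $\infty \notin \So(f)$, then $\So(f)$ is a two-point subset of $\C$ and any stabilising affine map is uniquely determined by its action on those two points, so $\Stab(\So(f)) \cap \Aff_1$ is again finite. In both sub-cases covered by \eqref{genus(f) is at least 1 or infinity does not belong to the odd support of f} the short exact sequence above forces ${\mathcal N}_{\C(y)^*\rtimes \Aff_1}(\T_f)$ to be finite and therefore of trivial neutral component.

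For case \eqref{genus(f)=0 and infinity belongs to the odd support of f}, we have $\So(f) = \{a, \infty\}$ for some $a \in \C$, so $\Fix(\So(f)) = \{y \mapsto \mu(y - a) + a : \mu \in \C^*\}$ is already contained in $\Aff_1$, and in fact coincides with $\Stab(\So(f)) \cap \Aff_1$ since no affine map can swap $a$ with $\infty$. Consequently the refined sequence \eqref{equation: short exact sequence for N_{C(y)* rtimes PGL_2}(T_f)^{circ}} of Lemma \ref{lemma: description of N_{C* times PGL_2}(T_f)^{circ}}\eqref{Second short exact sequence} shows that ${\mathcal N}_{\C(y)^*\rtimes \PGL_2}(\T_f)^\circ$ already lives inside $\C(y)^* \rtimes \Aff_1$, giving the inclusion ${\mathcal N}_{\C(y)^*\rtimes \PGL_2}(\T_f)^\circ \subseteq {\mathcal N}_{\C(y)^*\rtimes \Aff_1}(\T_f)^\circ$; the reverse inclusion is automatic since the topology on the smaller group is induced from the larger. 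This forces the desired equality ${\mathcal N}_{\C(y)^*\rtimes \Aff_1}(\T_f)^\circ = {\mathcal N}_{\C(y)^*\rtimes \PGL_2}(\T_f)^\circ \simeq \C^*$. The main subtlety is precisely this last identification, which hinges on observing that $\infty \in \So(f)$ is exactly what traps the full $\PGL_2$-neutral component inside the smaller group $\C(y)^* \rtimes \Aff_1$.
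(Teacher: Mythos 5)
Your proposal is correct and follows essentially the same route as the paper: reduce via Lemma~\ref{lemma: normaliser of Tf in Jonq} to computing ${\mathcal N}_{\C(y)^*\rtimes \Aff_1}(\T_f)^{\circ}$, then split into cases according to $\g$ and whether $\infty\in\So(f)$, using Lemmas~\ref{lemma: fov/f is a square iff v stabilizes the odd support of f} and \ref{lemma: description of N_{C* times PGL_2}(T_f)^{circ}}. The only (harmless) variation is in the sub-case $\g=0$, $\infty\notin\So(f)$, where you observe directly that $\Stab(\So(f))\cap\Aff_1$ is finite, while the paper instead passes to $\Fix\big(\So(f)\cup\{\infty\}\big)=\{1\}$; both arguments are valid and the remaining steps, including the identification ${\mathcal N}_{\C(y)^*\rtimes\Aff_1}(\T_f)^{\circ}={\mathcal N}_{\C(y)^*\rtimes\PGL_2}(\T_f)^{\circ}$ when $\infty\in\So(f)$, match the paper's.
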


\begin{proof}
We have
\[ \Nor_{\PGL_2 ( \C (y) )  \rtimes \Aff_1}( \T_f )^{\circ} = (  \Nor_{\Jonq}( \T_f )^{\circ} \cap [  \PGL_2 ( \C (y) )  \rtimes \Aff_1 ] ) ^{\circ} \]
and ${\Nor_{\Jonq}( \T_f )^{\circ} = \T_f  \rtimes {\mathcal N}_{\C(y) ^*\rtimes \PGL_2}(\T_f)^{\circ}}$ by Proposition~\ref{proposition: connected component of the normaliser of Tf in Jonq}. Hence we get
\[ \Nor_{\PGL_2 ( \C (y) ) \rtimes \Aff_1}( \T_f )^{\circ} = \T_f  \rtimes ( {\mathcal N}_{\C(y) ^*\rtimes \PGL_2}(\T_f)^{\circ} \cap [  \C (y)  ^*\rtimes \Aff_1] )^{\circ}  \]
and the equality ${\mathcal N}_{\C(y) ^*\rtimes \Aff_1}(\T_f)^{\circ}=( {\mathcal N}_{\C(y) ^*\rtimes \PGL_2}(\T_f)^{\circ} \cap [  \C (y)  ^*\rtimes \Aff_1] )^{\circ}$ establishes \eqref{equation: neutral component of the normaliser of T_f in PGL_2 ( C(y) ) rtimes Aff_1}.

Let's now show \eqref{genus(f) is at least 1 or infinity does not belong to the odd support of f}. If $\g \ge 1$ we have ${\mathcal N}_{\C(y) ^*\rtimes \PGL_2}(\T_f)^{\circ} = \{ 1 \}$ by Lemma~\ref{lemma: description of N_{C* times PGL_2}(T_f)^{circ}}\eqref{g is at least 1} and this yields ${\mathcal N}_{\C(y) ^*\rtimes \Aff_1}(\T_f)^{\circ}  = \{ \id \}$. So, assume now that $\g =0$ and $\infty \notin \So (f)$. Then, the following short exact sequence (which is equation \eqref{equation: short exact sequence for N_{C(y)* rtimes PGL_2}(T_f)^{circ}} of Lemma~\ref{lemma: description of N_{C* times PGL_2}(T_f)^{circ}})
\[ 1 \to \{ (\pm x,y)  \} \to {\mathcal N}_{\C (y) ^*\rtimes \PGL_2}(\T_f)^{\circ} \xrightarrow{\pr_2} \Fix ( \So (f) ) \to 1 \]
yields 
\begin{multline} 1 \to \{ (\pm x,y)  \} \to {\mathcal N}_{\C (y) ^*\rtimes \PGL_2}(\T_f)^{\circ} \cap [  \C (y)  ^*\rtimes \Aff_1]   \\\xrightarrow{\pr_2}  \Fix \big(  \So (f)  \cup \{ \infty \}  \big) \to 1. \label{equation: short exact sequence related with N_{C(y)* rtimes Aff_1}(T_f)^{circ}}  \end{multline}
The hypothesis $\infty \notin \So (f)$ shows that $\So (f)  \cup \{ \infty \}$ admits $3$ elements and this proves that $ \Fix \big(  \So (f)  \cup \{ \infty \}  \big)$ is trivial. Hence \eqref{equation: short exact sequence related with N_{C(y)* rtimes Aff_1}(T_f)^{circ}} shows that
\[    {\mathcal N}_{\C (y) ^*\rtimes \PGL_2}(\T_f)^{\circ}  \cap [  \C (y)  ^*\rtimes \Aff_1]  = \{ (\pm x,y)  \} \]
and finally ${\mathcal N}_{\C(y) ^*\rtimes \Aff_1}(\T_f)^{\circ}  = \{ \id \}$.

Let's now show~\eqref{genus(f)=0 and infinity belongs to the odd support of f}. So assume that we have $\g = 0$ and $\infty \in \So (f)$. We have therefore $\Fix ( \So (f) )  \subseteq \Fix ( \infty ) = \Aff_1$ and the first exact sequence above (which was equation \eqref{equation: short exact sequence for N_{C(y)* rtimes PGL_2}(T_f)^{circ}} of Lemma~\ref{lemma: description of N_{C* times PGL_2}(T_f)^{circ}}) shows that ${\mathcal N}_{\C (y) ^*\rtimes \PGL_2}(\T_f)^{\circ} \subseteq \C (y) ^* \rtimes \Aff_1$.  Hence we have ${\mathcal N}_{\C(y) ^*\rtimes \Aff_1}(\T_f)^{\circ}  = {\mathcal N}_{\C(y) ^*\rtimes \PGL_2}(\T_f)^{\circ}$ and the isomorphism ${\mathcal N}_{\C(y) ^*\rtimes \PGL_2}(\T_f)^{\circ}  \simeq \C^*$ was already shown in Lemma~\ref{lemma: description of N_{C* times PGL_2}(T_f)^{circ}}\eqref{g=0}.
\end{proof}

The following few results, culminating in Proposition~\ref{proposition: no Borel subgroup of PGL_2( C(y) ) rtimes Aff_1 embeds into Aff_1 via pr_2}, are also in preparation for Theorem~\ref{theorem: any Borel subgroup of PGL_2 (C (y) ) rtimes Aff_1 contains a Borel subgroup of PGL_2 (C (y) )}.

\begin{lemma} \label{lemma: morphism from a closed connected subgroup of Bir(P2) to (C,+) or (C*,x)}
Let $G$ be a closed connected subgroup of $\bp$, let $H$ be a linear algebraic group isomorphic to $(\C,+)$ or $(\C^*, \times)$, and let $\varphi \colon G \to H$ be a morphism of groups such that for each variety $A$ and each morphism $\lambda \colon  A \to \bp$  (see Definition~\ref{definition: morphism to Bir(W)}) with values in $G$, the composition $\varphi \circ \lambda \colon A \to H$ is a morphism of varieties. Then, we have $\varphi (G ) = \{ 1 \}$ or $H$.
\end{lemma}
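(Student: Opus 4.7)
The plan is to establish a dichotomy: either $\varphi\circ\lambda$ is constant for every morphism $\lambda\colon A\to\bp$ from a connected variety $A$ with values in $G$, or else some such $\varphi\circ\lambda$ has image containing a non-empty open subset of $H$. The first alternative will force $\varphi(G)=\{1\}$ via Lemma~\ref{lemma: two elements of a closed connected subset of Bir(Pn) belong to the image of a connected variety}, and the second will force $\varphi(G)=H$ via the group law in $H$.

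In the first case I would invoke Lemma~\ref{lemma: two elements of a closed connected subset of Bir(Pn) belong to the image of a connected variety}: any two $g_1,g_2\in G$ lie in a common image $\lambda(A)\subseteq G$ with $A$ connected. The hypothesis that $\varphi\circ\lambda\colon A\to H$ is a morphism of varieties, combined with its assumed constancy, gives $\varphi(g_1)=\varphi(g_2)$. Thus $\varphi$ is constant on $G$, and since it is a group morphism, $\varphi(G)=\{1\}$.

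In the second case, fix a morphism $\lambda\colon A\to\bp$ with $A$ connected and $\lambda(A)\subseteq G$ such that $\varphi\circ\lambda$ is non-constant. I would decompose $A=\bigcup_i A_i$ into its finitely many irreducible components. Each image $\varphi\circ\lambda(A_i)$ is constructible in $H$ by Chevalley's theorem, and its Zariski closure is irreducible; since $\dim H=1$ this closure is either a single point or all of $H$. If every $A_i$ mapped to a point, then $\varphi\circ\lambda(A)$ would be a finite subset of $H$; being the continuous image of the connected set $A$, it would be connected, hence a singleton, contradicting non-constancy. So some $\varphi\circ\lambda(A_i)$ is Zariski-dense in $H$ and, being constructible, contains a non-empty open set $U\subseteq H$.

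Then $\varphi(G)\supseteq U$, and since $\varphi(G)$ is a subgroup of $H$, it contains $U\cdot U^{-1}$. For the connected $1$-dimensional algebraic groups $H\in\{(\C,+),(\C^*,\times)\}$, any non-empty open $U\subseteq H$ is dense, so for any $h\in H$ the translate $hU$ meets $U$, yielding $U\cdot U^{-1}=H$; therefore $\varphi(G)=H$. The main subtlety I expect is the step that forces a non-constant image to contain an open set: it relies on combining constructibility, the connectedness of the image of a connected space, and the fact that the proper Zariski-closed subsets of $H$ are finite. Everything else is a straightforward application of the previously established lemma on connected closed subsets of $\bp$ together with the group-theoretic identity $U\cdot U^{-1}=H$ in a connected algebraic group.
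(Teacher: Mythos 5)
Your proof is correct and follows essentially the same route as the paper's: both reduce to Lemma~\ref{lemma: two elements of a closed connected subset of Bir(Pn) belong to the image of a connected variety}, observe that the image of $\varphi\circ\lambda$ is a connected constructible subset of the one-dimensional group $H$ which therefore contains a dense open set once it is non-constant, and then conclude via the standard fact that the product of two dense open subsets of a connected algebraic group is the whole group. The only cosmetic differences are your passage through irreducible components (the paper argues directly from connected + constructible + at least two points) and your use of $U\cdot U^{-1}$ in place of the paper's $U\cdot U$.
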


\begin{proof}
Assume that $\varphi (G) \neq \{ 1 \}$. Then, there exist two elements $g_1,g_2$ of $G$ such that $\varphi (g_1) \neq \varphi (g_2)$. By Lemma~\ref{lemma: a useful criterion of connectedness for closed subsets of Bir(Pn)}, there exists a connected (not necessarily irreducible) curve $C$ and a morphism $\lambda \colon C \to \bp$ whose image satisfies
\[ g_1,g_2 \in \Image (\lambda) \subseteq G .\]
The image $U$ of the morphism of algebraic varieties $\varphi \circ \lambda \colon C \to H$ is connected, constructible, and contains at least two points. Hence, it is a dense open subset of $H$. Moreover, $U$ is contained in the image of $\varphi$. Recall that if $V_1,V_2$ are two dense open subsets of a linear algebraic group $K$, then we have $K= V_1. V_2$ (see e.g.\ \cite[Lemma~(7.4), page 54]{Humphreys1975}). Here, we find that the image of $\varphi$ contains $U.U = H$ and hence is equal to $H$.
\end{proof}

\begin{remark}
If $\varphi \colon G \to H$ is a morphism of linear algebraic groups, it is well-known that $\varphi (G)$ is a closed subgroup of $H$. However, if $G$ is a closed subgroup of $\bp$, $H$ a linear algebraic group, and $\varphi \colon G \to H$ a morphism of groups such that for each variety $A$ and each morphism $\lambda \colon  A \to \bp$   with values in $G$, the composition $\varphi \circ \lambda \colon A \to H$ is a morphism of varieties, then it is in general false that $\varphi (G)$ is a closed subgroup of $H$. Take for $G$ the subgroup of monomial transformations
\[ G:= \{ (x^a y^b, x^c y^d) , \; \smat{a}{b}{c}{d} \in \GL_2 (\Z)   \} \subseteq \bp \]
and consider the natural morphism $\varphi \colon G \to \GL_2 (\C)$, $(x^a y^b, x^c y^d) \mapsto  \smat{a}{b}{c}{d}$. Then $G$ is a closed subgroup of $\bp$ whose image by $\varphi$ is equal to $\GL_2 (\Z)$, which is not closed in $\GL_2 (\C)$.
\end{remark}

\begin{lemma} \label{lemma: pr_2(B)}
Let $B$ be a closed connected subgroup of $\PGL_2 ( \C(y) ) \rtimes \Aff_1$. Then, $\pr_2 (B)$ is a closed connected subgroup of $\Aff_1\subset\PGL_2 ( \C(y) ) \rtimes \Aff_1$. In particular, up to conjugation, $\pr_2 (B)$ is equal to one of the following four subgroups of $\Aff_1$:
\[ \{ \id \},  \quad  \T_{0,1} = \{ (x,ay), \; a \in \C^* \}, \quad  \U_2 = \{ (x,y+c), \; c \in \C \}, \hspace{3mm} \text{or} \hspace{3mm}  \Aff_1.   \]
\end{lemma}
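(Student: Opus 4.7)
The plan is to verify three properties of $\pr_2(B)$---being a subgroup, being connected, and being closed in $\Aff_1$---and then appeal to the classification of closed connected subgroups of $\Aff_1$ to obtain the list of four models.

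The first two properties are quick: $\pr_2 \colon \PGL_2(\C(y)) \rtimes \Aff_1 \to \Aff_1$ is a group homomorphism, so $\pr_2(B)$ is automatically a subgroup; and Lemma~\ref{lemma: Jonq is closed and pr_2 is continuous}\eqref{pr_2 is continuous} gives continuity of $\pr_2$ for the $\bp$-topology, so $\pr_2(B)$ is the image of the connected set $B$ under a continuous map and is therefore connected.

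For closedness I will exploit the fact that $\Aff_1 \subset \Aut(\p^2)$ is algebraic, so that the $\bp$-topology restricted to $\Aff_1$ agrees with its natural Zariski topology as an algebraic group. Set $H := \overline{\pr_2(B)}$; then $H$ is a closed connected subgroup of $\Aff_1$, and Proposition~\ref{proposition: closed connected subgroups of PGL_2(K)} applied with $\K = \C$, restricted to subgroups of $\Aff_1$, identifies $H$ up to conjugation in $\Aff_1$ as one of $\{\id\}$, $\T_{0,1}$, $\U_2$, or $\Aff_1$. It then remains to prove $\pr_2(B) = H$. For this, I would use Lemma~\ref{lemma: two elements of a closed connected subset of Bir(Pn) belong to the image of a connected variety}: given any pair $\varphi, \psi \in B$ there exists a connected algebraic variety $A$ and a morphism $\lambda \colon A \to \bp$ with $\lambda(A) \subseteq B$ containing both $\varphi$ and $\psi$. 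The composition $\pr_2 \circ \lambda \colon A \to \Aff_1$ is then a morphism of algebraic varieties (the projection simply extracts the M\"obius component from the explicit parametrisation of $\lambda$), so by Chevalley's theorem $\pr_2(\lambda(A))$ is a constructible connected subset of $\pr_2(B)$ containing $\pr_2(\varphi)$ and $\pr_2(\psi)$. Combining this with the standard fact that a subgroup of an algebraic group which contains a dense constructible subset of its Zariski closure must equal that closure yields $\pr_2(B) = H$, and the four models then come directly from the classification.

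The main obstacle I expect in making this rigorous is the constructibility step: $B$ itself is not a finite-dimensional variety, so one cannot apply Chevalley to $B$ globally. The argument instead needs to be carried out via the $\bp_d$-filtration of $B$ and the finite-dimensional parametrisations produced by Lemma~\ref{lemma: two elements of a closed connected subset of Bir(Pn) belong to the image of a connected variety}, and then the group structure of $\pr_2(B)$ must be used to assemble these local constructible pieces into the full closure $H$.
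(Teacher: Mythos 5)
Your overall mechanism is the right one, and it is essentially the paper's: connectedness of $\pr_2(B)$ from continuity of $\pr_2$, and then a comparison of $\pr_2(B)$ with its closure $H$ in $\Aff_1$ using connected constructible pieces supplied by Lemma~\ref{lemma: two elements of a closed connected subset of Bir(Pn) belong to the image of a connected variety} together with the fact that two dense open subsets of a linear algebraic group multiply to the whole group. This settles the cases where $H$ is trivial or one-dimensional, because there a connected constructible subset with two distinct points is automatically dense in $H$; that is precisely the content of the paper's Lemma~\ref{lemma: morphism from a closed connected subgroup of Bir(P2) to (C,+) or (C*,x)}, which is deliberately stated only for targets isomorphic to $(\C,+)$ or $(\C^*,\times)$.

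The gap is the case $H=\Aff_1$, and it is genuine as written: Lemma~\ref{lemma: two elements of a closed connected subset of Bir(Pn) belong to the image of a connected variety} only produces, for each pair of elements, a connected constructible subset of $\pr_2(B)$ containing that pair. In a two-dimensional group such a set may well be one-dimensional, so it need not be ``a dense constructible subset of the Zariski closure'' and the standard fact you invoke does not apply; the ``assembly'' you defer to the end is where the actual work lies. The paper sidesteps this by never leaving dimension one: if $\overline{\pr_2(B)}=\Aff_1$ then $\pr_2(B)$ is non-abelian, so $\pr_2(\DDD(B))$ is a nontrivial subgroup of $\U_2\simeq(\C,+)$ and hence equals $\U_2$ by the one-dimensional argument; then the image of $\pr_2(B)$ in $\Aff_1/\U_2\simeq\C^*$ is nontrivial, hence all of $\C^*$, giving $\pr_2(B)=\Aff_1$. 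If you prefer to stay within your framework, the clean repair is to use that the subgroup generated by a connected constructible subset containing the identity is closed and connected (Humphreys, Prop.~7.5): your local pieces then generate closed connected subgroups of $\Aff_1$ contained in $\pr_2(B)$, and the classification of those subgroups plus density of $\pr_2(B)$ in $\Aff_1$ concludes. Finally, note that both your argument and the paper's implicitly need that $\pr_2\circ\lambda$ is a morphism of varieties for every morphism $\lambda\colon A\to\bp$ with values in $B$, which is a little more than the continuity recorded in Lemma~\ref{lemma: Jonq is closed and pr_2 is continuous} and deserves a sentence of justification.
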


\begin{proof}
Set $H:= \pr_2(B)$ and consider the closed subgroup $\overline{H}$ of $\Aff_1$. Up to conjugation, $\overline{H}$ is equal to $\{ \id \}$, $\T_{0,1}$, $\U_2$, or $\Aff_1$. In the first three cases, the conclusion follows from Lemma~\ref{lemma: morphism from a closed connected subgroup of Bir(P2) to (C,+) or (C*,x)}. Hence, we may assume that $\overline{H} = \Aff_1$. This shows that $H$ is non-abelian, i.e.\ $D^1(H) \subseteq \U_2$ is nontrivial. Setting $\DDD (B) := \overline{ D(B) } \subseteq B$, this implies that $\pr_2 ( \DDD (B) ) \subseteq \U_2$ is nontrivial, and so, Lemma~\ref{lemma: morphism from a closed connected subgroup of Bir(P2) to (C,+) or (C*,x)} yields $\pr_2 ( \DDD (B) ) = \U_2$. In particular, we have $\U_2 \subseteq \pr_2 (B)$. Set $\eta \colon \Aff_1 \to \Aff_1 / \U_2 = \C^*$, $ay+b \mapsto a$. The group $\eta \circ \pr_2( B)  \subseteq \C^*$ being nontrivial, Lemma~\ref{lemma: morphism from a closed connected subgroup of Bir(P2) to (C,+) or (C*,x)} shows that $\eta \circ \pr_2( B)  = \C^*$. Since $H$ contains $\U_2$ and since $\eta (H) = \C^*$, we have $H= \Aff_1$.
\end{proof}

\begin{proposition} \label{proposition: no Borel subgroup of PGL_2( C(y) ) rtimes Aff_1 embeds into Aff_1 via pr_2}
There is no Borel subgroup $B$ of $\PGL_2(\C(y)) \rtimes \Aff_1$ such that $\restr{\pr_2}{B}\colon B\to \Aff_1$ is injective.
\end{proposition}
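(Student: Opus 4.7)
The plan is to argue by contradiction: assume $B$ is a Borel subgroup of $\PGL_2(\C(y))\rtimes\Aff_1$ with $\restr{\pr_2}{B}$ injective. By Lemma~\ref{lemma: pr_2(B)}, $\pr_2(B)$ is a closed connected subgroup of $\Aff_1$, so (up to conjugation) one of $\{\id\}$, $\T_{0,1}$, $\U_2$, or $\Aff_1$. I will treat each case separately.

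If $\pr_2(B)\in\{\{\id\},\T_{0,1},\U_2\}$, then injectivity of $\restr{\pr_2}{B}$ forces $B$ to be abstractly isomorphic to the trivial group, to $(\C^*,\times)$, or to $(\C,+)$ respectively. The trivial case is immediate, and Lemma~\ref{lemma: characterisation of the groups conjugate to T_{0,1}} and Lemma~\ref{lemma: characterisation of the groups conjugate to U2} imply in the two remaining cases that $B$ is conjugate, inside $\PGL_2(\C(y))\rtimes\Aff_1$, to $\T_{0,1}$ or $\U_2$. All three subgroups $\{\id\}$, $\T_{0,1}$, $\U_2$ are strictly contained in the closed connected solvable group $\Aff_1\subset\BBB_2$, contradicting maximality of $B$.

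The substantive case is $\pr_2(B)=\Aff_1$, so that $\restr{\pr_2}{B}\colon B\to\Aff_1$ is an abstract group isomorphism. The plan is first to identify $\DDD(B)$: since $\restr{\pr_2}{B}$ is a group isomorphism, $D(B)=B\cap\pr_2^{-1}(\U_2)$, which is $\bp$-closed as an intersection of closed subsets, so $\DDD(B)=D(B)$. By Lemma~\ref{lemma: DG and (mathcal-D)G}\eqref{G-closed-connected-implies-(mathcal-D)G-connected} it is closed and connected, and $\pr_2$ induces an isomorphism $\DDD(B)\simeq\U_2\simeq(\C,+)$. Lemma~\ref{lemma: characterisation of the groups conjugate to U2} then supplies $\phi\in\PGL_2(\C(y))$ with $\phi\,\DDD(B)\,\phi^{-1}=\U_2$, and since conjugation by such a $\phi$ preserves $\pr_2$, replacing $B$ by $\phi B\phi^{-1}$ I may assume $\DDD(B)=\U_2\subseteq B$.

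Next I would compute $N:=\Nor_{\PGL_2(\C(y))\rtimes\Aff_1}(\U_2)$. Using the product formula of $\PGL_2(\C(y))\rtimes\Aff_1$, the identity
\[(M,\gamma)(\id,U_c)(M,\gamma)^{-1}=\bigl(M(\gamma^{-1}(y)+c)\,M(\gamma^{-1}(y))^{-1},\,U_{ac}\bigr)\]
(with $a$ the linear part of $\gamma$) shows that the normalising condition forces $M$ to be constant in $y$, so $N=\PGL_2(\C)\rtimes\Aff_1=\PGL_2(\C)\times\Aff_1$ (the action being trivial on the constant subgroup). This is a $5$-dimensional linear algebraic subgroup of $\bp$ sitting inside some $\bp_d$, on which the $\bp$-topology and the Zariski topology coincide. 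Consequently $B\subseteq N$ is a Zariski-closed, connected, solvable subgroup of $N$. Classical Borel theory applied inside the linear algebraic group $N$ places $B$ inside a Borel subgroup of $N$, which up to conjugation in $N$ is $\Aff_1(\C)\times\Aff_1$, of dimension $4$. Since $\restr{\pr_2}{B}\colon B\to\Aff_1$ is a bijective morphism of algebraic groups in $N$, $B$ has dimension $2$, and therefore $B\subsetneq\Aff_1(\C)\times\Aff_1\subset\BBB_2$. As $\Aff_1(\C)\times\Aff_1$ is a closed connected solvable subgroup of $\PGL_2(\C(y))\rtimes\Aff_1$, this contradicts the Borel assumption on $B$. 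The main obstacle is this last case, where one must combine the identification of $\DDD(B)$, the normaliser computation of $\U_2$, and a careful transfer between the $\bp$- and Zariski topologies on the finite-dimensional algebraic group $N$ so that the classical Borel-theoretic argument can be applied there.
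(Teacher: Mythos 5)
Your proof is correct and follows essentially the same route as the paper: the case analysis via Lemma~\ref{lemma: pr_2(B)}, the use of Lemmas~\ref{lemma: characterisation of the groups conjugate to T_{0,1}} and~\ref{lemma: characterisation of the groups conjugate to U2}, and the reduction of the main case to $\Nor_{\PGL_2(\C(y))\rtimes\Aff_1}(\U_2)=\PGL_2\times\Aff_1$ followed by classical Borel theory there. The only (harmless) differences are that you identify the relevant preimage of $\U_2$ as $D(B)$ and make the normaliser computation explicit, and you conclude by a dimension count inside $\Aff_1(\C)\times\Aff_1$ rather than by noting, as the paper does, that $B$ would have to equal $\Aff_1\times\Aff_1\subsetneq\BBB_2$.
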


\begin{proof}
Suppose for contradiction that $B$ is a Borel subgroup such that \linebreak $\restr{\pr_2}{B}\colon B \to \Aff_1$ is injective. By Lemma~\ref{lemma: pr_2(B)} and up to conjugation, we may assume that $\pr_2(B)$ is equal to $\{ \id \}, \T_{0,1},\,\U_2$ or $\Aff_1$. The case $\{ \id \}$ is of course excluded. Let's begin by showing that $B$ is bounded. For each $d \ge 1$, set $B_d:= \{ f \in B, \; \deg f \le d \}$. By Lemma~\ref{lemma: Jonq is closed and pr_2 is continuous}\eqref{The image of a bounded closed subset of Jonq by pr_2 is a constructible subset of PGL_2}, $\pr_2 ( B_d)$ is a constructible subset of $\PGL_2$. Now, since the variety $\pr_2 (B)$ is the increasing union of the constructible subsets $\pr_2 ( B_d)$, $d \ge1$, there exists an integer $d$ such that $\pr_2 (B) = \pr_2 ( B_d)$, i.e.\ $B= B_d$ (see e.g. \cite[Lemma 1.3.1, page 15]{FurterKraft2018}). Hence, we have shown that $B$ is bounded, and it follows that $B$ is an algebraic subgroup of $\bp$ (see Remark~\ref{remark: an algebraic subgroup of Bir(Pn) is a bounded closed subgroup}).

If $\pr_2(B)=\U_2$, then by Lemma~\ref{lemma: characterisation of the groups conjugate to U2} and up to conjugation we may assume that  $B =\U_2$. But then $B$ would be strictly contained in the closed connected solvable subgroup $\BBB_2$ of $\PGL_2 ( \C(y) ) \rtimes \Aff_1$. A contradiction. If $\pr_2(B)=\Aff_1$, then since $\Aff_1$ normalises $\U_2$, $B$ normalises the preimage of $\U_2$ by the isomorphism $B \to \Aff_1$ induced by $\pr_2$. Up to conjugation and by Lemma~\ref{lemma: characterisation of the groups conjugate to U2} again we may (and will) assume that this latter group is $\U_2$. Hence $B$ is contained in $\Norm_{\PGL_2(\C(y)) \rtimes \Aff_1}(\U_2)=\PGL_2  \times \Aff_1$. Since $B$ is a Borel subgroup of $\PGL_2  \times \Aff_1$, this implies that, up to conjugation, we have $B  = \Aff_1 \times \Aff_1 = \{ (ax+b, cy+d), \; a,b,c,d \in \C,$    $ac \neq 0 \}$. This is again a contradiction since this group is strictly contained in $\BBB_2$. Finally, if $\pr_2(B)=\T_{0,1}$, we have $B= \T_{0,1}$ up to conjugation, by Lemma~\ref{lemma: characterisation of the groups conjugate to T_{0,1}}. This is again a contradiction, because this group is clearly not a Borel subgroup of $\PGL_2 (\C(y)) \rtimes \Aff_1$ (being strictly contained in $\BBB_2$).
\end{proof}

\begin{lemma} \label{lemma: equivalence dealing with h=(x+1,y)}
Set $h:= (x+1,y) \in \bp$.

For any $g \in \Jonq$, the following assertions are equivalent:
\begin{enumerate}
\item \label{ghg(-1) is in Aff1(C(y))}
We have $ghg^{-1} \in \Aff_1 ( \C(y) )$.
\item \label{ghg(-1) is in B2}
We have $ghg^{-1} \in \BBB_2$.
\item \label{g is in Aff1 semidirect PGL2}
We have $g \in \Aff_1 (\C (y) ) \rtimes \PGL_2$.
\end{enumerate}
\end{lemma}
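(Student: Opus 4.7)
The plan is to prove the equivalences via the chain $(1)\Rightarrow(2)\Rightarrow(1)$ together with $(1)\Leftrightarrow(3)$. The implication $(1)\Rightarrow(2)$ is immediate since $\Aff_1(\C(y))\subseteq\BBB_2$. For $(2)\Rightarrow(1)$ I will use the fact that $\PGL_2(\C(y))$ is the kernel of the second projection $\pr_2\colon\Jonq\to\PGL_2$ and is therefore a normal subgroup of $\Jonq$; this forces $ghg^{-1}$ to lie in $\PGL_2(\C(y))$ because $h$ does, and combined with the obvious identity $\BBB_2\cap\PGL_2(\C(y))=\Aff_1(\C(y))$ it yields $(1)$.

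The substantive part is $(1)\Leftrightarrow(3)$. My plan is to write any $g\in\Jonq$ in the form $g(x,y)=(\phi(y)\cdot x,\,v(y))$, where $v\in\PGL_2$ and $\phi\colon y\mapsto\phi(y)$ is a matrix-valued function in $\PGL_2(\C(y))$ acting on $x$ by Möbius transformation; this realises the decomposition inside $\PGL_2(\C(y))\rtimes\PGL_2$, and the condition $g\in\Aff_1(\C(y))\rtimes\PGL_2$ translates into $\phi(y)$ being upper triangular for all $y$. A direct computation will then give $g^{-1}(x,y)=(\phi(v^{-1}(y))^{-1}\cdot x,\,v^{-1}(y))$ and, with $T=\smat{1}{1}{0}{1}$ representing $z\mapsto z+1$,
\[
ghg^{-1}(x,y)=\bigl(\psi(y)\,T\,\psi(y)^{-1}\cdot x,\;y\bigr),\qquad \psi(y):=\phi(v^{-1}(y)),
\]
so that $ghg^{-1}$ is precisely the element $\psi T\psi^{-1}\in\PGL_2(\C(y))$.

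To conclude, I will observe that $T$ fixes $\infty\in\p^1$, so the conjugate $\psi T\psi^{-1}$ fixes $\psi(\infty)\in\p^1(\C(y))$, and that $\Aff_1(\C(y))$ is precisely the stabiliser of $\infty$ inside $\PGL_2(\C(y))$. Hence $\psi T\psi^{-1}\in\Aff_1(\C(y))$ if and only if $\psi(\infty)=\infty$, i.e.\ the matrix $\psi(y)$ is upper triangular for all $y$, which is in turn equivalent to $\phi(y)$ being upper triangular for all $y$ since $v^{-1}$ is a bijection of $\p^1$. This establishes the equivalence with $(3)$. The only real obstacle is the careful bookkeeping with the semidirect-product conventions while computing $ghg^{-1}$; once the conjugation formula is written down, the rest reduces to a one-line observation about the stabiliser of $\infty$ in $\PGL_2(\C(y))$.
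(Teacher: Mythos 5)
Your proof is correct and follows essentially the same route as the paper: both reduce the problem to the conjugate $ghg^{-1}=\psi T\psi^{-1}$, where $\psi$ is the $\PGL_2(\C(y))$-component of $g$ (the $\PGL_2$-component drops out because $T$ has constant entries), and then decide when this conjugate lies in $\Aff_1(\C(y))$. The only real difference is the finishing move: the paper computes the $(2,1)$-entry of $\psi T\psi^{-1}$ explicitly as $-\gamma^2$ and concludes $\gamma=0$, whereas you identify $\Aff_1(\C(y))$ with the stabiliser of $\infty$ in $\PGL_2(\C(y))$ and argue via the fixed point of $T$. In the direction $(1)\Rightarrow(3)$ your argument implicitly uses that $\infty$ is the \emph{unique} fixed point of the translation $T$ on $\p^1(\C(y))$ — otherwise knowing that $\psi T\psi^{-1}$ fixes both $\psi(\infty)$ and $\infty$ would not force $\psi(\infty)=\infty$; this is immediate since $T$ is a nontrivial unipotent element, but it should be stated. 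Your treatment of $(2)\Rightarrow(1)$, via the normality of $\PGL_2(\C(y))=\Ker(\pr_2)$ in $\Jonq$ together with $\BBB_2\cap\PGL_2(\C(y))=\Aff_1(\C(y))$, is a clean alternative to the paper's chain $(2)\Rightarrow(3)\Rightarrow(1)$, which extracts the same information from the matrix computation.
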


\begin{proof}

\eqref{ghg(-1) is in Aff1(C(y))} $\Longrightarrow$ \eqref{ghg(-1) is in B2} This is obvious.

\eqref{ghg(-1) is in B2} $\Longrightarrow$ \eqref{g is in Aff1 semidirect PGL2} Write $g \in \Jonq = \PGL_2 (\C (y) ) \rtimes \PGL_2 $ as $g = uv $ where $u = \smat{\alpha }{\beta}{\gamma}{\delta} $  $\in \PGL_2 (\C (y) )$ and $v \in \PGL_2 $. We have
\[ ghg^{-1} = uhu^{-1} =\smat{\alpha }{\beta}{\gamma}{\delta} \smat{1 }{1}{0}{1} \smat{ \hspace{2mm} \delta }{-\beta}{-\gamma}{  \hspace{2mm} \alpha} =  \smat{\ast }{\ast}{-\gamma^2}{\ast}. \]
Hence, the condition \eqref{ghg(-1) is in B2} gives $\gamma=0$, i.e.\ $u \in \Aff_1 (\C (y) )$, and finally $ g \in \Aff_1 (\C (y) ) \rtimes \PGL_2 $.

\eqref{g is in Aff1 semidirect PGL2} $\Longrightarrow$ \eqref{ghg(-1) is in Aff1(C(y))} Write $g = uv$,  where $u \in \Aff_1 (\C(y) )$, $v \in  \PGL_2 $, and note that $ghg^{-1}  =u  h u^{-1}$ $\in \Aff_1 (\C(y) )$.
\end{proof}

\begin{theorem} \label{theorem: any Borel subgroup of PGL_2 (C (y) ) rtimes Aff_1 contains a Borel subgroup of PGL_2 (C (y) )}
Any Borel subgroup of $\PGL_2 (\C (y) ) \rtimes \Aff_1$ contains at least one Borel subgroup of $\PGL_2 (\C (y) )$.
\end{theorem}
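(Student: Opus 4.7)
The plan is to analyse a Borel subgroup $B'$ of $G := \PGL_2(\C(y)) \rtimes \Aff_1$ through its intersection $H := B' \cap \PGL_2(\C(y))$ with the normal subgroup $\PGL_2(\C(y))$. The trivial case is $\pr_2(B') = \{\id\}$: then $B' \subseteq \PGL_2(\C(y))$, and any Borel of $\PGL_2(\C(y))$ properly containing $B'$ would also be a closed connected solvable subgroup of $G$, contradicting maximality; hence $B'$ is itself a Borel of $\PGL_2(\C(y))$. Otherwise, Lemma~\ref{lemma: pr_2(B)} lets me conjugate $B'$ by an element of $\Aff_1 \subseteq G$ (which preserves $\PGL_2(\C(y))$) so that $\pr_2(B') \in \{\T_{0,1}, \U_2, \Aff_1\}$, and Proposition~\ref{proposition: no Borel subgroup of PGL_2( C(y) ) rtimes Aff_1 embeds into Aff_1 via pr_2} ensures that $H$ is non-trivial.

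The heart of the proof is producing a non-trivial \emph{closed connected} subgroup $H_0 \subseteq H$. Since $\pr_2(B') \subseteq \Aff_1$ has derived length at most $2$, I have $\DDD^2(B') \subseteq H$, and $\DDD^2(B')$ is closed connected solvable by Lemma~\ref{lemma: DG and (mathcal-D)G}. When $\length(B') \ge 3$, this subgroup is non-trivial and I set $H_0 := \DDD^2(B')$. The delicate sub-case is $\length(B') \le 2$; there I would first exclude that $B'$ is abelian---an abelian Borel with non-trivial $\pr_2$-image would have to lie inside the centraliser in $G$ of a lift of $\pr_2(B')$, which a direct computation shows is a proper subgroup of $\BBB_2$, contradicting maximality---so that $\DDD(B')$ is non-trivial, and then appeal to Proposition~\ref{proposition: embeddings of (C,+) in Jonq} together with Lemma~\ref{lemma: characterisation of the groups conjugate to T_{0,1}} to pin down the $1$-parameter subgroups of $B'$ and exhibit a non-trivial closed connected subgroup of $\DDD(B') \cap \PGL_2(\C(y))$.

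With $H_0$ in hand, Proposition~\ref{proposition: pre-Borel subgroups of closed subgroups of Bir(P2) are contained in Borel subgroups} supplies a Borel $\tilde B$ of $\PGL_2(\C(y))$ containing $H_0$. Conjugation by $B'$ preserves the normal subgroup $\PGL_2(\C(y))$ and the characteristic subgroup $H_0$, so it permutes the set of Borels of $\PGL_2(\C(y))$ containing $H_0$; by Lemma~\ref{lemma: a non-trivial Borel subgroup of PGL(2,K) is uniquely determined by any of its non-trivial elements}, Theorem~\ref{theorem : Bir(P2)-Borel and C(y)-Borel subgroups of PGL(2,C(y))}, and Lemma~\ref{lemma: the normaliser of Aff_1(K) in PGL_2(K)}, this set is finite, so the connectedness of $B'$ forces the action to be trivial and $B'$ normalises $\tilde B$. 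The product $\tilde B \cdot B'$ is then a closed connected solvable subgroup of $G$ (with $\tilde B$ normal in it), and maximality of $B'$ forces $\tilde B \cdot B' = B'$, i.e.\ $\tilde B \subseteq B'$. The principal obstacle is the derived-length-$\le 2$ sub-case in constructing $H_0$; a secondary technicality is verifying the closedness of $\tilde B \cdot B'$ in the $\bp$-topology.
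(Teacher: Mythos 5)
Your strategy for the case $\length(B')\ge 3$ (take $H_0:=\DDD^2(B')\subseteq B'\cap\PGL_2(\C(y))$, pass to a Borel $\tilde B$ of $\PGL_2(\C(y))$ containing it, and force $\tilde B\subseteq B'$ by normalisation and maximality) is sound and genuinely different from the paper's; the closedness of $\tilde B\cdot B'$ is indeed only a technicality, since one may replace it by its closure, which is still connected and solvable by Lemma~\ref{lemma: D^kG and (mathcal-D)^kG}. The real problem is the case $\length(B')\le 2$, which you flag but do not resolve, and which is not a fringe case: it contains essentially all the Borel subgroups one is trying to detect (the groups $\T_f$ and $\T_f\rtimes\C^*$ of Proposition~\ref{proposition: precise description of the unique Borel subgroup of PGL_2(C(y)) rtimes PGL_2 containing T_f}). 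Two concrete defects in your sketch. First, the exclusion of the abelian case rests on a false computation: the centraliser in $G=\PGL_2(\C(y))\rtimes\Aff_1$ of $(x,y+1)$, or of any lift of $\U_2$ or of $\T_{0,1}$, contains the copy of $\PGL_2(\C)$ given by constant matrices, hence is neither solvable nor contained in $\BBB_2$; moreover an abelian $B'$ only gives $B'\subseteq\Cent_G(k)$ for some non-trivial $k\in B'\cap\PGL_2(\C(y))$, and such a $k$ need not have finite order, so no ready-made description of $\Cent_G(k)$ is available. Second, in the non-abelian length-$2$ case you must rule out that $\pr_2$ is injective on $\DDD(B')$; Proposition~\ref{proposition: embeddings of (C,+) in Jonq} and Lemma~\ref{lemma: characterisation of the groups conjugate to T_{0,1}} apply only to closed subgroups isomorphic to $(\C,+)$ or $(\C^*,\times)$ \emph{as algebraic groups}, i.e.\ via morphisms in the sense of Definition~\ref{definition: morphism to Bir(W)}, and you have not established that $\DDD(B')$ is of this form. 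This is Proposition~\ref{proposition: no Borel subgroup of PGL_2( C(y) ) rtimes Aff_1 embeds into Aff_1 via pr_2} resurfacing one step down the derived series, and it does not come for free.

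For comparison, the paper closes exactly this gap by different means. Step 1 shows that $K:=B'\cap\PGL_2(\C(y))$ is closed for the $\C(y)$-topology, by observing that $B'\cdot\overline{K}$ is a connected solvable group containing $B'$, whence $\overline{K}\subseteq B'$. Step 2 shows $K^\circ\neq\{\id\}$: otherwise $K$ is finite and non-trivial, the connected group $B'$ centralises a non-trivial element $k$ of finite order of $\PGL_2(\C(y))$, and Beauville's classification of such elements together with the computation of $\Cent_G(k)^\circ$ (via Propositions~\ref{proposition: normaliser of T_f in Jonq is the centraliser of iota_f} and~\ref{proposition: description of N_{C* times Aff_1}(T_f)^{circ}}) shows that $\Cent_G(k)^\circ$ is connected and solvable, hence equal to $B'$, yet meets $\PGL_2(\C(y))$ in an infinite set --- a contradiction. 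Some substitute for these two steps is required in your low-derived-length case; once $K^\circ\neq\{\id\}$ is known, you may take $H_0:=K^\circ$ and your endgame (or the paper's Step 3) finishes the proof.
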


\begin{proof}
Let $B$ be a Borel subgroup of $\PGL_2 (\C (y) ) \rtimes \Aff_1$.

\uline{Step 1.}  Let's show that $K:= B \cap \PGL_2 ( \C(y) )$ is $\C (y)$-closed in $\PGL_2 (\C (y) )$.\\
Let $\overline{K}$ be the $\C(y)$-closure of $K$ in $\PGL_2 (\C (y) )$. Note that $\overline{K}$ is normalised by $B$ (since $K$ is normalised by $B$). Hence $B':= B. \overline{K}$ is a subgroup of $\PGL_2 ( \C(y) ) \rtimes \Aff_1$. Moreover, since $B$ and $\overline{K}$ are solvable, $B'$ is also solvable (this follows from the fact that  $\overline{K}$ and $B' / \overline{K}$ are solvable; the solvability of $B' / \overline{K}$ follows from the isomorphism $B' / \overline{K} \simeq B/ (B \cap  \overline{K})$).

Let's now check that $B'$ is connected with respect to the $\bp$-topology. Let $( \overline{K}) ^{\circ}$ be the neutral connected component of $\overline{K}$ with respect to the $\C (y) $-topology on $\PGL_2 ( \C (y) )$. By Lemma~\ref{lemma: closed connected subgroup of PGL(2,C(y)) for the C(y)-topology are Bir(P2)-connected}, it is $\bp$-connected. Since there exist elements $k_1, \ldots, k_r$ of $K$ such that $\overline{K} = \bigcup_i k_i  ( \overline{K}) ^{\circ}$, it follows that $B' = B (\overline{K})^{\circ}$, and this is sufficient for showing that $B'$ is $\bp$-connected.

The Borel subgroup $B$ being contained in $B'$ which is solvable and connected, we necessarily have $B=B'$. This shows that $\overline{K}$ is contained in $B$. Hence we have $\overline{K} = K$ and we have actually proven that $K$ is $\C(y)$-closed.

Set $H:= K^{\circ}$ where the neutral connected component is taken with respect to the $\C (y) $-topology on $\PGL_2 ( \C (y) )$.

\uline{Step 2.} Let's prove that $H \neq \{ \id \}$.

Assume by contradiction that $H = \{ \id \}$. Then $K$ is a finite subgroup of $\PGL_2 ( \C(y) )$ and we have a short exact sequence
\[ 1 \to K \to B \to \pr_2 (B) \to 1 .\]

Note that $K$ is nontrivial thanks to Proposition~\ref{proposition: no Borel subgroup of PGL_2( C(y) ) rtimes Aff_1 embeds into Aff_1 via pr_2}. Since the connected group $B$ normalises the finite group $K$, it necessarily centralises it. Choosing a nontrivial element $k$ of $K$ we have $B \subseteq \Cent_{\PGL_2 ( \C (y) ) \rtimes \Aff_1 }(k)^{\circ}$. By \cite[Theorem 4.2.]{Beauville2010} and up to conjugation, one of the two following assertions holds:
\begin{enumerate}
\item \label{k= (ax,y)}
$k = \smat{a}{0}{0}{1} $ where $a \in \C \setminus \{ 0,1 \}$;
\item \label{k= iota_f}
$k = \smat{0}{f}{1}{0} $ where $f$ is a nonsquare element of $\C (y)$. \rule{0mm}{5mm} 
\end{enumerate}
In case \eqref{k= (ax,y)}, an easy computation would show that $ \Cent_{\PGL_2 ( \C (y) ) \rtimes \Aff_1}(\k)^{\circ} = \C (y)^* \rtimes \Aff_1$. In case \eqref{k= iota_f}, Proposition~\ref{proposition: normaliser of T_f in Jonq is the centraliser of iota_f} yields $\Cent_{\PGL_2 ( \C (y) ) \rtimes \Aff_1}(k)^{\circ} = \Nor_{\PGL_2 ( \C (y) ) \rtimes \Aff_1}(\T_f)^{\circ}$ and Proposition~\ref{proposition: description of N_{C* times Aff_1}(T_f)^{circ}} yields $\Nor_{\PGL_2 ( \C(y) ) \rtimes \Aff_1}(\T_f)^\circ = \T_f \rtimes  {\mathcal N}_{\C(y) ^*\rtimes \Aff_1}(\T_f)^{\circ}$. Hence, we have
\[ \Cent_{\PGL_2 ( \C (y) ) \rtimes \Aff_1}(k)^{\circ} = \T_f  \rtimes {\mathcal N}_{\C(y) ^*\rtimes \Aff_1}(\T_f)^{\circ} . \]
Recall that ${\mathcal N}_{\C(y) ^*\rtimes \Aff_1}(\T_f)^{\circ}$ is commutative (being either trivial or isomorphic to $\C^*$). Hence $\Cent_{\PGL_2 ( \C (y) ) \rtimes \Aff_1}(k)^{\circ}$ is solvable in both cases. Since it contains $B$ and is connected (by definition) it is equal to $B$. However, $\Cent_{\PGL_2 ( \C (y) ) \rtimes \Aff_1}(k)^{\circ} \cap \PGL_2 ( \C (y) )$ is infinite in both cases. A contradiction.

\uline{Step 3.} 
Since $H$ is a nontrivial closed connected solvable subgroup of $\PGL_2 (\C(y))$, we conclude by Lemma~\ref{lemma: closed connected subgroups of PGL_2(K)} that up to conjugation it is either $\T_f$ for some nonsquare element $f \in \C(y)$, or one of the following three groups: $\Aff_1 (\C(y))$, $\{ \smat{a }{0}{0}{1}, \; a \in \C(y)^*  \}$, $\{ \smat{1}{a}{0}{1}, \; a \in \C(y)  \}$.
Since $B$ normalises $K$, it also normalises $H = K^{\circ}$, i.e.\ $B$ is contained in $\Norm_{\PGL_2(\C(y))\rtimes\Aff_1}(H)$. Since $B$ is connected we even have
\[ B \subseteq \Norm_{\PGL_2(\C(y))\rtimes\Aff_1}(H)^\circ.\]
If $H = \{ \smat{1}{a}{0}{1}, \; a \in \C(y)  \}$, we get $B \subseteq \BBB_2$ by Lemma~\ref{lemma: equivalence dealing with h=(x+1,y)} -- a contradiction, because this would yield $B = \BBB_2$ and then $H = \Aff_1(\C(y))$. If $H =\{ \smat{a }{0}{0}{1}, \; a \in \C(y)^*  \}$, we start by noting that $\{1\}\rtimes\Aff_1$ normalises $H$, and it is well known that only diagonal or anti-diagonal matrices in $\PGL_2(\C(y))$ normalise $\{ \smat{a }{0}{0}{1}, \; a \in \C(y)^*  \}$. We conclude that $B$ is contained in $\{ \smat{a }{0}{0}{1}, \; a \in \C(y)^*  \}\rtimes \Aff_1$ -- a contradiction because this would yield $B= \C (y)^* \rtimes \Aff_1$,  and $B$ is strictly contained in the closed connected solvable group $\BBB_2$. Hence, we have either $H = \T_f$ or $\Aff_1 ( \C (y) )$ and this achieves the proof.
\end{proof}

In a linear algebraic group a closed connected solvable subgroup
equal to its own normaliser is not necessarily a Borel subgroup, as shown in the following example: The diagonal group $T$ in the group $B$ of upper triangular matrices of size $2$ is equal to its own normaliser, but it is not a Borel subgroup of $B$. In contrast, the following result holds:

\begin{lemma} \label{lemma: a solvable subgroup of J containing a Borel subgroup of PGL(2,C(y)) normalizes it}
If $B'$ is a solvable subgroup of $\Jonq$ containing a Borel subgroup $B$ of $\PGL_2 (\C (y))$, then $B'$ normalises $B$.
\end{lemma}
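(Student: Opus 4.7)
The plan is to fix an arbitrary $g \in B'$ and establish $gBg^{-1} = B$. First I observe that since $\Jonq = \PGL_2(\C(y)) \rtimes \PGL_2$ realises $\PGL_2(\C(y))$ as a normal subgroup of $\Jonq$, the conjugate $gBg^{-1}$ automatically lies in $\PGL_2(\C(y))$. Moreover, as already used in the proof of Proposition~\ref{proposition: normaliser of T_f in Jonq is the centraliser of iota_f}, the map $u \mapsto gug^{-1}$ induces a $\bp$-bicontinuous bijection of $\PGL_2(\C(y))$ onto itself, so $gBg^{-1}$ inherits from $B$ the properties of being $\bp$-closed and $\bp$-connected; it is also solvable as an abstract conjugate of $B$.

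Next I would form $H := \langle B, gBg^{-1} \rangle \subseteq B' \cap \PGL_2(\C(y))$, which is solvable since $B'$ is; say $D^k(H) = \{1\}$. Its $\bp$-closure $\overline{H}$ is then still solvable: iterating Lemma~\ref{lemma: DG and (mathcal-D)G}\eqref{(mathcal-D)G=(mathcal-D)(closure of G)} and combining with Lemma~\ref{lemma: D^kG and (mathcal-D)^kG} yields
\[ D^k(\overline{H}) \subseteq \DDD^k(\overline{H}) = \DDD^k(H) = \overline{D^k(H)} = \{1\}. \]
The neutral connected component $\overline{H}^\circ$ is therefore a closed connected solvable subgroup of the closed subgroup $\PGL_2(\C(y)) \subseteq \bp$ (closed by Lemma~\ref{lemma: PGL(2,C(y)) is Bir(P2)-closed}), and since $B$ and $gBg^{-1}$ are $\bp$-connected subsets of $\overline{H}$ containing the identity, both sit inside $\overline{H}^\circ$.

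Applying Proposition~\ref{proposition: pre-Borel subgroups of closed subgroups of Bir(P2) are contained in Borel subgroups} to the closed subgroup $\PGL_2(\C(y)) \subseteq \bp$ now supplies a Borel subgroup $B''$ of $\PGL_2(\C(y))$ with $\overline{H}^\circ \subseteq B''$. Since $B \subseteq \overline{H}^\circ \subseteq B''$ and $B$ itself is a Borel subgroup of $\PGL_2(\C(y))$, maximality of $B$ forces $B = \overline{H}^\circ = B''$, and hence $gBg^{-1} \subseteq \overline{H}^\circ = B$. Applying the same argument to $g^{-1} \in B'$ in place of $g$ gives $g^{-1}Bg \subseteq B$, i.e.\ $B \subseteq gBg^{-1}$, so altogether $gBg^{-1} = B$. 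The main subtlety will be the bridge from the merely abstractly solvable group $H$ to an object to which Proposition~\ref{proposition: pre-Borel subgroups of closed subgroups of Bir(P2) are contained in Borel subgroups} can be applied; this is exactly what the closure-plus-connected-component construction delivers, with solvability preserved via Lemmas~\ref{lemma: DG and (mathcal-D)G} and~\ref{lemma: D^kG and (mathcal-D)^kG}.
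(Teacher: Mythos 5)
Your argument is correct and follows essentially the same route as the paper: the paper forms the subgroup generated by all conjugates $\varphi B\varphi^{-1}$, $\varphi\in B'$, notes it is a connected solvable subgroup of $\PGL_2(\C(y))$ containing $B$, and concludes $G=B$ by maximality of the Borel subgroup $B$. You simply carry this out one conjugate at a time and make explicit the closure-and-identity-component bookkeeping (via Lemmas~\ref{lemma: DG and (mathcal-D)G}, \ref{lemma: D^kG and (mathcal-D)^kG} and Proposition~\ref{proposition: pre-Borel subgroups of closed subgroups of Bir(P2) are contained in Borel subgroups}) that the paper leaves implicit in the step ``we have $G=B$''.
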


\begin{proof}
Consider the subgroup $G$ of $\Jonq$ generated by the subgroups $\varphi B \varphi^{-1}$, $\varphi \in B'$. As each $\varphi B \varphi^{-1}$ is contained in both $\PGL_2 (\C (y) )$ and $B'$, the same holds for $G$. Hence, $G$ is a solvable subgroup of $\PGL_2 (\C (y) )$ containing $B$. Since $G$ is obviously connected (being generated by connected subgroups), we have $G=B$.
\end{proof}

The following interesting result provides a description of all Borel subgroups of \linebreak $\PGL_2 ( \C(y) ) \rtimes \Aff_1$. This is the reason for including it even if it will not be used in the proof of Theorem~\ref{theorem: main theorem}.

\begin{theorem} \label{theorem: the natural bijection between the Borel subgroups of PGL_2 ( C(y) ) and the Borel subgroups of PGL_2 ( C(y) ) rtimes Aff_1}
Each Borel subgroup of $\PGL_2 (\C(y) )$ is contained in a unique Borel subgroup of $\PGL_2 ( \C(y) ) \rtimes \Aff_1$ and conversely each Borel subgroup of $\PGL_2( \C(y) ) \rtimes \Aff_1$ contains a unique Borel subgroup of $\PGL_2 (\C(y) )$. The corresponding bijection from the set of Borel subgroups of $\PGL_2 ( \C (y) )$ to the set of Borel subgroups of $\PGL_2( \C(y) ) \rtimes \Aff_1$ is the map 
\[ B \mapsto  \Nor_{\PGL_2 ( \C(y) ) \rtimes \Aff_1}(B)^{\circ} \]
and its inverse is the map
\[ B' \mapsto   [ \PGL_2 (\C (y) )  \cap B'  ]^{\circ}. \]
\end{theorem}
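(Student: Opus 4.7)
The plan is to define the two candidate maps
\[ \Phi \colon B \longmapsto \Nor_{\PGL_2(\C(y)) \rtimes \Aff_1}(B)^{\circ}, \qquad \Psi \colon B' \longmapsto [B' \cap \PGL_2(\C(y))]^{\circ}, \]
to prove that each lands in the correct set (Borel subgroups of the other group), and to show they are mutual inverses. Connectedness of subgroups of $\PGL_2(\C(y))$ can be taken in either the $\bp$-topology or the $\C(y)$-topology since by Theorem~\ref{theorem : Bir(P2)-Borel and C(y)-Borel subgroups of PGL(2,C(y))} the two topologies produce the same Borel subgroups. Once $\Phi$ lands in the right set, the uniqueness statements of the theorem will follow directly from the Borel-normaliser type Lemma~\ref{lemma: a solvable subgroup of J containing a Borel subgroup of PGL(2,C(y)) normalizes it}.

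I would first show that $\Phi(B)$ is a Borel subgroup of $\PGL_2(\C(y)) \rtimes \Aff_1$ containing $B$. By Theorem~\ref{theorem : Bir(P2)-Borel and C(y)-Borel subgroups of PGL(2,C(y))}, up to conjugation either $B = \Aff_1(\C(y))$ or $B = \T_f$ for some nonsquare $f \in \C(y)$. In the first case, Lemma~\ref{lemma: the normaliser of Aff_1(K) in PGL_2(K)} combined with the semidirect product structure implies that $\Nor_{\PGL_2(\C(y)) \rtimes \Aff_1}(\Aff_1(\C(y))) = \BBB_2$, which is already closed, connected and solvable. In the second case, Proposition~\ref{proposition: description of N_{C* times Aff_1}(T_f)^{circ}} identifies $\Phi(\T_f) = \T_f \rtimes \mathcal{N}_{\C(y)^* \rtimes \Aff_1}(\T_f)^{\circ}$ where the second factor is either trivial or isomorphic to $\C^*$, and the semidirect product is again closed, connected and solvable. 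In each case $B \subseteq \Phi(B)$ and $\Phi(B)$ is closed connected solvable. Maximality then follows from Lemma~\ref{lemma: a solvable subgroup of J containing a Borel subgroup of PGL(2,C(y)) normalizes it}: any closed connected solvable $H \supseteq \Phi(B)$ contains $B$, hence normalises $B$, hence lies in $\Nor_{\PGL_2(\C(y)) \rtimes \Aff_1}(B)^{\circ} = \Phi(B)$ by connectedness.

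Next I would show that $\Psi(B')$ is a Borel subgroup of $\PGL_2(\C(y))$ contained in $B'$. The set $K := B' \cap \PGL_2(\C(y))$ is closed in $\bp$ (as the intersection of two closed sets, using Lemma~\ref{lemma: PGL(2,C(y)) is Bir(P2)-closed}) and solvable as a subgroup of $B'$. Hence $\Psi(B') = K^{\circ}$ is a closed connected solvable subgroup of $\PGL_2(\C(y))$. On the other hand, Theorem~\ref{theorem: any Borel subgroup of PGL_2 (C (y) ) rtimes Aff_1 contains a Borel subgroup of PGL_2 (C (y) )} provides at least one Borel subgroup $B_0$ of $\PGL_2(\C(y))$ with $B_0 \subseteq B'$, and since $B_0$ is closed and connected we get $B_0 \subseteq \Psi(B')$. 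Maximality of $B_0$ then forces $\Psi(B') = B_0$, proving at the same time the uniqueness of the Borel subgroup of $\PGL_2(\C(y))$ contained in $B'$.

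Finally I would check that $\Phi$ and $\Psi$ are mutually inverse. For $\Psi(\Phi(B)) = B$, apply the previous paragraph to $B' := \Phi(B)$: the Borel subgroup $B$ is contained in $B'$, and the uniqueness shown there gives $\Psi(B') = B$. For $\Phi(\Psi(B')) = B'$, set $B := \Psi(B')$; then $B'$ is a closed connected solvable subgroup of $\Jonq$ containing the Borel subgroup $B$ of $\PGL_2(\C(y))$, so Lemma~\ref{lemma: a solvable subgroup of J containing a Borel subgroup of PGL(2,C(y)) normalizes it} yields $B' \subseteq \Nor_{\PGL_2(\C(y)) \rtimes \Aff_1}(B)$, and connectedness of $B'$ upgrades this to $B' \subseteq \Phi(B)$. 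Since $\Phi(B)$ is solvable and $B'$ is a Borel subgroup of $\PGL_2(\C(y)) \rtimes \Aff_1$, maximality forces $B' = \Phi(B) = \Phi(\Psi(B'))$. The main technical obstacle is verifying that $\Phi(B)$ is closed, connected and solvable in all cases; this is handled by Proposition~\ref{proposition: description of N_{C* times Aff_1}(T_f)^{circ}} for $\T_f$ and by a direct matrix computation relying on Lemma~\ref{lemma: the normaliser of Aff_1(K) in PGL_2(K)} for $\Aff_1(\C(y))$.
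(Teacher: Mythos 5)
Your proposal is correct and follows essentially the same route as the paper: both directions rest on Lemma~\ref{lemma: a solvable subgroup of J containing a Borel subgroup of PGL(2,C(y)) normalizes it} for maximality/uniqueness and on Theorem~\ref{theorem: any Borel subgroup of PGL_2 (C (y) ) rtimes Aff_1 contains a Borel subgroup of PGL_2 (C (y) )} for the inverse map. The only (harmless) difference is how solvability of $\Nor_{\PGL_2(\C(y))\rtimes\Aff_1}(B)^{\circ}$ is checked: you identify it explicitly case by case via Proposition~\ref{proposition: description of N_{C* times Aff_1}(T_f)^{circ}} and the computation of $\Nor(\Aff_1(\C(y)))$, whereas the paper notes that $D^2$ of the full normaliser lands in $\Nor_{\PGL_2(\C(y))}(B)$, which contains $B$ with index at most $2$.
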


\begin{proof}
Let $B$ be a Borel subgroup of $\PGL_2 (\C(y) )$. If $B'$ is a Borel subgroup of \linebreak  $\PGL_2( \C(y) ) \rtimes \Aff_1$ containing $B$, then we have  $B' \subseteq \Nor_{\PGL_2 ( \C(y) ) \rtimes \Aff_1}(B)$ by \linebreak Lemma~\ref{lemma: a solvable subgroup of J containing a Borel subgroup of PGL(2,C(y)) normalizes it} and even $B' \subseteq \Nor_{\PGL_2 ( \C(y) ) \rtimes \Aff_1}(B)^{\circ}$ because $B'$ is connected. For showing that  $\Nor_{\PGL_2 ( \C(y) ) \rtimes \Aff_1}(B)^{\circ}$ is the unique Borel subgroup of $\PGL_2( \C(y) ) \rtimes \Aff_1$ containing $B$, it remains to show that this group is solvable. Actually, we will even check that $\Nor_{\PGL_2 ( \C(y) ) \rtimes \Aff_1}(B)$ is solvable. Since $D^2 ( \Nor_{\PGL_2 ( \C(y) ) \rtimes \Aff_1}(B) )$ is contained in  $\Nor_{\PGL_2 ( \C(y) )}(B)$ it is enough to check that this latter group is solvable. But up to conjugation in $\PGL_2 (\C(y) )$ we have either $B= \T_f$ for some nonsquare element $f \in \C (y)$ or $B= \Aff_1 ( \C (y) )$ (Theorem~\ref{theorem : the Borel subgroups of PGL(2,C(y))}). If $B= \T_f$, we have $[ \Nor_{\PGL_2 ( \C(y) )}(B) : B] = 2$ by Lemma~\ref{lemma: the normaliser of T_f in PGL_2(K)} and if $B= \Aff_1 ( \C (y) )$ we have $\Nor_{\PGL_2 ( \C(y) )}(B)  = B$. In both cases $\Nor_{\PGL_2 ( \C(y) )}(B)$ is actually solvable.

Moreover, any Borel subgroup of $\PGL_2 ( \C(y) ) \rtimes \Aff_1$ contains a Borel subgroup of $\PGL_2 ( \C(y) )$ (see Theorem~\ref{theorem: any Borel subgroup of PGL_2 (C (y) ) rtimes Aff_1 contains a Borel subgroup of PGL_2 (C (y) )}). Hence the map
\[ B \mapsto  \Nor_{\PGL_2 ( \C(y) ) \rtimes \Aff_1}(B)^{\circ} \]
from the set of Borel subgroups of $\PGL_2 ( \C (y) )$ to the set of Borel subgroups of \linebreak $\PGL_2( \C(y) ) \rtimes \Aff_1$ is surjective.

Let $B'$ be a Borel subgroup of $\PGL_2 ( \C(y) ) \rtimes \Aff_1$. By what has just been said, there exists at least one Borel subgroup $B$ of $\PGL_2 ( \C (y) )$ such that $B \subseteq B'$. Since $B $ is contained in the closed connected solvable subgroup $[ \PGL_2 ( \C(y) ) \cap B']^{\circ}$ of $\PGL_2 (\C (y) )$ this shows that we necessarily have $B =  [ \PGL_2 ( \C(y) ) \cap B']^{\circ}$ and this concludes the proof.
\end{proof}

\begin{lemma} \label{lemma: normaliser in Jonq of Aff_1( C(y) )}
We have $\Nor_{\Jonq}(\Aff_1(\C(y)))=\Aff_1(\C(y))\rtimes\PGL_2$.
\end{lemma}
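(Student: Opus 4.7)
The plan is a direct application of Lemma~\ref{lemma: equivalence dealing with h=(x+1,y)}. First I would establish the easy inclusion $\Aff_1(\C(y)) \rtimes \PGL_2 \subseteq \Nor_{\Jonq}(\Aff_1(\C(y)))$. The group $\Aff_1(\C(y))$ obviously normalizes itself, so it suffices to check that every $v \in \PGL_2$ normalizes $\Aff_1(\C(y))$. Writing $v = (x, \mu(y))$ with $\mu \in \PGL_2$ and $u = (\alpha(y)x + \beta(y), y) \in \Aff_1(\C(y))$, a direct computation gives
\[ v \circ u \circ v^{-1} = (\alpha(\mu^{-1}(y)) x + \beta(\mu^{-1}(y)), y), \]
which still belongs to $\Aff_1(\C(y))$.

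For the reverse inclusion, take any $g \in \Nor_{\Jonq}(\Aff_1(\C(y)))$. Since the element $h := (x+1, y)$ lies in $\Aff_1(\C(y))$, we get $ghg^{-1} \in \Aff_1(\C(y))$. Now I would simply invoke the implication \eqref{ghg(-1) is in Aff1(C(y))} $\Longrightarrow$ \eqref{g is in Aff1 semidirect PGL2} of Lemma~\ref{lemma: equivalence dealing with h=(x+1,y)} to conclude that $g \in \Aff_1(\C(y)) \rtimes \PGL_2$, which finishes the argument.

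There is no real obstacle here; the work has already been done in Lemma~\ref{lemma: equivalence dealing with h=(x+1,y)}, where the off-diagonal entry of the conjugate matrix was computed explicitly to be $-\gamma^2$. The only thing to notice is that checking the normalizer condition on the single element $h = (x+1, y) \in \Aff_1(\C(y))$ is already sufficient to force $g$ into $\Aff_1(\C(y)) \rtimes \PGL_2$, so one does not need to test the condition on all of $\Aff_1(\C(y))$.
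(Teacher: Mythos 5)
Your proof is correct. It differs from the paper's only in which previously established lemma it leans on: the paper deduces the statement from Lemma~\ref{lemma: the normaliser of Aff_1(K) in PGL_2(K)} (that $\Nor_{\PGL_2(\C(y))}(\Aff_1(\C(y)))=\Aff_1(\C(y))$), implicitly using the decomposition $g=uv$ with $v\in\PGL_2$ normalising $\Aff_1(\C(y))$, whereas you invoke the implication \eqref{ghg(-1) is in Aff1(C(y))}~$\Longrightarrow$~\eqref{g is in Aff1 semidirect PGL2} of Lemma~\ref{lemma: equivalence dealing with h=(x+1,y)} applied to the single element $h=(x+1,y)$. The two routes are really the same computation in different packaging -- both lemmas are proved by conjugating $\smat{1}{1}{0}{1}$ and observing that the lower-left entry of the result is $-\gamma^2$ -- but your version has the small advantage of making explicit both the easy inclusion and the observation that testing the normaliser condition on one unipotent element already suffices, whereas the paper leaves the reduction to the reader.
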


\begin{proof}
This follows from the equality $\Nor_{\PGL_2 ( \C (y) ) }( \Aff_1 ( \C (y) ) ) = \Aff_1 ( \C (y) )$ (see \linebreak Lemma~\ref{lemma: the normaliser of Aff_1(K) in PGL_2(K)}).
\end{proof}

If $B$ is a Borel subgroup of $\PGL_2 ( \C (y) )$, we now describe more precisely the unique Borel subgroup $B'$ of $\PGL_2 ( \C(y) ) \rtimes \Aff_1$ which contains it. Recall that up to conjugation we have either $B= \Aff_1 ( \C (y) )$ or $B= \T_f$ for some nonsquare element $f \in \C (y)$(Theorem~\ref{theorem : the Borel subgroups of PGL(2,C(y))}). If $B= \Aff_1 ( \C (y) )$ it follows from Lemma~\ref{lemma: normaliser in Jonq of Aff_1( C(y) )}  that the unique Borel subgroup of $\PGL_2 ( \C(y) ) \rtimes \Aff_1$ containing $\Aff_1 ( \C (y) )$ is
\[ \Nor_{\PGL_2 ( \C(y) ) \rtimes \Aff_1} \Big( \Aff_1 ( \C (y) ) \Big) ^{\circ} =  \BBB_2. \]
If $B =\T_f$ the situation is described in Proposition~\ref{proposition: precise description of the unique Borel subgroup of PGL_2(C(y)) rtimes PGL_2 containing T_f} below which is a direct consequence of Proposition~\ref{proposition: description of N_{C* times Aff_1}(T_f)^{circ}}.

\begin{proposition} \label{proposition: precise description of the unique Borel subgroup of PGL_2(C(y)) rtimes PGL_2 containing T_f}
Let $f$ be a nonsquare element of $\C(y)$. Then the unique Borel subgroup of $\PGL_2 ( \C(y) ) \rtimes \Aff_1$ containing $\T_f$ is
\[ \Nor_{\PGL_2 ( \C(y) ) \rtimes \Aff_1}(\T_f)^\circ = \T_f \rtimes  {\mathcal N}_{\C(y) ^*\rtimes \Aff_1}(\T_f)^{\circ} . \]
\end{proposition}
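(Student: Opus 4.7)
The plan is to combine two results already established in the excerpt: the bijection theorem (Theorem~\ref{theorem: the natural bijection between the Borel subgroups of PGL_2 ( C(y) ) and the Borel subgroups of PGL_2 ( C(y) ) rtimes Aff_1}) and the explicit description of the neutral connected component of the normaliser (Proposition~\ref{proposition: description of N_{C* times Aff_1}(T_f)^{circ}}). Indeed, by Theorem~\ref{theorem : Bir(P2)-Borel and C(y)-Borel subgroups of PGL(2,C(y))}, the group $\T_f$ is a Borel subgroup of $\PGL_2(\C(y))$, so the bijection theorem immediately applies to it.

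First I would invoke Theorem~\ref{theorem: the natural bijection between the Borel subgroups of PGL_2 ( C(y) ) and the Borel subgroups of PGL_2 ( C(y) ) rtimes Aff_1} with $B = \T_f$: this says that there is a unique Borel subgroup of $\PGL_2(\C(y)) \rtimes \Aff_1$ containing $\T_f$, and that this subgroup is precisely
\[ \Nor_{\PGL_2 ( \C(y) ) \rtimes \Aff_1}(\T_f)^{\circ}. \]
Then I would apply Proposition~\ref{proposition: description of N_{C* times Aff_1}(T_f)^{circ}}, equation~\eqref{equation: neutral component of the normaliser of T_f in PGL_2 ( C(y) ) rtimes Aff_1}, which gives the explicit decomposition
\[ \Nor_{\PGL_2 ( \C(y) ) \rtimes \Aff_1}(\T_f)^{\circ} = \T_f \rtimes {\mathcal N}_{\C(y)^* \rtimes \Aff_1}(\T_f)^{\circ}. \]
Chaining these two equalities yields the statement.

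There is essentially no obstacle here, since every ingredient has already been proved: the bijection theorem does the conceptual work of identifying the unique Borel subgroup containing $\T_f$, and the normaliser proposition does the computational work of identifying that normaliser's connected component as a semidirect product. The only point worth mentioning is that $\T_f$ is indeed a Borel subgroup of $\PGL_2(\C(y))$ (not merely a closed connected solvable subgroup), which is exactly the content of Theorem~\ref{theorem : Bir(P2)-Borel and C(y)-Borel subgroups of PGL(2,C(y))}; this legitimises applying the bijection theorem with $B = \T_f$.
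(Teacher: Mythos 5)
Your proposal is correct and follows exactly the paper's route: the paper states that this proposition is a direct consequence of Proposition~\ref{proposition: description of N_{C* times Aff_1}(T_f)^{circ}}, with the identification of the unique Borel subgroup containing $\T_f$ as $\Nor_{\PGL_2(\C(y))\rtimes\Aff_1}(\T_f)^{\circ}$ coming from Theorem~\ref{theorem: the natural bijection between the Borel subgroups of PGL_2 ( C(y) ) and the Borel subgroups of PGL_2 ( C(y) ) rtimes Aff_1}, just as you argue. Your remark that $\T_f$ is indeed a Borel subgroup of $\PGL_2(\C(y))$ by Theorem~\ref{theorem : Bir(P2)-Borel and C(y)-Borel subgroups of PGL(2,C(y))} is the right justification for applying the bijection.
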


\section{Some Borel subgroups of $\bp$ } \label{section: some Borel subgroups of Bir(P2)}

In this section we show that the groups listed in Theorem~\ref{theorem: main theorem} are Borel subgroups of $\bp$ (see Theorem~\ref{theorem: some Borel subgroups of Bir(P2)}). In the proof we will use the following result.

\begin{proposition} \label{proposition: a Borel subgroup of Jonq is also a Borel subgroup of Bir(P2)}
Let $B \subseteq \Jonq$ be a subgroup. Then, the following assertions are equivalent.
\begin{enumerate}
\item \label{B is a Borel subgroup of Bir(P2)}
$B$ is a Borel subgroup of $\bp$;
\item  \label{B is a Borel subgroup of Jonq}
$B$ is a Borel subgroup of $\Jonq$.
\end{enumerate}
\end{proposition}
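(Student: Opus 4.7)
My plan is to prove the two implications separately. For the easy direction \eqref{B is a Borel subgroup of Bir(P2)} $\Longrightarrow$ \eqref{B is a Borel subgroup of Jonq}, I would observe that since $\Jonq$ is closed in $\bp$ by Lemma~\ref{lemma: Jonq is closed and pr_2 is continuous}\eqref{Jonq is closed in Bir(P2)}, any subset which is closed in $\Jonq$ for the induced topology is automatically closed in $\bp$. Therefore, if $B \subseteq B''$ with $B''$ closed connected solvable in $\Jonq$, the same $B''$ is a closed connected solvable subgroup of $\bp$, and the maximality of $B$ there forces $B = B''$.

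For the harder direction \eqref{B is a Borel subgroup of Jonq} $\Longrightarrow$ \eqref{B is a Borel subgroup of Bir(P2)}, the crucial step would be to exhibit a Jonquières twist inside $B$. By Theorem~\ref{theorem: any closed connected solvable of Jonq is conjugate to a subgroup of PGL_2(C(y)) rtimes Aff_1}, I can conjugate $B$ by an element of $\PGL_2 \subseteq \Jonq$ so that $B \subseteq \PGL_2(\C(y)) \rtimes \Aff_1$. Since $\PGL_2(\C(y)) \rtimes \Aff_1 = \pr_2^{-1}(\Aff_1)$ is closed in $\Jonq$ by Lemma~\ref{lemma: Jonq is closed and pr_2 is continuous}\eqref{pr_2 is continuous}, the reasoning of the easy direction (applied to the pair $(\Jonq, \PGL_2(\C(y)) \rtimes \Aff_1)$ in place of $(\bp, \Jonq)$) promotes $B$ to a Borel subgroup of $\PGL_2(\C(y)) \rtimes \Aff_1$. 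Then Theorem~\ref{theorem: any Borel subgroup of PGL_2 (C (y) ) rtimes Aff_1 contains a Borel subgroup of PGL_2 (C (y) )} deposits a Borel subgroup of $\PGL_2(\C(y))$ inside $B$, and Theorem~\ref{theorem : Bir(P2)-Borel and C(y)-Borel subgroups of PGL(2,C(y))} identifies it, up to conjugation, with either $\Aff_1(\C(y))$ or some $\T_f$. Both visibly contain non-algebraic elements: the first contains $(yx, y)$, whose $n$-th iterate $(y^n x, y)$ has unbounded degree, while for $\T_f$ Lemma~\ref{lemma: algebraic elements of Tf} asserts that every element other than $\id$ and $\iota_f$ is non-algebraic. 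Conjugating back in $\Jonq$, $B$ itself contains a Jonquières twist $t$.

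To conclude, let $B'$ be any closed connected solvable subgroup of $\bp$ containing $B$. Theorem~\ref{theorem: a closed connected solvable subgroup of Bir(P2) is conjugate to a subgroup of Jonq} supplies $\varphi \in \bp$ with $\varphi B' \varphi^{-1} \subseteq \Jonq$, whence $\varphi t \varphi^{-1} \in \Jonq$. The equivalence between items \eqref{phi belongs to Jonq} and \eqref{the conjugate of some Jonquieres twist of Jonq belongs to Jonq} of Lemma~\ref{lemma: a sufficient condition for an element to be Jonquieres} then forces $\varphi \in \Jonq$, so $\varphi B' \varphi^{-1}$ is a closed connected solvable subgroup of $\Jonq$ containing the Borel subgroup $\varphi B \varphi^{-1}$ of $\Jonq$; maximality yields $\varphi B' \varphi^{-1} = \varphi B \varphi^{-1}$, i.e.\ $B' = B$. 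The main obstacle is the production of a Jonquières twist inside $B$, for which one must draw on the full chain of conjugation and classification results built up in the earlier sections; once such a $t$ is in hand, Lemma~\ref{lemma: a sufficient condition for an element to be Jonquieres} renders the remainder essentially formal.
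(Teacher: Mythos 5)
Your proposal is correct and follows essentially the same route as the paper: conjugate $B$ into $\PGL_2(\C(y))\rtimes\Aff_1$, extract a Borel subgroup of $\PGL_2(\C(y))$ inside it (hence a Jonqui\`eres twist), and use the unique-rational-fibration lemma to force any conjugating element into $\Jonq$ before concluding by maximality. The only differences are cosmetic: you re-derive the ``Borel of $\Jonq$ is Borel of $\PGL_2(\C(y))\rtimes\Aff_1$'' step and spell out the twist $(yx,y)$ explicitly, where the paper simply cites Theorem~\ref{theorem: any closed connected solvable of Jonq is conjugate to a subgroup of PGL_2(C(y)) rtimes Aff_1} and asserts the existence of a twist.
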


\begin{proof}
The implication \eqref{B is a Borel subgroup of Bir(P2)} $\Longrightarrow$ \eqref{B is a Borel subgroup of Jonq} being obvious, let's prove  \eqref{B is a Borel subgroup of Jonq}  $\Longrightarrow$ \eqref{B is a Borel subgroup of Bir(P2)}. Assume that $B$ is a Borel subgroup of $\Jonq$. Up to conjugation by an element of $\Jonq$ we may assume that $B$ is a Borel subgroup of $\PGL_2 ( \C (y) ) \rtimes \Aff_1$ (see Theorem~\ref{theorem: any closed connected solvable of Jonq is conjugate to a subgroup of PGL_2(C(y)) rtimes Aff_1}). Let now $B'$ be a closed connected solvable subgroup of $\bp$ containing $B$. We want to show that $B=B'$. By Theorem~\ref{theorem: a closed connected solvable subgroup of Bir(P2) is conjugate to a subgroup of Jonq}, there exists $g \in \bp$ such that $g B' g^{-1} \subseteq \Jonq$ and by Theorem~\ref{theorem: any Borel subgroup of PGL_2 (C (y) ) rtimes Aff_1 contains a Borel subgroup of PGL_2 (C (y) )}, $B$ contains a Borel subgroup $B''$ of $\PGL_2 ( \C (y) )$. Since $B''$ is conjugate in $\PGL_2 ( \C (y) )$ to either $\Aff_1 ( \C (y) )$ or $\T_f$ for some nonsquare element $f$ of $\C (y)$ (see Theorem~\ref{theorem : the Borel subgroups of PGL(2,C(y))}), it contains a Jonqui\`eres twist. Hence the inclusion $g B'' g^{-1} \subseteq \Jonq$ and Lemma~\ref{lemma: unique rational fibration preserved and a property of some conjugants} show, in combination with \cite[Lemma~4.5]{DillerFavre2001}, 
that $g \in \Jonq$. This proves that $B'$ is actually contained in $\Jonq$ and finally the inclusion $B \subseteq B'$ implies that $B=B'$.
\end{proof}

As announced, we now describe some Borel subgroups of $\bp$. Later on, in Theorem~\ref{theorem: all Borel subgroups of Bir(P2)}, we will show that each Borel subgroup of $\bp$ is actually conjugate to one of them.

\begin{theorem} \label{theorem: some Borel subgroups of Bir(P2)}
The following subgroups are Borel subgroups of $\bp$:
\begin{enumerate}
\item \label{Borel: B2}
$\BBB_2$;
\item \label{Borel: Ty semidirect T}
$\T_y \rtimes \T_{1,2}$;
\item \label{Borel: Tf}
$\T_f$ where $f$ is a nonsquare element of $\C (y)$ such that the genus $\g$ of the curve $x^2=f(y)$ satisfies $\g \ge 1$.
\end{enumerate}
\end{theorem}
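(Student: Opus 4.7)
The plan is to handle case~\eqref{Borel: B2} by direct appeal to Popov's theorem recalled in the introduction, and to treat cases~\eqref{Borel: Ty semidirect T} and~\eqref{Borel: Tf} uniformly through a reduction to $\Jonq$ followed by a normaliser computation. In these latter two cases write $T$ for the group whose maximality is to be established and let $H\subseteq T$ denote the Borel subgroup of $\PGL_2(\C(y))$ naturally contained in it: $H=\T_y$ in case~\eqref{Borel: Ty semidirect T} and $H=\T_f$ in case~\eqref{Borel: Tf}.

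The preliminary fact that $T$ is a closed connected solvable subgroup of $\bp$ is routine. The group $H$ is a Borel subgroup of $\PGL_2(\C(y))$ both for the $\C(y)$-topology and for the $\bp$-topology by Theorem~\ref{theorem : Bir(P2)-Borel and C(y)-Borel subgroups of PGL(2,C(y))}, hence is $\bp$-closed and $\bp$-connected; in case~\eqref{Borel: Ty semidirect T} one has moreover $T=\Nor_{\Jonq}(\T_y)^{\circ}$ by Proposition~\ref{proposition: connected component of the normaliser of Tf in Jonq}, which settles closedness and connectedness of $T$ at once. Solvability is clear since $T$ is at worst metabelian.

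The core step is as follows. Let $B'$ be a closed connected solvable subgroup of $\bp$ with $T\subseteq B'$; I aim to show $B'=T$. The first move is to reduce to $\Jonq$: by Theorem~\ref{theorem: a closed connected solvable subgroup of Bir(P2) is conjugate to a subgroup of Jonq} there exists $g\in\bp$ with $gB'g^{-1}\subseteq\Jonq$. Now Lemma~\ref{lemma: algebraic elements of Tf} asserts that $H$ contains only two algebraic elements, so I can pick a non-algebraic $t\in H$, i.e.\ a Jonqui\`eres twist. Since $gtg^{-1}\in\Jonq$, the implication from item~\eqref{the conjugate of some Jonquieres twist of Jonq belongs to Jonq} to item~\eqref{phi belongs to Jonq} in Lemma~\ref{lemma: a sufficient condition for an element to be Jonquieres} forces $g\in\Jonq$, and hence $B'\subseteq\Jonq$. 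At this point $B'$ is a solvable subgroup of $\Jonq$ containing the Borel subgroup $H$ of $\PGL_2(\C(y))$, so Lemma~\ref{lemma: a solvable subgroup of J containing a Borel subgroup of PGL(2,C(y)) normalizes it} yields $B'\subseteq\Nor_{\Jonq}(H)$, and connectedness of $B'$ upgrades this to $B'\subseteq\Nor_{\Jonq}(H)^{\circ}$. Proposition~\ref{proposition: connected component of the normaliser of Tf in Jonq} then identifies this connected component as $\T_y\rtimes\T_{1,2}$ in case~\eqref{Borel: Ty semidirect T} and as $\T_f$ in case~\eqref{Borel: Tf}, the latter requiring precisely the assumption $\g\geq 1$ to eliminate the potential extra $\C^*$ factor. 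In both cases the right-hand side is exactly $T$, so $B'=T$.

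I expect the only genuinely delicate point to be the first half of the core step, namely the argument $B'\subseteq\Jonq$: the remainder is assembly of statements already proved in Sections~\ref{section: K-Borel subgroups of PGL(2,K)} and~\ref{section: The groups TT_f}, whereas this reduction is where the particular structure of $\T_y$ and $\T_f$ -- specifically, the abundance of Jonqui\`eres twists inside them -- is crucially used to pin $g$ down in $\Jonq$.
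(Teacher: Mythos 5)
Your proof is correct and follows essentially the same route as the paper: for cases~\eqref{Borel: Ty semidirect T} and~\eqref{Borel: Tf} the reduction of a closed connected solvable overgroup into $\Jonq$ via a Jonqui\`eres twist of $H$ is exactly the content of Proposition~\ref{proposition: a Borel subgroup of Jonq is also a Borel subgroup of Bir(P2)} unfolded inline, and the rest is the same combination of Lemma~\ref{lemma: a solvable subgroup of J containing a Borel subgroup of PGL(2,C(y)) normalizes it} with the normaliser computations of Proposition~\ref{proposition: connected component of the normaliser of Tf in Jonq}. The only real divergence is case~\eqref{Borel: B2}, which you outsource to Popov's theorem, whereas the paper derives it from the same normaliser machinery (computing $\Nor_{\Jonq}(\Aff_1(\C(y)))^{\circ}$ after an extra conjugation into $\PGL_2(\C(y))\rtimes\Aff_1$) and explicitly notes that its argument differs from Popov's; your citation is legitimate but less self-contained.
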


\begin{proof}
By Proposition~\ref{proposition: a Borel subgroup of Jonq is also a Borel subgroup of Bir(P2)}, it is enough to prove that all these groups are Borel subgroups of $\Jonq$. By Proposition~\ref{proposition: pre-Borel subgroups of closed subgroups of Bir(P2) are contained in Borel subgroups} any Borel subgroup $B$ of $\PGL_2 ( \C (y) )$ is contained in a Borel subgroup $B'$ of $\Jonq$ and by Lemma~\ref{lemma: a solvable subgroup of J containing a Borel subgroup of PGL(2,C(y)) normalizes it} we have $B' \subseteq \Nor_{\Jonq}(B)^\circ$.

\noindent \eqref{Borel: B2} If $B = \Aff_1 ( \C (y) )$, this yields $B \subseteq B' \subseteq B \rtimes \PGL_2$ (Lemma~\ref{lemma: normaliser in Jonq of Aff_1( C(y) )}). By Theorem~\ref{theorem: any closed connected solvable of Jonq is conjugate to a subgroup of PGL_2(C(y)) rtimes Aff_1} and up to conjugation by an element of $\{ 1 \} \rtimes \PGL_2 \subseteq \PGL_2 ( \C(y) ) \rtimes \PGL_2 = \Jonq$, we may moreover assume that we have $B' \subseteq \PGL_2 ( \C (y) ) \rtimes \Aff_1$. Using the equality
\[ (B \rtimes \PGL_2) \cap ( \PGL_2 ( \C (y) ) \rtimes \Aff_1) = B \rtimes \Aff_1\] we obtain
\[ B \subseteq B' \subseteq B \rtimes \Aff_1 .\]
Since $B \rtimes \Aff_1$ is solvable, we get $B' = B \rtimes \Aff_1 = \Aff_1 ( \C (y) ) \rtimes \Aff_1= \BBB_2$ and we have actually proven that $\BBB_2$ is a Borel subgroup of $\Jonq$. Note that this proof is different from the one given by Popov in \cite{Popov2017}.

\noindent \eqref{Borel: Ty semidirect T}
If $B= \T_y$, we have $B'  \subseteq \T_y \rtimes \T_{1,2}$ by Proposition~\ref{proposition: connected component of the normaliser of Tf in Jonq}\eqref{Nor_{Jonq}(T_y )^{circ} = T_t rtimes T_{1,2}}. Since $\T_y \rtimes \T_{1,2}$ is solvable, this gives $B' = \T_y \rtimes \T_{1,2}$.

\noindent  \eqref{Borel: Tf}
If $B:=\T_f$ with $\g \ge 1$, we have $B'  \subseteq \T_f$ by Proposition~\ref{proposition: connected component of the normaliser of Tf in Jonq}\eqref{Nor_{Jonq}(T_f )^{circ} = T_f if g  is at least 1} and this gives $B' = \T_f$.
\end{proof}

\begin{lemma} \label{lemma: estimates of rk(K semidirect H)}
Let $G,H,K$ be closed subgroups of $\bp$ such that $G = K \rtimes H$, then we have $\max ( \rk H, \rk K) \le \rk G \le \rk H + \rk K$.
\end{lemma}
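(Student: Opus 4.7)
The lower bound is immediate: since $H$ and $K$ are both subgroups of $G$, any algebraic torus in $H$ or in $K$ is an algebraic torus in $G$, so $\max(\rk H, \rk K) \le \rk G$.

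For the upper bound, the plan is to pick a maximal algebraic torus $T \subseteq G$ of dimension $\rk G$, split off a subtorus contained in $K$, and project the complement into $H$. Because $K$ is closed in $G$, the intersection $T \cap K$ is a closed subgroup of the algebraic torus $T$, hence a diagonalisable algebraic group whose neutral component $S := (T \cap K)^\circ$ is a subtorus of $T$ contained in $K$; in particular $\dim S \le \rk K$. Pick a subtorus $T' \subseteq T$ complementary to $S$, so that $T = S \cdot T'$ with $S \cap T'$ finite; then $\dim T' = \dim T - \dim S$ and $T' \cap K$ is finite. The semidirect product yields a group homomorphism $\pi \colon G = K \rtimes H \to H$, $kh \mapsto h$, and the restriction $\pi|_{T'}$ has finite kernel $T' \cap K$. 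Granted that $\pi|_{T'}$ is a morphism of algebraic groups (see below), its image is a subtorus of $H$ of dimension $\dim T'$, so $\dim T' \le \rk H$ and
\[ \rk G = \dim T = \dim S + \dim T' \le \rk K + \rk H, \]
which is the claimed upper bound.

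The main obstacle is justifying that $\pi|_{T'}$ is an algebraic morphism rather than merely an abstract group homomorphism, given that $H$ is only a closed subgroup of $\bp$ and not a priori endowed with the structure of an algebraic variety. I would verify this by observing that the closedness of $K$ in $G$, combined with the continuity of multiplication and inversion in $\bp$, implies that the graph of $\pi|_{T'}$ coincides with the closed subset of $T' \times H$ consisting of pairs $(t,h)$ with $h^{-1} t \in K$. The first projection restricts to a bijective morphism from this closed graph onto the smooth variety $T'$, whose inverse composed with the second projection realises $\pi|_{T'}$ as an algebraic map; one then invokes the standard fact that the image of a torus under a morphism of algebraic groups is again a torus. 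Alternatively, in the concrete cases where the lemma is applied, one may exhibit the projection directly (as is done for $\pr_2 \colon \Jonq \to \PGL_2$ in Lemma~\ref{lemma: Jonq is closed and pr_2 is continuous}\eqref{pr_2 is continuous}) and bypass the general argument.
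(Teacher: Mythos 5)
Your proof is correct and follows essentially the same route as the paper's: the paper uses the induced exact sequence $1 \to T\cap K \to T \xrightarrow{\pi} \pi(T) \to 1$ for a torus $T \subseteq G$ and observes that $(T\cap K)^{\circ}$ and $\pi(T)$ are connected algebraic groups of semisimple elements, hence tori, so that $\dim T = \dim (T\cap K) + \dim \pi(T) \le \rk K + \rk H$; your splitting $T = S\cdot T'$ with $T'$ a complementary subtorus mapping with finite kernel into $H$ is just a repackaging of the same dimension count. The algebraicity of $\pi$ restricted to the torus, which you rightly single out as the delicate point, is left entirely implicit in the paper's one-line argument.
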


\begin{proof}
Since $H,K$ are contained in $G$, the first inequality is obvious. Let now $T$ be a torus in $G$. The short exact sequence $1 \to K \to G \xrightarrow{\pi}  H \to 1$ induces the short exact sequence $1 \to T' \to T \to T'' \to 1$ where $T' := T \cap K$ and $T'' := \pi (T)$. Since $(T')^{\circ}$ and $T''$ are connected algebraic groups consisting of semisimple elements, they are tori (see \cite[Exercise 21.2, page 137]{Humphreys1975}). Hence, we get $\dim T = \dim T' + \dim T'' = \dim (T')^{\circ} + \dim T'' \le \rk K + \rk H$ and the conclusion follows.
\end{proof}

\begin{remark} \label{remark: lengths and ranks of Borel subgroups}
The three kinds of examples given in Theorem~\ref{theorem: some Borel subgroups of Bir(P2)} are non-isomorphic, hence non-conjugate, since the derived lengths of $\BBB_2$, $\T_y \rtimes \T_{1,2}, \T_f$ are respectively 4,2,1 (see Proposition~\ref{proposition: the derived length of Bn} for the derived length of $\BBB_2$).

Let's also note that the ranks of these groups are respectively 2,1,0: We know from Lemma~\ref{lemma: rk(T_f) = 0} that  $\rk (\T_f ) =0$; Lemma~\ref{lemma: estimates of rk(K semidirect H)} shows that $\rk (\T_y \rtimes \T_{1,2} ) = 1$; the equality $\rk (\BBB_2) = 2$ is obvious.
\end{remark}

\section{All Borel subgroups of $\bp$ } \label{section: all Borel subgroups of Bir(P2)}

In Theorem~\ref{theorem: some Borel subgroups of Bir(P2)} we have given some examples of Borel subgroups of $\bp$. We now prove that up to conjugation there are no others.

\begin{theorem} \label{theorem: all Borel subgroups of Bir(P2)}
Any Borel subgroup of $\bp$ is necessarily conjugate to one of the following subgroups:
\begin{enumerate}
\item \label{all Borel: B2}
$\BBB_2$;
\item \label{all Borel: Ty semidirect T}
$\T_y \rtimes \T_{1,2}$;
\item \label{all Borel: Tf}
$\T_f$ where $f$ is a nonsquare element of $\C (y)$ such that the genus $\g$ of the curve $x^2=f(y)$ satisfies $\g \ge 1$.
\end{enumerate}
\end{theorem}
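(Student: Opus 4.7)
The plan is to take an arbitrary Borel subgroup $B$ of $\bp$ and reduce it to one of the three listed forms by a sequence of conjugations, exploiting all the work done in Sections 3--9. By Theorem~\ref{theorem: a closed connected solvable subgroup of Bir(P2) is conjugate to a subgroup of Jonq} and Proposition~\ref{proposition: a Borel subgroup of Jonq is also a Borel subgroup of Bir(P2)}, after conjugation in $\bp$ I may assume $B \subseteq \Jonq$ and that $B$ is a Borel subgroup of $\Jonq$. Theorem~\ref{theorem: any closed connected solvable of Jonq is conjugate to a subgroup of PGL_2(C(y)) rtimes Aff_1} then lets me conjugate by an element of $\PGL_2 \subseteq \Jonq$ so that $B \subseteq \PGL_2(\C(y)) \rtimes \Aff_1$; maximality of $B$ inside $\Jonq$ forces $B$ to be a Borel of the smaller group $\PGL_2(\C(y)) \rtimes \Aff_1$, so Theorem~\ref{theorem: the natural bijection between the Borel subgroups of PGL_2 ( C(y) ) and the Borel subgroups of PGL_2 ( C(y) ) rtimes Aff_1} provides a unique Borel $B_0$ of $\PGL_2(\C(y))$ contained in $B$. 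Finally, Theorem~\ref{theorem : Bir(P2)-Borel and C(y)-Borel subgroups of PGL(2,C(y))} allows me to conjugate $B$ by an element of $\PGL_2(\C(y)) \subseteq \Jonq$ (preserving its Borel status in $\Jonq$) so that $B_0$ is either $\Aff_1(\C(y))$ or $\T_f$ for some nonsquare $f \in \C(y)$.

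The next move is to apply Lemma~\ref{lemma: a solvable subgroup of J containing a Borel subgroup of PGL(2,C(y)) normalizes it}, which yields $B \subseteq \Nor_{\Jonq}(B_0)$, and then connectedness of $B$ gives $B \subseteq \Nor_{\Jonq}(B_0)^{\circ}$. If $B_0 = \Aff_1(\C(y))$, then Lemma~\ref{lemma: normaliser in Jonq of Aff_1( C(y) )} gives $\Nor_{\Jonq}(B_0)^{\circ} = \Aff_1(\C(y)) \rtimes \PGL_2$. The projection of $B$ to $\PGL_2$ is a closed connected solvable subgroup, which by the classical Lie--Kolchin theorem is conjugate in $\PGL_2$ to a subgroup of $\Aff_1$; a direct computation shows that any $v = (x,\mu(y)) \in \PGL_2$ normalises $\Aff_1(\C(y))$, so conjugating $B$ by such $v$ keeps $B_0$ fixed and brings $B$ inside $\BBB_2 = \Aff_1(\C(y)) \rtimes \Aff_1$. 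Since $\BBB_2$ is a Borel of $\bp$ by Theorem~\ref{theorem: some Borel subgroups of Bir(P2)}, I conclude that $B = \BBB_2$ after this conjugation, giving case (1).

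If instead $B_0 = \T_f$, I invoke Proposition~\ref{proposition: connected component of the normaliser of Tf in Jonq}. When the genus $\g$ of the curve $x^2 = f(y)$ satisfies $\g \geq 1$, the proposition gives $\Nor_{\Jonq}(\T_f)^{\circ} = \T_f$, so $B = \T_f$, which is case (3). When $\g = 0$, both hyperelliptic curves associated with $x^2 = f(y)$ and $x^2 = y$ are isomorphic to $\p^1$, so Proposition~\ref{proposition: equivalent conditions for T_f and T_g to be conjugate in Bir(P2)} yields an element of $\Jonq$ conjugating $\T_f$ to $\T_y$; after this conjugation $B_0 = \T_y$ and $B \subseteq \Nor_{\Jonq}(\T_y)^{\circ} = \T_y \rtimes \T_{1,2}$, which is a Borel of $\bp$ by Theorem~\ref{theorem: some Borel subgroups of Bir(P2)}, forcing $B = \T_y \rtimes \T_{1,2}$, which is case (2). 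The main technical subtlety to watch for is that conjugating $B$ by elements of $\PGL_2(\C(y))$ in order to put $B_0$ in standard form can push $B$ outside $\PGL_2(\C(y)) \rtimes \Aff_1$; this is harmless precisely because the decisive normaliser computations (Lemma~\ref{lemma: normaliser in Jonq of Aff_1( C(y) )} and Proposition~\ref{proposition: connected component of the normaliser of Tf in Jonq}) are carried out inside $\Jonq$, where $B$ remains a Borel throughout.
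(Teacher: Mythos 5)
Your proposal is correct and follows essentially the same route as the paper: reduce to a Borel subgroup of $\PGL_2(\C(y))\rtimes\Aff_1$, extract the Borel subgroup $B_0$ of $\PGL_2(\C(y))$ it contains, normalise $B_0$ into standard form, and conclude via the normaliser computations of Lemma~\ref{lemma: normaliser in Jonq of Aff_1( C(y) )} and Proposition~\ref{proposition: connected component of the normaliser of Tf in Jonq}. The only cosmetic differences are that the paper splits into the three cases $\Aff_1(\C(y))$, $\T_y$, $\T_f$ ($\g\ge 1$) at once and quotes the normaliser computations from the proof of Theorem~\ref{theorem: some Borel subgroups of Bir(P2)}, whereas you separate the genus-$0$ case afterwards and add a (harmless, in fact redundant) Lie--Kolchin step in case~(1), since at that stage $B$ already lies in $\PGL_2(\C(y))\rtimes\Aff_1$.
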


\begin{proof}
Let $B'$ be a Borel subgroup of $\bp$. Up to conjugation, it is a Borel subgroup of $\PGL_2 ( \C (y) ) \rtimes \Aff_1$  (cf. Theorem~\ref{theorem: any closed connected solvable subgroup of Bir(P2) is conjugate to a subgroup of PGL_2(C(y)) rtimes Aff_1}). By Theorem~\ref{theorem: any Borel subgroup of PGL_2 (C (y) ) rtimes Aff_1 contains a Borel subgroup of PGL_2 (C (y) )} $B'$ contains a Borel subgroup $B$ of $\PGL_2 ( \C (y) )$ and by Lemma~\ref{lemma: a solvable subgroup of J containing a Borel subgroup of PGL(2,C(y)) normalizes it} we have $B' \subseteq  \Nor_{\PGL_2 ( \C (y) ) \rtimes \Aff_1}(B) ^{\circ}$.  Moreover, up to conjugation, one of the following cases occurs:
\begin{enumerate}
\item 
$B= \Aff_1 ( \C (y) )$;
\item
$B = \T_y$;
\item
$B = \T_f$ for some nonsquare element $f$ of $\C (y)$ such that the genus $\g$ of the curve $x^2=f(y)$ satisfies $\g \ge 1$.
\end{enumerate}
Then Lemma~\ref{lemma: normaliser in Jonq of Aff_1( C(y) )}, Proposition~\ref{proposition: connected component of the normaliser of Tf in Jonq}\eqref{Nor_{Jonq}(T_y )^{circ} = T_t rtimes T_{1,2}}, and Proposition~\ref{proposition: connected component of the normaliser of Tf in Jonq}\eqref{Nor_{Jonq}(T_f )^{circ} = T_f if g  is at least 1} prove that $B'$ is contained respectively in $\BBB_2$, $\T_y \rtimes \T_{1,2}$, and $\T_f$. Since these groups are solvable, this shows that $B'$ is equal respectively to $\BBB_2$, $\T_y \rtimes \T_{1,2}$, and $\T_f$.
\end{proof}

Let's now give the proof of our main theorem.

\begin{proof}[Proof of Theorem~\ref{theorem: main theorem}]
This follows from Theorems~\ref{theorem: some Borel subgroups of Bir(P2)} \&~\ref{theorem: all Borel subgroups of Bir(P2)}, Remark~\ref{remark: lengths and ranks of Borel subgroups}, and Proposition~\ref{proposition: equivalent conditions for T_f and T_g to be conjugate in Bir(P2)}.
\end{proof}

Recall that a Borel subgroup of a linear algebraic group is equal to its own normaliser
\cite[Theorem 23.1, page 143]{Humphreys1975}. We end this section by showing an analogous result in $\bp$ (see Proposition~\ref{proposition: a variant of Borel normaliser theorem for Bir(P2)} below). We begin with the following lemma.

\begin{lemma} \label{lemma: B2 is equal to its own normaliser in Bir(P2)}
The group $\BBB_2$ is equal to its own normaliser in $\bp$.
\end{lemma}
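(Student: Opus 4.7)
The plan is to show that any $\varphi \in \Nor_{\bp}(\BBB_2)$ lies in $\BBB_2$, in three steps.

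\textbf{Step 1: Reduce to $\varphi \in \Jonq$.} I would exhibit a Jonqui\`eres twist inside $\BBB_2$; the obvious candidate is $t := (yx,y)$, whose iterates $t^n = (y^n x, y)$ have degree going to infinity, so $t$ is not algebraic and hence a Jonqui\`eres twist. Since $\varphi t \varphi^{-1} \in \BBB_2 \subseteq \Jonq$, Lemma~\ref{lemma: a sufficient condition for an element to be Jonquieres}\eqref{the conjugate of some Jonquieres twist of Jonq belongs to Jonq} forces $\varphi \in \Jonq$.

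\textbf{Step 2: Reduce to $\varphi \in \Aff_1(\C(y)) \rtimes \PGL_2$.} Because $\PGL_2(\C(y))$ is the kernel of the second projection $\pr_2\colon \Jonq \to \PGL_2$ and hence is normal in $\Jonq$, the element $\varphi$ normalises $\PGL_2(\C(y))$, and therefore normalises the intersection $\BBB_2 \cap \PGL_2(\C(y)) = \Aff_1(\C(y))$. Lemma~\ref{lemma: normaliser in Jonq of Aff_1( C(y) )} then yields $\varphi \in \Aff_1(\C(y)) \rtimes \PGL_2$, and I write $\varphi = u\,v$ with $u \in \Aff_1(\C(y)) \subseteq \BBB_2$ and $v = (x, w(y)) \in \PGL_2$.

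\textbf{Step 3: Show $v \in \Aff_1$.} Since $u \in \BBB_2$, the element $v = u^{-1}\varphi$ still lies in $\Nor_{\bp}(\BBB_2)$. Applying this to the translation $\tau := (x,y+1) \in \BBB_2$, a direct computation gives $v\,\tau\,v^{-1} = (x,\,w(w^{-1}(y)+1))$, whose second coordinate is the $\PGL_2$-conjugate by $w$ of the non-trivial unipotent element $y\mapsto y+1$. For this conjugate to lie in $\Aff_1$ (which is the stabiliser of $\infty\in\p^1$), its unique fixed point, namely $w(\infty)$, must equal $\infty$. Hence $w \in \Aff_1$, so $v \in \Aff_1$ and $\varphi = uv \in \Aff_1(\C(y)) \rtimes \Aff_1 = \BBB_2$.

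The main obstacle is Step~1: the reduction to $\Jonq$ rests on the rigidity of the Jonqui\`eres fibration, i.e.\ on the input (due to Diller--Favre) that a Jonqui\`eres twist preserves a unique rational fibration, packaged in Lemma~\ref{lemma: a sufficient condition for an element to be Jonquieres}. Once inside $\Jonq$, Steps~2 and~3 are essentially formal manipulations using the semidirect-product decomposition $\Jonq = \PGL_2(\C(y)) \rtimes \PGL_2$ together with the fact that a non-trivial unipotent M\"obius transformation has a single fixed point.
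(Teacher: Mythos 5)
Your proof is correct and follows the same three-step strategy as the paper's: reduce to $\Jonq$ using a Jonqui\`eres twist contained in $\BBB_2$ (the paper invokes Lemma~\ref{lemma: unique rational fibration preserved and a property of some conjugants}, you invoke the equivalent Lemma~\ref{lemma: a sufficient condition for an element to be Jonquieres}), then land in $\Aff_1(\C(y))\rtimes\PGL_2$, then control the $\PGL_2$-factor. The only cosmetic differences are that the paper enters $\Aff_1(\C(y))\rtimes\PGL_2$ by conjugating $h=(x+1,y)$ (Lemma~\ref{lemma: equivalence dealing with h=(x+1,y)}) whereas you use normality of $\PGL_2(\C(y))$ in $\Jonq$ together with Lemma~\ref{lemma: normaliser in Jonq of Aff_1( C(y) )}, and the paper concludes by applying $\pr_2$ and the equality $\Nor_{\PGL_2}(\Aff_1)=\Aff_1$ whereas you verify that equality on the relevant element by an explicit fixed-point computation.
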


\begin{proof}
By Lemma~\ref{lemma: unique rational fibration preserved and a property of some conjugants}, we have $\Nor_{\bp} (\BBB_2) = \Nor_{\Jonq} (\BBB_2)$ (since $\BBB_2$ contains at least one Jonqui\`eres twist). By Lemma~\ref{lemma: equivalence dealing with h=(x+1,y)}, we have $\Nor_{\Jonq} (\BBB_2) \subseteq \Aff_1 (\C (y) ) \rtimes \PGL_2 $. We conclude by using the second projection $\pr_2 \colon \Jonq \to \PGL_2 $ and the equality $\Nor_{\PGL_2} ( \Aff_1) = \Aff_1$ (besides being obvious this latter equality is also Lemma~\ref{lemma: the normaliser of Aff_1(K) in PGL_2(K)} or follows from  the Borel normaliser theorem applied with the Borel subgroup $\Aff_1$ of $\PGL_2$).
\end{proof}

\begin{proposition} \label{proposition: a variant of Borel normaliser theorem for Bir(P2)}
Let $B$ be a Borel subgroup of $\bp$. Then we have
\[ B= \Nor_{\bp} (B) ^{\circ} .\]
\end{proposition}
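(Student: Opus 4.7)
The plan is to apply Theorem~\ref{theorem: all Borel subgroups of Bir(P2)} to reduce to the three mutually non-conjugate representatives for $B$, namely $\BBB_2$, $\T_y\rtimes\T_{1,2}$, and $\T_f$ (with the curve $x^2=f(y)$ of genus $\g\ge 1$); since conjugation commutes with normalizing and with taking neutral connected components, the statement may be verified on these representatives. The inclusion $B\subseteq\Nor_{\bp}(B)^{\circ}$ is automatic because $B$ is connected, so the content is the reverse inclusion.

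First I will show $\Nor_{\bp}(B)\subseteq\Jonq$ in all three cases; this is exactly Lemma~\ref{lemma: B2 is equal to its own normaliser in Bir(P2)} for $B=\BBB_2$, and for $B\in\{\T_y\rtimes\T_{1,2},\T_f\}$ it follows from Lemma~\ref{lemma: a sufficient condition for an element to be Jonquieres}: choosing any non-algebraic element $t$ of $\T_y$ (respectively of $\T_f$), which by Lemma~\ref{lemma: algebraic elements of Tf} is a Jonqui\`eres twist, the relation $\varphi t\varphi^{-1}\in B\subseteq\Jonq$ forces $\varphi\in\Jonq$. It then remains to compute $\Nor_{\Jonq}(B)^{\circ}$: the case $B=\BBB_2$ is immediate from Lemma~\ref{lemma: B2 is equal to its own normaliser in Bir(P2)} together with the connectedness of $\BBB_2$, and the case $B=\T_f$ with $\g\ge 1$ is precisely Proposition~\ref{proposition: connected component of the normaliser of Tf in Jonq}\eqref{Nor_{Jonq}(T_f )^{circ} = T_f if g  is at least 1}.

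The main obstacle is thus the case $B=\T_y\rtimes\T_{1,2}$. The key input here is that $\PGL_2(\C(y))$ is a normal subgroup of $\Jonq$, which is visible from the semidirect decomposition $\Jonq=\PGL_2(\C(y))\rtimes\PGL_2$ (a direct computation confirms that conjugation by $(x,\mu(y))\in\PGL_2$ sends $\smat{a(y)}{b(y)}{c(y)}{d(y)}$ to $\smat{a(\mu^{-1}(y))}{b(\mu^{-1}(y))}{c(\mu^{-1}(y))}{d(\mu^{-1}(y))}$). For any $\varphi\in\Nor_{\Jonq}(B)$ this yields
\[\varphi\bigl(B\cap\PGL_2(\C(y))\bigr)\varphi^{-1}=B\cap\PGL_2(\C(y)).\]
Only $t=\pm 1$ in $\T_{1,2}=\{(tx,t^2y):t\in\C^*\}$ lies in $\PGL_2(\C(y))$, so $B\cap\PGL_2(\C(y))=\T_y\sqcup(-x,y)\cdot\T_y$; both cosets are $\bp$-connected (as $\T_y$ is a Borel subgroup, hence connected), so the neutral component is $\T_y$. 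Therefore $\varphi$ normalizes $\T_y$, which gives $\Nor_{\Jonq}(B)^{\circ}\subseteq\Nor_{\Jonq}(\T_y)^{\circ}=\T_y\rtimes\T_{1,2}=B$ by Proposition~\ref{proposition: connected component of the normaliser of Tf in Jonq}\eqref{Nor_{Jonq}(T_y )^{circ} = T_t rtimes T_{1,2}}, completing the proof.
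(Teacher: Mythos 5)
Your proof is correct, and its overall architecture coincides with the paper's: reduce to the three representatives of Theorem~\ref{theorem: all Borel subgroups of Bir(P2)}, show the normaliser lands in $\Jonq$ via the unique-preserved-fibration mechanism (the paper invokes Lemma~\ref{lemma: unique rational fibration preserved and a property of some conjugants} directly, you route through Lemma~\ref{lemma: a sufficient condition for an element to be Jonquieres}, which amounts to the same thing), and then quote Lemma~\ref{lemma: B2 is equal to its own normaliser in Bir(P2)} and Proposition~\ref{proposition: connected component of the normaliser of Tf in Jonq} for the first and third cases. The one place where you genuinely diverge is the case $B=\T_y\rtimes\T_{1,2}$: the paper observes that $\DDD(B)=\overline{D(B)}=\T_y$ is a characteristic subgroup of $B$, so that $\Nor_{\Jonq}(B)\subseteq\Nor_{\Jonq}(\T_y)$, whereas you exhibit $\T_y$ as the neutral connected component of $B\cap\PGL_2(\C(y))$ and use the normality of $\PGL_2(\C(y))=\Ker(\pr_2)$ in $\Jonq$ to reach the same conclusion. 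Both arguments are sound; the paper's has the advantage of not requiring the identification of $B\cap\PGL_2(\C(y))$ and its components (though it does require computing $\DDD(\T_y\rtimes\T_{1,2})$), while yours avoids any computation with commutators and only uses that $\T_y$ is closed and connected and that $(-x,y)\notin\T_y$, so it is arguably the more elementary of the two.
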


\begin{proof}
Up to conjugation, one of the following cases occurs:
\begin{enumerate}
\item \label{Borel normaliser theorem: B2}
$B= \BBB_2$;
\item \label{Borel normaliser theorem: T_y rtimes T_{1,2}}
$B= \T_y \rtimes \T_{1,2}$;
\item \label{Borel normaliser theorem: T_f}
$B = \T_f$ where $f$ is a nonsquare element of $\C (y)$ such that the genus $\g$ of the curve $x^2=f(y)$ satisfies $\g \ge 1$.
\end{enumerate}
Note that $\Nor_{\bp} (B) =\Nor_{\Jonq} (B)$ in each of these cases by Lemma~\ref{lemma: unique rational fibration preserved and a property of some conjugants}.

In case \eqref{Borel normaliser theorem: B2}, the conclusion follows from Lemma~\ref{lemma: B2 is equal to its own normaliser in Bir(P2)}.

In case \eqref{Borel normaliser theorem: T_y rtimes T_{1,2}}, we have  $\Nor_{\Jonq} (B) \subseteq \Nor_{\Jonq} (\DDD (B) )$ because $\DDD (B) =\overline{ D(B) }$  and $D(B)$ is a characteristic subgroup of $B$. Since $\DDD  ( \T_y \rtimes \T_{1,2} ) = \T_y$ we obtain $\Nor_{\Jonq} (B)^{\circ} \subseteq \Nor_{\Jonq} ( \T_y)^{\circ} =$  $\T_y \rtimes \T_{1,2}$ (see Proposition~\ref{proposition: connected component of the normaliser of Tf in Jonq}\eqref{Nor_{Jonq}(T_y )^{circ} = T_t rtimes T_{1,2}}) and the conclusion follows.

The case~\eqref{Borel normaliser theorem: T_f} follows from Proposition~\ref{proposition: connected component of the normaliser of Tf in Jonq}\eqref{Nor_{Jonq}(T_f )^{circ} = T_f if g  is at least 1}.
\end{proof}

\begin{remark}
Note that the usual Borel normaliser theorem $B= \Nor_{G} (B)$ does not hold when $G= \bp$ and $B = \T_f$ for some nonsquare element $f$ of $\C (y)$ such that the genus $\g$ of the curve $x^2=f(y)$ satisfies $\g \ge 1$. Actually, in this case $\Nor_{\bp} (\T_f)$ is strictly larger than $\T_f$ because it contains $\T_f \rtimes \langle \smat{1}{\hspace{2mm} 0}{0}{-1} \rangle$ (see Lemma~\ref{lemma: the normaliser of T_f in PGL_2(K)}).
\end{remark}

\appendix

\section{Computation of the derived length of $\mathcal B_n$}
\label{section: computation of the derived length of Bn}

In this appendix we give the proof of Proposition~\ref{proposition: the derived length of Bn}, stated in the introduction.

\begin{proof}[Proof of Proposition \ref{proposition: the derived length of Bn}]
Set $U_k := D^{2n-2k} ( \mathcal B_n )$ for $0 \le k \le n$ and $V_k := D (U_k)$ for $1 \le k \le n$. 

We could easily check by induction that $U_k$ is contained in the group of elements $f=$  $(f_1, \ldots, f_n)$ in $\mathcal B_n$ satisfying $f_i =x _i$ for $i > k$ and that $V_k$ is contained in the group of elements $f= (f_1, \ldots, f_n)$ in $\mathcal B_n$ satisfying $f_i =x _i$ for $i > k$ and $f_k = x_k + b_k$ with $b_k \in \C (x_{k+1},\ldots, x_n)$. This implies $U_0 = \{ \id \}$. Hence the derived length of $\mathcal B_n$ is at most $2n$.

Conversely, we will now prove that this derived length is at least $2n$, i.e.\ that $V_1 \neq \{ \id \}$. For this, we introduce the following notation.
For each $k \in \{ 1, \ldots, n \}$, each nonzero element $a \in \C (x_{k+1},\ldots, x_n)$, and each element $b \in \C (x_{k+1},\ldots, x_n)$, we define the dilatation $d(k, a) \in \Bir (\p^n)$ and the elementary transformation $e(k, b) \in  \Bir (\p^n)$ by
\[ d(k, a) = (g_1, \ldots, g_n) \quad \text{and} \quad e(k, b) = (h_1, \ldots, h_n), \]
where $g_k = a x_k$, $h_k = x_k + b$ and $g_i=h_i = x_i$ for $i \neq k$.

If $i \in \{ 1, \ldots, n-1 \}$ and if $G$ is a subgroup of $\Bir (\p^n)$, the properties $(D_i)$ and $(E_i)$ for $G$ are defined in the following way:
\begin{enumerate}
\item[$(D_i)$] $\exists \, a \in \C (x_{i+1}) \setminus \C $ such that $d(i,a ) \in G$;
\item[$(E_i)$] $\forall \, b \in \C (x_{i+1} )$ we have $e(i,b) \in G$.
\end{enumerate}
For $i=n$, the  properties $(D_n)$ and $(E_n)$ are defined in the following slightly different way:
\begin{enumerate}
\item[$(D_n)$] $\exists \, a \in \C \setminus \{ 0,1 \} $ such that $d(n, a) \in G$;
\item[$(E_n)$] $\forall \, b \in \C $ we have $e(n, b) \in G$.
\end{enumerate}

We will prove  by decreasing induction on $k$ that for each $k \in \{ 1, \ldots, n \}$ the two following assertions hold:
\begin{enumerate}
\item[$(a_k)$] The group $U_k$ satisfies $(E_i)$ and $(D_i)$ for $i \in \{ 1, \ldots, k \}$.
\item[$(b_k)$]  The group $V_k$ satisfies $(E_i)$ for $i \in \{ 1, \ldots, k \}$ and $(D_i)$ for $i \in \{1 , \ldots, k-1 \}$.
\end{enumerate}

We will use two obvious identities. The first one is
\begin{equation} [  d(i,a), e(i,b)  ] = e(i, b (a-1) )  \label{commutator-[d(i), e(i)]} \end{equation}
where we have either $i \in \{ 1, \ldots, n-1 \}$, $a \in \C (x_{i+1} ) \setminus \{ 0 \}$, $b \in \C (x_{i+1} )$, 
or $i =n$, $a \in \C ^*$, $b \in \C$.

The second one is
\begin{equation} [ d(i,a), e(i+1,c) ] = d \left( i, \frac{a(x_{i+1} ) }{a( x_{i+1} - c)} \right) \label{commutator-[d(i), e(i+1)]} \end{equation}
where $i \in \{ 1, \ldots, n-1 \}$, $a \in \C (x_{i+1} ) \setminus \{ 0 \}$, $c \in \C$.

The identity \eqref{commutator-[d(i), e(i)]} implies that if a subgroup $G$ of $\Bir (\p^n)$ satisfies $(D_i)$ and $(E_i)$, then its derived subgroup $D^1(G)$ also satisfies $(E_i)$.

Before making use of the identity \eqref{commutator-[d(i), e(i+1)]}, let's check that if $a \in \C (x ) $ is nonconstant and $c \in \C$ is nonzero, then $a(x) / a(x-c)$ is nonconstant. Otherwise, we would have $a(x) = \lambda a(x-c)$ for some $\lambda \in \C^*$. This implies that the union $U(a)$ of the zeros and  poles of $a$ is invariant by translation by $c$. Since $U(a)$ is finite, it must be empty, proving that $a$ is constant. A contradiction.

It follows from \eqref{commutator-[d(i), e(i+1)]} and the last observation that if a subgroup $G$ of $\Bir (\p^n)$ satisfies $(D_i)$ and $(E_{i+1})$, then  $D^1(G)$ also satisfies $(D_i)$.

We are now ready for the induction. We begin by noting that the hypothesis $(a_n)$ is obviously satisfied. It is then enough to observe that we have $(a_k) \Longrightarrow (b_k)$ for each $k \in \{1, \ldots, n \}$ and $(b_k) \Longrightarrow (a_{k-1} )$ for each $k \in \{2, \ldots, n \}$.
\end{proof}

\section{More on Borel subgroups of $\PGL_2( \C(y) ) \rtimes \Aff_1$} \label{section: Borel subgroups of PGL_2(C(y)) rtimes PGL_2}

We have seen in Theorem~\ref{theorem: the natural bijection between the Borel subgroups of PGL_2 ( C(y) ) and the Borel subgroups of PGL_2 ( C(y) ) rtimes Aff_1} that there is a natural bijection between the Borel subgroups of $\PGL_2( \C(y) )$ and the Borel subgroups of $\PGL_2( \C(y) ) \rtimes \Aff_1$. Hence the interplay between $\PGL_2( \C(y) )$ and $\PGL_2( \C(y) ) \rtimes \Aff_1$ is very straightforward in this respect. Analogously the interplay between the set of conjugacy classes of Borel subgroups of $\Jonq$ and the set of conjugacy classes of Borel subgroups of $\bp$ is very clear: Any Borel subgroup of $\Jonq$ is a Borel subgroup of $\bp$ (Proposition~\ref{proposition: a Borel subgroup of Jonq is also a Borel subgroup of Bir(P2)}); any Borel subgroup of $\bp$ is conjugate to a Borel subgroup of $\Jonq$ (Theorem~\ref{theorem: a closed connected solvable subgroup of Bir(P2) is conjugate to a subgroup of Jonq}); and finally two Borel subgroups of $\Jonq$ are conjugate in $\Jonq$ if and only if they are conjugate in $\bp$ (Lemma~\ref{lemma: unique rational fibration preserved and a property of some conjugants} and Theorem~\ref{theorem: all Borel subgroups of Bir(P2)}). Hence, the map sending the conjugacy class of a Borel subgroup $B$ of $\Jonq$ to its conjugacy class in $\bp$ is a bijection from the set of conjugacy classes of Borel subgroups of $\Jonq$ to the set of  conjugacy classes of Borel subgroups of $\bp$.

The interplay between $\PGL_2( \C(y) ) \rtimes \Aff_1$ and $\Jonq = \PGL_2( \C(y) ) \rtimes \PGL_2$ is more intricate: We now give an example of a Borel subgroup $B$ of $\PGL_2( \C(y) ) \rtimes \Aff_1$
which is no longer a Borel subgroup of $\Jonq$. To put it differently,  $B$ is conjugate in $\Jonq$ to a subgroup $B'$ of $\PGL_2( \C(y) ) \rtimes \Aff_1$ which is no longer a Borel subgroup of $\PGL_2( \C(y) ) \rtimes \Aff_1$! This kind of situation can of course not occur for linear algebraic groups: Let $H$ be a closed subgroup of a linear algebraic group $G$ and let $B,B'$ be two subgroups of $H$ which are conjugate in $G$. Then, $B$ is a Borel subgroup of $H$ if and only if $B'$ is a Borel subgroup of $H$ (if $B,B'$ are closed, it is enough to note that they have the same dimension).

\begin{example} \label{example: a Borel subgroup of PGL_2(C(y)) rtimes PGL_2 which is not a Borel subgroup of Bir(P2)}
The subgroups $\T_{y(y-1)}$ and $ \T_y$ of $\PGL_2 ( \C (y) )$ are conjugate in $\Jonq$ since the curves $x^2=  y(y-1)$ and $x^2 = y $ both have genus 0. However, since the odd supports $\So (y (y-1) )$ and $\So (y )$ are respectively equal to $\{ 0, 1 \}$ and $ \{ 0,  \infty \}$, Proposition~\ref{proposition: precise description of the unique Borel subgroup of PGL_2(C(y)) rtimes PGL_2 containing T_f} asserts that $\T_{y(y-1)}$ is a Borel subgroup of $\PGL_2 ( \C(y) ) \rtimes \Aff_1$ but that $\T_y$ is not. In fact, the unique Borel subgroup of  $\PGL_2 ( \C(y) ) \rtimes \Aff_1$ containing $\T_y$ is $\T_y \rtimes \T_{1,2}$; see Theorem~\ref{theorem: all Borel subgroups of Bir(P2)}. This also shows that even if  $\T_{y(y-1)}$ is a Borel subgroup of $\PGL_2 ( \C(y) ) \rtimes \Aff_1$, it is no longer a Borel subgroup of $\Jonq$.
\end{example}

\textbf{Acknowledgements.} We warmly thank J\'er\'emy Blanc for interesting discussions related to the subject of this paper.

\end{document}